\theoremstyle{plain}
\newtheorem{theorem}{Theorem}[section]
\newtheorem{lemma}[theorem]{Lemma}
\newtheorem{proposition}[theorem]{Proposition}
\newtheorem{corollary}[theorem]{Corollary}
\theoremstyle{definition}
\newtheorem{definition}[theorem]{Definition}
\newtheorem{notation}[theorem]{Notation}
\newtheorem{example}[theorem]{Example}
\newtheorem{remark}[theorem]{Remark}
\newtheorem{convention}[theorem]{Convention}
\numberwithin{equation}{section}
\newcommand{\F}{\mathbf{F}}
\newcommand{\PP}{\mathbf{P}}
\newcommand{\Q}{\mathbf{Q}}
\newcommand{\R}{\mathbf{R}}
\newcommand{\Z}{\mathbf{Z}}
\newcommand{\frakm}{\mathfrak{m}}
\newcommand{\frakp}{\mathfrak{p}}
\newcommand{\calA}{\mathcal{A}}
\newcommand{\calB}{\mathcal{B}}
\newcommand{\calD}{\mathcal{D}}
\newcommand{\calE}{\mathcal{E}}
\newcommand{\calL}{\mathcal{L}}
\newcommand{\calM}{\mathcal{M}}
\newcommand{\calO}{\mathcal{O}}
\newcommand{\calV}{\mathcal{V}}
\newcommand{\calW}{\mathcal{W}}
\newcommand{\calZ}{\mathcal{Z}}
\newcommand{\et}{\text{\'et}}
\newcommand{\mult}{\text{m}}
\renewcommand{\ll}{\text{ll}}
\newcommand{\bfA}{A}
\newcommand{\bfB}{B}
\newcommand{\bfC}{C}
\newcommand{\bfD}{D}
\newcommand{\bfV}{V} %
\DeclareFontFamily{OT1}{rsfs}{}
\DeclareFontShape{OT1}{rsfs}{n}{it}{<-> rsfs10}{}
\DeclareMathAlphabet{\mathscr}{OT1}{rsfs}{n}{it}
\newcommand{\Gscr}{\mathscr{G}}
\newcommand{\E}{\mathscr{E}}
\DeclareMathOperator{\NP}{\mathrm{NP}}
\DeclareMathOperator{\HP}{\mathrm{HP}}
\DeclareMathOperator{\Cl}{Cl}
\DeclareMathOperator{\Char}{char}
\DeclareMathOperator{\sep}{sep}
\DeclareMathOperator{\Frac}{Frac}
\DeclareMathOperator{\ev}{ev}
\DeclareMathOperator{\Hom}{\mathrm{Hom}}
\DeclareMathOperator{\Gal}{\mathrm{Gal}}
\DeclareMathOperator{\rev}{rev}
\DeclareMathOperator{\Det}{det}
\newcommand{\Fil}[1]{\textup{Fil}^{ #1 }}
\newcommand{\oFil}[1]{\textup{Fil}_\perp^{ #1 }}
\newcommand{\filindex}{t}
\newcommand{\ord}{\operatorname{ord}}
\newcommand{\ordn}{\ord_n}
\newcommand{\Frob}{\text{Frob}}
\newcommand{\y}{\mathbf{y}}
\newcommand{\powerseries}[1]{[\![#1]\!]}
\newcommand{\Spec}{\operatorname{Spec}}
\newcommand{\fielddegree}{{\nu}}
\newcommand{\FF}{{\mathbb{F}_p}}
\newcommand{\prt}[1]{\operatorname{pr}_{#1}}
\newcommand{\klin}{\Psi}
\newcommand{\Flin}{\Theta}
\newcommand{\dnom}{\delta}
\title{Iwasawa theory of Frobenius-torsion class group schemes}
\author{Jeremy Booher, Bryden Cais, Joe Kramer-Miller, and James Upton}
\date{\today}
\begin{document}

\begin{abstract}
We establish a new Iwasawa theory 
for the kernel of Frobenius on Jacobians of curves
in geometric $\Z_p$-towers over
the projective line in characteristic $p$,
 thereby proving several of the main conjectures of
 \cite{boohercais21}.  
\end{abstract}

\maketitle

\section{Introduction}\label{sec:intro}

Let $K$ be a function field in one variable over a finite field $k$, and $p$ a prime number.
As with number fields, Iwasawa theory aims to understand the growth
of the $p$-primary part of class groups in $\Z_{p}$-extensions of $K$ unramified outside a finite set $S$ of places. %
For the constant $\Z_{p}$-extension of $K$, %
the regular growth of these $p$-Sylow subgroups
was observed by Iwasawa himself \cite{IwasawaAnalogies}, and served as a foundational motivation
for the theory he initiated \cite{Iwasawa}.  More natural from the viewpoint of arithmetic geometry 
are {\em geometric} $\Z_p$-extensions $L/K$, {\em i.e.~}those with
$k$ algebraically closed in $L$. 
If $p\neq \Char(k)$, it is a straightforward consequence of class field theory that $K$ admits
no geometric $\Z_{p}$-extensions {\em whatsoever}, and when $p=\Char(k)$---which we henceforth assume---there are no 
everywhere unramified %
geometric $\Z_p$-extensions of $K$.  
What remains
are the geometric $\Z_p$-extensions that ramify---necessarily infinitely and wildly---at one or more places,
and there are myriads of them. %
 We fix such a geometric $\Z_p$-extension, %
and for simplicity assume that every place in $S$ totally ramifies.  Write $K_n$ for the fixed field
of $p^n\Z_p \subseteq \Z_p$; %
as for number fields, the class group $\Cl_{K_n}$
of $K_n$ is a finite%
---and generally mysterious---abelian group
controlling the unramified abelian extensions of $K_n$.
In \cite{MWAnalogies}, Mazur and Wiles studied
the projective limit $M\colonequals \varprojlim \Hom(\Cl_{K_n},\Q_p/\Z_p)$ of Pontryagin duals, and proved that it is finitely generated
and torsion as a module for the Iwasawa algebra $\Lambda$.
The function field analogue of Iwasawa's celebrated formula---which gives the exact power of $p$ dividing $|\Cl_{K_n}|$---follows from this, as does the existence of a characteristic ideal for $M$ governing its structure as a $\Lambda$-module. 
Subsequently, Crew proved \cite{Crew} that this characteristic ideal is generated by a {\em Stickelberger element}, given by the special value of 
a $\Lambda$-adic $L$-function, thereby establishing a beautiful analogue of the Main Conjecture of Iwasawa theory for number fields %
and proving a conjecture of Katz \cite{KatzDwork}.
In the last twenty years, the work of Mazur–Wiles and Crew has been generalized and expanded extensively
(e.g.~\cite{BurnsZeta}, \cite{Burns}, \cite{BurnsKakde}, \cite{BurnsTrihan}, \cite{EmertonKisin}, \cite{GP1}, \cite{GP2}, \cite{GP3}, \cite{WanClass},  \cite{RLiuWan}).

Writing $X_n$ for the unique smooth, projective, and geometrically connected curve with function field $K_n$,
the %
Jacobian $J_{X_n}$ of $X_n$ provides a powerful, geometric avatar for the class group of each $K_n$
via the identification $\Cl_{K_n} = J_{X_n}(k)$. %
This %
remarkable fact---which has no analogue for number fields---yields an identification of 
$p$-groups $\Cl_{K_n}(p) = \Gscr_n(k)$, where $\Gscr_n \colonequals  J_{X_n}[p^{\infty}]$ is the $p$-divisible group of $J_{X_n}$; 
this is an inductive system of finite group {\em schemes}, and is an exceedingly rich algebro-geometric object. 
Unfortunately, the passage $\Gscr\rightsquigarrow \Gscr(k)$ from a $p$-divisible group $\Gscr$ to its group of $k$-points 
incurs a {\em profound} loss of information: for example, when $K=k(x)$ is the rational function field and $S=\{\infty\}$,
the corresponding $p$-divisible groups $\Gscr_n$ have 
$\Gscr_n(k) = 0$ for all $n$, yet the dimension of $\Gscr_n$ can grow {\em arbitrarily} fast! 
In general, a $p$-divisible group $\Gscr$ breaks up as the product $\Gscr = \Gscr^{\et} \times \Gscr^{\mult}\times \Gscr^{\ll}$ of its  \'etale, %
multiplicative, %
and local-local %
components, %
and one has 
$\Gscr(k)=\Gscr^{\et}(k)$.  In other words, the passage to $k$-points completely loses the connected component $\Gscr^0=\Gscr^{\mult}\times \Gscr^{\ll}$,
which is arguably the {\em most} interesting part of $\Gscr$.

The guiding philosophy of this paper is that Iwasawa theory for function fields should encompass the study of the
{\em entire} $p$-divisible group $\Gscr_n$
and not just its group of $k$-rational points. There is compelling recent evidence
for the merit of this inclusive vision:
in \cite{DWX}, Davis, Wan, and Xiao use techniques from Dwork theory to study the $p$-adic properties
of Hasse--Weil Zeta functions of the curves $X_n$ in certain $\Z_p$-extensions of $k(x)$
when $S=\{\infty\}$.
Their work has been extended 
by 
Xiang \cite{XiangSlopes} and Kosters--Zhu
\cite{KostersZhu}
to larger classes of $\Z_p$-extensions
of $k(x)$ with $S=\{\infty\}$, and more recently to $\Z_p$-extensions of arbitrary {\em ordinary}
function fields $K$ with $S$ any finite set under certain
ramification hypotheses\footnote{These hypotheses, called {\em eventually minimal break ratios}
in this paper (see Remark \ref{rem: embr}) are imposed by all cited prior work on this subject.
We warn the reader that \cite{KostersZhu}
relies on the erroneous \cite[Theorem 1 (3)]{KW}, %
a corrected version of which is provided by \cite{KWErrata}.  As such, 
the main results of \cite{KostersZhu} are only valid
for those $\Z_p$-extensions with eventually minimal break ratios, and not for the larger class of {\em genus stable}
extensions as claimed.
}
by the second two authors \cite{kramer-milleruptonI}, \cite{kramer-milleruptonII}.
Via the crystalline interpretation of $L$-functions ({\em e.g.}~\cite{KatzMessing}, \cite{KatzCrystalline}) and the canonical identification of the first crystalline cohomology of $X_n$ with the Dieudonn\'e
module of $\Gscr_n$ \cite{MazurMessing}, it follows from these papers that 
the {\em slopes} of the isocrystal associated to $\Gscr_n$ exhibit astonishing regularity
as $n\rightarrow \infty$, 
and in particular become uniformly distributed in the unit interval.  
By the Dieudonn\'e--Manin classification, the $p$-divisible group
$\Gscr_n$ is determined up to isogeny over $\overline{k}$ by the set of slopes of its associated isocrystal, 
so these papers may be interpreted as providing an analogue of Iwasawa theory for the {\em isogeny type}
of $p$-divisible class group schemes in such $\Z_p$-extensions.

Unfortunately,
the isogeny type of a $p$-divisible group is a somewhat coarse invariant,
as it loses touch with finite, {\em torsion} subgroup schemes, whose
structure can be exceedingly subtle.
In \cite{boohercais21},
the first two authors have studied
the isomorphism type of the Frobenius-torsion subgroup schemes $\Gscr_n[F]=J_{X_n}[F]$
in $\Z_p$-extensions of $k(x)$ with $S=\{\infty\}$.  By Oda's Theorem \cite{Oda} and Dieudonn\'e theory,
 $\Gscr_n[F]$ is determined up to isomorphism by the
sequence of nonnegative integers
\begin{equation}
        a_n^{(r)}\colonequals a_{X_n}^{(r)} \colonequals  \ker\left( V^r : H^0(X_n,\Omega^1_{X_n/k})\rightarrow H^0(X_n,\Omega^1_{X_n/k})\right), \quad r \ge 1\label{anumdef}
\end{equation}
where $V$ is the {\em Cartier operator} acting on the space of global, holomorphic differential forms on $X_n$ over $k$.
When $r=1$, the integer $a_n^{(1)}$ is called the {\em $a$-number} of the curve $X_n$, 
and is a subtle and mysterious quantity that has been 
 studied extensively in many %
contexts 
\cite{CMHyper,VolochChar2,Fermat,ReBound,ElkinPriesanum1,ElkinCyclic,Suzuki,Dummigan,FermatHurwitz,Frei}.
While there are upper and lower bounds for $a$-numbers \cite{BCanum} and their
``higher" (i.e. $r>1$ above) counterparts \cite{groen} in arbitrary branched $\Z/p\Z$-covers of curves,
these bounds leave room for significant variation, and  
there can be no exact formula for the $a$-number of a $\Z/p\Z$-cover in general
\cite[Example 7.2]{BCanum}.  
Via extensive numerical computation for small values of $p$ and $n$, as well as some theoretical results when $p=2$ and $r=1$,
\cite{boohercais21} provides support for an array of conjectures which, for each fixed $r\ge 1$, 
predict that the higher $a$-numbers \eqref{anumdef} behave with striking regularity as $n\rightarrow \infty$. 

In this paper, we prove the main conjectures of \cite{boohercais21} for a broad class of $\Z_p$-extensions of $k(x)$
by establishing an explicit formula that gives 
the higher $a$-numbers \eqref{anumdef} up to bounded (and sometimes zero) error as $n\rightarrow \infty$. 
In order to state it, we fix some notation and terminology.  
We say that a geometric $\Z_p$-extension $L/K$ of $K=k(x)$ with $S=\{\infty\}$ 
has {\em minimal break ratios} if the $n$-th break in the upper numbering ramification
filtration at $\infty$ is equal to $dp^{n-1}$, for some positive integer $d$
that 
we call the {\em ramification invariant} of $L/K$.
Given such an $L/K$, for an integer $i>0$ we write $\frac{p+1}{d}i = \sum_{m\in \Z} i_m p^m$ in base $p$
and define
\begin{equation*}  
    \mu_i \colonequals  \begin{cases}
            \left\lfloor \frac{p+1}{d}i \right\rfloor & \text{if}\ (i_m)_{m\ge 0} \succ (i_{-1-m})_{m\ge 0} \\ \\
            
            \left\lceil \frac{p+1}{d}i \right\rceil & \text{otherwise}
    \end{cases}\quad\text{and}\quad
  t(n) \colonequals  \begin{cases}
            \left\lfloor \frac{dp^n}{(r+1)p - (r-1)}\right\rfloor & \text{if}\ d | (p-1) \\ \\
            d\left\lfloor \frac{p^n-1}{(r+1)p - (r-1)}\right\rfloor & \text{otherwise}
    \end{cases}
\end{equation*}
where the definition of $\mu_i$ uses the lexicographic ordering ``$\succ$'' on the set of integer sequences.

\begin{theorem}\label{MT}
    Let $L/K$ be a geometric $\Z_p$-extension of the rational function field $K=k(x)$
    totally ramified over $S=\{\infty\}$ and unramified elsewhere, with minimal break ratios and ramification invariant $d$.
    For each $r$, there exists a constant $C_{p,d,r}\ge 0$ depending only on $d,p$, and $r$,
    such that
        \begin{equation}
                0 \le \frac{r (p-1) t(n) (t(n)+1)}{2d} + \#\{(i,j)\in \Z^2\ :\ i > t(n),\ p^n >  j \ge \mu_i \} - a_n^{(r)}
                \le C_{p,d,r}\label{anumformula}
        \end{equation}
    for all $n > 0$.  When $d|(p-1)$, we may take $C_{p,d,r}=0$.  When $d\le p+1$ and $r\not\equiv 1\bmod p$,
    \eqref{anumformula} holds with $C(p,d,r)=0$ whenever $n\equiv 0\bmod m$, with $m$ the order
    of $p$ modulo $(p-1)r+(p+1)$.
\end{theorem}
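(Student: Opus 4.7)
The plan is to reduce the dimension count $a_n^{(r)} = \dim \ker\bigl(V^r \mid H^0(X_n,\Omega^1_{X_n/k})\bigr)$ to a concrete lattice-point enumeration by analyzing the Cartier operator explicitly in a tower-compatible basis of differentials. First, one exploits the minimal break ratio hypothesis to present $X_n$ as an Artin--Schreier--Witt cover of $\PP^1$ with ramification data at $\infty$ entirely determined by $d$ and $n$. The Riemann--Hurwitz formula yields the genus $g_n = \frac{(d-1)(p^n-1)}{2}$, and a careful analysis of $\Omega^1_{X_n/k}$ near $\infty$ (converting the ramification filtration into divisor-class inequalities) produces an explicit basis for $H^0(X_n,\Omega^1_{X_n/k})$ indexed by integer pairs $(i,j)$ in a specific polygonal region with $0 \le j < p^n$.

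Next, one computes the Cartier operator $V$ on this basis. The identity $V(f^p\omega) = f V(\omega)$ together with the Artin--Schreier--Witt relations defining the tower determines $V$ combinatorially: each basis differential is either annihilated or sent to another basis element according to a ``divide-by-$p$'' rule on $i$ twisted by a carry coming from the cover equations. Iterating $r$ times, a basis differential indexed by $(i,j)$ lies in $\ker V^r$ precisely when $j$ is bounded below by a quantity which, up to rounding, agrees with $\mu_i$. The lexicographic digit-comparison in the definition of $\mu_i$ captures exactly the distinction between when iterated multiplication by $\frac{p+1}{d}$ produces an integer on the nose versus missing by a fractional amount---that is, between floor and ceiling.

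With this explicit description, enumerating the kernel proceeds by partitioning the basis into two regions. For indices $i \le t(n)$, every corresponding basis differential is automatically annihilated by $V^r$ (essentially because the $r$-fold iterate pushes $(i,j)$ out of the holomorphic region regardless of $j$), and summing an arithmetic progression in $j$ over these $i$ yields the triangular main term $\frac{r(p-1)t(n)(t(n)+1)}{2d}$. For $i > t(n)$, annihilation by $V^r$ is controlled by the sharper condition $j \ge \mu_i$, contributing the second combinatorial term in \eqref{anumformula}. The fact that this count is an \emph{upper bound} for $a_n^{(r)}$ reflects that a small set of the enumerated lattice points may correspond to ``phantom'' kernel vectors that are actually killed by Artin--Schreier--Witt relations among the basis differentials.

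The main obstacle is establishing the uniform bound $C_{p,d,r}$ on the resulting discrepancy. This excess is concentrated on boundary pairs near $i = t(n)$ and $j = p^n$, where the iterated Cartier operator's behavior depends delicately on the base-$p$ digit expansions of certain multiples of $d$ modulo $p^n$. A periodicity argument showing these expansions repeat modulo $(p-1)r + (p+1)$ should yield both the uniform bound and the sharper vanishing statements: when $d \mid (p-1)$, the relevant divisions align exactly so no boundary discrepancy arises; when $d \le p+1$ and $r \not\equiv 1 \pmod p$, taking $n$ divisible by the order $m$ of $p$ modulo $(p-1)r + (p+1)$ restores the base-$p$ digits to their initial configuration and eliminates the error.
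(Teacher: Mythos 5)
Your proposal is essentially the ``toy model'' of Section~\ref{ss:toy}, and it founders on exactly the point the paper flags there: the Cartier operator does \emph{not} act monomially (nor in any usable combinatorial ``divide-by-$p$-with-carries'' fashion) on a basis of $H^0(X_n,\Omega^1_{X_n/k})$ built from the Artin--Schreier--Witt presentation. The matrix of $V$ in the natural basis $\{u_i\}$ is a full matrix whose entries can only be bounded $T$-adically, so the claim that ``a basis differential indexed by $(i,j)$ lies in $\ker V^r$ precisely when $j$ is bounded below by a quantity agreeing with $\mu_i$'' cannot be established by the route you describe, and the enumeration collapses. The actual proof requires two substantial inputs you omit entirely: (i) overconvergence of the Frobenius element $\alpha$ on the disc $v_T(x)>-1/d$, which makes $V$ improve $T$-adic decay by a factor of $p$ and renders its Fredholm determinant well defined; and (ii) the identification of that Fredholm determinant with the Artin $L$-function $L(\chi,s)$ via an equicharacteristic Dwork trace formula, combined with the external theorem (Kramer-Miller--Upton, refining Kosters--Zhu) that the $T$-adic Newton polygon of $L(\chi,s)$ touches the Hodge polygon $\HP(d)$ at every multiple of $d$, and equals it when $d\mid(p-1)$. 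Only this Newton--Hodge coincidence yields the invertibility of the upper-left block $\bfA_{r,t}$ of $V^r$ with determinant of exact valuation $r\nu\eta_t$ (giving the triangular main term), and only the resulting containments for $\bfC_r\bfA_r^{-1}$ control the error coming from $i>t(n)$.

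Two further concrete errors. Your genus formula $g_n=\tfrac{(d-1)(p^n-1)}{2}$ is wrong for $n\ge 2$: the correct $2g_n=d\tfrac{p^{2n}-1}{p+1}+1-p^n$ grows like $p^{2n}$, as it must for the main term of \eqref{anumformula} to be consistent with $a_n^{(r)}\le g_n$. And the bounded discrepancy $C_{p,d,r}$ does not arise from ``phantom kernel vectors killed by ASW relations''; it arises from genuine uncertainty about the slopes of $V$ between consecutive multiples of $d$, where the Newton and Hodge polygons need not touch. Likewise the refined statements ($C=0$ when $d\mid(p-1)$, or when $n\equiv 0\bmod m$) come from the exact coincidence of the two polygons, respectively from the congruence condition $s(n)<\delta/d-1$ forcing $t(n)=t'(n)$ --- not from base-$p$ digits returning to an initial configuration.
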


Estimating the lattice-point count in \eqref{anumformula}, we deduce a case of \cite[Conjecture 3.4]{boohercais21}:

\begin{corollary} \label{cor:asymptoticmain}
    With the hypotheses of Theorem \ref{MT}, for each $r\ge 1$
    \begin{equation*}
         a_n^{(r)} = \frac{r}{r + \frac{p+1}{p-1}} \frac{dp^{2n}}{2(p+1)} + O(p^n) \quad\text{as}\ n\rightarrow \infty,
    \end{equation*}
    with an implicit constant depending only on $d,p,r$. In particular, if $g_n$ is the genus of $X_n$ then
    \begin{equation}\label{eq:a-genus asymptotic}
         \lim_{n\rightarrow \infty} \frac{a_n^{(r)}}{g_n} = \frac{r}{r + \frac{p+1}{p-1}}.
    \end{equation}
\end{corollary}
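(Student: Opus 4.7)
My plan is to apply Theorem \ref{MT} as a black box: it asserts $a_n^{(r)} = T_1(n) + T_2(n) + O(1)$, where $T_1(n) \colonequals \frac{r(p-1)t(n)(t(n)+1)}{2d}$ and $T_2(n) \colonequals \#\{(i,j)\in\Z^2 : i > t(n),\ p^n > j \ge \mu_i\}$. Using the identity $(r+1)p - (r-1) = r(p-1) + (p+1)$, the target coefficient $\frac{r}{r + (p+1)/(p-1)} \cdot \frac{d}{2(p+1)}$ rewrites as $\frac{dr(p-1)}{2(p+1)(r(p-1)+(p+1))}$, so the task reduces to showing $T_1(n)+T_2(n) = \frac{dr(p-1)p^{2n}}{2(p+1)(r(p-1)+(p+1))} + O(p^n)$.

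For $T_1(n)$, both cases of the piecewise definition give $t(n) = \frac{dp^n}{r(p-1)+(p+1)} + O(1)$, so direct substitution yields $T_1(n) = \frac{dr(p-1)p^{2n}}{2(r(p-1)+(p+1))^2} + O(p^n)$. For $T_2(n)$, I would replace $\mu_i$ by $\frac{p+1}{d}i$ throughout, since the definition forces $\mu_i = \frac{p+1}{d}i + O(1)$; only the $O(p^n)$ indices $i$ with $i < \frac{dp^n}{p+1}$ contribute a nonzero term, so the total replacement error is $O(p^n)$. The remaining count $\sum_{t(n) < i \le dp^n/(p+1)} (p^n - \frac{p+1}{d}i)$ is an arithmetic progression, which I would evaluate in closed form (or approximate by the corresponding integral, at a further $O(p^n)$ cost). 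Using $p^n - \frac{p+1}{d}t(n) = \frac{r(p-1)p^n}{r(p-1)+(p+1)} + O(1)$, this gives $T_2(n) = \frac{dr^2(p-1)^2 p^{2n}}{2(p+1)(r(p-1)+(p+1))^2} + O(p^n)$. Summing $T_1(n) + T_2(n)$ and collapsing the common rational factor yields the desired leading term.

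For the ratio \eqref{eq:a-genus asymptotic}, I would compute $g_n$ independently via Riemann--Hurwitz applied to $X_n \to \PP^1_k$. Since only $\infty$ ramifies and the $n$-th upper ramification break is $dp^{n-1}$, the conductor-discriminant formula applied to the characters of $\Gal(K_n/K) = \Z/p^n\Z$ gives discriminant exponent $\delta_n = \frac{d(p^{2n}-1)}{p+1} + (p^n - 1)$, whence $g_n = \frac{dp^{2n}}{2(p+1)} + O(p^n)$. Dividing the first asymptotic by this expression yields the stated limit.

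The argument is essentially bookkeeping once Theorem \ref{MT} is taken as a black box; there is no deep obstacle. The only step that demands real care is the $T_2(n)$ estimate, since the leading term of order $p^{2n}$ arises by summing $O(p^n)$ quantities each of order $p^n$, and each rounding or truncation error (from $\mu_i$, from the integer endpoints of the sum, and from the sum-to-integral passage if one uses it) must be verified individually to be $O(p^n)$.
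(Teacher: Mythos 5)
Your proposal is correct and essentially mirrors the paper's own proof in Section~\ref{s: formulae and estimates}. The paper also reads Theorem~\ref{MT} as $a_n^{(r)} = T_1(n) + T_2(n) + O(1)$ and handles the two pieces separately: equation~\eqref{eq:leftestimate} is exactly your estimate for $T_1(n)$, and Lemma~\ref{lem:rightestimate} estimates $T_2(n)=\#\Delta_n(t(n))$ by observing that it agrees, to within $O(p^n)$, with the area of the triangle cut off by $y=p^n$, $x=t(n)$, and $y=\frac{p+1}{d}x$ (citing Proposition~\ref{prop:muestimates} for $\mu_i = \frac{p+1}{d}i + O(1)$). Your evaluation of the count as an arithmetic progression $\sum_{t(n)<i<dp^n/(p+1)}(p^n - \frac{p+1}{d}i)$ is the same computation in a marginally different guise; both produce $\bigl(\frac{r(p-1)}{r(p-1)+(p+1)}\bigr)^2\frac{dp^{2n}}{2(p+1)}$ with $O(p^n)$ error, and both require noting that each of the $O(p^n)$ unit rounding errors from $\mu_i$ accumulates to only $O(p^n)$. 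For the ratio~\eqref{eq:a-genus asymptotic}, you rederive $g_n = \frac{dp^{2n}}{2(p+1)} + O(p^n)$ via Riemann--Hurwitz; the paper already records this as Proposition~\ref{prop:genusbreaks}, so you could simply cite that, but the computation is the same. No gaps.
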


When $r=1$, $p>2$, and $d|(p-1)$, we are able to compute the lattice-point count in \eqref{anumformula} exactly:

\begin{corollary}\label{MC2:formula}
    With the hypotheses of Theorem \ref{MT}, if $p>2$ and $d|(p-1)$ the $a$-number of $X_n$ is
    \begin{equation}
        a_n^{(1)} = \frac{p-1}{2}\frac{d}{2(p+1)}(p^{2n-1}+1) - \begin{cases} 0 & d\ \text{even} \\ \frac{p-1}{4d} & d\ \text{odd}\end{cases}
    \end{equation}
\end{corollary}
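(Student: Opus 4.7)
The plan is to apply Theorem \ref{MT} with $r=1$ to obtain an exact expression for $a_n^{(1)}$, and then to evaluate the two resulting terms explicitly. Under the hypothesis $d\mid(p-1)$ the constant $C_{p,d,1}$ vanishes, yielding
\[
a_n^{(1)}=\frac{(p-1)\,t(n)(t(n)+1)}{2d}+N(n),\qquad N(n)\colonequals \#\{(i,j)\in\Z^2:i>t(n),\ p^n>j\ge\mu_i\},
\]
where for $r=1$ and $d\mid(p-1)$ one has $t(n)=\lfloor dp^{n-1}/2\rfloor$, equal to $dp^{n-1}/2$ or $(dp^{n-1}-1)/2$ according to the parity of $d$. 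The first term is a polynomial in $p^n$ which I evaluate directly.

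The key task is the exact evaluation of $N(n)$. When $d\mid(p-1)$, write $e=(p-1)/d$ so that $1/d$ has the purely periodic base-$p$ expansion with repeating digit $e$; for each integer $i>0$ decompose $2i=qd+s$ with $0\le s<d$. Then $(p+1)i/d=ei+q+s/d$, and since $se<p$ the fractional part $s/d$ has purely periodic base-$p$ expansion with repeating digit $se$. Consequently the sequence $(i_{-1-m})_{m\ge 0}$ is the constant sequence $(se,se,\ldots)$, and the lexicographic test in the definition of $\mu_i$ reduces to comparing the base-$p$ digits of the integer $ei+q$ with the single digit $se$. Thus $\mu_i$ is either $ei+q$ or $ei+q+1$, determined by an explicit condition on the digits of $ei+q$ (and equal to $ei+q$ whenever $s=0$, in which case $(p+1)i/d$ is itself an integer).

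With $\mu_i$ so described, I sum $p^n-\mu_i$ over those integers $i>t(n)$ with $\mu_i<p^n$. Grouping $i$ by its residue class modulo $d$ fixes $s$ and therefore the rounding pattern, so each residue class contributes an arithmetic progression whose rounding correction is a bounded, periodic quantity. The leading contribution to $N(n)$ is $\tfrac{d(p-1)^2}{8(p+1)}p^{2n-2}$; when combined with $\tfrac{(p-1)t(n)(t(n)+1)}{2d}$ this assembles the principal term $\tfrac{d(p-1)p^{2n-1}}{4(p+1)}$ of the claimed formula.

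The main obstacle is the careful accounting near the upper endpoint $i\approx dp^n/(p+1)$: the constant ``$+1$'' in $p^{2n-1}+1$ and the parity correction $-\tfrac{p-1}{4d}$ for odd $d$ arise entirely from the interplay between this endpoint and the floor-versus-ceiling alternation in $\mu_i$. Verifying that the accumulated boundary adjustments telescope into precisely these constants is the delicate step; the interior of the sum behaves uniformly thanks to the periodicity modulo $d$.
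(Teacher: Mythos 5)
Your reduction is fine: invoking Theorem \ref{MT} with $r=1$ and $C_{p,d,1}=0$ when $d\mid(p-1)$, and identifying $t(n)=\lfloor dp^{n-1}/2\rfloor$, is exactly the starting point of the paper's argument, and your digit analysis of $\mu_i$ (writing $(p+1)i/d=ei+q+s/d$ with $e=(p-1)/d$, $2i=qd+s$, and noting that the fractional part has constant base-$p$ digit $se$) is correct and consistent with Proposition \ref{prop:muestimates}. Your leading-order bookkeeping also checks out. But there is a genuine gap in the evaluation of $N(n)$, which is the entire content of the corollary beyond Corollary \ref{cor:asymptoticmain}.

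The problem is your claim that, after fixing the residue of $i$ modulo $d$, ``the rounding correction is a bounded, periodic quantity.'' It is not. For fixed $s\neq 0$, whether $\mu_i$ equals $\lfloor(p+1)i/d\rfloor$ or $\lceil(p+1)i/d\rceil$ is decided by comparing the base-$p$ digits of the integer $c=ei+q$ (from the units digit upward) with the constant sequence $(se,se,\ldots)$: the floor is taken precisely when the lowest-order digit of $c$ differing from $se$ exceeds $se$. This is a self-similar, $p$-adic digit condition on $c$, not a condition periodic in $i$; counting how often it holds over a range of length $\sim p^n$ is itself a nontrivial recursive computation, and summing $\mu_i$ column-by-column therefore cannot be reduced to arithmetic progressions plus a periodic error. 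You then defer the ``accounting near the upper endpoint'' that produces the constants $+1$ and $-\tfrac{p-1}{4d}$, asserting that the boundary adjustments ``telescope'' without exhibiting the computation. Since the asymptotic is already known, the exact constants are the whole point, and they are not derived.

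The paper sidesteps both difficulties by counting $\Delta_n(t'(n))$ row-by-row rather than column-by-column: using the dual description of $\Delta_n$ in terms of $\xi_b$, the number of points in row $b$ is $\lfloor \xi_b - dp^{n-1}/2\rfloor$, and the resulting sum $f_n(x)=\sum_{b<p^n}\lfloor x+\xi_b\rfloor$ satisfies a clean self-similar recursion (Lemma \ref{lemma:fn(x)}) solved exactly by induction using Hermite's identity (Lemma \ref{lemma:f1(x)}, Proposition \ref{prop:fn(x)explicit}). That recursion is precisely the device needed to handle the non-periodic digit behavior you encounter; to complete your column-wise route you would need to either prove an analogous recursion for $\sum_i\mu_i$ (equivalently, for the count of indices where the ceiling is taken) or convert your sum back to the row-wise form.
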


\begin{remark}
When $p=2$, an analogous formula for {\em arbitrary} $d$ is given by\footnote{
Although \cite[Corollary 8.12]{boohercais21} is only asserted for the smaller class of ``basic'' $\Z_2$-towers in the sense of
\cite[Definition 2.12]{boohercais21}, the proof given works for all $\Z_2$-extensions of $k(x)$
with $S=\{\infty\}$ that have minimal break ratios.
}
\cite[Corollary 8.12]{boohercais21}.
The special case $n=1$ of Corollary \eqref{MC2:formula} follows from \cite[Theorem 1.1]{FarnellPries},
which gives a formula for the $a$-number of any branched $\Z/p\Z$-cover $X_1\rightarrow \PP^1$ ramified at $S$
when the least common multiple of the ramification breaks at all points in $S$ is a divisor of $p-1$.  
\end{remark}

When $r>1$ and $d|(p-1)$, we do not have a simple explicit formula.  However, 
in this situation, \cite{promys} shows that the lattice-point count in \ref{MT} 
obeys a beautiful Iwasawa--style formula:

\begin{corollary} \label{r>1result}
 Under the hypotheses of Theorem \ref{MT}, if $d|(p-1)$ then for each $r$ there exists $\lambda_r\in \Q$
and a periodic function $\nu_r: \Z\rightarrow \Q$ such that for all $n$ sufficiently large
    \begin{equation}
        a_n^{(r)} = \frac{r}{r + \frac{p+1}{p-1}} \frac{dp^{2n}}{2(p+1)} + \lambda_r n + \nu_r(n).
    \end{equation}
    Furthermore, writing $D_r$ for the prime to $p$ part of the denominator of $d/(r(p-1)+(p+1))$ in lowest terms,
     the period of $\nu_r$ divides the least common multiple of $2$ and the order of $p$ modulo $D_r$.
\end{corollary}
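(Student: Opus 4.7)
The plan is to use the exact version of Theorem~\ref{MT} (which gives $C_{p,d,r}=0$ when $d\mid(p-1)$) and carefully analyze the two summands it produces. Writing $E\colonequals r(p-1)+(p+1)$, so that $t(n)=\lfloor dp^n/E\rfloor$, Theorem~\ref{MT} gives the exact identity
$$a_n^{(r)} = \frac{r(p-1)}{2d}\,t(n)\bigl(t(n)+1\bigr)+N(n),\qquad N(n)\colonequals \#\{(i,j)\in\Z^2 : i>t(n),\ p^n>j\ge \mu_i\}.$$

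For the polynomial summand I would write $dp^n = E\,t(n)+s(n)$ with $0\le s(n)<E$; the residue $s(n)$ depends only on $p^n\bmod D_r$ and is therefore periodic in $n$ with period dividing $\ord_{D_r}(p)$. Substituting and expanding $t(n)(t(n)+1)$ then produces a quadratic-in-$p^n$ main term, a linear-in-$p^n$ contribution, and a purely periodic-in-$n$ remainder, each with explicit coefficients depending on $s(n)$.

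The substantive step is the evaluation of $N(n)$. I would reorganize this as $N(n) = \sum_{i=t(n)+1}^{U(n)}(p^n-\mu_i)$, where $U(n)\colonequals \max\{i:\mu_i<p^n\}$, and then compute $\sum_i \mu_i$ using the definition of $\mu_i$ as the floor or ceiling of $(p+1)i/d$ selected by the lexicographic comparison of the base-$p$ digits of $(p+1)i/d$. This reduces the problem to a digit-combinatorial count on arithmetic progressions of $i$, counting how often the ``upper half'' of the digit string dominates the ``lower half'' as $i$ ranges over such a progression. This is precisely the identity established in \cite{promys}, which yields an expansion $N(n)=\alpha p^{2n}+\beta p^n+\lambda'_r n+\tilde\nu_r(n)$ with $\tilde\nu_r$ periodic of the claimed period; the factor of $2$ in the period of $\nu_r$ arises from the even/odd case distinction forced by the lexicographic comparison, which splits the digit string of $(p+1)i/d$ into two halves of possibly unequal length depending on the parity of $n$.

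Combining the two expansions, the quadratic pieces sum to $\tfrac{r}{r+(p+1)/(p-1)}\cdot\tfrac{dp^{2n}}{2(p+1)}$, recovering the leading asymptotic already implicit in Corollary~\ref{cor:asymptoticmain}. The main obstacle is then to verify that the two $O(p^n)$ contributions---one from expanding $t(n)(t(n)+1)$, the other from $N(n)$---cancel exactly, leaving behind only the Iwasawa-style remainder $\lambda_r n+\nu_r(n)$. This cancellation is the arithmetic content of the formula: the ``missing'' part of the polynomial main term truncated by the floor in $t(n)$ is precisely compensated by the lattice contribution from rows just beyond $t(n)$. I would confirm it by matching the explicit $O(p^n)$ coefficient of $N(n)$ from \cite{promys} against that coming from the polynomial expansion, after which the stated period bound for $\nu_r$ follows by taking the least common multiple of the periods contributed by each piece.
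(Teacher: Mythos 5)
Your proposal is correct and follows essentially the same route as the paper: when $d\mid(p-1)$ Theorem~\ref{MT} holds with $C_{p,d,r}=0$, the polynomial term is handled by the elementary division $dp^n=Et(n)+s(n)$ with $s(n)$ eventually periodic of period dividing $\ord_{D_r}(p)$, and the Iwasawa-style expansion of the lattice-point count (including the exact cancellation of the $O(p^n)$ contributions, which you correctly identify as the crux) is exactly the content the paper imports from \cite{promys}.
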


Corollary~\ref{MC2:formula} and Corollary~\ref{r>1result} establish \cite[Conjecture 3.7 and 3.8]{boohercais21} for $\{X_n\}$ when $d | (p-1)$.

\begin{remark}
  The restrictions to $K= k(x)$ and $S=\{\infty\}$ are unnatural, and we expect to be able to remove them.
  Indeed, a primary external input in this paper
    is the work \cite{kramer-milleruptonI}, \cite{kramer-milleruptonII}
    of the second two authors, which studies Hasse--Weil $L$-functions
    in $\Z_p$-extensions of function fields, and allows for
    arbitrary ordinary base curves, as well as ramification
    over {\em any} (finite) number of points under the hypothesis
    of (eventually) minimal break ratios.  It is moreover worth pointing out that
    the asymptotic formula \eqref{eq:a-genus asymptotic} makes no reference to the ramification invariant $d$, so might be expected to hold 
    under considerably relaxed hypotheses on the ramification
    (cf.~\cite[\S6.4--6.5]{boohercais21}).
\end{remark}

    Theorem \ref{MT} and its corollaries may also be understood as Iwasawa--Theoretic refinements of the Riemann--Hurwitz and Deuring--Shafarevich formulae.  To explain this perspective, suppose for the moment that $k=\overline{k}$ is {\em any} algebraically closed field, and $X$ is a smooth, projective and connected curve over $k$
    of
    genus 
    $g_X:=\dim_k H^0(X,\Omega^1_{X/k})$.
    In any branched Galois cover of curves
    $Y\rightarrow X$ with group $G$, %
    the {\em Riemann--Hurwitz} formula is an exact expression for $g_Y$ in terms of $g_X$, the order of $G$, 
    and the ramification filtration of $G$ over the branch points.
    When $k$ has positive characteristic $p$, the Cartier operator $V$ gives
    $H^0(X,\Omega^1_{X/k})$ the structure of a $k[V]$-module, and it follows from Fitting's Lemma
    and Lang's Theorem \cite[Theorem 1]{Lang-algebraic_groups_over_finite_fields}
    that
    one has an isomorphism of $k[V]$-modules
    $$
        H^0(X,\Omega^1_{X/k})\simeq \left(\frac{k[V]}{(V-1)}\right)^{f_X} \oplus \bigoplus_{i\ge 1} \left(\frac{k[V]}{(V^i)}\right)^{m_X(i)}
    $$
    for a nonnegative integer $f_X$ (called the $p$-rank of $X$) and an eventually zero sequence
    of nonnegative integers $\{m_X(i)\}_{i\ge 1}$.  %
    It is then natural to ask for analogues
    of the Riemann--Hurwitz formula for the numerical invariants $f_{\bullet}$ and $m_{\bullet}(i)$ in branched $G$-covers
    of curves.  For general $G$, simple examples show there can be no direct analogue for $f_{\bullet} $ \cite[Remark 1.8.1]{CrewEtale}.  However, 
    when $G$ is a $p$-group, the {\em Deuring--Shafarevich} formula provides an exact expression for $f_Y$ in terms
    of $f_X$, the order of $G$, and the ramification indices of the branch points.
    On the other hand,
    there can be no exact formula for $m_{\bullet}(i)$ in general, even for $p$-group covers \cite[Example 7.2]{BCanum}.  
    Nevertheless, it follows from the relation $m_{X_n}(i)=2a_n^{(r)}-a_n^{(r-1)} - a_n^{(r+1)}$
    that there {\em is} a formula for $m_{X_n}(i)$
    in any $\Z_p$-extension satisfying the hypotheses of Theorem \ref{MT}, with bounded error as $n\rightarrow \infty$.
    In particular:

    \begin{corollary}\label{c: asymptotic decomposition of k[V]-structure}
        For each fixed $i\ge 1$ 
    $$
        \lim_{n\rightarrow \infty} \frac{m_{X_n}(i)}{g_{X_n}} = \frac{2 \tau}{(i+\tau)^3-(i+\tau)}\quad \text{where}\ \tau:=\frac{p+1}{p-1}.
    $$
    When moreover $d|(p-1)$, Corollary \ref{r>1result} yields an exact Iwasawa--style formula
    \begin{equation*}
            m_{X_n}(i) = \frac{d}{\left(i+\tau\right)^3 - \left(i+\tau\right)}\cdot \frac{p^{2n}}{p-1}
            + \lambda_i \cdot n + \nu_i(n)
    \end{equation*}
    for all $n$ sufficiently large, where $\lambda_i\in \Q$ is a constant (depending only on $p$, $d$, and $i$),
    and $\nu_i:\Z\rightarrow \Q$ is a periodic function (again depending only on $p$, $d$, and $i$).
    \end{corollary}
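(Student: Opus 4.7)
The plan is to deduce Corollary \ref{c: asymptotic decomposition of k[V]-structure} directly from Corollaries \ref{cor:asymptoticmain} and \ref{r>1result} via the linear identity
\begin{equation*}
    m_{X_n}(i) = 2a_n^{(i)} - a_n^{(i-1)} - a_n^{(i+1)}
\end{equation*}
noted just before the corollary statement (with the convention $a_n^{(0)}=0$). This identity is a routine linear-algebra consequence of the $k[V]$-module decomposition: on each Jordan block $k[V]/(V^j)$ the kernel of $V^r$ has dimension $\min(r,j)$, and the alternating combination $2\min(r,j) - \min(r-1,j) - \min(r+1,j)$ equals $1$ when $j=r$ and vanishes otherwise.

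For the asymptotic statement, I would substitute the expansion $a_n^{(r)} = \frac{r}{r+\tau}\cdot \frac{dp^{2n}}{2(p+1)} + O(p^n)$ from Corollary \ref{cor:asymptoticmain} (writing $\tau := (p+1)/(p-1)$) into the above relation. The $O(p^n)$ errors are absorbed after division by $g_{X_n}$, so the question reduces to the rational-function identity
\begin{equation*}
    \frac{2i}{i+\tau} - \frac{i-1}{i-1+\tau} - \frac{i+1}{i+1+\tau} = \frac{2\tau}{(i-1+\tau)(i+\tau)(i+1+\tau)} = \frac{2\tau}{(i+\tau)^3 - (i+\tau)},
\end{equation*}
which follows from a short partial-fractions calculation in which the numerator collapses to $2\tau$ after expanding (e.g.\ by the substitution $x = i+\tau$). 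Dividing through by the asymptotic $g_{X_n} \sim \frac{dp^{2n}}{2(p+1)}$---which I would extract either from a direct Riemann--Hurwitz computation at $\infty$ using the conductor $dp^{m-1}+1$ at level $m$, or from Corollary \ref{cor:asymptoticmain} by taking $r$ large enough that $a_n^{(r)}=g_{X_n}$---yields the stated limit.

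For the exact Iwasawa-style formula under the hypothesis $d \mid (p-1)$, I would instead substitute the refined expression $a_n^{(r)} = \frac{r}{r+\tau}\cdot \frac{dp^{2n}}{2(p+1)} + \lambda_r n + \nu_r(n)$ from Corollary \ref{r>1result}. The $p^{2n}$-coefficient is handled by the same algebraic identity and, using $\tau = (p+1)/(p-1)$, simplifies to the required $\frac{d}{(p-1)\bigl((i+\tau)^3-(i+\tau)\bigr)}$. The linear-in-$n$ and bounded periodic contributions from the three inputs $a_n^{(i-1)}, a_n^{(i)}, a_n^{(i+1)}$ combine linearly to produce the required rational constant and periodic function, with the resulting period dividing the least common multiple of the three periods supplied by Corollary \ref{r>1result}. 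There is no genuine obstacle here: Corollaries \ref{cor:asymptoticmain} and \ref{r>1result} do all the substantive work, and what remains is a brief partial-fractions check together with verifying that the alternating combination preserves the stated structural form.
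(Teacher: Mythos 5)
Your proposal is correct and is essentially the paper's own (largely implicit) argument: the corollary is deduced by substituting Corollaries \ref{cor:asymptoticmain} and \ref{r>1result} into the second-difference identity $m_{X_n}(i)=2a_n^{(i)}-a_n^{(i-1)}-a_n^{(i+1)}$ and performing exactly the partial-fractions simplification you describe, with the genus asymptotic $g_{X_n}=\frac{dp^{2n}}{2(p+1)}+O(p^n)$ supplied by the Riemann--Hurwitz computation of Proposition \ref{prop:genusbreaks}. (Only your alternative suggestion of extracting the genus from Corollary \ref{cor:asymptoticmain} by letting $r$ grow is shaky, since the implicit constant there depends on $r$; the direct Riemann--Hurwitz route you list first is the right one.)
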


\begin{remark}
We emphasize that, by the Deuring--Shafarevich formula,
{\em any} $\Z_p$-extension of $k(x)$
with $S=\{\infty\}$ has 
$| \Cl_{K_n}[p^\infty]| =1$ for all $n$.  Thus, the
$p$-primary torsion in the
``physical'' class group $\Cl_{K_n}$ of $K_n$ entirely misses all of the interesting Iwasawa-theoretic phenomena in these towers: such phenomena appear {\em only} when considering the $p$-primary class group {\em schemes} $J_{X_n}[p^\infty]$.
\end{remark}    

Theorem~\ref{MT} and its corollaries establish an entirely new %
instance of Iwasawa's philosophy that $p$-power torsion in class groups of $\Z_p$-extensions of global fields behaves regularly, and represent the beginning of an exciting new trajectory in geometric Iwasawa theory.
Ultimately, one would like an Iwasawa-theoretic understanding of the full $p$-divisible group $\Gscr_n$
as $n\rightarrow \infty$.  Using Dieudonn\'e theory, \cite{ID} develops a systematic approach to the study of
these {\em $p$-divisible class group schemes} in arbitrary, pro-$p$ extensions of function fields.
A main result of this work is that the ``Iwasawa--Dieudonn\'e'' modules attached to 
the \'etale and multiplicative components of $p$-divisible class group schemes in pro-$p$
extensions of function fields are {\em always} finitely generated over the %
Iwasawa algebra, and satisfy a {\em control} theorem
enabling the recovery of each finite level from the limit object; it follows that both the \'etale and
multiplicative components grow in a regular and controlled manner.  
This provides in particular a direct generalization
of the work \cite{MWAnalogies} of Mazur--Wiles to arbitrary pro-$p$ extensions of function fields.
In the case of everywhere unramified pro-$p$ extensions, which exist over 
sufficiently large (necessarily infinite) base fields, \cite{ID} shows that the 
Iwasawa--Dieudonn\'e module attached to local--local $p$-divisible class group schemes
is {\em also} finitely generated over the Iwasawa algebra, and satisfies control.  It follows from this
that the $p$-divisible groups in \'etale pro-$p$ extensions exhibit remarkable regularity;
for example, \cite[Corollary 1.4]{ID} gives an analogue of Corollary \ref{cor:asymptoticmain} in this setting.
Unfortunately, the Iwasawa--Dieudonn\'e module associated to the local--local components
of $p$-divisible class group schemes in {\em ramified} pro-$p$ extensions
is {\em never} finitely generated \cite[Theorem E]{ID}, %
so understanding the growth and behavior of these local-local $p$-divisible groups
lies beyond the scope and methods of traditional Iwasawa theory.  
The present paper provides compelling evidence
that these $p$-divisible class group schemes nevertheless
grow with astonishing regularity, and 
strongly suggests that the Iwasawa--Dieudonn\'e modules attached to them
are {\em Banach modules} over the Iwasawa algebra, and should be studied in this context.

\section*{Acknowledgments}
Jeremy Booher and Joe Kramer-Miller would like to thank Richard Crew for valuable discussions. Jeremy Booher was supported by the Marsden Fund administered by the Royal Society of New Zealand. Bryden Cais was supported by NSF grants DMS-2302072 and DMS-1902005.

	\section{Overview of the article}
 \label{s: outline of the article and sketch of the proof}

 As in Section \ref{sec:intro}, we fix a geometric $\Z_p$-extension $L/K$ of $K=k(x)$, totally ramified over $S=\{\infty\}$
 and unramified elsewhere We write $K_n$ for the unique subextension of $L/K$ of degree $p^n$
 over $K$, and $X_n$ for the corresponding smooth projective curve; in particular, $X_0=\PP^1$. 
We set $A:=k\powerseries{T}$, which we identify with the characteristic-$p$ Iwasawa algebra $k[\![\Gal(L/K)]\!]$
by sending a fixed choice of topological generator of $\Gal(L/K)$ to $1+T$. %
Writing $P_n$ for the unique point of $X_n$ over $\infty$,
in Section~\ref{s: towers of curves and iwasawa modules of differentials} we study the 
inclusion of 
$A$-modules:
\begin{equation*}
    \xymatrix{
        {M_n:=H^0(X_n,\Omega^1_{X_n/k})} \ar@{^{(}->}[r] & H^0(X_n-P_n,\Omega^1_{X_n/k})=: Z_n
    }
\end{equation*}
Writing $R_n$ %
for the affine coordinate ring of $X_n-P_n$, so $R_0=k[x]$, we have a natural identification of $k[x][T]/(T^{p^n})$-modules
$Z_n=R_n dx$.
Passing to inverse limits along the canonical the trace mappings, we obtain an inclusion of topological $A$-modules
\begin{equation}
    \xymatrix{
        {M:=\varprojlim_n M_n} \ar@{^{(}->}[r] & \varprojlim Z_n=: Z.
    }\label{eq:MinZ}
\end{equation}
Setting $R:=\varprojlim_n R_n$ we have an identification $Z =Rdx$ of modules over the Tate algebra $A\langle x\rangle =\varprojlim_n k[x][T]/(T^{p^n})$.
The starting point for our work (Proposition \ref{prop: control type results}) is that $R$ is {\em free} of {\em rank one} as an $A\langle x\rangle$-module; in particular, there exists $w_0 \in R$ such that $R = A\langle x \rangle w_0$. %
Writing $w_0 dx$ for the corresponding element of $Z$, we have $Z=A\langle x\rangle \cdot w_0 dx$
as an $A\langle x\rangle$-module, and 
the $E:=\Frac(A)$-vector space $Z\otimes_A E$ is
naturally an $E$-Banach space, isomorphic as such to the $T$-adic Tate algebra $E\langle x\rangle$.
The Cartier operator $V$ induces a semilinear map on $Z$ which preserves $M$,
and we show using the fact that $Z\simeq R$ is of rank one
 over  $A\langle x\rangle$ that $V$ is completely determined by a single 
 power series $\alpha\in A\langle x \rangle$ (see Definition~\ref{alphadef} and Corollary \ref{cor:rankone}).
Our strategy to understand $V$ on $M_n$ is to analyze the $T$-adic properties of $V$ on $M$
via the inclusion \eqref{eq:MinZ} and a careful analysis of $\alpha$, and then pass to finite level using the $A$-module identification
\begin{equation}\label{eq: intro 1}
		M_n \simeq \frac{M}{T^{p^n} Z \cap M}.
\end{equation}
We emphasize that although $Z$ satisfies {\em control} in the sense that $Z/T^{p^n}Z\simeq Z_n$,
recovering $M_n$ from $M$ is more complicated.
	
	In Section \ref{ss: functions and differentials in the limit} we use explicit Artin--Schreier--Witt theory
 to analyze the inclusion \eqref{eq:MinZ}; this analysis depends on the fact that our base field
 is the rational function field (see Proposition \ref{proposition:standardformB} and its proof).
 For $i\ge 1$, we construct unit power series 
 $r_i=r_i(x) \in A\langle x\rangle^{\times}$, as well as
a sequence of positive integers $\mu_1\le \mu_2 \le  \dots$
 determined by the ramification filtration at $\infty$ such that 
        \begin{equation}
            u_i \colonequals r_{\mu_i}  x^{i-1}w_0 dx \quad\text{for}\ i\ge 1\label{eq:uelt def}
        \end{equation} 
satisfy the following:
\begin{enumerate}
        \item[i] The set $\{u_i\}_{i\ge 1}$ is an orthonormal basis of the Banach space $Z\otimes_A E$ over $E$.
        \item[ii] The set $\{T^{\mu_i}u_i\}_{i\ge 1}$ is a $A$-basis\footnote{In the sense 
        that every element of $M$ is a unique (possibly infinite) $A$-linear combination
        of the elements $T^{\mu_i}u_i$.} of $M$.
\end{enumerate}
This provides a precise understanding of the way in which $M$ sits inside of $Z$ as an $A$-submodule,
which we will exploit to transfer information about $V$ on $Z$ to an understanding of $V$ on $M$.

An important point is that we do not {\em directly} analyze $V$ on $M$, but rather infer 
information about $V$ on $M$ by studying its action on certain ``overconvergent'' $A$-submodules
of $Z$ defined in terms of the $u_i$.  In order for this strategy to be effective, 
we must impose sufficient regularity on the sequence $\mu_i$ (which can in general
be highly irregular) to gain control of $V$ on these submodules.
With this in mind, in Section \ref{s: tadic growht for minimally ramified towers} we restrict our attention to extensions 
$L/K$ having minimal break ratios and ramification invariant $d$.
The key result is in this setting is Corollary \ref{cor: Frobenius element growth},
    which shows that the power series $\alpha \in A\langle x\rangle$
    {\em overconverges} to the open disc $v_T(x)> -\frac{1}{d}$, with values
    bounded by $1$.  To fully exploit this fact, we work over the extension
    $\calA:=k[\![T^{1/d}]\!]$ of $A$ obtained by adjoining a $d$-th root of $T$. 
    For $m>0$, we define the $\calA$-submodules of $\calA\langle x\rangle$ and $\calZ:=Z\otimes_A \calA$
    \[
        \calL^m\colonequals  \Bigg\{ \sum_{i=1}^\infty a_ix^i ~|~ a_i\in \calA\ \text{and}\ v_T(a_i) \geq \frac{i}{m} \Bigg \},\quad
        \calZ^m\colonequals  \Bigg\{ \sum_{i=1}^\infty a_iu_i ~|~ a_i\in \calA\ \text{and}\ v_T(a_i) \geq \frac{i}{m} \Bigg \}.
    \]
    Note that $\calL^m$ is precisely the $\calA$-submodule of $\calA\langle x\rangle$ consisting of functions that 
    vanish at $x=0$ and
    converge on the open disc $v_T(x)>-\frac{1}{m}$ with values bounded by 1.  
In Corollary \ref{cor:functionsqb}, we prove that $r_{\mu_i}-r_{\mu_i}(0)\in \calL^{d/p}$ for all $i$,
from which we obtain an identification of $\calA$-modules 
\begin{equation}\label{eq:Lm eq Zm}
    x^{-1}\calL^m w_0 dx=\calZ^m\
\end{equation}
for any $m\ge d/p$ (see Lemma \ref{l: growth property of Z^m for normal basis}).  
Using the overconvergence of $\alpha$, in Lemma \ref{l: action of of V on Z^m} 
we deduce from \eqref{eq:Lm eq Zm}
the key fact that $V$ improves $T$-adic convergence by a factor of $p$: precisely, $V(\calZ^d) \subseteq \calZ^{d/p}$.
In Corollary \ref{c: V-estimate for basis eliment}, we apply this key fact
to obtain good $T$-adic estimates for the action of $V$ on $\calZ^{d/p}$; it follows, in particular,
that the $T$-adic {\em Hodge Polygon} of $V$ on $\calZ^{d/p}$ (defined along the lines of \cite[\S1.2]{Katz})
lies on or above the polygon $\HP(d)$ with slopes (each of multiplicity 1)
\[
    \frac{p-1}{d}, 2\frac{(p-1)}{d}, 3\frac{(p-1)}{d}, \dots.
\]

To proceed further, in Section \ref{ss:cartierestimates} we sharpen these $T$-adic estimates for $V$ on $\calZ^{d/p}$ by applying ideas inspired from Dwork theory. 
For simplicity of exposition, we describe only the case $k=\F_p$, so that $V$ is actually {\em linear} over $\calA$; the general case
is similar, presenting only minor technical difficulties.
After recalling some basic facts in the theory of Banach spaces in Section~\ref{ss:recollectionsbanach},  
we show that our overconvergence results imply that
 the {\em Fredholm determinant} $\Det_{\calA}(1-sV ~|~\calZ^{d/p})$ of $V$ acting on $\calZ^{d/p}$ is a well-defined
 power series in $s$ with coefficients in $\calA$. 
    Write
\begin{equation*}
\xymatrix{
    {\chi: \pi_1^{\et}(\mathbf{A}^1_{\overline{k}})} \ar@{->>}[r] & {\Gal(L/K)} \ar@{^{(}->}[r] & {k[\![\Gal(L/K)]\!]^{\times}\simeq {A^{\times}}}
}
\end{equation*}
for the $A$-valued character associated to our $\Z_p$-extension $L/K$
and our identification of the Iwasawa algebra of $\Gal(L/K)$ with $A$.
Using an equicharacteristc version of the Dwork trace formula, we deduce from
this %
that the Artin $L$-function of $\chi$ coincides with the Fredholm determinant of $V$:
	\[L(\chi,s)=\Det_{\calA}(1-sV ~|~\calZ^{d/p}). \] 	 
In particular, the $T$-adic Newton polygon of $L(\chi,s)$
and the Newton polygon of $V$ are {\em equal}.  
By the usual ``Newton over Hodge''-type formalism (e.g.~\cite[Theorem 1.4.1]{Katz}),
one knows that the Newton polygon of $V$ lies on or above the
Hodge polygon of $V$, %
which we have seen lies on or above the polygon $\HP(d)$.
On the other hand, 
work of the last two authors \cite{kramer-milleruptonII} gives strong estimates for the Newton polygon of $L(\chi,s)$,
showing in particular that it {\em coincides} with the polygon $\HP(d)$ every $d$-slopes (i.e.~the points with $x$-coordinate $dk$ for $k\in \Z$ are the same on both polygons).  When moreover $d|(p-1)$, they show that the two polygons are {\em identical}. 
It follows that the Hodge polygon of $V$ on $\calZ^{d/p}$ coincides with $\HP(d)$ every $d$-slopes for general $d$,
and when $d|(p-1)$, is equal to $\HP(d)$ on the nose.  Although we do not explicitly use the formalism 
of Hodge polygons, this periodic coincidence is essential to our analysis, and is exploited in 
Section \ref{ss: consequences of NP and HP coincidence} to deduce refined estimates for the behavior of $V$
on the basis elements $u_i$.  

These refined estimates give us sufficiently strong control over the $T$-adic 
properties of $V$ and its iterates on the $u_i$ that we are able to deduce our main theorem from them
in Section \ref{s:calculation of higher a numbers}.  As one might expect, the actual analysis %
is both technical and nuanced: 
in general---and in contrast to Newton polygons---the Hodge polygon of $V^r$ on $\calZ^{d/p}$ is not
easily related to the Hodge polygon of $V$.  Nevertheless, our precise control of the $T$-adic properties of $V$ on 
the $u_i$ is just strong enough for our desired application to estimating the higher $a$-numbers \eqref{anumdef},
providing an exact formula in the most favorable situation $d|(p-1)$.  Section~\ref{ss:toy} presents a ``toy model'' of this argument which illustrates the essential behavior.

Finally, in Section \ref{s: formulae and estimates} we estimate the lattice count in Theorem \ref{MT} to obtain the asymptotic results stated in Corollary \ref{cor:asymptoticmain}: it is essentially the area of triangle above and to the right of the diagonal in Figure~\ref{fig:picture} below. When $r=1$ and $d|p-1$ we obtain precise results, giving the exact formula in Corollary \ref{MC2:formula}.

\subsection{A simple ``toy model''} \label{ss:toy}
It follows easily from definitions that the set $\calB=\{T^{ip/d}u_i\}_{i\ge 1}$
is an $\calA$-basis of $\calZ^{d/p}$.
When $d|(p-1)$, so that the Hodge and Newton polygons of $V$ coincide, an analogue of ``Hodge--Newton decomposition'' (e.g.~\cite[Theorem 1.6.1]{Katz})
implies the existence of an $\calA$-basis $\calB'$
 of $\calZ^{d/p}$ in which the matrix of $V$ is {\em diagonal}.
Unfortunately, we have little control over the change of basis matrix
from $\calB$ to $\calB'$, which prevents us from directly 
transferring the simple behavior of $V$ and its iterates
on $\calB'$
to useful information about $V$ on the elements $u_i$.
Nevertheless, our argument in Section \ref{s:calculation of higher a numbers} was inspired by, and in many ways reflects,
the heuristic {\em toy} argument founded on the premise
that the matrix of $V$ relative to $\calB$ is already diagonal,
i.e.~that $Vu_i = \lambda_i u_i$ for some $\lambda_i\in \calA$.
Although this assumption is incorrect,
it allows us to illustrate
the main conceptual ideas of Section \ref{s:calculation of higher a numbers}
in a way that is not only faithful to the spirit of the argument, 
but even predicts the higher $a$-numbers exactly when $d|p-1$. 

For simplicity, we continue to assume that $k=\F_p$, so that $V$ is $k$-linear.
We allow arbitrary $d$, but we assume that each $u_i$ is an eigenvector of $V$ whose eigenvalue\footnote{We remark that the heuristic works with the milder assumption that the eigenvalue has $T$-adic valuation $b_i$, but we make the simplifying assumption for convenience.} is $T^{b_i}$ with $b_1 \le b_2 \le\dots$. Via \eqref{eq: intro 1}
and the fact that the $u_i$ are an $A$-basis of $Z$, we see that
a $k$-basis for $M_n$ is given by the images of $T^j u_i$ for $(i,j)\in \Z^2$
satisfying $1\le i \le i(n)$ and $\mu_i \leq j \leq p^{n}-1$, where $i(n)$ is the largest integer
with $\mu_{i(n)} < p^n$.  We represent this $k$-basis of $M_n$ graphically in
Figures \ref{fig:picture} and \ref{fig:picture 2} by associating the lattice point $(i,j)$ to the
$k$-basis element corresponding to $T^j u_i$. 
We may then view $V$ as acting on lattice points by sending $(i,j)$ to $(i,j + b_i)$.
Again using \eqref{eq: intro 1}, the differential given by $T^ju_i$ lies in the kernel
of $V^r$ on $M_n$ if and only if $V^r(T^ju_i) \in T^{p^n}Z$; since the $u_i$ are an $A$-basis
of $Z$, this translates to the condition
\begin{equation}\label{eq: being in the kernel}
    j+rb_i \geq p^n,
\end{equation}
so that the lattice point $(i,j)$ corresponds to a $k$-basis element of $M_n$ that lies in the kernel of $V^r$ if and only if $V^r$ moves it on or above the line $y=p^n$. 
Since the $\mu_i$'s and the $b_i$'s are both non-decreasing sequences, there exists an exact ``cutoff point'' $\tau(n)$ satisfying the following:
\begin{enumerate}
    \item For $i\leq \tau(n)$ we have $ \mu_i < p^n-rb_i$. This means that the lattice points $(i,p^n-j)$ for $rb_i \le j < p^n$
     correspond to $k$-independent global differentials in $M_n$
     that lie in the kernel of $V^r$ by \eqref{eq: being in the kernel}, which contributes
    \begin{equation}\label{eq: being in the kernel 1}
        \sum_{i=1}^{\tau(n)} rb_i
    \end{equation}
    to the dimension of  $\ker(V^r)$.
    \item For $i > \tau(n)$ we have $\mu_i\geq p^n-rb_i$. 
    Again by \eqref{eq: being in the kernel}, the lattice points
    $(i,j)$ for $\mu_i \le j < p^n$ correspond to $k$-linearly independent global differentials in the kernel of $V^r$, contributing
    \begin{equation}\label{eq: being in the kernel 2}
        \sum_{i=\tau(n)+1}^\infty \max(0, p^n-\mu_i)
    \end{equation}
    to the dimension of $\ker(V^r)$.
\end{enumerate}

When $d|(p-1)$, we know the Newton polygon of $V$ exactly: the slopes are $\frac{p-1}{d}i$ for $i\ge 1$, each with multiplicity $1$, so $b_i=i\frac{p-1}{d}$. As such, the points $(i,p^n-rb_i)$ all lie on the line $y=p^n-ir\frac{p-1}{d}$, 
displayed in Figure \ref{fig:picture} as the downward sloping red line. In this case,
the exact ``cutoff point'' is
\begin{equation*}
     t'(n)=\left \lfloor \frac{dp^n}{(r+1)p-(r-1)} \right \rfloor,
\end{equation*}
which is represented in Figure \ref{fig:picture} by the vertical line $x=5$. Combining \eqref{eq: being in the kernel 1} and \eqref{eq: being in the kernel 2}, we get
\begin{equation*}
\dim \ker V^r|_{M_n} = \frac{r (p-1) t'(n) (t'(n)+1)}{2d} + \#\{(i,j)\in \Z^2\ :\ i > \tau(n),\ p^n >  j \ge \mu_i \}.
\end{equation*}

\begin{figure}[ht]
\includegraphics[height=3in]{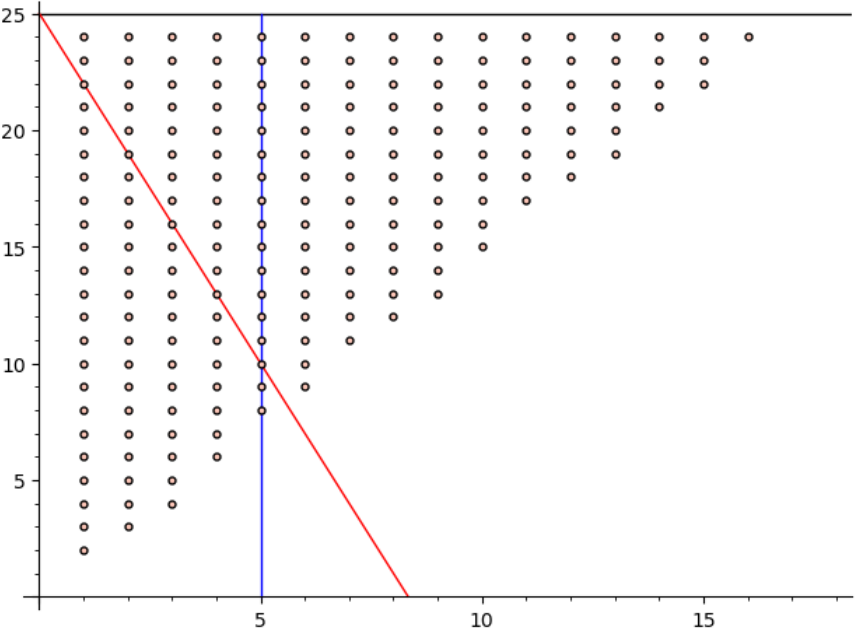}
\caption{A $k$-basis of $M_2$ when $p=5$, $d=4$, $r=3$.  The lattice point $(i,j)$ represents the basis element $T^j u_i$ for $M_2$.  Lattice points on or to the right of the diagonal line are sent to $0$ by $V^3$.
The vertical line represents the cutoff $\tau(n)=t'(n)$.}
\label{fig:picture}
\end{figure}

When $d\nmid (p-1)$, we only know that the Newton and Hodge polygons coincide periodically (i.e.~every $d$ slopes). This translates to the following information about the $b_i$'s:
\begin{enumerate}
    \item For $dk < i \leq d(k+1)$ we have $k(p-1) < b_i \leq (k+1)(p-1)$.\label{item: gend1}
    \item The average value of $b_i$ for $i=dk+1,\dots, d(k+1)$ is $\sum_{i=1}^d \frac{p-1}{d} (dk+i)$.\label{item: gend2}
\end{enumerate}
We can then repeat the argument above, with the caveat that not knowing the exact value of $b_i$ introduces uncertainty,
and we can not determine the exact `cutoff point'
in this case.  Instead, our analysis is optimized when we choose 
the closest approximate `cutoff point' that is a multiple of $d$:
\begin{equation*}
    t(n) = d \left \lfloor \frac{p^n-1}{(r+1)p-(r-1)}  \right\rfloor;
\end{equation*}
this is represented in Figure \ref{fig:picture 2} by the vertical line $x=6$.
When $i \leq t(n)$, the points $(i,p^n-j)$ for $r b_i\le j< p^n$ 
correspond to $k$-independent global differentials in the kernel of $V^r$, which contributes
\begin{equation*}
    \sum_{i=1}^{t(n)}rb_i=\frac{r(p-1)t(n)(t(n)+1)}{2d}
\end{equation*}
to $\dim \ker (V^r)$. Here, we make use of 
the known average value of the
the $b_i$'s when computing the summation.
There is some uncertainty %
when $i > t(n)$: we only have a range of possibilities for $b_i$, so it is possible that $p^n-rb_i>\mu_i$. 
In other words, not all lattice points to the right of the cutoff point need correspond to $k$-independent global differentials
in the kernel of $V^r$. 
Nevertheless, using our bounds on $b_i$ and the 
regularity that our hypothesis of minimal break ratios imposes on the $\mu_i$,
we show that 
the number of lattice points $(i,j)$ with $i>t(n)$
that {\em do not} correspond to $k$-independent global differentials
in the kernel of $V^r$ is bounded independently of $n$, 
which yields
\begin{align*}
    0 \leq  \frac{r(p-1)t(n)(t(n)+1)}{2d} + \#\{(i,j)\in \Z^2\ :\ i > \tau(n),\ p^n >  j \ge \mu_i \}-\dim\ker V^r|_{M_n}< C,
\end{align*}
where $C$ is some constant independent of $n$.

\begin{figure}[ht]
\includegraphics[height=3in]{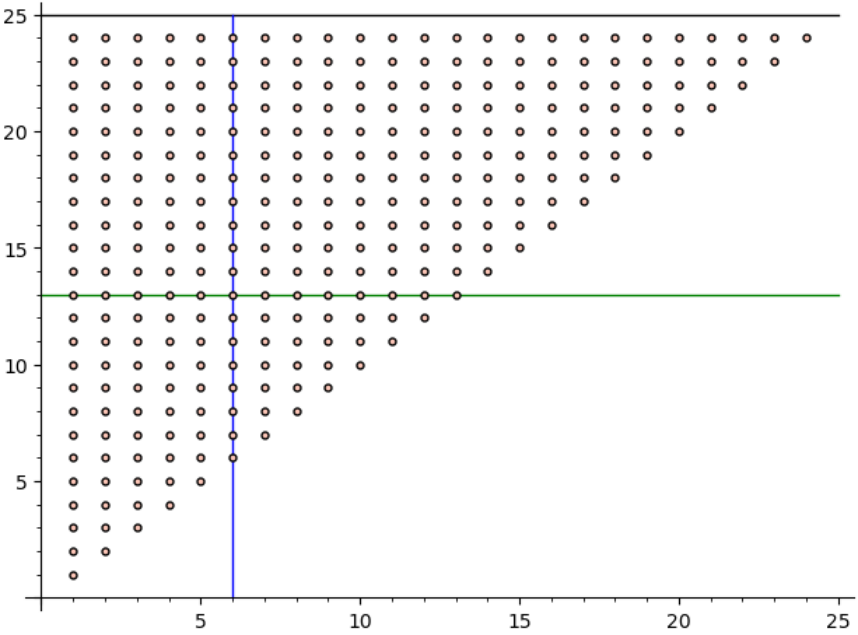}
\caption{
A $k$-basis of $M_2$ when $p=5$, $d=6$, $r=3$.  The lattice point $(i,j)$ represents the basis element $T^j u_i$ for $M_2$.  The vertical cutoff line is $y=t(2)=6$,
 and the horizontal line is $y=p^n-rt(n)(p-1)/d=13$.
 Every point with $i> 6$ and $j\ge 13$ contributes
 contributes to $\dim \ker V^3$, and exactly $42$ of the $72$
 points with $i\le 6$ and $j \ge 13$
 contribute.  No lattice point with $i < 6$ and $j < 13$
 contributes, while whether a lattice point  $(i,j)$
 with $i>6$ and $j<13$ contributes is uncertain.
}
\label{fig:picture 2}
\end{figure}

Figure \ref{fig:picture 2} gives a precise example with $p=5$, $d=6$, $r=3$ and $n=2$.
For $0<i\leq 6$ (i.e. on or left of the vertical line $x=6$) we know that $0<b_i\leq 4$
by \ref{item: gend1} above, so that $25-3b_i \geq 13$
Thus, any lattice point $(i,j)$ corresponding to a basis element $T^j u_i$ of $M_2$
with $0 < i \le 6$ and $25-3b_i \leq j < 25$ lies in the kernel of $V^3$ on $M_2$;
such lattice points all lie on or above the horizontal line $y=13$, though
for any specific value of $i$, we do not know exactly which lattice points 
satisfy this condition, as we do not know the exact value of $b_i$.
Nevertheless, our knowledge of the average values of the $b_i$ in \ref{item: gend2}
above tells us that $\sum_{i=1}^6 3b_i=42$, so that exactly 42 of the 72 lattice points
lying on or left of $x=6$ and on or above $y=13$ (i.e.~the top left region, including the boundary)
contribute to the $k$-dimension of $\ker V^3$ on $M_2$.

On the other hand, any differential in $M_2$ corresponding to  lattice point $(i,j)$ with $0<i \le 6$ and $j < 13$
is sent to something {\em nonzero} by $V^3$, as $j+3b_i < 25$.
It follows that {\em no} lattice point on or left of $x=6$ and below $y=13$
(i.e.~the bottom left region) contributes to $\dim \ker V^3$ on $M_2$.

For $i> 6$, we have $b_i>4$, so when $j\ge 13$ we get $j+3b_i \ge 25$
and the lattice point $(i,j)$ is sent by $V^3$ to $(i, j+3b_i)$,
which lies above $y=25$.  Thus, {\em every}
such lattice point $(i,j)$ corresponding to a basis element of $M_2$
contributes to $\dim \ker V^3$ on $M_2$.
That is, all $150$ lattice points in the top right region (including the bottom boundary $y=13$
but excluding the left boundary $x=6$) contribute to $\dim \ker V^3$,
for a running total of $42+150=192$.

What happens below the horizontal line $y=13$ and to the right of the vertical cutoff line $x=6$ is opaque without knowledge of the exact values of $b_i$. 
There are $21$ lattice points in this region of uncertainty, which yields an upper bound
on $a_2^{(3)}$ of $192+21=213$. 

In fact, the bound is \emph{sharp} in this case: for $p=5$, $d=6$, and $r=3$,
Table~\ref{table: anums} gives $a_2^{(r)}$ for the $\Z_p$-extension of $k(x)$ given by the Artin-Schreier-Witt equation
$(y_0^p,y_1^p,\ldots)-(y_0,y_1,\ldots) = f([x])$ for varying $f\in W(k)[X]$, where $[x]\in W(k(x))$
is the Teichm\"uller lift of $x$; see Remark \ref{rem:ASW}.

\begin{table}[ht]
\centering
\renewcommand{\arraystretch}{1.5}
\captionsetup{justification=centering}
\caption{$a_2^{(3)}$ for $\Z_5$-extensions of $\F_5(x)$}
\begin{tabular}{|c|c|} 
 \hline
  $f$& $a_2^{(3)}$ \\ 
 \hline\hline
 $X^6 + X^4 + 2X^3 + X^2 + X$ & $210$ \\
 \hline
 $X^6 + X^4 + 2X^2$ & $210$ \\ 
 \hline 
 $X^6 + X^3 + X^2 + 3X$ & $211$ \\
 \hline 
 $X^6 + 4X$ & $213$  \\ 
 \hline 
 $ X^6$ & $213$\\
 \hline
\end{tabular}
 \label{table: anums}
\end{table}

\section{Towers of Curves and Iwasawa modules of differentials} \label{s: towers of curves and iwasawa modules of differentials}

\newcommand{\fullystable}{{minimally ramified}} %
\newcommand{\isfullystable}{{has minimal monodromy}} %

We maintain the notation of the introduction, and fix a finite field $k$ of %
cardinality $p^{\fielddegree}$ with prime field $\F \colonequals \F_p$. 
Let $L$ be a $\Z_p$ extension of the rational function field $K = k(x)$ that is {\em geometric} in the sense that $k$ is algebraically closed in $L$.  We fix an isomorphism of topological groups $\Z_p\simeq \Gal(L/K)$, 
and write $\gamma$ for the image of $1$ under this identification. 
We denote by $K_n$ the unique subextension of $L/K$ of degree $p^n$ over $K$,
and by $X_n$ the unique smooth, projective, and
geometrically connected curve over $k$ whose function field is $K_n$.  Thus, $X_0=\PP^1_k$, and 
$\pi:X_n\rightarrow X_0$ is a branched $\Z/p^n\Z$-cover of the projective line.   We call the collection $\{X_n\}_{n\ge 0}$ a {\em $\Z_p$-tower} of curves over $X_0=\PP_k^1$.   We are concerned with $\Z_p$-towers where $L$ is unramified outside $S=\{\infty\}$ and totally ramified there.  In this paper, a $\Z_p$-tower over $\PP^1$ will always satisfy this additional condition even if not explicitly stated.  

\begin{notation} \label{notation:towers}
Given a $\Z_p$-tower of curves over $\PP^1_k$, we write:
\begin{enumerate}[(i)]
    \item $P_n$ for the unique point of $X_n$ over infinity.
    
    \item  $\ord_n$ for the discrete valuation on $K_n$ taking the value 1 on a uniformizer at $P_n$.

    \item  $d_n$ for the unique break in the ramification filtration of the cover $X_{n} \to X_{n-1}$ at $P_n$.

   \item $s_n$ for the $n$th break in the upper numbering ramification filtration of $L/K$ over infinity.
    
\end{enumerate}
\end{notation}

Note that $s_n\in \Z$, and that
 $p\nmid s_1$ and $s_{n+1}\ge p s_n$ with equality if and only if $p|s_{n+1}$ \cite[Chap IV, Exercise 3]{SerreLF}.
Conversely, given {\em any} sequence of integers $\{s_n\}_{n\ge 1}$ satisfying these conditions, there
exists a $\Z_p$-tower over $\PP^1_k$ with $n$th upper ramification break $s_n$ \cite[Proposition 3.3]{Keating:numerology}.
In particular, the sequence of upper ramification breaks in a $\Z_p$-tower over $\PP^1_k$ can grow {\em arbitrarily}
fast.  To prove our main results, we will need to restrict our attention to a certain class
of $\Z_p$-towers with controlled ramification:

\begin{definition}\label{d:minimal-break-ratios}
    A $\Z_p$-tower $\{X_n\}_{n\ge 0}$ over $X_0=\PP_k$ has {\em minimal break ratios} if 
    $s_{n+1}=ps_n$ for all $n\ge 1$, {\em i.e.}~
    $s_{n+1}/s_n$ is as small as theoretically possible.
    In this case, $s_n=d p^{n-1}$ for all $n\ge 1$ and some integer $d >0$ that is not divisible by $p$. We call $d$ the {\em ramification invariant} of the tower.
\end{definition}

\begin{remark}\label{rem: embr}
    It follows from the Riemann--Hurwitz formula
    that any $\Z_p$-tower over $\PP_k^1$ with {\em eventually minimal break ratios},
    {\em i.e.}~which satisfies $s_{n+1}=ps_{n}$ for all $n\gg 0$, is     
    {\em genus stable} in the sense that
    the genus $g_n$ of $X_n$ is given by $g_n = ap^{2n} + bp^n + c$
    for fixed rational numbers $a,b,c$ and all $n$ sufficiently large. 
    Genus stability is a {\em strictly weaker} condition than having eventually minimal break ratios \cite[Theorem 0.1]{KWErrata},
    and there are $\Z_p$-towers over $\PP_k^1$ with $g_n$ a fixed quadratic polynomial in $p^n$ for {\em all} $n\ge 0$ which do {\em not}
    have (eventually) minimal break ratios. It is expected that all towers of `geometric origin' are genus stable \cite{Kramer-Miller-monodromy}.
\end{remark}

\begin{remark}\label{rem:ASW}
 There are an enormous number of $\Z_p$-towers with minimal break ratios, even when the ramification invariant $d$ is fixed.
Indeed, fix a separable closure $K^{\sep}$ of $K\colonequals k(x)$ and $\alpha\in k$ having nonzero trace in $\FF$,
and write $[\cdot]: k(x)\rightarrow W(k(x))$ for the Teichm\"uller section. 
By Artin--Schreier--Witt theory and \cite[Lemma 4.8]{KW},
to each pair $(L,\varphi)$ with $L$ a subextension of $K^{\sep}/K$ unramified outside $S=\{\infty\}$
and $\varphi:\Z_p\simeq  \Gal(L/K) $ an isomorphism of topological groups, there is a unique $c\in \Z_p$
and a unique primitive ({\em i.e.}~nonzero modulo $p$), %
 convergent power series
 $f(X)\colonequals \sum_{i\ge 1, (i,p)=1} c_i X^i \in W(k)[\![X]\!]$ such that $K_n= k(x,y_0,y_1,\ldots, y_{n-1})$ is determined
 by the Witt vector equation
 \begin{equation}
    (y_0^p,y_1^p,\ldots) - (y_0,y_1,\ldots) = c [\alpha] + f([x]).\label{ASWeqn}
 \end{equation}
 Forgetting the identification $\varphi$, we see that the Witt vector $c[\alpha]+ f([x])$
 is determined uniquely by $L$ up to multiplication by $\Z_p^{\times}$. Conversely,
any such equivalence class of pairs $(c,f)$ as above determines a unique $\Z_p$-extension via \eqref{ASWeqn}.
 As $f$ is primitive and convergent, $d\colonequals \deg (f(X) \bmod p)$
is a positive integer, necessarily prime to $p$.  
 Due to \cite[Proposition 3.3]{KW}, the $\Z_p$-extension $L/K$ has minimal break ratios with ramification invariant $d$
 if and only if %
 $
        |c_i|_p \le \frac{d}{i}
 $
 for all $i> 0$. 
\end{remark}

The usual formula expressing the lower ramification breaks in terms of the upper breaks
and the Riemann--Hurwitz formula give (e.g. \cite[\S2.2]{boohercais21}):

\begin{proposition}\label{prop:genusbreaks}
    Let $\{X_n\}$ be a $\Z_p$-tower over $\PP_k^1$ and write $g_n$ for the genus of $X_n$.  Then
    $$d_{n+1}-d_n = (s_{n+1}-s_n)p^n \qquad\text{and}\qquad
        2g_n = 1 - p^n + \sum_{i=1}^n \varphi(p^{n+1-i})d_i.$$ 
    In particular, if ${X_n}$ has minimal break ratios and ramification invariant $d$, then
        $$d_n = d\displaystyle\frac{p^{2n-1}+1}{p+1}\qquad\text{and}\qquad
        2g_n = d\displaystyle\frac{p^{2n}-1}{p+1} +1 - p^n.$$
\end{proposition}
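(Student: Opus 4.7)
My plan is to prove the three assertions in sequence, with the first two following from standard facts about ramification and Riemann--Hurwitz, and the specialization being a direct computation.

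For the relation $d_{n+1}-d_n=(s_{n+1}-s_n)p^n$, I would first invoke the structure theorem for ramification in an abelian wildly totally ramified $p$-extension to observe that the lower filtration of $G_n:=\Gal(X_n/\PP^1)$ at $P_n$ passes through the chain
$$G_n \supsetneq \Gal(X_n/X_1) \supsetneq \Gal(X_n/X_2) \supsetneq \cdots \supsetneq \Gal(X_n/X_{n-1}) \supsetneq 1,$$
with distinct jumps at some $u_1<u_2<\cdots<u_n$. Using the compatibility $H_i = H\cap G_{n,i}$ for the inclusion $H:=\Gal(X_n/X_{n-1})\hookrightarrow G_n$, the unique break of the $\Z/p\Z$-cover $X_n\to X_{n-1}$ is $d_n=u_n$. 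The Herbrand function of this filtration is piecewise linear with slope $1/p^{i-1}$ on $[u_{i-1},u_i]$, so $s_i=\varphi(u_i)$ gives
$$s_i-s_{i-1} = (u_i-u_{i-1})/p^{i-1},$$
i.e.\ $u_i-u_{i-1}=(s_i-s_{i-1})p^{i-1}$. Since upper numbering is preserved under quotients, the same $s_i$ arise from $G_m$ for any $m\ge i$, so the $u_i$ do not depend on the ambient group; the first formula follows on taking $i=n+1$.

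For the genus formula, I would apply Riemann--Hurwitz to the $\Z/p\Z$-cover $X_n\to X_{n-1}$ (totally ramified at $P_{n-1}$ with unique break $d_n$), whose different has degree $(p-1)(d_n+1)$. This gives the recurrence
$$2g_n = 2pg_{n-1} + (p-1)(d_n-1),$$
and iterating against $g_0=0$ yields
$$2g_n = (p-1)\sum_{i=1}^n p^{n-i}d_i - (p^n-1) = \sum_{i=1}^n \varphi(p^{n+1-i})d_i + 1 - p^n,$$
using $\varphi(p^{n+1-i})=(p-1)p^{n-i}$.

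For the specialization, with $s_i=dp^{i-1}$ the first formula gives $d_{n+1}-d_n=d(p-1)p^{2n-1}$ together with $d_1=s_1=d$. Summing the geometric series produces $d_n=d(p^{2n-1}+1)/(p+1)$. Substituting this expression for $d_i$ into the genus formula and simplifying the resulting sum of two geometric series yields $2g_n = d(p^{2n}-1)/(p+1)+1-p^n$. The only step requiring any care is the bookkeeping with the Herbrand function and the independence of the $u_i$ from the level $n$; the remaining steps are straightforward recurrences and arithmetic.
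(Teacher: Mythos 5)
Your proof is correct and follows exactly the route the paper intends: the paper gives no argument beyond citing ``the usual formula expressing the lower ramification breaks in terms of the upper breaks and the Riemann--Hurwitz formula,'' which is precisely your Herbrand-function computation $u_i-u_{i-1}=p^{i-1}(s_i-s_{i-1})$ combined with the layer-by-layer Riemann--Hurwitz recurrence $2g_n=2pg_{n-1}+(p-1)(d_n-1)$. The specialization to minimal break ratios is the same geometric-series computation in both cases.
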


\subsection{Iwasawa modules}\label{ss:iwasawa-modules}

For a $\Z_p$-tower $\{X_n\}$ over $\PP_k^1$, the covering maps $\pi: X_{n+1}\rightarrow X_n$
are finite morphisms of smooth curves, so in particular are finite and flat.  It follows that
$\pi_*\calO_{X_{n+1}}$ is a finite, locally free $\calO_{X_n}$-module, so there is a canonical
{\em trace} morphism $\pi_* : \pi_*\calO_{X_{n+1}}\rightarrow \calO_{X_n}$.
On the other hand, Grothendieck's theory associates to the finite morphism $\pi$
a trace map on sheaves of differentials $\pi_*: \pi_*\Omega^1_{X_{n+1}/k} \rightarrow \Omega^1_{X_n/k}$
which, via Grothendieck--Serre duality, is dual to the usual pullback map on structure sheaves;
likewise, the dual of the trace map on structure sheaves
is pullback of differential forms; see e.g.~\cite[Proposition 4.5]{cais09}.  Using these maps and the canonical
action of $\Z_p$, we will associate to the tower $\{X_n\}$ Iwasawa modules of differentials
and functions whose study is the main object of this paper.

\begin{notation} \label{notation: iwasawa}
We introduce some notation for this Iwasawa theory.
\begin{enumerate}[(i)]
\item  Let  $A \colonequals k\powerseries{T}$ and $A_{\F} \colonequals \F \powerseries{T}$.
\item  We denote the fields of fractions by $E \colonequals \Frac{A}$ and $E_{\F} \colonequals \Frac(A_\F)$.
\item  For $n \geq 0$, let $A_n \colonequals k[T]/(T^{p^n})$.
\item  Let $A\langle x \rangle \colonequals \varprojlim_n A_n[x]$ be the Tate algebra over $A$.
\end{enumerate}
\end{notation}

Note that $A = \varprojlim_n A_n$.

\begin{notation}  Let $\{X_n\}$ be a $\Z_p$-tower over $\PP^1_k$, and fix a coordinate $x$ on $\PP^1_k$.
\begin{enumerate}[(i)]
    \item Let $R_n \colonequals \calO_{X_n}(X_{n}-P_n)$ be the affine coordinate ring of $X_{n}-P_n$;
    in particular, $R_0=k[x]$.

    \item  Write $M_n \colonequals H^0(X_n,\Omega^1_{X_n/k})$ for the space of regular differentials on $X_n$.

    \item  Let $Z_n \colonequals H^0(X_n-P_n, \Omega^1_{X_n/k})$ be the space of differentials on $X_n$ regular outside $P_n$.
    
\end{enumerate}
\label{notation: functions and differentials}
\end{notation}

Via our fixed isomorphism $\Z_p\simeq \Gal(L/K)$ sending $1$ to $\gamma$, we obtain an action
of $\Z_p$ on $R_n$, $M_n$, and $Z_n$, with $p^n\Z_p$ acting trivially.
We identify the completed group ring $\F[\![\Z_p]\!]$
with the power series ring $A_\F = \F[\![T]\!]$ via the map carrying $a\in \Z_p$ to $(1+T)^a$,
and we write 
\begin{equation}
    \xymatrix{
        \chi: \Gal(L/K) \ar[r]^-{\simeq} & \Z_p \ar@{^{(}->}[r] & \F[\![\Z_p]\!]^{\times} \ar[r]^-{\simeq} &  \F[\![T]\!]^{\times}= A_\F ^\times 
        }\label{eq:galoischar}
\end{equation}
for the corresponding (injective) homomorphism of topological groups.  By a slight abuse of
notation, we will frequently view $\chi$ as a continuous character $\chi: \Gal(L/K) \rightarrow A = k\powerseries{T}$.
In this way, 
each of $R_n$, $M_n$, and $Z_n$ become $A_n$-modules, i.e.~an $A$-module whose annihilator is generated by $T^{p^n}$. 
In fact, both $R_n$ and $Z_n$ are naturally $R_0$-modules, and hence are modules over $A\langle x\rangle = R_0 \powerseries{T}$.
These structures are  compatible with the trace mappings, so we may form projective limits.

\begin{definition} \label{definition: limiting objects}
    We write $M\colonequals \varprojlim_n M_n$, with the inverse limit taken via the trace mappings.
    We similarly define $R\colonequals \varprojlim_n R_n$ and $Z\colonequals \varprojlim_n Z_n$,
    again using the trace mappings.  Each of these is naturally a topological $A$-module, via the inverse
    limit topology, and both $R$ and $Z$ are naturally $A\langle x\rangle$-modules.
    We denote by $\sigma$ the unique continuous endomorphism of $A\langle x\rangle$
    that is the $p$-power map on $R_0$, and sends $T$ to $T$; this
    restricts to an automorphism of $A$.
\end{definition}

Passing to inverse limits, the inclusions $M_n\hookrightarrow Z_n$ yield
a continuous  inclusion of $A$-modules
\begin{equation}
    \iota: M\hookrightarrow Z\label{eq:iotamap}
\end{equation}
and we henceforth identify $M$ as a $A$-submodule of $Z$ using $\iota.$

The explicit realization of $\pi_*:R_{n+1}\rightarrow R_n$ as a sum of Galois conjugates (see below) 
makes it clear that $\pi_*$ commutes with the Frobenius endomorphism on each $R_n$; as such,
and keeping in mind the fact that Frobenius commutes with any ring map in characteristic $p$, 
the inverse limit $R$ is equipped with an additive Frobenius map $F$ that 
is $\sigma$-semilinear over $A\langle x\rangle$. Likewise, since the absolute Frobenius morphism $F:X_n\rightarrow X_n$ is finite and flat, 
it induces a trace morphism of abelian sheaves
$F_*: F_*\Omega^1_{X_n/k}\rightarrow \Omega^1_{X_n/k}$ which is dual, via Grothendieck--Serre duality,
to pullback by absolute Frobenius on structure sheaves.  This yields an additive 
map $V: Z_n\rightarrow Z_n$, called the {\em Cartier operator}, that preserves the subspace $M_n$ of regular differential forms and satisfies $V(F(f)\omega)=f V(\omega)$ for any $f\in R_n$.
The Cartier operator is equivariant with respect to the action of the Galois group $\Z_p$, as well as the trace morphisms
$\pi_*$ on differentials; as such, it induces a  $\sigma^{-1}$-semilinear endomorphism $V$ of 
the $A$-module $Z$
that preserves $M$ and satisfies $V(F(f)\omega)=fV(\omega)$ for any $f\in R$ and $\omega\in Z$.
For more on the Cartier operator, see \cite{cartier57,serre58,achterhowe}.

Using the following general fact, we will prove that the structure of $Z$ is particularly simple:

\begin{lemma} \label{lem:torsors}
    Let $\pi: V\rightarrow U$ be a finite \'etale $G$-torsor over $k$, with $G$ a (constant) $p$-group and $U=\Spec(R)$ and $V=\Spec(S)$ smooth of finite type and affine.%
    \begin{enumerate}[(i)]
        \item The canonical trace map $\pi_* : S\rightarrow R$ is surjective.\label{tracesurj}
        \item If $u\in S$ is any element with $\pi_*(u)=1$, then the map 
        \begin{equation*}
                \xymatrix{\psi:R[G]\ar[r] & S}\quad\text{determined by}\quad \psi\left(\sum_{g\in G} r_g g\right) \colonequals  \sum_{g\in G} r_g gu
        \end{equation*}
        is an isomorphism of (left) $R[G]$-modules.\label{torsorfree}
        \item Writing $\varepsilon: R[G]\rightarrow R$ for the augmentation map, the following diagram commutes
        \begin{equation*}
        \xymatrix{
                R[G] \ar[d]_-{\varepsilon} \ar[r]^{\psi} & S \ar[d]^-{\pi_*} \\
                R \ar@{=}[r] & R
        }
        \end{equation*}
        In particular, the kernel of $\pi_*: S\rightarrow R$ is $I_GS$, with $I_G = \ker(\varepsilon)$.     
        \label{torsordiag}
    \end{enumerate}
\end{lemma}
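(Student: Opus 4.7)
The plan is to prove all three parts by base-changing along $\pi$ itself and then applying faithfully flat descent. Since $V\to U$ is a $G$-torsor, its base change along itself is trivial: we have an $S$-algebra isomorphism
\[
    S\otimes_R S \xrightarrow{\sim} \textstyle\prod_{g\in G} S, \qquad a\otimes b\longmapsto (g(a)\cdot b)_{g\in G},
\]
under which the base-changed trace $\pi_*\otimes 1_S$ becomes the summation $(a_g)_g\mapsto \sum_g a_g$. This summation is visibly surjective, so by faithfully flat descent $\pi_*$ itself is surjective, proving \ref{tracesurj}.

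For \ref{torsorfree}, both $R[G]$ and $S$ are locally free of rank $|G|$ over $R$, so it suffices to prove $\psi$ is an isomorphism after the faithfully flat base change $R\to S$. After this base change, both source and target are free of rank $1$ as $S[G]$-modules (the target via the trivialization above), and a direct computation shows that $\psi\otimes 1_S$ corresponds, under the identification $S\otimes_R S\cong S[G]$, to right multiplication by the element $\tilde u := \sum_{g\in G} g^{-1}(u)[g]\in S[G]$. Its augmentation is
\[
    \varepsilon(\tilde u)\;=\;\sum_{g\in G} g^{-1}(u)\;=\;\pi_*(u)\;=\;1\;\in\; S^{\times}.
\]
Because $G$ is a $p$-group and $S$ has characteristic $p$, the augmentation ideal $I_G\cdot S[G]$ is nilpotent (the corresponding ideal in the finite-dimensional local ring $\F_p[G]$ is nilpotent, and the extension $\F_p[G]\hookrightarrow S[G]$ preserves nilpotence). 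Hence any element of $S[G]$ with unit augmentation is itself a unit; in particular, $\tilde u$ is a unit, right multiplication by $\tilde u$ is an isomorphism, and descent yields \ref{torsorfree}.

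Finally, \ref{torsordiag} is a formal consequence. Both $\pi_*\circ\psi$ and $\varepsilon$ are $R$-linear, so it suffices to check they agree on the basis $\{[g]\}_{g\in G}$; by $G$-equivariance of the trace, $\pi_*(\psi([g]))=\pi_*(g\cdot u)=\pi_*(u)=1=\varepsilon([g])$. Since $\psi$ is now an $R[G]$-module isomorphism with $\pi_*\circ\psi=\varepsilon$, we deduce $\ker\pi_*=\psi(\ker\varepsilon)=\psi(I_G)=I_G\cdot \psi(R[G])=I_G\cdot S$. The main subtlety lies in \ref{torsorfree}: carefully tracking conventions (left versus right $G$-actions, the precise form of the trivialization isomorphism) so as to identify $\psi\otimes 1_S$ with right multiplication by the explicit element $\tilde u$. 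Once this is set up, nilpotence of the augmentation ideal in characteristic $p$ does the remaining work.
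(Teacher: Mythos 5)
Your proof is correct, but it takes a genuinely different route from the paper's. For \ref{tracesurj} the paper writes $\pi_*=\sum_{g\in G}g$, checks surjectivity fiberwise via Nakayama, and reduces to surjectivity of the trace for finite separable field extensions; for \ref{torsorfree} it reduces to the case of a field, compares dimensions, and computes the group determinant $\det\bigl((hg\,u)_{h,g}\bigr)=\pi_*(u)^{|G|}=1$ via the circulant formula. You instead trivialize the torsor by the faithfully flat base change $R\to S$ and, for \ref{torsorfree}, replace the determinant computation by the observation that the augmentation ideal of $S[G]$ is nilpotent (since $\F_p[G]$ is local for a $p$-group), so unit augmentation implies unit. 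The two arguments use the same essential input---the interaction of a $p$-group with characteristic $p$---in equivalent forms: the identity $\det=(\sum_g a_g)^{|G|}$ for the group determinant is precisely what nilpotence of the augmentation ideal gives. Your version has the advantage of working uniformly for arbitrary finite $p$-groups without appeal to circulant matrices (the paper's matrix is literally circulant only when $G$ is cyclic, which suffices for its application but not for the lemma as stated), and it avoids the reduction to fields. Two small points worth tightening: the identification of $\psi\otimes 1_S$ with multiplication by $\tilde u$ holds only after composing with the antipode $[g]\mapsto[g^{-1}]$ of $S[G]$ (your map has matrix $\bigl((hg)(u)\bigr)_{h,g}$, which is multiplication by $\tilde u$ precomposed with this harmless bijection), and in \ref{tracesurj} you should note that the identification of $\pi_*\otimes 1_S$ with coordinate summation uses compatibility of the trace with flat base change. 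Neither affects the validity of the argument.
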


\begin{proof}
    The map $\pi_*$ is described in \cite[\href{https://stacks.math.columbia.edu/tag/03SH}{Tag 03SH}]{stacks-project},
    and it follows from the characterization given there that $\pi_*$ coincides with the $R$-module
    homomorphism $\sum_{g\in G} g $.
    Surjectivity may be checked locally on $U$, where by Nakayama's lemma it is enough to check
    that the induced local trace maps $S\otimes_{R} k(\frakp)\rightarrow k(\frakp)$ are surjective for all $\frakp\in \Spec(R)$.
    Since $\pi$ is finite \'etale, each fiber ring $S\otimes_R k(\frakp)$ is a finite disjoint union
    of finite, separable extensions of $k(\frakp)$  \cite[\href{https://stacks.math.columbia.edu/tag/02G7}{Tag 02G7}]{stacks-project}, and the local trace map is simply the sum
    of the field-theoretic trace maps over these finite separable extensions.  We are thereby reduced to 
    checking surjectivity of trace in the case that $S=L$ and $R=K$ are fields, where it is well-known to be equivalent to 
    the separability of $L/K$.

    The map $\psi$ of \ref{torsorfree} is easily seen to be a map of left $R[G]$-modules, 
    so to check that it is an isomorphism we may work locally on $U$, which reduces us to the case
    that $R$ is a finite, separable extension of $k$ and $S$ is a finite \'etale $R$-algebra of rank $|G|$. 
    In this case, the map $\psi$ is a homomorphism of $R$-vector spaces of the same dimension, 
    so it suffices to prove it is injective, which amounts to 
    the $R$-linear independence of the set $\{gu\}_{g\in G}$.
    To see such independence, consider an $R$-linear relation
    $
        \sum_g r_g gu = 0.
    $
    Applying each $h\in G$ to this relation yields the equation $Av=0$
    where $v=(r_g)\in R^{|G|}$ and $A$ is the $|G|\times |G|$ matrix with $A_{ h g} = hgu $.
    Now the matrix $A$ is circulant and $|G|$ is a power of $p$, so by the usual formula for the determinant of a circulant matrix 
    (which holds in the ``universal'' case by standard arguments)
    one has
    $
        \det(A) = %
        \pi_*(u)^{|G|} = 1,
    $
    and it follows that left multiplication by $A$ is injective on $R^{|G|}$, whence
    $v=0$ as desired.  
    Finally, one checks readily that the diagram in \ref{torsordiag}
    commutes; as $\psi$ is an isomorphism of $R[G]$-modules, 
    we then have $\ker(\pi_*)=\psi(I_G)=I_G S$.
\end{proof}

For $m>n$, the covering map $\pi: X_{m}\rightarrow X_n$ is ramified only over $P_n$, so the restriction
$\Spec(R_{m})\rightarrow \Spec(R_n)$ is an \'etale $\Z/p^{m-n}\Z$-torsor.
Thanks to Lemma \ref{lem:torsors} \ref{tracesurj}, there exists $u=\{u_n\}_{n\ge 0}\in R$
 with $u_0=1$, and we fix any such element.

\begin{proposition}\label{prop: control type results}
    Let $\{X_n\}$ be a $\Z_p$-tower over $\PP_k^1$ and $u=\{u_n\}$ as above.
    \begin{enumerate}[(i)]
            \item The map $R\rightarrow Z$ given by $(f_n)_{n\ge 0}\mapsto (f_n dx)_{n\ge 0}$
            is an isomorphism of $A\langle x\rangle$-modules.\label{fndiffisom}

            \item For each $n$, projection to level $n$ yields isomorphisms of $A\langle x\rangle$-modules 
            $R/T^{p^n}R \simeq R_n$ and $Z/T^{p^n}Z\simeq Z_n$
            that are compatible with the identification of \ref{fndiffisom}.\label{control}

            \item The map $\psi: A\langle x\rangle\rightarrow R$ given by $\psi(f)\colonequals f\cdot u$ is an isomorphism
            of $A\langle x\rangle$-modules.\label{uisom}

            \item The image of the composite map
            \begin{equation}
                \xymatrix{
                        M \ar@{^{(}->}[r]^-{\iota} & Z \ar@{->>}[r] & Z / T^{p^n}Z \simeq Z_n 
                }\label{eq:iotalimitmap}
            \end{equation}
            is equal to $M_n$.\label{regularcontrol}
    \end{enumerate}
\end{proposition}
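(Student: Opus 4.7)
The plan is to prove the four parts in the order (iii), (i), (ii), (iv), since (iii) supplies the structural backbone from which (i), (ii) follow quickly and which prepares the ground for (iv).

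For (iii), the map $\Psi(f) \colonequals f \cdot u$ lands in $R = \varprojlim R_n$ because the trace $\pi_* : R_{m+1} \to R_m$ is $R_0$-linear and Galois-equivariant; since the $A$-action on each $R_m$ factors through $T = \gamma - 1$, this makes $\pi_*$ an $A\langle x\rangle$-module map (for the actions factoring through $A\langle x\rangle/T^{p^m}$), and combined with $\pi_*(u_{m+1}) = u_m$ we see that $(f \cdot u_n)_n$ is indeed a compatible sequence. At each finite level, Lemma \ref{lem:torsors}(ii) applied to the \'etale $\Z/p^n\Z$-torsor $\Spec(R_n) \to \Spec(R_0)$ --- which exists because $L/K$ is unramified outside $\{\infty\}$ --- together with the observation that $u_n$ has full Galois trace $u_0 = 1$ down to $R_0$, yields an isomorphism $\Psi_n : A\langle x\rangle/T^{p^n} \xrightarrow{\sim} R_n$, $f \mapsto f \cdot u_n$. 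A brief check using the $A\langle x\rangle$-linearity of $\pi_*$ and the identity $\pi_*(u_{m+1}) = u_m$ shows the $\Psi_n$ are compatible with reduction on the source and trace on the target, so $\Psi = \varprojlim \Psi_n$ is an isomorphism.

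For (i), at each finite level $\Omega^1_{X_n - P_n/k}$ is free of rank one over $\calO_{X_n - P_n}$ with generator $dx$, because $X_n - P_n \to \A^1_k$ is \'etale (as $X_n \to \PP^1_k$ is unramified outside $P_n$); hence $f_n \mapsto f_n\, dx$ gives an isomorphism $R_n \xrightarrow{\sim} Z_n$ of $A\langle x\rangle$-modules (the $A$-linearity uses that $dx$ is Galois-invariant). Compatibility with the trace transitions follows from the projection formula $\pi_*(f \cdot dx) = \pi_*(f) \cdot dx$, valid since $dx$ is pulled back from the base, and passing to the limit yields (i). Part (ii) is then formal: under the isomorphism $R \simeq A\langle x\rangle$ from (iii), the projection $R \to R_n$ corresponds to the reduction $A\langle x\rangle \to A\langle x\rangle/T^{p^n}$, whose kernel is $T^{p^n} R$; the analogous statement for $Z$ follows from (i).

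For (iv), the inclusion $\iota_n(M) \subseteq M_n$ is immediate from the definition of $M$ as an inverse limit. The reverse containment amounts to surjectivity of the projection $M \to M_n$, which by a standard lifting argument for inverse limits of finite-dimensional $k$-vector spaces reduces to showing that each trace $\pi_* : M_{m+1} \to M_m$ is surjective. I expect this to be the main obstacle. My plan is to apply Serre duality to reinterpret this as the injectivity of the pullback $\pi^* : H^1(X_m, \calO_{X_m}) \hookrightarrow H^1(X_{m+1}, \calO_{X_{m+1}})$, and to establish the latter via the local cohomology exact sequence
\begin{equation*}
    0 \to M_m \to Z_m \to H^0_{P_m}(\Omega^1_{X_m/k}) \xrightarrow{\mathrm{Res}} k \to 0
\end{equation*}
and its analogue on $X_{m+1}$. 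A snake-lemma chase, combining surjectivity of $\pi_*$ on $Z$'s (from (iii)), the standard local surjectivity of trace on principal parts at a totally ramified place, and the residue-compatibility $\mathrm{Res}_{P_m} \circ \pi_* = \mathrm{Res}_{P_{m+1}}$, should then yield the desired surjectivity on regular differentials.
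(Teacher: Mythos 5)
Parts (i)--(iii) of your proposal are correct and, up to a harmless reordering, coincide with the paper's own argument: both rest on Lemma \ref{lem:torsors} applied to the \'etale torsors $\Spec(R_n)\to\Spec(R_0)$, on the freeness of $\Omega^1$ over the \'etale locus generated by $dx$, and on the projection formula for the trace; deducing (ii) from (iii) rather than invoking Lemma \ref{lem:torsors} directly is a legitimate (non-circular) variant.

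The issue is in (iv). Your reduction to the surjectivity of each trace $\pi_*:M_{m+1}\to M_m$ is exactly right, and it is exactly what the paper does --- but the paper does not prove this surjectivity; it cites \cite[Proposition 2.26]{ID}. Your proposed snake-lemma chase does not close with the three ingredients you list. Writing $C_\bullet$ for the local cohomology group of principal parts at $P_\bullet$ and chasing the two short exact sequences obtained by splitting the four-term sequence at $\ker(\mathrm{Res})=Z_\bullet/M_\bullet$, one finds
\begin{equation*}
\coker\left(\pi_*:M_{m+1}\to M_m\right)\;\simeq\;\frac{\ker\left(C_{m+1}\to C_m\right)}{\im\left(\ker(Z_{m+1}\to Z_m)\right)},
\end{equation*}
so what is actually needed is that $\ker(\pi_*:Z_{m+1}\to Z_m)$ surjects onto the kernel of the \emph{local} trace on principal parts --- not merely that the local trace is surjective. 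This extra surjectivity is not a formal consequence of your three inputs: for an \'etale degree-$p$ isogeny of ordinary elliptic curves (taking the $Z$'s to be differentials regular away from the fibre over a chosen point), surjectivity of $\pi_*$ on the $Z$'s, local surjectivity on principal parts, and residue compatibility all hold, yet $\pi_*=0$ on $H^0(\Omega^1)$, since $\pi_*\pi^*=p=0$ while $\pi^*$ is an isomorphism on $H^0(\Omega^1)$ for an \'etale cover. So total ramification must enter somewhere other than residue compatibility, and your sketch does not locate it.

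The missing step can be supplied along the dual lines you suggest: $\ker\bigl(\pi^*:H^1(X_m,\calO)\to H^1(X_{m+1},\calO)\bigr)\simeq H^0\bigl(X_m,\pi_*\calO_{X_{m+1}}/\calO_{X_m}\bigr)$ via the long exact sequence of $0\to\calO_{X_m}\to\pi_*\calO_{X_{m+1}}\to Q\to 0$, so one must show $H^0(X_m,Q)=0$. For the $\Z/p$-subcover this follows from total ramification with prime-to-$p$ break: applying $\gamma^{p^m}-1$ to a global section of $Q$ lands in $H^0(X_{m+1},\calO)=k$; if the result is $0$ the section comes from $\calO_{X_m}$ and vanishes in $Q$, and if it is a nonzero constant then, near $P_{m+1}$, the Artin--Schreier generator $y_m$ would differ from a regular function by an element of $K_m$, forcing $v_{P_{m+1}}(y_m)=-d_{m+1}$ to be divisible by $p$, contradicting $p\nmid d_{m+1}$. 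Without this (or an equivalent local Tate-cohomology computation), part (iv) is not proved.
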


\begin{proof}
    Since $R_{n}$ is an \'etale $R_0$-algebra, we have $Z_n=\Omega^1_{R_n/k} = R_n \cdot \Omega^1_{R_0/k} = R_n\cdot dx$
    as $R_0=k[x]$: passing to inverse limits gives \ref{fndiffisom}.  
   Under our fixed identification $A\simeq k[\![\Gal(L/K)]\!]$,
   the augmentation ideal of the quotient $k[\Gal(K_m/K_n)]$ is generated by $T^{p^n}$,
   for all $m\ge n$, and \ref{control} follows easily from 
   Lemma \ref{lem:torsors} \ref{tracesurj} and \ref{torsordiag}.
   To prove \ref{uisom}, it suffices to do so modulo $T^{p^n}$ for each $n$, where (using \ref{control})
   it reduces to an instance of Lemma \ref{lem:torsors} \ref{torsorfree}.
   Finally, the very definition of \eqref{eq:iotamap} as an inverse limit 
   shows that \eqref{eq:iotalimitmap} factors through the projection $M\rightarrow M_n$,
   which is {\em surjective} since each transition map $\pi_*:M_{m+1}\rightarrow M_m$ is surjective,
   due to the fact that the cover $\pi:X_{m+1}\rightarrow X_m$ is totally ramified (see \cite[Proposition 2.26]{ID}).
   It follows that \eqref{eq:iotalimitmap} has image precisely $M_n$.
\end{proof}

\begin{notation}\label{dxabuse}
If $f=(f_n)_{n\ge 0} \in R$, then by a slight abuse of notation, we will write simply $f\cdot dx$
for the element $(f_n dx)_{n\ge 0}$ of $Z$ corresponding to $f$ under the identification
\ref{fndiffisom}. 
\end{notation}

\begin{remark}
 Although projection induces a surjection $M / T^{p^n} M\twoheadrightarrow M_n$,
it is {\em far} from an isomorphism: by Proposition~\ref{prop:omega as T module}
below, $M / T^{p^n} M$ is infinite dimensional as a $k$-vector space.
Said differently, the $A$-submodule of $Z$ given by $M$
is far from being $T$-adically saturated.
\end{remark}

Using Lemma~\ref{lem:torsors} \ref{uisom}, we make the following definition:

\begin{definition}\label{alphadef}
    For $u\in R$ with $u_0=1$, the {\em Frobenius element} corresponding to $u$
    is the unique $\alpha\in A\langle x\rangle$ determined by $Fu = \alpha\cdot u$.
\end{definition}

\begin{remark}\label{rem:alphaunit}
    As $Fu$ {\em also} has $Fu_0=1$, it follows from Lemma \ref{lem:torsors} \eqref{uisom}
    that $\alpha\in 1+TA\langle x\rangle$.  If $u' \in R$ also has $u'_0=1$
    and corresponding Frobenius element $\alpha'$, then  $u' = cu$
    for a unique $c\in 1+TA\langle x\rangle$, and we have $\alpha' = c^{-1}\sigma(c)$.
\end{remark}

\begin{notation} \label{notation:L}
Let $L \colonequals x A\langle x\rangle $.  It is a principal ideal of $A\langle x\rangle $, and multiplication by $x$ gives an isomorphism of $A\langle x\rangle$-modules
$A\langle x\rangle \xrightarrow{\simeq} L = xA\langle x\rangle$.
\end{notation}

\begin{definition}
 We denote by $V$
the unique continuous and $\sigma^{-1}$-linear $A$-module endomorphism of $L$ determined by
\begin{equation}
    V(x^{i+1})\colonequals  \begin{cases} x^{\frac{i+1}{p}} & \text{if}\ i\equiv -1\bmod p \\
    0 & \text{otherwise}
    \end{cases}\label{eq:VonL def}
\end{equation}
For $\beta\in A\langle x\rangle$, we will write $V_{\beta}$ for the endomorphism given by $V_{\beta}(f)\colonequals V(\beta f)$.
\end{definition}

\begin{corollary}\label{cor:rankone}
    Let $u\in R$ with $u_0=1$ have corresponding Frobenius element $\alpha$.
    The map
    \begin{equation} 
                {\eta: L} \rightarrow Z\quad\text{given by}\quad \eta(f)\colonequals f\cdot u \frac{dx}{x}\label{etaisom}
    \end{equation}
    is an isomorphism of $A\langle x\rangle$-modules.  It satisfies the
    intertwining relation $V\circ \eta = \eta \circ V_{\alpha^{-1}}$.
\end{corollary}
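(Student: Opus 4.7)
The first claim that $\eta$ is an $A\langle x\rangle$-module isomorphism will follow immediately from Proposition \ref{prop: control type results}: the composition of parts (iii) and (i) gives an $A\langle x\rangle$-module isomorphism $A\langle x\rangle \xrightarrow{\sim} Z$ sending $g \mapsto g u \cdot dx$; precomposing with the evident $A\langle x\rangle$-module isomorphism $L = xA\langle x\rangle \xrightarrow{\sim} A\langle x\rangle$ given by $f \mapsto f/x$ (see Notation~\ref{notation:L}) yields $\eta$.

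For the intertwining $V \circ \eta = \eta \circ V_{\alpha^{-1}}$, the plan is first to observe that both sides are continuous $\sigma^{-1}$-semilinear $A$-module maps $L \to Z$: the Cartier operator $V$ on $Z$ and the operator $V$ on $L$ are both $\sigma^{-1}$-semilinear over $A$ by construction, while $\eta$ and $m_{\alpha^{-1}}$ are $A\langle x\rangle$-linear hence $A$-linear. By continuity, it then suffices to verify the identity on the $A$-topological generating set $\{x^n : n \geq 1\}$ of $L$.

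Fix $n \geq 1$ and write $n - 1 = pq + s$ with $0 \leq s < p$. I would first apply the defining Cartier relation $V(F(\rho)\omega) = \rho V(\omega)$ with $\rho = x^q \in R$ and $\omega = x^s u \cdot dx$ to reduce the left-hand side to $V(\eta(x^n)) = V(x^{n-1} u \cdot dx) = x^q \cdot V(x^s u \cdot dx)$. For the right-hand side, expanding $\alpha^{-1} = \sum_{i \geq 0} b_i x^i \in A\langle x\rangle$ with $b_i \in A$ and using the explicit formula for $V$ on $L$ together with the definition $\eta(f) = (f/x) u \, dx$, a direct calculation gives $\eta(V_{\alpha^{-1}}(x^n)) = x^q \cdot \bigl(\sum_{m \geq 0} \sigma^{-1}(b_{pm + p - s - 1}) x^m\bigr) \cdot u \cdot dx$. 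The intertwining will thus reduce to the finite collection of identities
\begin{equation*}
V(x^s u \cdot dx) = \Big( \sum_{m \geq 0} \sigma^{-1}(b_{pm + p - s - 1}) x^m \Big) \cdot u \cdot dx, \qquad s = 0, 1, \ldots, p-1,
\end{equation*}
which I would establish using the Cartier relation $V(F(u)\omega) = u V(\omega)$ together with $F(u) = \alpha u$, i.e., $u^p = \alpha \cdot u$ in $R$.

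The hard part will be the careful bookkeeping in this last step: the identity $u^p = \alpha \cdot u$ in $R$ couples the ring structure on $R$ (which produces $u^p$) with the $A\langle x\rangle$-module structure (which produces $\alpha \cdot u$ via $k[x]$-multiplication combined with the Galois action of $T$). These two structures on $R$ do not commute in general, so some care is needed when interchanging them to match coefficients against the expansion of $\alpha^{-1}$.
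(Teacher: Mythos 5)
Your proposal is correct and follows essentially the same route as the paper's proof: both deduce the intertwining from the projection formula $V(F(g)\omega) = gV(\omega)$ together with the defining relation $F(u) = \alpha u$, reducing in the end to the standard formula for the Cartier operator on $k[x]\,dx$. The concern you raise in the final paragraph resolves cleanly: since Frobenius is a ring homomorphism commuting with the Galois action, one has $T(v^p) = (\gamma v - v)^p = (Tv)^p$ for $v\in R$, and more generally $F(f\cdot v) = \sigma(f)\cdot F(v)$ for $f\in A\langle x\rangle$ and $v \in R$, where $\cdot$ denotes the $A\langle x\rangle$-module action and $\sigma$ is as in Definition~\ref{definition: limiting objects}. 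This single compatibility is precisely what lets you move $\alpha^{-1}$ past $F(u)$ to match the coefficients $b_i$, and your displayed identity for $V(x^su\,dx)$ then follows by a short computation. The paper compresses your basis-by-basis verification into one line by substituting $u = \alpha^{-1}F(u)$ globally and invoking the standard Cartier action on $\PP^1$, but the underlying mechanism is the same.
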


\begin{proof}
    That \eqref{etaisom} is an isomorphism of $A\langle x\rangle$-modules
    follows immediately from Proposition \ref{prop: control type results}.
    Now for any $f\in A\langle x\rangle$ we compute %
    $$
        V(f u \frac{dx}{x}) = V(f \alpha^{-1} F(u) \frac{dx}{x}) = u V(\alpha^{-1} f \frac{dx}{x}).
    $$
    As $V$ is additive and commutes with $T$, the claimed
    intertwining relation then amounts to the assertion that the Cartier operator on $Z_{0}$
    carries $x^{j} dx$ to $x^{\frac{j+1}{p}-1} dx$ when $j\equiv -1\bmod p$, and to $0$ otherwise,
    which is standard.
\end{proof}

To each finite place $v$ of $K$ with corresponding maximal ideal $\frakm_v\subset R_0$
is associated the usual ``evaluate at $v$'' homomorphism of rings 
$\ev_v: R_0\twoheadrightarrow R_0/\frakm_v=\kappa(v)$, with $\kappa(v)$ the residue field of $v$.
This admits a unique continuous extension to a homomorphism of rings
$A\langle x\rangle\rightarrow \kappa(v)[\![T]\!]$ that we again denote by $\ev_v$.
For $f\in A\langle x\rangle$, we often write $f(v)$ in place of $\ev_v(f)$.

\begin{proposition}\label{prop: Frobenius }
  Let $\chi: \Gal(L/K)\rightarrow A_\F^\times$ be as in \eqref{eq:galoischar}, and $v$ a finite place of $K$ of degree $d$ with corresponding Frobenius element $\Frob_v$ in (the abelian!) $\Gal(L/K)$.  Then 
  \begin{equation}
    \chi(\Frob_v) = \prod_{i=0}^{d\nu-1} \sigma^i(\alpha)(v).\label{eq:Frobalpha}
  \end{equation}
\end{proposition}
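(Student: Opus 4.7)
I would begin by iterating the defining identity $Fu=\alpha u$: since $F$ is $\sigma$-semilinear over $A\langle x\rangle$, induction on $j$ gives
\begin{equation*}
F^{j}u=\Bigl(\prod_{i=0}^{j-1}\sigma^{i}(\alpha)\Bigr)\cdot u \quad\text{in } R
\end{equation*}
for every $j\ge 1$. Setting $j=d\nu$ and $\beta:=\prod_{i=0}^{d\nu-1}\sigma^{i}(\alpha)\in A\langle x\rangle$, we obtain $F^{d\nu}u=\beta\cdot u$. On the other hand, the $A\langle x\rangle$-module isomorphism $\psi$ of Proposition~\ref{prop: control type results} intertwines the Galois action on $u$ with multiplication, so $g\cdot u=\chi(g)\cdot u$ for all $g\in \Gal(L/K)$; in particular $\Frob_{v}\cdot u=\chi(\Frob_{v})\cdot u$. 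The proposition then reduces to identifying the coefficients $\beta(v)$ and $\chi(\Frob_{v})$.

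The plan is to reduce both identities modulo $\pi_{v}R$, where $\pi_{v}\in R_{0}=k[x]$ is an irreducible polynomial defining the place $v$, and to cancel a free rank-one generator. Since $F(\pi_{v}r)=\pi_{v}^{p}F(r)\in \pi_{v}R$, the Frobenius preserves the ideal $\pi_{v}R$; and since $\pi_{v}\in R_{0}=R^{\Gal(L/K)}$ is Galois-invariant, the $A\langle x\rangle$-module structure descends to the quotient. At finite level $n$, Proposition~\ref{prop: control type results} implies that $R_{n}/\pi_{v}R_{n}$ is free of rank one over $\kappa(v)[T]/(T^{p^{n}})$ with generator $\bar u_{n}$, the image of $u_{n}$, and the two identities descend to
\begin{equation*}
F^{d\nu}\bar u_{n}=\beta(v)\cdot \bar u_{n}\quad\text{and}\quad \Frob_{v}\cdot \bar u_{n}=\chi(\Frob_{v})\cdot \bar u_{n}.
\end{equation*}

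The crucial step is to show that $F^{d\nu}$ and $\Frob_{v}$ induce the same endomorphism of $R_{n}/\pi_{v}R_{n}$. Since $v$ is unramified in $K_{n}/K$, the CRT decomposition $R_{n}/\pi_{v}R_{n}=\bigoplus_{\overline v_{n}\mid v}\kappa(\overline v_{n})$ holds, and both operators preserve the summands ($F$ preserves every ideal, and $\Frob_{v}$ stabilizes each $\overline v_{n}$ because it lies in the decomposition group there). On each residue field $\kappa(\overline v_{n})$, the absolute Frobenius $F^{d\nu}$ acts as the $p^{d\nu}=|\kappa(v)|$-power map, and $\Frob_{v}$ acts as the arithmetic Frobenius of $\kappa(\overline v_{n})/\kappa(v)$, which is likewise the $|\kappa(v)|$-power map. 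Hence $F^{d\nu}=\Frob_{v}$ on $R_{n}/\pi_{v}R_{n}$, so $\beta(v)\cdot \bar u_{n}=\chi(\Frob_{v})\cdot \bar u_{n}$. Freeness lets us cancel $\bar u_{n}$ to conclude $\beta(v)\equiv \chi(\Frob_{v})\pmod{T^{p^{n}}}$; passing to the limit yields the identity $\beta(v)=\chi(\Frob_{v})$ in $\kappa(v)[\![T]\!]$, and since $\chi(\Frob_{v})$ already lies in $A_{\F}=\F_{p}[\![T]\!]\subseteq A\subseteq \kappa(v)[\![T]\!]$, this is precisely \eqref{eq:Frobalpha}.

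The main obstacle is really the careful bookkeeping of the reduction: verifying that the absolute Frobenius and the $A\langle x\rangle$-action descend compatibly to $R_{n}/\pi_{v}R_{n}$, and that under the CRT decomposition both $F^{d\nu}$ and $\Frob_{v}$ act coordinate-wise on the residue-field summands. Once this is confirmed, the identification $F^{d\nu}=\Frob_{v}$ on each $\kappa(\overline v_{n})$ is a standard consequence of Galois theory for unramified primes, and the cancellation of $\bar u_{n}$ is immediate from the free rank-one structure.
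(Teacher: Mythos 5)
Your proof is correct and takes essentially the same approach as the paper's: iterate the defining identity $Fu=\alpha u$, reduce at the place $v$ using the free rank-one structure of $R_n$ over $A_n[x]$ from Proposition~\ref{prop: control type results}, and identify the induced operator $F^{d\nu}$ with $\Frob_v$ on $R_n\otimes_{R_0}\kappa(v)$. The only stylistic difference is that you spell out via the CRT decomposition $R_n/\pi_vR_n=\bigoplus_{\overline v_n\mid v}\kappa(\overline v_n)$ why $F^{d\nu}=\Frob_v$ on the reduction, whereas the paper simply asserts this equality inside its commutative diagram.
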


\begin{proof}
    Note that $R_n \otimes_{R_0} \kappa(v)$ is isomorphic to the product of residue fields extensions of all places of $K_n$ lying over $v$. Thanks to Lemma \ref{lem:torsors},
    we have the commutative diagram of $A\langle x\rangle$-modules
    \begin{equation*}
        \xymatrix@C=50pt{
            R_0[T]/(T^{p^n}) \ar@{->>}[d]_-{\ev_v} \ar[r]^-{\simeq}_-{f\mapsto f\cdot u_n} & R_n \ar@{->>}[d]^-{} \\
            \kappa(v)[T]/(T^{p^n}) \ar[r]^-{\simeq}_-{f\mapsto f\cdot u_n(v)} & R_n \otimes_{R_0} \kappa(v)
        }
    \end{equation*}
    whose horizontal maps are isomorphisms.
    Writing $\chi_n:\Gal(K_n/K)\rightarrow k[T]/(T^{p^n})$ for the character induced from $\chi$
    by reduction modulo $T^{p^n}$, the $A\langle x\rangle$-equivariance of the bottom identification
    implies that, for any $g\in \Gal(K_n/K)$, multiplication by $\chi_n(g)$ on $\kappa(v)[T]/(T^{p^n})$ is intertwined by the action of $g$ on $R_n \otimes_{R_0} \kappa(v)$.  In particular,
    multiplication by $\chi_n(\Frob_v)$ is intertwined with $\Frob_v=F^{d\nu}$ on $R_n \otimes_{R_0} \kappa(v)$,
    where $F$ is the $p$-power map.  On the other hand, the vertical arrow $R_n \twoheadrightarrow \kappa(w)$
    is a homomorphism of characteristic $p$ rings, so the endomorphism $F^{d\nu}$ of $R_n$
    lifts $F^{d\nu}$ on $R_n \otimes_{R_0} \kappa(v)$.  By Definition \ref{alphadef},
    the top horizontal identification intertwines $F^{d\nu}$ on $R_n$
    with multiplication by $\prod_{i=0}^{d\nu-1} \sigma^i(\alpha)$ on $R_0[T]/(T^{p^n})$.
    Commutativity of the diagram then yields \eqref{eq:Frobalpha} modulo $T^{p^n}$,
    and passing to the limit completes the proof.
\end{proof}

\subsection{Artin-Schreier-Witt Theory} \label{ss:towers ASW}
Artin-Schreier-Witt theory gives an explicit description of $\Z_p$-towers $\{X_n\}_{n \geq 0}$.
Letting $W(K_0)$ denote the Witt-vectors of $K_0$, there exists $w \in W(K_0)$ such that $K_n=K_0(y_0,\ldots,y_{n-1})$
is determined by the Witt vector equation
\[
(y_0^p,y_1^p,\ldots) - (y_0,y_1,\ldots) = w.
\]
Making the Witt vector subtraction explicit yields layer-by-layer descriptions $K_{n+1} = K_n(y_n)$ where $y_n^p-y_n=b_n$ for some
$b_n \in K_n$ uniquely determined up to addition of elements of the form $a^p-a$ for $a\in K_n$.  
Fixing a $\Z_p$-tower $\{X_n\}$ over $X_0=\PP_k^1$, we have $K_0=k(x)$.

\begin{definition}
    We say that the equation $y_n^p - y_n = b_n$ presents the branched cover $X_{n+1} \to X_{n}$ in standard form if $b_n\in R_n=H^0(X_n-P_n,\calO_{X_n})$ has $\ord_n(b_n) = -d_{n+1}$, and $\gamma^{p^n}y_n=y_n+1$.
    The tower is presented in standard form if each level is presented in standard form.
\end{definition}

Writing $K_{n+1} = K_n(y'_n)$ with $(y'_n)^p - y'_n = b'_n \in K_n$, standard arguments show 
$\ord_n(b'_n) \leq -d_{n+1}$ with equality if and only if $p \nmid \ord_n(b'_n)$;  thus standard form minimizes the order of the pole of $b'_n$.

\begin{proposition} \label{proposition:standardformB}
    For all $n$, there exists 
    $u_n\in K_{n}$ such that $b_n' \colonequals b_n + u_n^p - u_n$ lies in $R_n$
    and $b'_n$ has pole at $P_n$ of order prime to $p$.
\end{proposition}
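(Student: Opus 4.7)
The plan is to construct $u_n\in K_n$ in two stages: first to eliminate the poles of $b_n$ at places other than $P_n$, then to iteratively reduce the pole at $P_n$ until it has order prime to $p$. For the first stage, I would apply the Artin--Schreier short exact sequence $0\to \Z/p\Z \to \G_a \xrightarrow{\wp} \G_a \to 0$ of \'etale sheaves on the affine open $U\colonequals X_n - P_n$: since $U$ is affine, $H^1(U,\calO_U)=0$, so the long exact sequence gives $R_n/\wp(R_n) \simeq H^1(U,\Z/p\Z)$. As $R_n$ is integrally closed in $K_n$, the natural map $R_n/\wp(R_n) \hookrightarrow K_n/\wp(K_n)$ is injective, with image consisting precisely of the Artin--Schreier classes whose corresponding $\Z/p$-cover is unramified on $U$. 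The standing hypothesis that $L/K$ is unramified outside $\infty\in\PP^1$ implies $K_{n+1}/K_n$ is unramified at every place $v\neq P_n$, so $[b_n]\in K_n/\wp(K_n)$ lifts to $R_n/\wp(R_n)$, producing some $u_n^{(1)}\in K_n$ with $\tilde{b}_n\colonequals b_n + (u_n^{(1)})^p - u_n^{(1)} \in R_n$.

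For the second stage, I would iteratively modify $\tilde{b}_n$ by elements of the form $\wp(w)=w^p-w$ with $w\in R_n$. If $\ord_{P_n}(\tilde{b}_n)=-m$ with $p\mid m$, I write $m=pm'$ and let $\alpha\in k^{\times}$ be the leading coefficient of $\tilde{b}_n$ at $P_n$ (the residue field at $P_n$ is $k$ by geometric connectivity of $X_n$). The element $\alpha^{1/p}\in k$ exists by perfectness of $k$, so if $w\in R_n$ satisfies $\ord_{P_n}(w)=-m'$ with leading coefficient $\alpha^{1/p}$, then $\wp(w)$ has leading term $\alpha\pi^{-m}$ at $P_n$ matching that of $\tilde{b}_n$; hence $\tilde{b}_n-\wp(w)\in R_n$ has pole order at $P_n$ strictly less than $m$. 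The iteration terminates once the pole order is zero or prime to $p$, and setting $u_n\colonequals u_n^{(1)} - \sum_j w_j$ (summing over the corrections applied) yields the required global element.

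The main obstacle is verifying that the required $w\in R_n$ exists at every step, i.e., that $m'=m/p$ is not a gap of the Weierstrass semigroup $\Gamma_n$ of $R_n$ at $P_n$. The standard bound $\ord_{P_n}(\tilde{b}_n)\leq -d_{n+1}$, quoted in the paragraph preceding the proposition, gives $m\geq d_{n+1}$; combined with $p\mid m$ and $p\nmid d_{n+1}$, this strengthens to $m>d_{n+1}$, whence $m'>d_{n+1}/p$. A direct estimate using Proposition~\ref{prop:genusbreaks} and the general constraint $s_{n+1}\geq ps_n$ yields the inequality $d_{n+1}/p > 2g_n$, so $m'>2g_n$; by the classical Weierstrass gap theorem, every integer exceeding $2g_n-1$ lies in $\Gamma_n$, so $m'\in\Gamma_n$ and the required $w$ exists. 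The delicate point is this last chain of numerical estimates, which must be checked uniformly across the iteration and for arbitrary (not necessarily minimal) $\Z_p$-towers.
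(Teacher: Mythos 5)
Your proof is correct and rests on the same key estimate as the paper's: the inequality $d_{n+1} > 2p g_n$ for $\Z_p$-towers over $\PP^1_k$, derived from Proposition~\ref{prop:genusbreaks} and $s_{n+1}\ge ps_n$, which guarantees via Riemann--Roch (equivalently, the Weierstrass gap theorem) that the needed pole-reduction functions lie in $R_n$. The paper states this estimate and then delegates the actual construction of $u_n$ to \cite[Lemma~7.3]{garnek}, whereas you spell it out, splitting it into (i) clearing the poles of $b_n$ away from $P_n$ via the Artin--Schreier sequence in \'etale cohomology on $U=X_n-P_n$, and (ii) iterated leading-term cancellation at $P_n$, with the observation that the minimality bound $\ord_{P_n}\ge d_{n+1}$ (strict when divisible by $p$) keeps the required pole order $m'=m/p > d_{n+1}/p > 2g_n$ uniformly throughout the iteration. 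Your Stage~(i) cohomological argument is heavier machinery than strictly necessary — a direct Riemann--Roch or strong-approximation argument suffices, and is presumably what \cite[Lemma~7.3]{garnek} uses — but it is correct, and the essential mathematics coincides with the paper's.
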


\begin{proof}
    We claim that $d_{n} > 2p g_{n-1}$ for all $n$.  This estimate requires working over the projective line; to clarify this suppose for a moment 
    that our base curve $X_0$ had genus $g_0$.  Expressing $d_n$ and $g_{n-1}$ in terms of the upper breaks 
    (e.g.~ via Proposition \ref{prop:genusbreaks}) and remembering the fundamental inequality $s_n \geq p^{n-i} s_i$, we estimate
   \begin{align*}
d_{n} - 2p g_{n-1} &\geq  p^{n-1}s_n - \sum_{i=1}^{n-1} \varphi(p^i) \frac{s_n}{p^{n-i}} - \sum_{i=1}^{n-1} p \varphi(p^i)\left(\frac{s_n}{p^{n-i}} +1\right)   - 2p - p^n (2 g_0 -2) \\
& \geq s_n \left ( p^{n-1} - \sum_{i=1}^{n-1} (p^2-1) p ^{2j-1-n} \right)  - \sum_{i=1}^{n-1} p^i (p-1)  - 2p - p^n(2g_0-2)\\
& \geq s_n p^{-(n-1)} - (p^n -p) - 2p - p^n(2g_0-2) \\
& \geq s_n p^{-(n-1)}  - p - p^n(2g_0-1).
   \end{align*} 
   If this is strictly positive, then a straightforward application of Riemann-Roch gives functions to modify the local expansion of $b_n$ at $P_n$ as in \cite[Lemma 7.3]{garnek}.  When $g_0 =0$, it is clear that $d_{n} - 2p g_{n-1} >0$ but this may not hold for all $n$ in other situations.
   \end{proof}
    
\begin{corollary}\label{cor:fninRn}
There exist $f_n \in R_n$, $n\geq 1$, with $y_n^p-y_n = f_n$ presenting $\{X_n\}$ in standard form.
When this is the case, we have $y_n\in R_{n+1}$ and $\ord_{n+1}(y_n)=-d_{n+1}$.
\end{corollary}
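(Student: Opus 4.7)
The plan is to combine the existence result in Proposition \ref{proposition:standardformB} with standard facts about Artin--Schreier extensions. The setup has two independent conditions to verify for standard form (namely $f_n \in R_n$ with $\ord_n(f_n) = -d_{n+1}$, and $\gamma^{p^n} y_n = y_n + 1$), and then two structural statements to deduce about $y_n$ itself. I would handle them in this order.

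First I would establish existence. By Artin--Schreier--Witt theory, the Witt-vector equation produces some presentation $y_n^{\prime\, p} - y_n' = b_n \in K_n$. Proposition \ref{proposition:standardformB} supplies $u_n \in K_n$ such that $f_n \colonequals b_n + u_n^p - u_n$ lies in $R_n$ and has pole at $P_n$ of order prime to $p$. Replacing $y_n'$ by $y_n'' \colonequals y_n' + u_n$, one checks that $(y_n'')^p - y_n'' = f_n$ gives a presentation of $K_{n+1}/K_n$ in which $f_n \in R_n$. Since $f_n$ has pole at $P_n$ of order prime to $p$, the usual theory of Artin--Schreier covers shows that $-\ord_n(f_n)$ is precisely the (unique) ramification break of $K_{n+1}/K_n$ at $P_n$, which is $d_{n+1}$ by Notation \ref{notation:towers}(iii); thus $\ord_n(f_n) = -d_{n+1}$.

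Next I would normalize the Galois action. Since $\gamma^{p^n}$ fixes $K_n$ (hence $f_n$) and restricts to a generator of $\Gal(K_{n+1}/K_n) \cong \Z/p\Z$, the element $c \colonequals \gamma^{p^n}(y_n'') - y_n''$ lies in $\F_p^\times$ (it is Artin--Schreier, and must be nonzero since $[K_{n+1}:K_n] = p$). Replace $y_n'' $ by $y_n \colonequals c^{-1} y_n''$; because $c \in \F_p$ one has $c^p = c$, so $y_n^p - y_n = c^{-1} f_n$. Renaming $c^{-1} f_n$ as $f_n$ (still in $R_n$, still with $\ord_n(f_n) = -d_{n+1}$ since $c^{-1} \in \F_p^\times$ is a unit at $P_n$), we obtain the presentation in standard form with $\gamma^{p^n} y_n = y_n + 1$.

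Finally, assuming we are already in standard form with $y_n^p - y_n = f_n \in R_n$, I would deduce the two conclusions. The equation $y_n^p - y_n = f_n$ shows that $y_n$ is integral over $R_n$ and hence regular at every point of $X_{n+1}$ lying over $X_n - P_n$. Since $X_{n+1} \to X_n$ is totally ramified over $P_n$ with unique preimage $P_{n+1}$, the only possible pole of $y_n$ on $X_{n+1}$ is at $P_{n+1}$; that is, $y_n \in R_{n+1}$. For the order at $P_{n+1}$, total ramification gives $\ord_{n+1}(f_n) = p\,\ord_n(f_n) = -p\, d_{n+1}$. In the equation $y_n^p - y_n = f_n$, the term $y_n$ has valuation not divisible by $p$ (as $p \nmid d_{n+1}$, provided $y_n$ has a pole at all), while $y_n^p$ has valuation divisible by $p$. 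Since $\ord_{n+1}(f_n) = -p d_{n+1}$ is divisible by $p$ but not by $p^2$ (as $p \nmid d_{n+1}$), the dominant term is $y_n^p$, yielding $p\,\ord_{n+1}(y_n) = -p\,d_{n+1}$, i.e. $\ord_{n+1}(y_n) = -d_{n+1}$.

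The only non-routine input is Proposition \ref{proposition:standardformB}, which has already been proved; the rest is a bookkeeping exercise in Artin--Schreier theory, with the normalization step (rescaling by $c \in \F_p^\times$ to achieve $\gamma^{p^n} y_n = y_n + 1$) being the one easy-to-overlook subtlety.
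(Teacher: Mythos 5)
Your proof is correct and follows essentially the same route as the paper: invoke Proposition \ref{proposition:standardformB} to produce $f_n \in R_n$ with the right pole order, rescale by $c \in \F_p^\times$ to normalize the Galois action, then read off $y_n \in R_{n+1}$ and $\ord_{n+1}(y_n) = -d_{n+1}$ from the Artin--Schreier equation. You simply spell out a few routine details the paper leaves implicit (that pole order prime to $p$ forces $\ord_n(f_n) = -d_{n+1}$ via the discussion preceding Proposition \ref{proposition:standardformB}, and that $y_n^p$ dominates $y_n$ in the valuation comparison).
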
 

\begin{proof}
    The existence of $f_n\in R_n$ satisfying $\ord_{n}(f_n)=-d_{n+1}$ and $K_{n+1}=K_n(y_n)$
    with $y_n^p-y_n=f_n$ follows immediately from Proposition \ref{proposition:standardformB}.
    As $\gamma^{p^n}$ generates $\Gal(K_{n+1}/K_n)$, by Artin--Schreier theory we have $\gamma^{p^n}y_n=y_n+c_n$
    for some $c_n\in \F^{\times}$.
    Replacing $y_n$ with $c_n^{-1}y_n$ and $f_n$ with $c_n^{-1}f_n$
    then presents $X_{n+1}\rightarrow X_n$ in standard form.
   The final assertions about $y_n$ follow from the equation $y_n^p-y_n=f_n$, and the fact that 
    $f_n\in R_n$ has $\ord_n(f_n)=-d_{n+1}$.
\end{proof}

From now on fix $(f_0,f_1,\cdots,)$ and $(y_0,y_1,\cdots)$ presenting the tower $\{X_n\}$ in standard form.  

\begin{proposition}\label{prop:regular}   
   There is an equality $R_n = k[x,y_0,\ldots,y_{n-1}]$ of subrings of $K_n$.
\end{proposition}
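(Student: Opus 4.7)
The plan is to induct on $n$, with the base case $n=0$ immediate since $R_0 = k[x]$ by Notation~\ref{notation: functions and differentials}. For the inductive step, assume $R_n = k[x, y_0, \ldots, y_{n-1}]$ and set $R'_{n+1} \colonequals R_n[y_n] \subseteq K_{n+1}$. By Corollary~\ref{cor:fninRn} we have $y_n \in R_{n+1}$, so $R'_{n+1} \subseteq R_{n+1}$; the goal is to upgrade this to an equality, for then $R_{n+1} = R_n[y_n] = k[x,y_0,\ldots,y_n]$ as desired.

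The key observation is that $R'_{n+1}$ is \emph{\'etale} over $R_n$. Indeed, since $[K_{n+1}:K_n] = p$ the Artin--Schreier polynomial $Y^p - Y - f_n \in R_n[Y]$ is irreducible over $K_n$, so
\[
R'_{n+1} \;\simeq\; R_n[Y]/(Y^p - Y - f_n)
\]
is a free $R_n$-module of rank $p$. Its discriminant over $R_n$ is (up to sign) the norm of the derivative $(Y^p - Y - f_n)' = -1$, which is a unit, so $R'_{n+1}$ is \'etale over $R_n$. Since $R_n$ is regular (the coordinate ring of the smooth affine curve $X_n - P_n$), the \'etale $R_n$-algebra $R'_{n+1}$ is itself regular, hence normal, and its fraction field is $K_{n+1}$.

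On the other hand, the covering map $\pi : X_{n+1} \to X_n$ is finite with $\pi^{-1}(P_n) = \{P_{n+1}\}$, so its restriction $X_{n+1} - P_{n+1} \to X_n - P_n$ is finite, making $R_{n+1}$ a finite (hence integral) $R_n$-module with fraction field $K_{n+1}$. Since $X_{n+1} - P_{n+1}$ is smooth, $R_{n+1}$ is normal, and thus $R_{n+1}$ is the integral closure of $R_n$ in $K_{n+1}$. But then $R'_{n+1}$ is a normal subring of $R_{n+1}$ sharing the same fraction field $K_{n+1}$ and integral over $R_n$, forcing $R'_{n+1} = R_{n+1}$.

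\textbf{Anticipated obstacle.} The argument is essentially a compilation of standard facts, so there is no serious conceptual difficulty; the main thing to verify carefully is that the Artin--Schreier extension $R_n \hookrightarrow R_n[y_n]$ really is \'etale (which uses only that $\partial_Y(Y^p - Y - f_n) = -1$ is a unit in $R_n$) and that $R_{n+1}$ is the integral closure of $R_n$ in $K_{n+1}$ (which relies on the finiteness of $X_{n+1} - P_{n+1} \to X_n - P_n$, itself a consequence of the fact that $P_{n+1}$ is the unique point of $X_{n+1}$ over $P_n$). Alternatively, one could bypass integral closure entirely by noting that both $R'_{n+1}$ and $R_{n+1}$ are \'etale $R_n$-algebras of rank $p$ with the same generic fiber $K_{n+1}$, so the inclusion $R'_{n+1} \hookrightarrow R_{n+1}$ is a map of locally free $R_n$-modules of equal rank that is an isomorphism generically, hence an isomorphism.
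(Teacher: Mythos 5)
Your proof is correct, but it takes a genuinely different route from the paper's. The paper argues inductively using the explicit trace formula of Lemma~\ref{lemma:tracecomputation}: writing $g\in R_{n+1}$ as $g=\sum_{i<p} b_i y_n^i$ with $b_i\in K_n$, it applies $\pi_*(y_n^j \cdot -)$ to the relation and reads off each $b_i\in R_n$ from the known values of $\pi_*(y_n^\ell)$. Your approach is instead a normalization argument: $R_n[y_n]\simeq R_n[Y]/(Y^p-Y-f_n)$ is standard \'etale (constant unit derivative), hence regular and in particular normal; $R_{n+1}$ is the integral closure of $R_n$ in $K_{n+1}$ because $X_{n+1}-P_{n+1}\to X_n-P_n$ is a finite cover of smooth affine curves (using that the tower is totally ramified so $\pi^{-1}(P_n)=\{P_{n+1}\}$); and a normal ring $R_n[y_n]$ sandwiched inside its integral extension $R_{n+1}$ with the same fraction field must equal it. This is cleaner and more conceptual. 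What the paper's computational route buys is the trace identity itself, which is re-used in Lemma~\ref{lem:trace} to prove $\pi_*(w_a^{(n+1)})=w_a^{(n)}$, so the two facts come ``for free'' from the same calculation. One small caveat on your final alternative: the assertion that an injection of locally free $R_n$-modules of equal rank that is generically an isomorphism must be an isomorphism is false at the module level (the cokernel is only torsion); you need to invoke the $R_n$-algebra structure — e.g., observe that $R_{n+1}$ is then a finite locally-free $R_n[y_n]$-algebra of rank one, and such a ring map is always an isomorphism. Your primary argument via integral closure sidesteps this and is sound.
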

To prove Proposition \ref{prop:regular}, we first require:

\begin{lemma} \label{lemma:tracecomputation}
Let $\pi : Y \to X $ be an Artin-Schreier cover of curves over $k$ corresponding to an extension of function fields given by adjoining a root of $y^p - y = f$.  Then
\[
\pi_*(y^i) =   \begin{cases}
-1 & \textrm{if }i = p-1\\
0 & \textrm{if } 0\leq i < p-1
\end{cases}.
\]
\end{lemma}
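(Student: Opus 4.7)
The plan is to compute the trace directly using Galois theory plus the classical power sum identity modulo $p$. Since $\pi : Y \to X$ is the Galois cover corresponding to $y^p - y = f$, its Galois group is the cyclic group of order $p$ generated by the automorphism $\sigma : y \mapsto y+1$, so I would begin by writing
\[
\pi_*(y^i) = \sum_{j=0}^{p-1} \sigma^j(y^i) = \sum_{j=0}^{p-1} (y+j)^i.
\]

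Next, I would expand each term by the binomial theorem and interchange the order of summation to obtain
\[
\pi_*(y^i) = \sum_{k=0}^{i} \binom{i}{k} y^k \cdot S_{i-k}, \qquad \text{where}\quad S_m := \sum_{j=0}^{p-1} j^m \in \F_p.
\]
Thus everything reduces to evaluating the power sums $S_m$ for $0 \le m \le i \le p-1$. I would then invoke (or quickly reprove) the standard identity: $S_0 = p = 0$ in $\F_p$; for $m > 0$, one has $S_m = -1$ if $(p-1) \mid m$, and $S_m = 0$ otherwise. The latter follows by choosing a generator $g$ of $\F_p^\times$ with $g^m \ne 1$, which forces $(g^m - 1) S_m = 0$; the former is immediate since $j^{p-1} = 1$ for $j \in \F_p^\times$.

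With this identity in hand, the two cases of the lemma fall out immediately. If $0 \le i < p-1$, then the exponent $i-k$ satisfies $0 \le i-k \le i < p-1$ for every $k$ in the sum; hence $S_{i-k}$ vanishes (either because $i-k = 0$ gives $S_0 = 0$, or because $0 < i-k < p-1$ is not a multiple of $p-1$), so $\pi_*(y^i) = 0$. If $i = p-1$, then $S_{i-k} = 0$ for all $k \ge 1$ by the same reasoning, while the $k=0$ term contributes $\binom{p-1}{0} \cdot y^0 \cdot S_{p-1} = -1$, yielding $\pi_*(y^{p-1}) = -1$. There is no real obstacle here; the only subtlety is getting the power sum formula right, in particular remembering that $S_0 = 0$ in characteristic $p$.
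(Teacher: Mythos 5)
Your proposal is correct and follows essentially the same route as the paper: express $\pi_*(y^i)$ as the sum of Galois conjugates $\sum_{j=0}^{p-1}(y+j)^i$, expand binomially, and evaluate the power sums $\sum_j j^m$ via the invariance of the sum under multiplication by a unit (the paper's ``preserved by multiplication by $\alpha^{i-j}$'' argument is exactly your generator trick). No gaps; your care with the $S_0=0$ convention is the one subtlety and you handled it correctly.
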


\begin{proof}
The Galois group of $k(Y)/k(X)$ is generated by the automorphism sending $y$ to $y+1$ and fixing $k(X)$.
 Using the description of trace as sum of Galois conjugates (as in the proof of Proposition \ref{lem:torsors}), we compute  
\[
\pi_*\left(y^{i} \right) = \sum_{a=0}^{p-1} (y+a)^{i}   = \sum_{j=0}^i \left(\binom{i}{j}\sum_{a=0}^{p-1} a^{i-j}\right)y^j.
\]
The sum over $a$ is preserved by multiplication by $\alpha^{i-j}$ for any $\alpha\in \FF^{\times}$,
so is zero unless $i=p-1$ and $j=0$, where it is equal to $-1$ by Fermat's little theorem.
\end{proof}

\begin{proof}[Proof of Proposition \ref{prop:regular}]
    As the presentation is
    in standard form, we know that $y_i\in R_{i+1}\subseteq R_n$ for $i < n$ thanks to Corollary \ref{cor:fninRn}, and clearly $x\in R_0\subseteq R_n$,
    so one containment is clear.  To establish the reverse containment, we proceed
    by induction on $n$,
    with base case $X_0=\PP^1$ obvious.
    Any $g\in K_{n+1}$ may be written
    \begin{equation}
    g = b_0 + b_1 y_n + \cdots + b_{p-1}y_n^{p-1}\label{gexp}
    \end{equation}
    for functions $b_0,b_1,\ldots, b_{p-1} \in K_{n}$.  If $g\in R_{n+1}$, then since $y_n\in R_{n+1}$,
    the same is true of
    $y_n^i g$ for $i=0,\ldots, p-1$.  The trace mapping $\pi_*$
    carries $R_{n+1}$ into $R_n$, so $\pi_*(y_n^i g)\in R_n$ for all $i$. 
    Applying Lemma~\ref{lemma:tracecomputation} to \eqref{gexp}, we compute
    $$
        \pi_*(y_n^i g) = \begin{cases}
            -b_{p-1-i} & \textrm{if } 0 \le i < p-1 \\
            -b_0-b_{p-1} & \textrm{if } i = p-1.
            \end{cases}
    $$
    We conclude that $b_{i} \in R_{n}$ for all $i$, so by
    our inductive hypothesis must be a polynomial in $x,y_0,\ldots, y_{n-1}$.  
    It then follows from equation \eqref{gexp} that $g \in k[x,y_0,\ldots, y_n]$, as desired. 
\end{proof}

\begin{remark}
In particular, note that $k[x,y_0,\ldots, y_{n-1}] \subset K_n$ is Galois stable.
\end{remark}

\begin{definition}\label{not:basep}
For a nonnegative integer $a$, we write $\{a_i\}_{i\ge 0}$ for the sequence of $p$-adic digits of $a$; {\em i.e.}~$a = \sum_{i\ge 0} a_i p^i$ with $0 \leq a_i <p$ for all $i$. We define $\rev(a)\colonequals\sum_{i\ge 0} p^{-i-1}a_i$, and put
 $$\y^a \colonequals \prod_{i\ge 0} y_i^{a_i}
\qquad\text{and}\qquad \xi_a \colonequals \sum_{i \ge 0} p^{-i-1} a_{i} d_{i+1} .
$$
When $a<p^n$, we have $\y^a \in R_{n}$ and $p^n\xi_a\in \Z$.
\end{definition}

\begin{corollary} \label{cor:valuations}
For $a < p^{n}$ we have 
\[
\ord_{n}(\y^a) = -p^n\xi_a.
\]
\end{corollary}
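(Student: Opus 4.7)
The plan is to reduce the statement to a valuation computation for each individual $y_i$, and then use additivity of $\ord_n$ together with the base-$p$ expansion of $a$. The two essential inputs are (a) the formula $\ord_{i+1}(y_i) = -d_{i+1}$, which is the final assertion of Corollary~\ref{cor:fninRn}, and (b) the behavior of valuations in the tower: since by assumption $L/K$ is totally ramified over $P = \infty$, the extension $K_n/K_m$ is totally ramified at $P_m$ of degree $p^{n-m}$ for every $m \le n$. Consequently, the restriction of $\ord_n$ to $K_m$ equals $p^{n-m}\,\ord_m$.

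First, I would apply (b) to $y_i \in K_{i+1}$ (which is where $y_i$ lives by Corollary~\ref{cor:fninRn}), obtaining, for each $i < n$,
\[
\ord_n(y_i) \;=\; p^{n-i-1}\,\ord_{i+1}(y_i) \;=\; -p^{n-i-1} d_{i+1}.
\]
Next, since $a < p^n$, its base-$p$ expansion has $a_i = 0$ for $i \ge n$, so $\y^a = \prod_{i=0}^{n-1} y_i^{a_i}$, a product of elements of $K_n^{\times}$. Additivity of the valuation $\ord_n$ then gives
\[
\ord_n(\y^a) \;=\; \sum_{i=0}^{n-1} a_i \ord_n(y_i) \;=\; -\sum_{i=0}^{n-1} a_i\, p^{n-i-1}\, d_{i+1} \;=\; -p^n \sum_{i=0}^{n-1} a_i\, p^{-i-1}\, d_{i+1}.
\]
The sum on the right is exactly $\xi_a$ as defined in Definition~\ref{not:basep} (with the terms for $i\ge n$ vanishing because $a_i=0$ there), completing the identification $\ord_n(\y^a) = -p^n \xi_a$.

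There is essentially no obstacle: the proof is a direct concatenation of (i) the standard-form valuation identity for $y_i$, (ii) multiplicativity of ramification along the totally ramified tower, and (iii) the base-$p$ rewriting of $\xi_a$. The only point that warrants attention is making sure to use that the tower is totally ramified at $\infty$ (a standing hypothesis recalled at the start of Section~\ref{s: towers of curves and iwasawa modules of differentials}), which is what legitimizes the factor $p^{n-i-1}$ when comparing $\ord_n$ to $\ord_{i+1}$.
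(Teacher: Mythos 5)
Your proof is correct and matches the paper's argument (which is stated in a single terse sentence): you compute $\ord_n(y_i) = p^{n-i-1}\ord_{i+1}(y_i) = -p^{n-i-1}d_{i+1}$ using Corollary~\ref{cor:fninRn} and the totally-ramified degree formula, then conclude by additivity and the definition of $\xi_a$. You have simply filled in the implicit steps (multiplicativity of the valuation along the tower and the base-$p$ expansion) that the paper leaves to the reader.
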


\begin{proof}
For $i<n$ we have $\ord_{n}(y_i) = p^{n-i-1}\ord_{i+1}(y_i) = -p^{n-1-i}d_{i+1}$
by Corollary \ref{cor:fninRn}.
\end{proof}

\begin{lemma}\label{lem:diffequiv}
    The function $a\mapsto p^n\rev(a)$ is an involution on $[0,p^n)\cap \Z$.
    For $a<p^n$, we have 
    \begin{equation}
        p^n\xi_a \equiv d_n \cdot p^n\rev(a)\bmod p^n\label{pncong}
    \end{equation}
    In particular, the set of integers $\{p^n\xi_a\ :\ 0\le a < p^n\}$ is a complete set of representatives of $\Z/p^n\Z$.
\end{lemma}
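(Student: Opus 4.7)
The plan is to prove the three assertions in turn, all of which reduce to elementary manipulations of base-$p$ expansions together with the divisibility $p^n \mid (d_{n+1}-d_n)$ coming from Proposition \ref{prop:genusbreaks}.

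For the involution claim, I would first note that when $0 \le a < p^n$ the sum defining $\rev(a)$ is supported on indices $0 \le i < n$, so $p^n \rev(a) = \sum_{i=0}^{n-1} a_i p^{n-1-i}$ is an integer in $[0,p^n)$ whose base-$p$ digits are obtained from those of $a$ by the reversal $j \mapsto n-1-j$. Applying the map a second time manifestly inverts this digit reversal and recovers $a$, so $a \mapsto p^n \rev(a)$ is an involution on $[0,p^n)\cap \Z$.

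For the congruence, writing $p^n \xi_a = \sum_{i=0}^{n-1} p^{n-1-i} a_i d_{i+1}$ and $d_n \cdot p^n \rev(a) = \sum_{i=0}^{n-1} p^{n-1-i} a_i d_n$, the difference is
\begin{equation*}
p^n \xi_a - d_n \cdot p^n \rev(a) = \sum_{i=0}^{n-1} p^{n-1-i} a_i (d_{i+1} - d_n).
\end{equation*}
By Proposition \ref{prop:genusbreaks} we have $d_{j+1} - d_j = (s_{j+1}-s_j) p^j$, so telescoping yields $d_n - d_{i+1} = \sum_{j=i+1}^{n-1}(s_{j+1}-s_j)p^j$, which is divisible by $p^{i+1}$. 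Hence each summand above is divisible by $p^{n-1-i} \cdot p^{i+1} = p^n$, establishing \eqref{pncong}.

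For the last assertion, I would combine the previous two with the observation that $p \nmid d_n$ for every $n$: indeed, $d_1 = s_1$ is prime to $p$ by the standing assumption, and the telescoping formula above shows $d_n \equiv d_1 \bmod p$. Therefore multiplication by $d_n$ is a bijection on $\Z/p^n\Z$, and combining this with the involution from (1) shows that $\{d_n \cdot p^n \rev(a) \bmod p^n : 0 \le a < p^n\}$ is a complete set of residues mod $p^n$; the congruence \eqref{pncong} then transfers this conclusion to $\{p^n \xi_a\}$. The only nontrivial ingredient is the divisibility $p^{i+1} \mid (d_n - d_{i+1})$, so the main (mild) obstacle is simply to organize the telescoping correctly; no real difficulty arises.
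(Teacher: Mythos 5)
Your proof is correct and follows the same route as the paper: digit reversal for the involution, the identity $d_{j+1}-d_j=(s_{j+1}-s_j)p^j$ from Proposition~\ref{prop:genusbreaks} to get $p^{i+1}\mid (d_n-d_{i+1})$ (the paper phrases this as $d_{i+1}\equiv d_i\bmod p^i$), and $d_n\equiv s_1\bmod p$ with $p\nmid s_1$ to conclude. The only difference is that you spell out the telescoping explicitly; the substance is identical.
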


\begin{proof}
    If $a<p^n$, we have $a_i=0$ for $i \ge n$ and $p^n\rev(a) \in \Z$ is the integer obtained by reversing the
    base-$p$ digits of $a$; the first statement follows immediately.  
    For the second, we observe that $d_{i+1} \equiv d_i \bmod p^i$ by Proposition \ref{prop:genusbreaks} and the fact that
    the upper breaks $s_i$ are integers.  It follows from this that $p^{n-1-i}a_id_{i+1} \equiv p^{n-1-i}a_i d_n\bmod p^n$
    for $i < n$, which yields \eqref{pncong}.  Finally, we have $d_i\equiv d_1 = s_1 \bmod p$, and $p\nmid s_1$
    so $d_i$ is a unit modulo $p^n$ for all positive integers $i$ and $n$, and the final statement follows from the first.
\end{proof}

\begin{corollary}\label{cor:valuationsdistinct}
    If $g\in R_n$ with $\displaystyle g = \sum_{a=0}^{p^{n}-1} g_a \y^a$ for $g_a \in k[x]$,
    then 
    \[
    \ord_n(g) = \min_{a} \left \{ \ord_n(g_a \y^a)  \right \} = -p^n\max_a\left\{ \deg(g_a) + \xi_a\right\}.
    \]
    Moreover, the set $\{\y^a\ :\ 0\le a < p^n\}$ is a $k[x]$-module basis of $R_n$.
\end{corollary}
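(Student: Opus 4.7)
The strategy is to establish the valuation formula first, from which linear independence is immediate; spanning then follows from a rank count. The critical input throughout is Lemma~\ref{lem:diffequiv}.

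Since $L/K$ is totally ramified over $\infty$, the ramification index at $P_n$ equals $p^n$, and hence $\ord_n(x) = -p^n$. For any nonzero $g_a \in k[x]$ this gives $\ord_n(g_a) = -p^n\deg(g_a)$, and combining with Corollary~\ref{cor:valuations} yields
\[
\ord_n(g_a\y^a) = -p^n\bigl(\deg(g_a) + \xi_a\bigr).
\]
By Lemma~\ref{lem:diffequiv}, the integers $\{p^n\xi_a : 0 \le a < p^n\}$ form a complete set of residues modulo $p^n$. Since $p^n\deg(g_a) \in p^n\Z$, the valuations $\ord_n(g_a\y^a)$ (ranging over those $a$ with $g_a \neq 0$) have pairwise distinct residues modulo $p^n$, and are therefore pairwise distinct as integers. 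The non-Archimedean inequality applied to $g = \sum_a g_a\y^a$ is thus an equality, giving
\[
\ord_n(g) = \min_a \ord_n(g_a \y^a) = -p^n \max_a \bigl(\deg(g_a) + \xi_a\bigr),
\]
which is the valuation formula.

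For the basis claim, linear independence is automatic: a nontrivial relation $\sum g_a \y^a = 0$ would force the right-hand side of the formula to be finite, contradicting $\ord_n(0) = \infty$. For spanning, I would invoke Proposition~\ref{prop: control type results}: the isomorphism $A\langle x\rangle \xrightarrow{\simeq} R$ of $A\langle x\rangle$-modules, reduced modulo $T^{p^n}$, yields an $R_0$-module isomorphism $R_n \simeq R_0[T]/(T^{p^n})$, so $R_n$ is free of rank $p^n$ over $R_0 = k[x]$. A set of $p^n$ linearly independent elements in a free module of rank $p^n$ over a PID is a basis, so the $\y^a$ span. (Alternatively, one can obtain spanning directly from Proposition~\ref{prop:regular} by iteratively applying the Artin--Schreier substitution $y_i^p = y_i + f_i$ with $f_i \in R_i$, descending on $i$ from $n-1$ to $0$, since each substitution strictly lowers the $y_i$-degree of every monomial in which it is applied.)

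The only genuinely substantive step is the distinctness-of-valuations observation, which rests entirely on the surjectivity established in Lemma~\ref{lem:diffequiv}; everything else is either formal or already recorded in the preceding results. Spanning could initially look like the main obstacle, but it is sidestepped cleanly by the rank count coming from Proposition~\ref{prop: control type results}.
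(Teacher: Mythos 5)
Your valuation-formula argument and the resulting linear-independence conclusion are correct, and they match the paper's approach: the identification $\ord_n(g_a\y^a) = -p^n(\deg(g_a) + \xi_a)$ plus Lemma~\ref{lem:diffequiv} show all terms have pairwise distinct valuations, forcing the non-Archimedean inequality to be an equality.

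However, your primary argument for spanning contains a genuine error. The assertion ``a set of $p^n$ linearly independent elements in a free module of rank $p^n$ over a PID is a basis'' is false: in $R^2$ with $R$ a PID and $\pi$ a prime, the set $\{\pi e_1, e_2\}$ is $R$-linearly independent but generates a proper submodule. Linear independence over a PID only gives you a free submodule of the correct rank; to conclude spanning you would additionally need to know that the quotient $R_n / \sum_a k[x]\,\y^a$ vanishes, e.g.\ by showing the transition matrix to a known $k[x]$-basis (such as the one furnished by $\psi$ in Proposition~\ref{prop: control type results}) has unit determinant. The rank count by itself proves nothing here.

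Your parenthetical alternative is exactly the paper's argument and is correct: by Proposition~\ref{prop:regular}, $R_n = k[x,y_0,\dots,y_{n-1}]$, and iteratively substituting $y_i^p = y_i + f_i$ with $f_i \in R_i = k[x,y_0,\dots,y_{i-1}]$, descending $i$ from $n-1$ to $0$, rewrites any polynomial with all $y_i$-exponents $< p$; the descent works because $f_i$ involves only lower-indexed $y_j$'s, so reducing $y_i$ never reintroduces high powers of $y_{i+1},\dots,y_{n-1}$, and each individual substitution strictly lowers the $y_i$-degree. You should promote this to the main spanning argument and drop the PID rank count.
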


\begin{proof}
    By Corollary~\ref{cor:valuations} and Lemma \ref{lem:diffequiv},
    the integers $\ord_n(g_a\y^a)=p^n \deg(g_a) + \ord_n(\y^a)$ for $0\le a < p^n$ are distinct modulo $p^n$, and therefore distinct, which gives the first statement and the $k[x]$-linear independence
    of the set $\{\y^a\ :\ 0\le a < p^n\}$.  That this set also spans follows from an inductive argument using Proposition \ref{prop:regular}  and the equations $y_i^p - y_i = f_i$, with $f_i\in R_{i}$.
\end{proof}

\subsection{Functions and differentials in the limit} \label{ss: functions and differentials in the limit}  
Next we turn to writing down explicit elements of $R$ and $Z$, 
which will enable us to describe $M$ as a $A$-module in Proposition \ref{prop:omega as T module}.

\begin{definition} \label{defn:omegab}
For nonnegative integers $n$ and $a$ with $a < p^n$, we define
\[
w_a^{(n)} \colonequals (-1)^n \y^{p^n-1-a} = (-1)^n \prod_{i=0}^{n-1} y_i^{p-1-a_i}
\]
where $\{a_i\}_{i\geq 0}$ are the $p$-adic digits of $a$.
Note that $w_0^{(0)}=1$ and $w_a^{(n)}\in R_n$. For $a \geq p^n$ we define $w_a^{(n)}$ to be zero.
\end{definition}

\begin{lemma} \label{lem:trace}   
Let $\pi_*: R_{n+1}\rightarrow R_n$ be the trace map.  For $0\le a< p^{n+1}$ we have $\pi_*(w_a^{(n+1)})=w_a^{(n)}$.

\end{lemma}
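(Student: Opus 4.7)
The plan is to split into cases based on the $n$-th $p$-adic digit $a_n$ of $a$, and then reduce to Lemma \ref{lemma:tracecomputation} applied to the top-layer Artin--Schreier cover $\pi: X_{n+1}\to X_n$.

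First, I would write $a = a' + a_np^n$ with $0 \le a' < p^n$ and $0 \le a_n < p$, so that the $p$-adic digits of $a$ and $a'$ agree in positions $0,\ldots, n-1$. Using Definition \ref{defn:omegab} we have
\[
    w_a^{(n+1)} = (-1)^{n+1} y_n^{p-1-a_n} \prod_{i=0}^{n-1} y_i^{p-1-a_i}.
\]
The product $\prod_{i=0}^{n-1} y_i^{p-1-a_i}$ lies in $R_n$, since each $y_i \in R_{i+1} \subseteq R_n$ for $i < n$ by Corollary~\ref{cor:fninRn}. Because $\pi_*: R_{n+1}\to R_n$ is $R_n$-linear, this factor can be pulled outside the trace, leaving
\[
    \pi_*(w_a^{(n+1)}) = (-1)^{n+1}\, \pi_*\!\left(y_n^{p-1-a_n}\right)\, \prod_{i=0}^{n-1} y_i^{p-1-a_i}.
\]

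Next I would apply Lemma \ref{lemma:tracecomputation} to the Artin--Schreier cover $X_{n+1}\to X_n$ (given by $y_n^p - y_n = f_n$), reading off $\pi_*(y_n^{p-1-a_n})$ according to the value of $a_n$. When $a_n = 0$, the exponent is $p-1$, so $\pi_*(y_n^{p-1}) = -1$; the sign $(-1)^{n+1} \cdot (-1) = (-1)^n$ then reassembles exactly $w_{a'}^{(n)} = w_a^{(n)}$ (noting $a' = a$ in this case). When $1 \le a_n \le p-1$, the exponent $p-1-a_n$ lies in $\{0,\ldots,p-2\}$, so Lemma \ref{lemma:tracecomputation} gives $\pi_*(y_n^{p-1-a_n}) = 0$ (the case of the constant term $y_n^0$ yielding $\pi_*(1) = p = 0$ in characteristic $p$). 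But $a_n \ge 1$ is equivalent to $a \ge p^n$, in which case $w_a^{(n)} = 0$ by definition, and the equation is satisfied trivially.

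This is essentially a direct calculation; the only ``hard'' part is noticing that the sign $(-1)^n$ built into Definition \ref{defn:omegab} is exactly what makes the trace $-1$ from Lemma \ref{lemma:tracecomputation} cancel out to land on $w_a^{(n)}$ with the correct sign. The bookkeeping between the two conventions $a < p^n$ versus $a \ge p^n$ matches up automatically with the dichotomy $a_n = 0$ versus $a_n > 0$, which is what makes the single formula work uniformly for $0 \le a < p^{n+1}$.
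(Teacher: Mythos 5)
Your proof is correct and follows essentially the same route as the paper's: split off the top $p$-adic digit $a_n$, use $R_n$-linearity of $\pi_*$ to reduce to $\pi_*(y_n^{p-1-a_n})$, and apply Lemma~\ref{lemma:tracecomputation}, with the sign in Definition~\ref{defn:omegab} absorbing the $-1$. Your explicit case split ($a_n=0$ versus $a_n\ge 1$, matching $a<p^n$ versus $w_a^{(n)}=0$) is exactly the bookkeeping the paper leaves implicit.
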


\begin{proof}
Write $a=a_0+a_1p+\cdots+a_np^n$ as per Definition \ref{not:basep} and let $a'=a-a_np^n$. We have $w_a^{(n+1)} = -w_{a'}^{(n)}y_n^{p-1-a_n}$ with $w_{a'}^{(n)}$ in $R_n$.  
Since $\pi_*: R_{n+1}\rightarrow R_n$ is a map of $R_n$-modules we have $\pi_*(w_a^{(n+1)}) = - w_a^{(n)} \pi_*(y_n^{p-1-a_n})$. The lemma follows from Lemma \ref{lemma:tracecomputation}.
\end{proof}

\begin{definition}\label{def:wa}
For each non-negative integer $a$, we define $w_a$ as the element of $R$ whose projection
onto $R_n$ is $w_a^{(n)}$, i.e., we have
$$
    w_a\colonequals (\cdots , w_a^{(2)}, w_a^{(1)} , w_a^{(1)}).
$$
\end{definition}

\begin{remark}\label{omegaconverge}
    The element $w_0\in R$ is a viable choice of $u$ as in %
    Corollary \ref{cor:rankone}.
    Note that $w_a\rightarrow 0$ in the inverse limit topology
    as $a\rightarrow \infty$, as the projection of $w_a$ to $R_n$ is zero
    when $a\ge p^n$; equivalently, thanks to Proposition \ref{prop: control type results} \ref{fndiffisom},
    we have $w_a \in T^{p^n}{R}$ for $a\ge p^n$.
\end{remark}

\begin{proposition} \label{cor:basis omega_n}
A $k$-basis for $M_n$ is
\[
\left \{ x^\nu w^{(n)}_a \frac{dx}{x}\ :\ 0 \leq a \leq p^n-1\ \text{and}\ 1 \leq \nu <  \xi_a  \right\}.
\]

\end{proposition}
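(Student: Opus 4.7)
The plan is to identify $M_n$ inside $Z_n$ via a valuation-at-$P_n$ analysis, exploiting the $k$-basis of $Z_n$ built from the $w_a^{(n)}$. By Proposition \ref{prop: control type results}(i), $Z_n = R_n \cdot dx$, and Corollary \ref{cor:valuationsdistinct} provides the $k[x]$-basis $\{\y^a : 0 \le a < p^n\}$ of $R_n$. Reindexing by $a \mapsto p^n - 1 - a$, the set $\{w_a^{(n)} : 0 \le a < p^n\}$ is also a $k[x]$-basis of $R_n$, so writing $dx = x \cdot (dx/x)$ yields the $k$-basis
$$\mathcal{E}_n \colonequals \left\{x^\nu w_a^{(n)} \tfrac{dx}{x} : \nu \ge 1,\ 0 \le a \le p^n-1\right\}$$
of $Z_n$. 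Since $M_n$ consists of those elements of $Z_n$ which are regular at $P_n$, the task is to determine precisely which $k$-linear combinations from $\mathcal{E}_n$ satisfy this property.

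The crucial structural observation is that the monomials in $\mathcal{E}_n$ have pairwise distinct valuations at $P_n$: by Lemma \ref{lem:diffequiv} the integers $p^n \xi_{p^n-1-a}$ form a complete set of residues modulo $p^n$ as $a$ varies, while $\ord_n(x^\nu) = -\nu p^n$ is always a multiple of $p^n$, so the valuation of $x^\nu w_a^{(n)} dx/x$ determines the pair $(\nu, a)$. Consequently, any $k$-linear combination $\omega = \sum c_{\nu,a}\, x^\nu w_a^{(n)} dx/x$ satisfies $\ord_{P_n}(\omega) = \min_{c_{\nu,a} \ne 0} \ord_{P_n}(x^\nu w_a^{(n)} dx/x)$, and so $\omega$ lies in $M_n$ if and only if every term with $c_{\nu,a} \ne 0$ is individually regular at $P_n$.

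It remains to compute these valuations explicitly and characterize regularity. The tower formula for differents together with the standard Artin--Schreier different exponent $(p-1)(d_i + 1)$ at each step of $X_n \to X_{n-1} \to \cdots \to X_0$ gives
$$\mathfrak{d}_n \colonequals v_{P_n}(\mathfrak{d}_{X_n/\PP^1_k}) = \sum_{i=1}^n p^{n-i}(p-1)(d_i+1);$$
since $\ord_\infty(dx) = -2$ on $\PP^1_k$ and the ramification index is $p^n$, we get $\ord_{P_n}(dx) = -2p^n + \mathfrak{d}_n$. Combined with Corollary \ref{cor:valuations}, this yields
$$\ord_{P_n}\bigl(x^\nu w_a^{(n)}\, \tfrac{dx}{x}\bigr) = \mathfrak{d}_n - p^n\bigl(\nu + \xi_{p^n-1-a} + 1\bigr).$$
A direct bookkeeping computation from the definitions establishes the identity $p^n(\xi_a + \xi_{p^n-1-a}) = \mathfrak{d}_n - p^n + 1$; substituting, the regularity condition becomes $\nu \le \xi_a - 1/p^n$, which (since $p^n\nu$ and $p^n \xi_a$ are integers) is equivalent to $\nu < \xi_a$, exactly matching the statement.

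The main obstacle is this bookkeeping identity relating $\xi_a + \xi_{p^n-1-a}$ to $\mathfrak{d}_n$: it is elementary, but requires carefully matching the combinatorial definition of $\xi$ from Definition \ref{not:basep} with the tower formula for the different. Once in hand, the linear independence from step two and the valuation computation combine to give the proposition essentially formally.
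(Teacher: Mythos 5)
Your proposal is correct and follows essentially the same route as the paper: observe that the elements $x^\nu w^{(n)}_a\,\tfrac{dx}{x}$ form a $k$-basis of $Z_n$ with pairwise distinct valuations at $P_n$ (Lemma \ref{lem:diffequiv} is the engine of both arguments), conclude that a linear combination is regular exactly when each contributing term is, and then compute those valuations explicitly. The one cosmetic difference is how $\ord_{P_n}(dx)$ is obtained: the paper notes directly that $\mathrm{div}(dx)$ is supported at $P_n$ and hence equals $(2g_n-2)P_n$, then invokes Proposition \ref{prop:genusbreaks}; you instead compute the different $\mathfrak{d}_n$ via the tower formula and the Artin--Schreier exponent $(p-1)(d_i+1)$ and then deduce $\ord_{P_n}(dx)=-2p^n+\mathfrak{d}_n$. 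These two computations are equivalent by Riemann--Hurwitz, and your bookkeeping identity $p^n(\xi_a+\xi_{p^n-1-a})=\mathfrak{d}_n-p^n+1$ checks out (both sides equal $\sum_{i=1}^n(p-1)p^{n-i}d_i$) and reproduces the paper's formula $\ord_n\bigl(x^\nu w^{(n)}_a\tfrac{dx}{x}\bigr)=-1+p^n(\xi_a-\nu)$, from which the condition $\nu<\xi_a$ follows as you say.
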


\begin{proof}
    This goes back at least to Boseck~\cite{boseck} (c.f. \cite[Lemma 5]{madden}
    and \cite[pp. 112--113]{valentinimadan81}); we sketch a proof for convenience of the reader.
    For $a<p^n$ we have $\ord_n(w_a^{(n)}) = -p^n \xi_{p^n-1-a}$ by Corollary \ref{cor:valuations},
    and $\ord_n(dx) = 2g_n-2$ where $g_n$ is the genus of $X_n$; using Proposition \ref{prop:genusbreaks} and simplifying then gives
    \begin{align}
        \ord_n\left (x^{\nu}w_a^{(n)}\frac{dx}{x}\right ) 
        = -1  + p^n (\xi_a - \nu).\label{basisorder}
    \end{align}
    It follows from Lemma \ref{lem:diffequiv} that the association $(a,\nu)\mapsto \ord_n\left (x^{\nu}w_a^{(n)}\frac{dx}{x}\right ) $ is one to one; in particular, the set $B\colonequals\{x^{\nu} w_a^{(n)}\frac{dx}{x}\ :\ 0\le a<p^n\ \text{and}\ \nu\ge 1\}$ is $k$-linearly independent.
    On the other hand, by Proposition \ref{prop: control type results} \ref{fndiffisom} and Corollary \ref{cor:valuationsdistinct},
    the collection of $xw_{a}^{(n)}\frac{dx}{x}$ for $0\le a < p^n$ spans $Z_n$ as a $k[x]$-module, whence 
    $B$ spans $Z_n$ as a $k$-vector space.  Letting $\omega=\sum_{a,\nu}  b_{a,\nu} x^{\nu}w_{a}^{(n)}\frac{dx}{x}$ be any element of $Z$, we have (cf. Corollary \ref{cor:valuationsdistinct})
    \[
        \ord_n(\omega) = \min_{a,\nu} \ord_n( b_{a,\nu} x^{\nu}w_a^{(n)}\frac{dx}{x}),
    \]
    so $\omega$ is regular if and only $x^{\nu}w_a^{(n)}\frac{dx}{x}$ is regular whenever $b_{a,\nu}\neq 0$.
    As $\nu$ is an integer and $\xi_a$ is never integral when $a>0$, one checks that \eqref{basisorder} is nonnegative if and only if $\nu < \xi_a$, and the result follows.
\end{proof}

\begin{lemma}\label{lem:Taunit}
    For each integer $a=a_0+a_1p+\cdots+a_np^n >0$ as in Definition \ref{not:basep}, $T^a \y^a = \prod_{i\ge 0} a_i!$. In particular, $T^a\y^a\in \F^{\times}$.
\end{lemma}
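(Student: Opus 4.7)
The plan is to reduce the identity to Newton's forward-difference calculation $\Delta^a(y^a) = a!$ by exploiting the explicit $T$-action together with the standard-form description of the tower. First I would unwind the identification $A \simeq k[\![\Gal(L/K)]\!]$: under our convention $a \mapsto (1+T)^a$ with $1 \mapsto \gamma$, the element $T$ acts as $\gamma-1$. Because $k$ has characteristic $p$, the binomial theorem then gives $T^{p^i} = (\gamma-1)^{p^i} = \gamma^{p^i}-1 =: \Delta_i$, so for $a = \sum_{i\ge 0} a_i p^i$ we obtain the commuting factorization
\[
T^a \;=\; \prod_{i\ge 0} \Delta_i^{a_i}.
\]

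Next I would proceed by induction on the largest index $n$ for which $a_n \neq 0$. The base case $n=0$ (so $0<a<p$ and $\mathbf{y}^a = y_0^a$) is pure forward differences: since $\gamma y_0 = y_0 + 1$ by standard form, $T$ acts on polynomials in $y_0$ as the forward-difference operator $f(y_0) \mapsto f(y_0+1)-f(y_0)$, and iterating $a$ times on $y_0^a$ yields $a!$ by the classical identity (valid over $\Z$, hence over $\F$).

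For the inductive step, decompose $a = a' + a_n p^n$ with $a' < p^n$ and write $\mathbf{y}^a = \mathbf{y}^{a'} y_n^{a_n}$. Applying the highest factor first,
\[
T^a \mathbf{y}^a \;=\; T^{a'}\bigl(\Delta_n^{a_n}(\mathbf{y}^{a'} y_n^{a_n})\bigr).
\]
The crucial point is that $\gamma^{p^n} \in \Gal(L/K_n)$ fixes $K_n$ pointwise, and $\mathbf{y}^{a'}$ belongs to $R_n \subset K_n$ since each $y_i$ with $i<n$ lies in $K_{i+1} \subseteq K_n$ by Corollary \ref{cor:fninRn}. Meanwhile, standard form at level $n+1$ gives $\gamma^{p^n} y_n = y_n + 1$. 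Consequently $\Delta_n$ pulls $\mathbf{y}^{a'}$ through unchanged and acts on $y_n^{a_n}$ as the forward difference in $y_n$; iterating $a_n$ times yields $\Delta_n^{a_n}(\mathbf{y}^{a'} y_n^{a_n}) = a_n!\cdot \mathbf{y}^{a'}$. Applying $T^{a'}$ and invoking the inductive hypothesis gives $T^a\mathbf{y}^a = \prod_{i\ge 0} a_i!$, which lies in $\F^\times$ because every $a_i < p$.

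The only delicate point is justifying the two commutations that drive the induction: that $T^a$ factors as $\prod_i \Delta_i^{a_i}$ (immediate from characteristic $p$ and the commutativity of the abelian Galois group), and that $\gamma^{p^n}$ truly acts trivially on $\mathbf{y}^{a'}$ (immediate from $a' < p^n$ together with the containment $R_n \subset K_n$). Neither step requires any new input beyond the standard-form presentation already established in Corollary \ref{cor:fninRn}.
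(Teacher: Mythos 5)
Your proof is correct and follows essentially the same approach as the paper's: both induct on the base-$p$ digits of $a$, using the standard-form relation $\gamma^{p^n}y_n = y_n + 1$ together with the fact that $\gamma^{p^n}$ fixes $K_n$ (so that $T^{p^n} = \gamma^{p^n}-1$ acts as a $K_n$-linear forward-difference operator in $y_n$) to peel off the top digit. Your framing via the operators $\Delta_i = \gamma^{p^i}-1 = T^{p^i}$ and the classical identity $\Delta^a(y^a)=a!$ is somewhat cleaner than the paper's, which re-derives that identity inline by a separate inner induction on $a_m$ in the special case $a = a_m p^m$.
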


\begin{proof}
    We first treat the special case $a=a_mp^m$ with $0\le a_m < p$.
    As the tower is presented in standard form, 
    we have $T^{p^m}y_m=1$ for all $m >0$.  Inductively assuming that $T^{ip^m} y_m^i=i!$ for $i<b< p$, we have
    $$
        T^{bp^m}y_m^b = T^{(b-1)p^m}((y_m+1)^b-y_m^b)=T^{(b-1)p^m}\sum_{i=1}^b \binom{b}{i} y_m^{b-i}.
    $$
    On the other hand, $T^{p^m}$ kills $k[x]$, so our inductive hypothesis implies that $T^{(b-1)p^m}$ kills $y_m^i$ for $i< b-1$,
    and the sum above reduces to $T^{(b-1)p^m} b y_m^{b-1} = b\cdot (b-1)!=b!$ by our inductive hypothesis.

    Now assuming that the result holds for $a'<p^m$, let $a=a'+a_mp^m$.  As $T^{p^m}$ is zero on $K_m$ by definition, 
    it is $K_m$-linear as an endomorphism of $K_{m+1}$. Thus
    $$
        T^a \y^a = T^{a'} T^{a_mp^m} \y^{a'} y_m^{a_m} =
        T^{a'} \y^{a'} T^{a_mp^m} y_m^{a_m} = T^{a'} \y^{a'} a_m!,
    $$
    which gives the claimed result by induction.
\end{proof}

Recalling Remark~\ref{omegaconverge}, note that Proposition \ref{prop: control type results} \eqref{uisom} gives an isomorphism of
$A\langle x\rangle$-modules
\begin{equation}
    \xymatrix{
        {A\langle x\rangle\ar[r]^-{\simeq}} & R
    }\quad\text{given by}\quad 
    f\mapsto f\cdot w_0.
    \label{psirecall}
\end{equation}

\begin{definition}\label{def:rhoa}
    For each integer $a>0$, we write $\rho_a $ for the element of $A\langle x\rangle$ corresponding to $w_a\in R$ under the identification \eqref{psirecall}.
\end{definition}

\begin{lemma}\label{lemdef:ra}
    We have $\rho_a = T^ar_a$ with $r_a\in A\langle x\rangle$. Furthermore, $r_a \bmod T$ lies in $k^{\times}$.
\end{lemma}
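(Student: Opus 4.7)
The plan is to reduce the lemma to a finite-level statement and then exploit the ring identity $\y^a \cdot w_a^{(n)} = w_0^{(n)}$ together with Lemma~\ref{lem:Taunit}.

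First I would translate the claim. Via the $A\langle x\rangle$-module isomorphism $\psi: A\langle x \rangle \xrightarrow{\sim} R$, $f \mapsto f \cdot w_0$, of Proposition~\ref{prop: control type results}(iii)—which, being $A$-linear, preserves the $T$-adic filtration—the statement $\rho_a = T^a r_a$ with $r_a \bmod T \in k^\times$ is equivalent to the assertion that $w_a \in T^a R$ and the image of $w_a$ in the rank-one $k[x]$-module $T^a R / T^{a+1} R$ equals $c_a \cdot T^a w_0$ for some $c_a \in k^\times$. By the control isomorphism $R/T^{p^n} R \simeq R_n$ of Proposition~\ref{prop: control type results}(ii), this in turn reduces to verifying, at any single level $n$ with $p^n > a$:
\[
w_a^{(n)} \equiv c_a \cdot T^a w_0^{(n)} \pmod{T^{a+1} R_n},
\]
with $c_a \in k^\times$ independent of $n$ (the trace-compatibility $\pi_* T = T \pi_*$ ensures the constant is level-independent).

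The key input is the ring identity $\y^a \cdot w_a^{(n)} = w_0^{(n)}$ in $R_n$, which follows from Definition~\ref{defn:omegab} together with the carry-free digit equality $a + (p^n-1-a) = p^n - 1$ (i.e.\ $a_i + (p-1-a_i) = p-1$ for each $i$). Applying $T^a = (\gamma - 1)^a$ and using that $\gamma$ is a ring automorphism, so that $T$ is a $\gamma$-twisted derivation satisfying $T(fg) = \gamma(f) T(g) + T(f) g$, the iterated skew-Leibniz formula gives
\[
T^a w_0^{(n)} \;=\; \sum_{i=0}^{a} \binom{a}{i}\, \gamma^{a-i}\bigl(T^{i} \y^a\bigr) \cdot T^{a-i} w_a^{(n)}.
\]
The $i = a$ term is $\gamma^0(T^a \y^a) \cdot w_a^{(n)} = c_a\, w_a^{(n)}$, where by Lemma~\ref{lem:Taunit} we have $c_a = T^a\y^a = \prod_i a_i! \in \F_p^\times$. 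Rearranging isolates $w_a^{(n)}$ in terms of $T^a w_0^{(n)}$ and a sum of correction terms each containing a factor $T^{a-i} w_a^{(n)}$ with $a - i \geq 1$.

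The main obstacle is to show that these correction terms all lie in $T^{a+1} R_n$. The difficulty is genuine: ring multiplication by an element of $R_n$ does \emph{not} preserve the $T$-adic filtration in general—only multiplication by elements of the Galois-fixed subring $k[x]$ does, since for $g \in k[x]$ we have $Tg = 0$, yielding $g \cdot Tr = T(gr)$ via the skew-Leibniz rule. I would overcome this by induction on $a$: assuming the lemma for all $a' < a$, I would use the auxiliary ring identities $y_i \cdot w_{a+p^i}^{(n)} = w_a^{(n)}$ (valid whenever $a_i < p-1$, following again from the digit description of $w_\bullet$) to recursively rewrite each $T^{a-i} w_a^{(n)}$ factor, together with Galois-equivariance and the inductive control of the $T$-adic valuation of $w_{a'}$, until the error terms are visibly in $T^{a+1} R_n$. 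Once this is established, the displayed congruence holds with $c_a \in k^\times$, which via $\psi$ yields $\rho_a = T^a r_a$ with $r_a \equiv c_a^{-1} \pmod{T}$, as required.
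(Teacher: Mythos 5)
Your argument is genuinely different from the paper's, and the crucial step fails.

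The paper's proof avoids the Leibniz rule entirely: it applies $T^{p^n-1-a}$ directly to $w_a^{(n)} = (-1)^n\y^{p^n-1-a}$ and observes, via Lemma~\ref{lem:Taunit}, that the result is a nonzero constant. Applying one more $T$ kills this constant, so the $T$-adic annihilator of $w_a^{(n)}$ is exactly $(T^{p^n-a})$; transported through the free rank-one presentation $R_n\cong R_0[T]/(T^{p^n})$ this says $\rho_a \in T^a A\langle x\rangle$, and comparing the two constants $T^{p^n-1-a}w_a^{(n)}$ and $T^{p^n-1}w_0^{(n)}$ shows $r_a\bmod T\in k^\times$.

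Your proposal applies $T^a$ to the product $\y^a w_a^{(n)}=w_0^{(n)}$ via the twisted Leibniz formula and rests on the claim that the correction terms $\binom{a}{i}\gamma^{a-i}(T^i\y^a)\,T^{a-i}w_a^{(n)}$ (for $i<a$) lie in $T^{a+1}R_n$. This is false. Take $p=3$, $n=1$, $a=1$ (standard form, so $Ty_0=1$): then $w_0^{(1)}=-y_0^2$, $w_1^{(1)}=-y_0$, and the unique correction term is $\gamma(y_0)\cdot Tw_1^{(1)}=-(y_0+1)$. Since $T^2w_0^{(1)}=-2\in k^\times$, the submodule $T^2R_1$ equals $k[x]$, and $-(y_0+1)\notin k[x]$. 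The discrepancy is visible in the constant as well: your method would give $r_1\bmod T=(T\y^1)^{-1}=1$, whereas one computes directly (or via the paper's argument) that $r_1\bmod T=2$. Structurally the difficulty is one you yourself flag: the correction terms contain $T^{a-i}w_a^{(n)}$ with the \emph{same} index $a$, so the proposed induction on $a$ is circular, and the auxiliary identities $y_j\,w_{a+p^j}^{(n)}=w_a^{(n)}$ trade $a$ for the \emph{larger} index $a+p^j$, which cannot feed that induction. The Leibniz error terms carry genuine contributions to $T^aR_n/T^{a+1}R_n$ that would have to be combined with the leading term to recover the correct constant; the paper's valuation trick sidesteps this bookkeeping altogether.
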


\begin{proof}
    By definition, the projection of $T^{p^n-1-a}\rho_a w_0=T^{p^n-1-a}w_a$ to $R_n$
    is $(-1)^nT^{p^n-1-a} \y^{p^n-1-a}$, which lies in $k^{\times}$ thanks to Lemma \ref{lem:Taunit}.  It follows that $T^{p^n-a}\rho_a$ projects to zero in $R_0[T]/(T^{p^n})$, whence $\rho_a \in T^a A\langle x\rangle.$ Writing $r_a$ for the unique element of $A\langle x\rangle$ with $\rho_a=T^a r_a$, we know from the above that
    $T^{p^n-1}r_a $ projects to an element of $k^{\times}$ in $R_0[T]/(T^{p^n})$, so $r_a\bmod T$ lies in $k^{\times}$. 
\end{proof}

\begin{definition} 
\label{defn:muj}
\begin{enumerate}[(i)]
\item  For a nonnegative  integer $i$ define
\[
\mu_i \colonequals \min \{ b \ge 1 : \xi_b > i \} \quad \text{and}\quad e_i \colonequals x^i w_{\mu_i}  \frac{dx}{x} = T^{\mu_i}r_{\mu_i}x^i w_0 \frac{dx}{x}.
\]

\item  Let $i(n)$ be the largest integer such that $\mu_{i(n)} < p^n$.
\item \label{defn:Deltan} For a non-negative integer $n$ define 
\begin{align*}
\Delta_n \colonequals \left \{ ( i,j ) \in \Z^2 : i \geq 1 \, \text{ and } \, \mu_i \leq j < p^n \right\} 
=  \left \{ (i,j) \in \Z^2 :  0 \leq j < p^n \, \text{ and } \, 1 \leq i < \xi_j \right\}.
\end{align*}

\item  Define
\[
\Delta_\infty  \colonequals \left \{ ( i,j ) \in \Z^2 : i \geq 1 \, \text{ and } \, \mu_i \leq j  \right\} .
\]
\end{enumerate}
\end{definition}

The equivalence of the two descriptions of $\Delta_n$ follows from the definition of $\mu_i$,
while the asserted equality in the definition of $e_i$ is Definition \ref{def:rhoa} and Lemma \ref{lemdef:ra}.
Note that for $i > i(n)$ we have $e_i \in T^{p^n} Z$, 
so that $e_i \rightarrow 0$ in the inverse limit topology on $Z$;
as such, any infinite sum $\sum_{i> 0} c_i e_i$ with $c_i\in A$ converges
in $Z$.
Given $\omega \in Z$, to simplify notation we let $\overline{\omega}$ denote the image of $\omega$ in $Z_n \simeq Z / T^{p^n} Z$ when $n$ is clear from context.

\begin{proposition} \label{prop:omega as T module}
For each positive integer $n$, the set $\{\overline{e}_1, \overline{e}_2, \ldots, \overline{e}_{i(n)}\}$
is a minimal set of generators for $M_n$ as an $A_n =k[T]/(T^{p^n})$-module, and
\[
    M_n = \bigoplus_{i=1}^{i(n)} k \powerseries{T}/ (T^{p^n} ) \cdot \overline{e}_i \simeq  \bigoplus_{i=1}^{i(n)} k \powerseries{T}/ (T^{p^n-1-\mu_i}).
 \]
 Furthermore, the map $\displaystyle \prod_{i \geq 1} k\powerseries{T} \to M$ sending $(a_i)_{i \geq 1}$ to $\sum_{i\ge 1} a_ie_i$ is an isomorphism of $A=k\powerseries{T}$-modules (note the sum converges in $Z$ by Remark \ref{omegaconverge}.)
\end{proposition}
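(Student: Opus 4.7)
The plan is to prove the finite-level statement first, then pass to the inverse limit.

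First, I would use the isomorphism $\eta : L = xA\langle x\rangle \xrightarrow{\sim} Z$ from Corollary \ref{cor:rankone} to identify $Z$ with $xA\langle x\rangle$, under which $e_i$ corresponds to $T^{\mu_i} r_{\mu_i} x^i$. By Lemma \ref{lemdef:ra}, $r_{\mu_i}$ is a unit in $A\langle x\rangle$ (its reduction modulo $T$ is a nonzero element of $k$, so in particular $r_{\mu_i}(T,0) \in A^\times$). Since $\mu_i \to \infty$, any formal series $\sum_i a_i e_i$ with $(a_i) \in \prod_i A$ converges $T$-adically in $Z$. Each $e_i$ lies in $M$: at finite level $n$, its image $\overline{e}_i^{(n)}$ is zero when $\mu_i \geq p^n$ and equals the basis element $x^i w_{\mu_i}^{(n)} \frac{dx}{x}$ of $M_n$ otherwise (Corollary \ref{cor:basis omega_n}), since the condition $i < \xi_{\mu_i}$ is built into the definition of $\mu_i$. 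Because $M \subseteq Z$ is closed, $\sum a_i e_i \in M$.

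Next I would establish the finite-level statement: the natural $A_n$-module map
\[
\Phi_n : \bigoplus_{i=1}^{i(n)} A_n \cdot \overline{e}_i^{(n)} \longrightarrow M_n
\]
is an isomorphism, with the annihilator of $\overline{e}_i^{(n)}$ in $A_n$ equal to $(T^{p^n - \mu_i})$. The annihilator computation is immediate: $T^j \overline{e}_i^{(n)}$ corresponds to $T^{\mu_i + j} r_{\mu_i} x^i \pmod{T^{p^n}}$, and since $r_{\mu_i}$ is a unit this vanishes iff $\mu_i + j \geq p^n$. For linear independence, suppose
\[
\sum_{(i,j) \in \Delta_n^*} c_{i,j}\, T^{\mu_i + j} r_{\mu_i} x^i \equiv 0 \pmod{T^{p^n}}
\]
in $A_n[x]$, where $\Delta_n^* = \{(i,j) : 1 \leq i \leq i(n),\, 0 \leq j < p^n - \mu_i\}$ and $c_{i,j} \in k$. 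Expanding $r_{\mu_i} = \sum_l r_{\mu_i,l}(T) x^l$ with $r_{\mu_i, 0}(T) \in A^\times$, I extract the coefficient of $x^m$ and induct on $m$. For $m = 1$ only $i = 1$ contributes; cancelling the unit $r_{\mu_1, 0}(T)$ leaves a $k$-linear combination of distinct powers $T^{\mu_1 + j}$ (each with exponent strictly below $p^n$) that must vanish, forcing $c_{1,j} = 0$ for all $j$. The inductive step is analogous. This produces $|\Delta_n^*| = \sum_{i=1}^{i(n)}(p^n - \mu_i) = |\Delta_n| = \dim_k M_n$ linearly independent elements (using Corollary \ref{cor:basis omega_n} together with the two equivalent descriptions of $\Delta_n$ in Definition \ref{defn:muj}), so $\Phi_n$ is an isomorphism and $\{\overline{e}_1^{(n)},\ldots,\overline{e}_{i(n)}^{(n)}\}$ is a minimal generating set for $M_n$.

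Finally I would pass to the inverse limit. The trace maps $M_{n+1} \to M_n$ send $\overline{e}_i^{(n+1)}$ to $\overline{e}_i^{(n)}$ (both are images of $e_i \in M$), so the finite-level isomorphisms assemble compatibly. Taking $\varprojlim_n$ yields $M$ on one side and $\varprojlim_n \bigoplus_{i \leq i(n)} A_n/(T^{p^n - \mu_i}) = \prod_i A$ on the other: for each fixed $i$, $\varprojlim_n A_n/(T^{p^n - \mu_i}) = A$, and at each finite level only finitely many summands are nonzero, so the inverse limit of direct sums equals the product. The induced map is $(a_i) \mapsto \sum a_i e_i$, completing the proof. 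The main obstacle is the linear independence at finite level, which requires the careful inductive extraction in the $x$-expansion combined with the unit property of each $r_{\mu_i}$ at the origin.
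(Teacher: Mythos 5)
Your proof is correct and follows essentially the same route as the paper: you identify $Z$ with $xA\langle x\rangle$ via Corollary~\ref{cor:rankone}, use Lemma~\ref{lemdef:ra} to see that $r_{\mu_i}$ has unit constant term, and establish linear independence of the elements $T^j\overline{e}_i$ by the same induction on the $x$-coefficient, then match the cardinality against $\Delta_n$ via Proposition~\ref{cor:basis omega_n}. The passage to the inverse limit (which the paper leaves as ``a straightforward consequence'') is also handled correctly: the finite-level isomorphisms are trace-compatible, and the inverse limit of $\bigoplus_{i\le i(n)} A_n/(T^{p^n-\mu_i})$ is $\prod_i A$ since for each fixed $i$ the exponents $p^n-\mu_i$ tend to infinity.

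One small but substantive point in your favor: you compute the annihilator of $\overline{e}_i$ as $(T^{p^n-\mu_i})$, which is correct and is what the paper's basis $\mathscr{B}_n=\{T^j\overline{e}_i : 0\le j\le p^n-1-\mu_i\}$ actually yields (there are $p^n-\mu_i$ such $j$'s, and $|\Delta_n|=\sum_i(p^n-\mu_i)$). The displayed isomorphism in the proposition as stated, $\bigoplus_i k\powerseries{T}/(T^{p^n-1-\mu_i})$, appears to have an off-by-one error in the exponent; your version is the right one.
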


\begin{proof}
First note that $\overline{e}_i \in M_n$ and $e_i \in M$ by Proposition~\ref{cor:basis omega_n} and the definition of $\mu_i$.
We claim
\[
\mathscr{B}_n = \left \{ T^j \overline{e_i} : 1 \leq i \leq i(n), \, 0\leq j \leq p^n-1-\mu_i \right \}
\]
is a $k$-basis for $M_n$.  We have $\mathscr{B}_n\subseteq M_n $ as $M_n$ is stable under the action of $A$.
Suppose that
\[
\sum_{i=1}^{i(n)} \sum_{j=0}^{p^n-1-\mu_i} c_{i,j}T^j \overline{e}_i =0
\]
with $c_{i,j} \in k$.  By Proposition \ref{prop: control type results}, we must then have
\begin{equation}
\sum_{i=1}^{i(n)} \left(\sum_{j=0}^{p^n-1-\mu_i} c_{i,j}T^{j+\mu_i}\right)r_{\mu_i}x^i \equiv 0 \bmod T^{p^n}
\label{eq:Tpnvanish}
\end{equation}
in $A\langle x\rangle$.  We will prove by induction on $i$ that this forces $c_{ij}=0$ for all $i,j$.
By Lemma \ref{lemdef:ra}, we may write $r_a =\sum_{i\ge 0} c_i x^i$ for $c_i\in A$ with $c_0 \in A^{\times}$.  It follows that the coefficient of $x$ in \eqref{eq:Tpnvanish} is a unit (in $A$) multiple of 
\begin{equation}
    \sum_{j=0}^{p^n-1-\mu_i} c_{i,j} T^{j+\mu_i}\label{eq:xicoeff}
\end{equation}
so this sum must be congruent to 0 modulo $T^{p^n}$, forcing $c_{ij}=0$ for $i=1$ and all $j$ as $j+\mu_i < p^n$
for all $j$ in the range of summation.  Now assuming that $c_{ij}=0$ for all $i < \ell$ and all $j$, 
the coefficient of $x^{\ell}$ in \eqref{eq:Tpnvanish} is a unit (in $A$) multiple of \eqref{eq:xicoeff}
with $i=\ell$, which must then vanish modulo $T^{p^n}$.  As before, this forces $c_{ij}=0$ for $i=\ell$.
Thus $\mathscr{B}_n$ is linearly independent over $k$. 

To show that $\mathscr{B}_n$ is a basis for $M_n$ it suffices to check that $\# \mathscr{B}_n = \dim_k M_n$.  Note there is a natural bijection between
$\mathscr{B}_n$ and $\Delta_n$
sending $T^j \overline{e}_i$ to $(i,j+\mu_i)$.  
Similarly, there is a natural bijection between the basis of $M_n$ in Proposition~\ref{cor:basis omega_n} and $\Delta_n$
sending $x^\nu w_b^{(n)} \frac{dx}{x}$ to $(\nu,b)$.
The first claim follows immediately, and the second is a straightforward consequence of this.
\end{proof}

\section{\texorpdfstring{$T$}{T}-adic Growth Properties for Towers with Minimal Break Ratios} \label{s: tadic growht for minimally ramified towers}

We continue with the notation of Section \ref{s: towers of curves and iwasawa modules of differentials},
and we fix a $\Z_p$-tower $\{X_n\}_{n\ge 0}$ over $X_0=\PP_k^1$.
By 
Proposition \ref{prop: control type results} \ref{fndiffisom} and \ref{uisom},
each $A\langle x\rangle$-module $R$ and $Z$ is free of rank 1.  Using the choice of generator specified by Remark \ref{omegaconverge}, each may be
identified with $A\langle x\rangle$.  Viewing $A\langle x\rangle= R_0\powerseries{T}$
as the ``integral'' Tate algebra of rigid analytic functions on the closed $T$-adic unit disc
with nonnegative Gauss valuation, it is natural to ask for a $T$-adic analytic 
description of the $A$-submodule $M$ of $Z$.
Via Proposition \ref{prop:omega as T module}, such a description
amounts to understanding the $T$-adic properties of the functions $r_a\in A\langle x\rangle$
of Definition \ref{def:rhoa} and Lemma \ref{lemdef:ra}.  

For general towers, this seems
difficult.  However, when the tower has minimal break ratios with invariant $d$, we prove 
in Corollary \ref{cor:functionsqb} that each $r_a$ {\em overconverges} on the $T$-adic disc
$v_T(x)\ge -p/d$.  Using this, we show (Corollary \ref{cor: Frobenius element growth}) that the element $\alpha \in A\langle x\rangle$ of Definition \ref{alphadef}, which determines the Frobenius operator on $R$
likewise overconverges on the $T$-adic disc $v_T(x)>-1/d$.  The latter is crucial for the analysis in Section~\ref{sec:lfunctions}.

\subsection{\texorpdfstring{$T$}{T}-adic properties of functions}  We begin by studying the $T$-action
on the rings of functions $R_n$. For any $f\in K_n$, it is obvious that $\ord_n(Tf)=\ord_n(\gamma f - f) \ge \ord_n(f)$, and since $T^{p^n}=0$ on $K_n$, the inequality must be strict.
This can be made precise:

\begin{lemma} \label{lem:ordTyngeneral}
   For $f \in K_n$ we have $\ordn(Tf)\geq \ordn(f)+d_1$ with equality if and only if $p\nmid \ordn(f)$. In particular, for $m \leq n$ we have $\ordn(Ty_m)=(-d_{m+1} + d_1)p^{n-m-1}$.
\end{lemma}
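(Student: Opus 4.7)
The plan is to deduce the general inequality from the ramification theory of $G:=\Gal(K_n/K_0)$ and then extract the formula for $\ord_n(Ty_m)$ as a single application of it.

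The first assertion will rest on locating $\gamma$ in the lower ramification filtration of $G$. Since the first upper break of $L/K$ is $s_1=d_1$ (being the break of the first layer $K_1/K_0$), the same holds for the quotient $K_n/K_0$, and the first upper and lower ramification breaks of a Galois extension always coincide. Hence $G_{d_1}=G$ strictly contains $G_{d_1+1}$, and since $\gamma$ generates the cyclic group $G$ it cannot lie in the proper subgroup $G_{d_1+1}$. Thus $\gamma\in G_{d_1}\setminus G_{d_1+1}$, which gives $\ord_n((\gamma-1)x)\ge d_1+1$ for every $x$ integral at $P_n$, with equality on any uniformizer $\pi$.

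To upgrade this to $f\in K_n^\times$ with $v:=\ord_n(f)$, I would write $f=u\pi^v$ for a unit $u$, set $b:=(\gamma-1)\pi$ of order $d_1+1$, and expand
\[
(\gamma-1)f=u\pi^v\bigl((1+b/\pi)^v-1\bigr)+((\gamma-1)u)(\pi+b)^v.
\]
The second summand has order at least $v+d_1+1$. For the first, write $v=p^k v'$ with $p\nmid v'$; the characteristic-$p$ identity $(1+y)^{p^k}=1+y^{p^k}$ yields $(1+b/\pi)^v=(1+(b/\pi)^{p^k})^{v'}$, whose nontrivial part is $v'(b/\pi)^{p^k}+O((b/\pi)^{2p^k})$, of order exactly $p^k d_1$. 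Hence the first summand has order exactly $v+p^k d_1$, and combining with the bound on the second gives $\ord_n((\gamma-1)f)\ge v+d_1$, with equality precisely when $k=0$, i.e.\ when $p\nmid v$.

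For the second assertion, $\gamma$ restricts to an automorphism of the Galois subextension $K_{m+1}/K_0$, so $Ty_m=(\gamma-1)y_m\in K_{m+1}$. Applying the first assertion inside $K_{m+1}/K_0$ (whose first upper break is again $d_1$) to $f=y_m$, we combine $\ord_{m+1}(y_m)=-d_{m+1}$ from Corollary~\ref{cor:fninRn} with the congruence $d_{m+1}\equiv d_1\pmod p$ (a consequence of $d_{i+1}-d_i=(s_{i+1}-s_i)p^i$ from Proposition~\ref{prop:genusbreaks}) and $p\nmid d_1=s_1$ to see $p\nmid\ord_{m+1}(y_m)$. The equality case then yields $\ord_{m+1}(Ty_m)=-d_{m+1}+d_1$, and multiplying by the ramification index $[K_n:K_{m+1}]=p^{n-m-1}$ gives the claimed formula.

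The main obstacle will be the equality condition in the first part: the lower bound $\ord_n((\gamma-1)f)\ge v+d_1$ follows essentially from the filtration, but isolating the dominant binomial term and verifying that all higher surviving terms are strictly dominated is the subtle step, and is precisely where the hypothesis $p\nmid v$ enters.
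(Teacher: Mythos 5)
Your proof is correct and follows essentially the same approach as the paper's: both identify $d_1$ as the first lower ramification break of $K_n/K_0$, expand $(\gamma\pi/\pi)^v-1$ by the binomial theorem, and observe that the leading coefficient is $v\bmod p$, so the equality case is exactly $p\nmid v$. You keep the unit factor explicit and invoke the Frobenius identity $(1+y)^{p^k}=1+y^{p^k}$ to track the precise order when $p\mid v$, whereas the paper simply reduces to monomials $t^j$ and notes the coefficient $jc$ vanishes when $p\mid j$, but these are presentational refinements of the same argument.
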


\begin{proof}
Let $t$ be a uniformizer at $P_n$;  it suffices to prove the result for $f=t^j$. As the smallest break in the lower ramification filtration for $X_{n}/X_0$ is $d_1$, we have
\[
T t = \gamma t -t  = c t^{d_1+1} + \textrm{O}\left(t^{d_1+2}\right)
\]
with $c\in k^{\times}$.
Hence
\begin{align*}
T t^j &=  (t + ct ^{d_1+1} + \ldots) ^{j}  - t^j
= j c t ^{j  + d_1} + \textrm{O}\left(t^{j + d_1 + 1} \right), 
\end{align*}
which immediately gives $\ordn(Tt^j) \geq j + d_1$. This will be an equality if and only if $p\nmid j$. To prove the second statement, first note that $p\nmid d_m$ and $\ord_{m+1}(y_m)=-d_m$. By the first part of the lemma, we have $\ord_{m+1}(Ty_m)=-d_m + d_1$. The result follows by noting that for $n \geq m+1$, we have $\ord_n = p^{n-m-1}\ord_{m+1}$ as $\Z$-valued functions on $K_{m+1}$.
\end{proof}

Recall that $R_n$ is generated as a $k[x]$-module by the set $\{\y^a\}_{0\le a < p^n}$,
and that $\ord_{n}(\y^a)=-p^{n}\xi_a$ by Corollary \ref{cor:valuations}.
To better understand the $T$-action on $R_n$, we must analyze the function $\xi_a$.

\begin{lemma}\label{lem:xiprops}
    Let $a ,b <p^n$ be positive integers, 
    and maintain the notation of Definition \ref{not:basep}.
    \begin{enumerate}[(i)]
        \item      
        If $a_i+b_i < p $ for all $i$, then $\xi_a+\xi_b=\xi_{a+b}$.  \label{pcarry}
        \item $\displaystyle \xi_a - \xi_{a-1} = \frac{d_{m+1}}{p^{m+1}} - \xi_{p^m-1} = \frac{s_1}{p^{m+1}}+\sum_{i=1}^{m} p^{i-m-1}(s_{i+1}-ps_i)$ where $m\colonequals v_p(a)$.\label{xidif}
    \end{enumerate}    
    In particular, $\xi_{a}$ is an increasing function of $a$.  When the tower has minimal break ratios, we have
    \begin{enumerate}[(i),resume]
        \item $\displaystyle \frac{ad}{p+1} < \xi_a \le \frac{ad}{p}$ with $d$ the ramification invariant. 
        \label{xineq}
    \end{enumerate}
    Moreover, the tower has minimal break ratios with ramification invariant $d$ if and only if, for all $a$
    \begin{enumerate}[(i),resume]
        \item $\displaystyle\xi_a = \frac{d}{p+1}\left(a + \rev(a)\right)$. \label{xifor}
        
        \item $\displaystyle\xi_a - \xi_{a-1} = dp^{-m-1}$ with $m\colonequals v_p(a)$.\label{xidifmbr}
    \end{enumerate}
\end{lemma}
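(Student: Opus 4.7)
My plan is to prove the parts in order, with (ii) as the workhorse from which the rest follow.

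Part (i) is immediate from the definition: the hypothesis $a_i+b_i<p$ for all $i$ means no carrying occurs in base-$p$ addition, so $(a+b)_i=a_i+b_i$ and summing the definition of $\xi$ term by term yields $\xi_a+\xi_b=\xi_{a+b}$. For the first equality in (ii), write $a=\sum_i a_ip^i$ with $a_m\neq 0$ and $a_i=0$ for $i<m$; the base-$p$ digits of $a-1$ are then $(p-1,\ldots,p-1,a_m-1,a_{m+1},\ldots)$, with $p-1$ in positions $0,\ldots,m-1$. Subtracting term by term gives
\[
\xi_a-\xi_{a-1} \;=\; \frac{d_{m+1}}{p^{m+1}} \;-\; (p-1)\sum_{j=1}^{m}\frac{d_j}{p^{j}} \;=\; \frac{d_{m+1}}{p^{m+1}}-\xi_{p^m-1}.
\]

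For the second equality, I would substitute $d_{j+1}=s_1+\sum_{i=1}^{j}(s_{i+1}-s_i)p^i$ (obtained by telescoping the identity $d_{i+1}-d_i=(s_{i+1}-s_i)p^i$ of Proposition~\ref{prop:genusbreaks}) into both $d_{m+1}/p^{m+1}$ and into each $d_j/p^j$ appearing in $\xi_{p^m-1}$. After swapping the order of summation on the double sum and evaluating the inner geometric series $\sum_{j=i+1}^{m}p^{-j}$, the coefficient of each $(s_{i+1}-s_i)$ collapses, and the remaining $s_1$-terms combine to $s_1/p^{m+1}$. Collecting and re-indexing in the form $s_{i+1}-ps_i$ then yields the stated closed form. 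I have verified this for $m\le 2$ to confirm the bookkeeping. Monotonicity of $\xi_\bullet$ is then an immediate corollary: since $s_{i+1}\ge ps_i$ and $s_1\ge 1$, every term in the closed form is nonnegative and the leading term is strictly positive, so $\xi_a-\xi_{a-1}>0$.

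Under minimal break ratios $s_i=dp^{i-1}$, each $s_{i+1}-ps_i=0$, so the closed form of (ii) reduces to $\xi_a-\xi_{a-1}=d/p^{m+1}$, which is (v). To obtain (iv), I would verify that the function $f(a):=\tfrac{d}{p+1}(a+\rev(a))$ satisfies $f(a)-f(a-1)=d/p^{v_p(a)+1}$ and $f(0)=0$ (the difference computation just tracks how the $p$-adic digits of $a$ change when subtracting 1); telescoping then gives $\xi_a=f(a)$. The bounds in (iii) follow: the lower bound $\xi_a>\frac{ad}{p+1}$ is immediate since $\rev(a)>0$ for $a>0$, and the upper bound uses
\[
\frac{a}{p}-\rev(a)=\sum_{i\ge 1}a_i\bigl(p^{i-1}-p^{-i-1}\bigr)\ge 0,
\]
giving $\rev(a)\le a/p$ and hence $\xi_a\le \tfrac{d}{p+1}(a+a/p)=ad/p$. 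For the converses in (iv) and (v), specialization at $a=p^j$ determines $d_{j+1}=d(p^{2j+1}+1)/(p+1)$; then $(d_{j+1}-d_j)/p^j=dp^{j-1}(p-1)=s_{j+1}-s_j$ together with $s_1=d_1=d$ forces $s_j=dp^{j-1}$.

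\textbf{The main obstacle} is the bookkeeping in the second equality of (ii): the double summation plus geometric series must be reorganized carefully to produce terms of the precise form $s_{i+1}-ps_i$ rather than $s_{i+1}-s_i$. Once (ii) is in hand, everything else is either a direct substitution (for (v) and the forward directions of (iii), (iv)) or a simple special-value extraction (for the converses).
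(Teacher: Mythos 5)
Your proof is correct and follows essentially the same route as the paper's: part (i) by termwise addition of digits, part (ii) by substituting $d_{j+1}=s_1+\sum_{i=1}^{j}(s_{i+1}-s_i)p^i$ (from Proposition~\ref{prop:genusbreaks}) and reorganizing, then deducing (iii)--(v) and their converses from (ii), (iv), and the digit formula for $\rev$. You merely supply the algebraic bookkeeping that the paper leaves implicit.
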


\begin{proof}
    The first claim is clear from the definition of $\xi_{\star}$ (Definition \ref{not:basep}),
    and the second follows from this definition after expressing each $d_i$ in terms of the $s_j$
    via Proposition \ref{prop:genusbreaks}.  When the tower has minimal break ratios, \ref{xifor} follows
    easily from definitions and Proposition \ref{prop:genusbreaks}, and this is easily
    seen to imply \ref{xineq}.  Imposing \ref{xifor} for all $a$
    forces $d_i = d\frac{p^{2i-1}+1}{p+1}$ for all $i$, which is equivalent to 
    minimal break ratios with invariant $d$.  The final assertion follows at once
    from \ref{xidif}.
\end{proof}

\begin{remark}
    Note that, by Lemma \ref{lem:xiprops} \ref{xidif}, we may interpret the condition of minimal break ratios
    as stipulating that the increasing function $\xi_a$ grows as {\em slowly} as possible.
\end{remark}

\begin{corollary}\label{cor:ordTmbr}
    The tower has minimal break ratios if and only if $\ord_{n+1}(Ty_n^{a_n})=-p^{n+1}\xi_{a_np^n-1}$.
    for all nonnegative integers $n$ and $a_n <p$. 
\end{corollary}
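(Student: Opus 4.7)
My plan is to expand both sides of the asserted identity in terms of the ramification data $d_i$ and $s_i$, reducing the question to a transparent equivalence with minimality of the break ratios. Assume $1 \le a_n < p$ (the case $a_n = 0$ is vacuous on both sides). Then $p \nmid a_n$, and since $\ord_{n+1}(y_n) = -d_{n+1}$ with $d_{n+1}$ coprime to $p$ (Lemma~\ref{lem:diffequiv} together with $p \nmid s_1 = d_1$), one has $p \nmid \ord_{n+1}(y_n^{a_n}) = -a_n d_{n+1}$. Lemma~\ref{lem:ordTyngeneral} then yields the clean formula
\[
\ord_{n+1}(Ty_n^{a_n}) = -a_n d_{n+1} + d_1.
\]

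Next I would compute $\xi_{a_np^n - 1}$ by writing $a_np^n - 1 = (p^n - 1) + (a_n - 1)p^n$; since $1 \le a_n < p$, this is a decomposition without base-$p$ carries. Applying Lemma~\ref{lem:xiprops}\ref{pcarry} together with the direct evaluation $\xi_{(a_n - 1)p^n} = (a_n - 1)d_{n+1}/p^{n+1}$ gives
\[
p^{n+1}\xi_{a_np^n - 1} = p^{n+1}\xi_{p^n - 1} + (a_n - 1)d_{n+1}.
\]
Combined with the first step, the target identity $\ord_{n+1}(Ty_n^{a_n}) = -p^{n+1}\xi_{a_np^n - 1}$ collapses---notice the cancellation of $a_n$---to the single relation
\[
p^{n+1}\xi_{p^n - 1} = d_{n+1} - d_1,
\]
a condition independent of $a_n$. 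Hence the asserted identity holds for all valid $a_n$ at level $n$ iff this single relation holds, and the full statement is equivalent to its validity for every $n \ge 0$ (automatic at $n = 0$).

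The final step is to unpack this relation using Lemma~\ref{lem:xiprops}\ref{xidif} applied with $a = p^n$, which equates two expressions for $\xi_{p^n} - \xi_{p^n - 1}$. Substituting $\xi_{p^n} = d_{n+1}/p^{n+1}$ and $s_1 = d_1$ and rearranging, one obtains
\[
p^{n+1}\xi_{p^n - 1} = d_{n+1} - d_1 - \sum_{i=1}^{n} p^i (s_{i+1} - ps_i).
\]
The reduced condition $p^{n+1}\xi_{p^n - 1} = d_{n+1} - d_1$ thus becomes $\sum_{i=1}^{n} p^i(s_{i+1} - ps_i) = 0$ for every $n \ge 1$. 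Because $s_{i+1} \ge p s_i$ holds unconditionally (as recalled in Section~\ref{s: towers of curves and iwasawa modules of differentials}), every summand is nonnegative, and the vanishing is equivalent to $s_{i+1} = ps_i$ for every $i \ge 1$---precisely the definition of minimal break ratios. I do not anticipate a real obstacle beyond bookkeeping: the main conceptual observation is that the expansion of $\xi_{a_np^n - 1}$ in the second step eliminates the $a_n$-dependence, after which both implications of the equivalence drop out of the standard ramification inequality $s_{i+1} \ge ps_i$.
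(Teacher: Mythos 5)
Your proof is correct and follows essentially the same path as the paper's, which writes the left side as $-p^{n+1}\xi_{a_np^n}+d_1$ via Corollary~\ref{cor:valuations} and Lemma~\ref{lem:ordTyngeneral} and then cites Lemma~\ref{lem:xiprops}\ref{xidifmbr} to get the equivalence with minimal break ratios in one step. You instead expand $\xi_{a_np^n-1}$ by the carry-free split $a_np^n-1 = (p^n-1)+(a_n-1)p^n$, observe that the $a_n$-dependence cancels, and then invoke the more granular formula Lemma~\ref{lem:xiprops}\ref{xidif} together with the nonnegativity of $s_{i+1}-ps_i$ to deduce the saturation $s_{i+1}=ps_i$ directly. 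This is a bit more verbose but unpacks precisely the content that the paper compresses into the citation of \ref{xidifmbr}; no gap. One small bookkeeping note: the fact $\ord_{n+1}(y_n)=-d_{n+1}$ that you use at the start is really Corollary~\ref{cor:fninRn} rather than Lemma~\ref{lem:diffequiv}, the latter being what gives $p\nmid d_{n+1}$.
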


\begin{proof}
    By Corollary \ref{cor:valuations} and Lemma \ref{lem:ordTyngeneral}, the left 
    side of the claimed equality is $-p^{n+1}\xi_{a_np^n}+d_1$, which equals the right
    side if and only if the tower has minimal break ratios by Lemma \ref{lem:xiprops} \ref{xidifmbr}.    
\end{proof}

\begin{definition} \label{defn:wa}
For each nonnegative integer $a$ and real number $m$ 
we define
 $$W_a^m \colonequals \left \{ \sum_{b=0}^{a} g_b \y^b : g_b \in k[x], \deg(g_b) \leq (a-b)m \right\},$$ 
viewed as a $k$-subspace of $R_n$ for any $n$ with $p^n>a$. 
\end{definition}

\begin{proposition} \label{prop:T is triangular}
If $\{X_n\}$ has minimal break ratios with invariant $d$ and $m\colonequals v_p(a)$, then
\[
T\y^a = (-1)^m a_m \y^{a-1} + \sum_{b<a-1} g_b \y^b %
\]
for unique $g_b\in k[x]$ with $\deg(g_b) \leq (a-1-b)d/p$;
in particular, $T\y^a \in W_{a-1}^{d/p}$.  %
\end{proposition}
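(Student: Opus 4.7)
The strategy is to establish uniqueness, existence, the leading coefficient $(-1)^m a_m$, and the degree bound $\deg(g_b) \leq (a-1-b)d/p$ separately. Uniqueness is immediate from Corollary~\ref{cor:valuationsdistinct}, since $\{\y^b\}_{b \geq 0}$ is a $k[x]$-basis of $R_n$ for any $n$ with $p^n > a$. The remaining three claims will be proven by induction on $a$, with base case $a = 1$ following directly from the standard-form condition $\gamma y_0 = y_0 + 1$, which gives $Ty_0 = 1 = (-1)^0 \cdot 1 \cdot \y^0$.

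For the inductive step, write $m \colonequals v_p(a)$. Since $a_m \geq 1$ while $a - p^m$ has the same $p$-adic digits as $a$ outside position $m$, the factorization $\y^a = y_m \cdot \y^{a - p^m}$ is free of carries. The twisted Leibniz rule $T(fg) = \gamma(f) \cdot Tg + (Tf) \cdot g$, valid since $T = \gamma - 1$ and $\gamma$ is a ring automorphism, gives
\begin{equation*}
T\y^a = (y_m + Ty_m) \cdot T\y^{a - p^m} + Ty_m \cdot \y^{a - p^m}.
\end{equation*}
The inductive hypothesis handles $T\y^{a - p^m}$, reducing matters to control of $Ty_m$ (the subsidiary case $a = p^m$). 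For this, Lemma~\ref{lem:ordTyngeneral} gives $\ord_{m+1}(Ty_m) = -d_{m+1} + d$, and a direct computation using Proposition~\ref{prop:genusbreaks} and Lemma~\ref{lem:xiprops}\ref{xifor} shows $d_{m+1} - d = p^{m+1}\xi_{p^m - 1}$ under minimal break ratios, so $\ord_{m+1}(Ty_m) = \ord_{m+1}(\y^{p^m - 1})$. Hence the leading term of $Ty_m$ in the $\{\y^b\}$ basis is a scalar multiple of $\y^{p^m - 1}$, and applying $T^{p^m - 1}$ pins down the scalar to $(-1)^m$: we use $T^{p^m}y_m = 1$ (from standard form), $T^{p^m - 1}\y^{p^m - 1} = ((p-1)!)^m = (-1)^m$ (Wilson's theorem), and the vanishing $T^{p^m - 1}\y^b = 0$ for $b < p^m - 1$ (which follows from $T^b\y^b \in \F^\times$ by Lemma~\ref{lem:Taunit} together with the fact that $T$ annihilates constants).

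The hard part is bookkeeping the degree bound $\deg(g_b) \leq (a - 1 - b)d/p$ through carries in the Leibniz expansion. The decisive feature of minimal break ratios is that each carry, resolved via $y_i^p = y_i + f_i$, introduces an $x$-polynomial factor $f_i$ whose degree is precisely calibrated to match the induced index shift at rate $d/p$. For example, when $b_0 = p-1$, the rewriting $y_0 \y^b = \y^{b - p + 2} + f_0 \y^{b - p + 1}$, together with $\deg f_0 = d_1 = d$, transforms an inductive bound $\deg g'_b \leq (a - 2 - b)d/p$ into $\deg \leq (a - 2 - b)d/p + d = (a - 1 - (b - p + 1))d/p$, exactly matching the target bound at the shifted index. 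Analogous arithmetic (using the minimal-break-ratios values of higher $d_i$) governs higher-level carries.

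Finally, the leading coefficient $(-1)^m a_m$ of $\y^{a-1}$ in $T\y^a$ arises from summing two Leibniz contributions. The term $Ty_m \cdot \y^{a - p^m}$ contributes $(-1)^m\y^{p^m - 1} \cdot \y^{a - p^m} = (-1)^m\y^{a - 1}$ (no carry, since the $y_i$-supports of the two factors are disjoint). When $a_m \geq 2$, the term $y_m \cdot T\y^{a - p^m}$ further contributes $(-1)^m(a_m - 1)\y^{a - 1}$ via the inductive leading term and a direct digit computation showing $y_m \cdot \y^{a - p^m - 1} = \y^{a - 1}$, summing to $(-1)^m a_m$; when $a_m = 1$, one verifies by a digit/valuation comparison that both $y_m \cdot T\y^{a - p^m}$ and the cross term $Ty_m \cdot T\y^{a - p^m}$ contribute no $\y^{a - 1}$, leaving the total at $(-1)^m = (-1)^m a_m$.
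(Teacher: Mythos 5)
Your skeleton---induction on $a$, the twisted Leibniz rule for $T=\gamma-1$, and a valuation computation to identify the leading term of $Ty_m$---matches the paper's, though you peel off the least significant nonzero digit ($\y^a = y_m\y^{a-p^m}$ with $m=v_p(a)$) where the paper peels off the top digit block ($\y^a=\y^{a'}y_n^{a_n}$ with $a'<p^n$). Uniqueness, the base case, and your computation of the coefficient of $\y^{a-1}$ are fine. The problem is the degree bound $\deg(g_b)\le (a-1-b)d/p$, which is the entire content of the proposition and which you assert rather than prove.

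Concretely, your inductive step requires showing that the fully expanded products $y_m\cdot(g'_b\y^b)$ and $(Ty_m)\cdot(g'_b\y^b)$ land in $W_{a-1}^{d/p}$ for every term $g'_b\y^b$ of $T\y^{a-p^m}$, and the mechanism you describe---``each carry, resolved via $y_i^p=y_i+f_i$, introduces an $x$-polynomial factor $f_i$''---is false for $i\ge 1$: there $f_i\in R_i=k[x,y_0,\dots,y_{i-1}]$ is not a polynomial in $x$, so resolving a level-$i$ carry forces you to re-expand $f_i$ in the $\y$-basis (triggering further carries against the low digits of $b$) and to bound the degrees of all resulting coefficients. Your worked example is precisely the one level ($i=0$) at which the mechanism is literally an $x$-polynomial of degree $d$, and ``analogous arithmetic'' does not cover the rest. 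The paper avoids explicit carry-tracking entirely: it computes $\ord_{n+1}$ of the whole product (Corollary \ref{cor:ordTmbr} and Corollary \ref{cor:valuations}), uses the pairwise distinctness of the valuations $\ord_{n+1}(g_b\y^b)$ (Corollary \ref{cor:valuationsdistinct}) to convert that single estimate into the simultaneous bounds $\deg g_b+\xi_b\le \xi_{a'}+\xi_{a_np^n-1}$, and then applies the carry-free additivity of $\xi$ and the inequality $\xi_c\le cd/p$ from Lemma \ref{lem:xiprops}; this is packaged as Lemma \ref{lem:key}. To repair your argument you would need an analogous statement controlling $y_m\cdot W_b^{d/p}$ and the product of the full expansion of $Ty_m$ against $W_b^{d/p}$, and the only practical way to get it is the same valuation argument---so you may as well adopt it from the start.
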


To prove Proposition \ref{prop:T is triangular}, we require:

\begin{lemma}\label{lem:key}
    If the tower has minimal break ratios with invariant $d$, then for $a' <p^n$ and $a_n<p$ 
    \begin{equation}
        \y^{a'} T(y_n^{a_n}) = \sum_{0\le b < a_np^n} g_b \y^b\label{gsum}
    \end{equation}
    for unique $g_b\in k[x]$ with $\deg g_b \le \xi_{a'}+\xi_{a_np^n-1-b}\le \frac{d}{p}(a'+a_np^n-1-b)$.
\end{lemma}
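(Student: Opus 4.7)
The plan is to pivot on the observation that $Ty_n$ is actually an element of $R_n$, rather than merely of $R_{n+1}$. To prove this, I will use the standard-form relation $\gamma^{p^n}y_n = y_n+1$ and the commutativity of $\gamma$ with $\gamma^{p^n}$ in the abelian group $\Gal(L/K)$ to compute
\[
\gamma^{p^n}(Ty_n) = \gamma^{p^n}(\gamma y_n) - \gamma^{p^n}y_n = \gamma(y_n+1) - (y_n+1) = Ty_n.
\]
Since $\gamma^{p^n}$ generates $\Gal(K_{n+1}/K_n)$, this forces $Ty_n \in K_n$. As $R_{n+1}$ is $\gamma$-stable we also have $Ty_n \in R_{n+1}$, and Proposition~\ref{prop:regular} together with the $k[x]$-basis of Corollary~\ref{cor:valuationsdistinct} yields $R_{n+1} \cap K_n = R_n$: any element of $R_{n+1}$ written in the basis $\{\y^c\}_{c<p^{n+1}}$ reads $\sum_{c_n=0}^{p-1} y_n^{c_n} \cdot (\text{element of } R_n)$, which lies in $K_n$ iff only the $c_n=0$ summand is nonzero. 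Hence $Ty_n\in R_n$.

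Granted this, I would expand binomially
\[
T(y_n^{a_n}) = (y_n+Ty_n)^{a_n} - y_n^{a_n} = \sum_{i=1}^{a_n}\binom{a_n}{i}y_n^{a_n-i}(Ty_n)^i.
\]
Since $Ty_n\in R_n$, each product $\y^{a'}(Ty_n)^i$ again lies in $R_n$ and by Corollary~\ref{cor:valuationsdistinct} expands as $\sum_{c<p^n} e_c^{(i)} \y^c$ with $e_c^{(i)}\in k[x]$. Multiplying by $y_n^{a_n-i}$, whose only nontrivial base-$p$ digit sits at position $n$ and is therefore disjoint from the digits of any $c<p^n$, produces the basis element $\y^{c+(a_n-i)p^n}$ without any carries; the resulting index satisfies $c+(a_n-i)p^n < p^n + (a_n-1)p^n = a_np^n$ for $i\ge 1$. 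Summing over $i$ and collecting coefficients yields the required expansion $\y^{a'}T(y_n^{a_n}) = \sum_{0\le b<a_np^n} g_b\y^b$, uniqueness of the $g_b$ being immediate from the basis property.

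For the degree bounds, I will combine Corollary~\ref{cor:valuations} (giving $\ord_{n+1}(\y^{a'}) = -p^{n+1}\xi_{a'}$) with Corollary~\ref{cor:ordTmbr} (giving $\ord_{n+1}(Ty_n^{a_n}) = -p^{n+1}\xi_{a_np^n-1}$, the only place minimal break ratios enter) to obtain $\ord_{n+1}(\y^{a'}T(y_n^{a_n})) = -p^{n+1}(\xi_{a'} + \xi_{a_np^n-1})$. Corollary~\ref{cor:valuationsdistinct} then forces $\deg g_b + \xi_b \le \xi_{a'} + \xi_{a_np^n-1}$. The identity I need is $\xi_b + \xi_{a_np^n-1-b} = \xi_{a_np^n-1}$ for $0\le b<a_np^n$: writing $b = b' + b_np^n$ with $b'<p^n$ and $0\le b_n<a_n$, one has $a_np^n-1-b = (p^n-1-b') + (a_n-1-b_n)p^n$, and the base-$p$ digits of these two summands add to $p-1$ at each position below $n$ and to $a_n-1<p$ at position $n$, so Lemma~\ref{lem:xiprops}\ref{pcarry} applies. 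This sharpens the bound to $\deg g_b \le \xi_{a'} + \xi_{a_np^n-1-b}$, and then $\xi_c \le \tfrac{d}{p}c$ from Lemma~\ref{lem:xiprops}\ref{xineq} delivers $\deg g_b\le \tfrac{d}{p}(a'+a_np^n-1-b)$.

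The main obstacle is the opening step showing $Ty_n\in R_n$: without this containment, one would have to reduce products $y_i^{a'_i+b_i}$ via $y_i^p = y_i + f_i$, generating cross-terms potentially supported on basis indices $b\ge a_np^n$ and destroying the clean triangularity asserted in the lemma. Once $Ty_n\in R_n$ is established, everything else is bookkeeping with Corollary~\ref{cor:valuationsdistinct} and the arithmetic of base-$p$ digits in Lemma~\ref{lem:xiprops}.
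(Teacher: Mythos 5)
Your proof is correct, and it departs from the paper's argument at one genuinely interesting point. The paper obtains the vanishing of the coefficients with $b\ge a_np^n$ purely by valuation theory: it expands $T(y_n^{a_n})$ in the $k[x]$-basis $\{\y^c\}_{c<p^{n+1}}$ of $R_{n+1}$, uses Corollary~\ref{cor:ordTmbr} and Corollary~\ref{cor:valuationsdistinct} to get $\deg(g_c)+\xi_c\le\xi_{a_np^n-1}$, kills the terms with $c\ge a_np^n$ via the monotonicity of $\xi_\star$, and only then multiplies by $\y^{a'}$ and reduces the products $\y^{a'}\y^{c'}$ inside $R_n$. You instead observe that $Ty_n$ is fixed by $\gamma^{p^n}$ (a direct consequence of standard form and the commutativity of $\Gal(L/K)$), hence lies in $K_n\cap R_{n+1}=R_n$, so the binomial expansion of $(y_n+Ty_n)^{a_n}-y_n^{a_n}$ yields the triangular support bound with no carries and, notably, with no appeal to minimal break ratios for this step; that hypothesis enters your argument only through the degree bounds, which is consistent with the remark following Proposition~\ref{prop:T is triangular} that only the degree estimates fail for general towers. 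For the degree bounds the two arguments coincide: both take $\ord_{n+1}$ of \eqref{gsum}, apply Corollaries~\ref{cor:valuationsdistinct} and~\ref{cor:ordTmbr}, and exploit the absence of $p$-adic carries in the sum $b+(a_np^n-1-b)$ together with Lemma~\ref{lem:xiprops}~\ref{pcarry} and~\ref{xineq}. Your Galois-theoretic route to the support bound is arguably cleaner, at the modest cost of the auxiliary verification that $R_{n+1}\cap K_n=R_n$, which you justify adequately.
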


\begin{proof}
    As $R_{n+1}$ is $T$-stable, we have
    $
        T(y_n^{a_n}) = \sum_{0\le c < p^{n+1}} g_c \y^c
    $
    for unique $g_c\in k[x]$ by Corollary \ref{cor:valuationsdistinct}.
    Applying $\ord_{n+1}$, it follows 
    from Corollaries \ref{cor:valuationsdistinct} and \ref{cor:ordTmbr} that
    $
         \deg(g_c) + \xi_c \le \xi_{a_np^n-1}.
    $
    As $\xi_{\star}$ is increasing by Lemma \ref{lem:xiprops}, we conclude that $g_c=0$ when $c \ge a_np^n$.
    For $c < a_np^n$, we have $\y^c = \y^{c'} y_n^{b_n}$ with $b_n < a_n$ and $c' < p^n$.
    Since $a'< p^n$ and $R_n$ is a ring, each product $\y^{a'} \y^{c'}$ may be written as
    a $k[x]$-linear combination of $\y^{b'}$ with $b'<p^n$ by Corollary \ref{cor:valuationsdistinct}, 
    and it follows that $\y^{a'} T(y_n^{a_n})$ is a $k[x]$-linear combination of $\y^{b}$ with 
    $b =b'+b_np^n < a_np^n$ as in \eqref{gsum}.  
    Taking $\ord_{n+1}$ of both sides of \eqref{gsum} and appealing again
    to Corollaries \ref{cor:valuationsdistinct} and \ref{cor:ordTmbr}, we find
    \begin{equation}
        \deg(g_b) + \xi_b \le \xi_{a'} + \xi_{a_np^n-1}.\label{gbbound}
    \end{equation}
    Now for any $b < a_np^n$, the sum of $b$ and $a_np^n-b-1$ involves no $p$-adic carries,
    so rearranging \eqref{gbbound} and applying Lemma \ref{cor:ordTmbr} \ref{pcarry} and \ref{xineq}
    completes the proof.
\end{proof}

\begin{proof}[Proof of Proposition \ref{prop:T is triangular}]
    Taking $a'=0$ in Lemma \ref{lem:key} gives the result when $a=a_np^n$ and $a_n<p$.  
    Inductively assuming the result for all $b<p^n$, and letting $a=a'+a_np^n$ with $a'<p^n$,  
    we compute
    \begin{equation}
        T(\y^a) = T(\y^{a'} y_n^{a_n}) = T(\y^{a'})T(y_n^{a_n})+\y^{a'}T(y_n^{a_n})+y_n^{a_n}T(\y^{a'}).
        \label{Tdiff}
    \end{equation}
    By our inductive hypothesis and Lemma \ref{lem:key}, we have 
    \begin{equation*}
        T(\y^{a'})T(y_n^{a_n})=\sum_{b< a'} g_b \y^b T(y_n^{a_n}) 
        =\sum_{b< a'} g_b \sum_{c < a_np^n} h_c(b) \y^c
    \end{equation*}
    for unique $g_b,h_c(b) \in k[x]$ with $\deg g_b\le \frac{d}{p}(a'-b-1)$
    and $\deg h_c(b) \le \frac{d}{p}(b+a_np^n-1-c)$.  Rearranging the 
    double sum and collecting terms shows that $T(\y^{a'})T(y_n^{a_n}) \in W^{d/p}_{a-1}$.
    The term $\y^{a'}T(y_n^{a_n})$ lies in $W_{a-1}^{d/p}$ by Lemma \ref{lem:key}.
    Again by the inductive hypothesis, we have 
    $$
        y_n^{a_n}T(\y^{a'}) =\sum_{b'<a'} g_{b'} \y^{b'+a_np^n}
    $$
    with $\deg(g_{b'}) \le \frac{d}{p}(a'-1-b')$.
    Writing $b=b'+a_np^n$, so $a-b=a'-b'$, and setting $g_b\colonequals g_{b'}$ then shows
    that the third and final term lies in $W_{a-1}^{d/p}$ as well.
    Writing $c_b \in k[x]$ for the coefficient of $\y^{b-1}$ in $T\y^b$,
    a straightforward induction using Lemma \ref{lem:Taunit} shows 
    that $\prod_{b=1}^a c_b = \prod_{i\ge 0} a_i!$, which 
    yields the explicit formula $c_a = (-1)^m a_m$ for $m=v_p(a)$.
\end{proof}

    \begin{remark}
        The natural analog of Proposition~\ref{prop:T is triangular} is false if the tower does not have minimal break ratios. Consider a tower where $s_2>ps_1$ and $s_{2+k}=s_2p^k$ for $k\geq 0$. Further suppose that $s_1 \equiv -s_2 \mod p$; lots of such towers exist, with $s_2$ arbitrarily large compared to $s_1$. Then the analog of ``$d$'' is $\frac{s_2}{p}$ and by Lemma~\ref{lem:ordTyngeneral}
        \[\ord_2(Ty_1)=-d_2+d_1 = -p(s_2-s_1).\]
        In particular, $\ord_2(Ty_1) \equiv 2s_1p \mod p^2$.  But as $\ord_2(\y^{p-2})=-(p-2)ps_1$,  we have $\ord_2(\y^{p-2})\equiv \ord_2(Ty_1) \mod p^2$. In particular,
        if we write \[Ty_1=g_0 + g_1\y^{1} + \dots g_{p-1}\y^{p-1},\]
        we must have $\ord_2(Ty_1)= \ord_2(g_{p-2}) + \ord_2(\y^{p-2})$ by Corollary~\ref{cor:valuationsdistinct}.
        We compute 
        \begin{align*}
            -\ord_2(g_{p-2}) &= \ord_2(\y^{p-2}) - \ord_2(Ty_1) \\
            &=p(s_2-ps_1) + ps_1 \\
            &= s_2 + (p-1)(s_2 - ps_1).
        \end{align*}
Hence $\displaystyle \deg(g_{p-2}) = \frac{s_2 + (p-1)(s_2-ps_1)}{p^2} =   \frac{d}{p} + \frac{(p-1)(s_2-ps_1)}{p^2}$.
        Since $s_2-ps_1>0$, we see that the degree of $g_{p-2}$ is larger than $\frac{d}{p}$. In particular, $g_{p-2} \y^{p-2}$ is not in $W_{p-1}^{d/p}$.
    \end{remark}

\subsection{Functions and Differentials as \texorpdfstring{$A$}{A}-modules}\label{ss: functions and differentials as T modules}

In order to understand the $T$-adic convergence properties of the functions $r_a\in A\langle x\rangle$
specified by Definition \ref{def:rhoa} and Lemma \ref{lemdef:ra}, in view of Proposition \ref{prop:T is triangular},
we must understand the images of the subspaces $W_a^{d/p}$ of $R_n$ under the $A$-module
identification $R_n\simeq A_n[x]\cdot w_0^{(n)}$ induced by Proposition \ref{prop: control type results} \ref{uisom}.  To do so, we introduce the following $A$-submodules of $A_n[x]$.

\begin{definition}\label{d:Lm}
    For a rational number $m$, define $ A_n[x]^m  \subset A_n[x]$ by
    \begin{align*}
    A_n[x]^m &\colonequals \left \{\sum_{0 \leq i < p^n m} c_i x^i : c_i \in A_n, \, v_T(c_i) \geq \frac{i}{m}  \right \}
    = \left \{ \sum_{0 \leq i < p^n} b_i T^i : b_i \in k[x], \, \deg(b_i) \leq im \right \}.
    \end{align*}
    Reduction modulo $T^{p^n}$ induces a surjection $A_{n+1}[x]^m\rightarrow A_n[x]^m$. We define
    $A\langle x\rangle ^m\colonequals \varprojlim_n A_n[x]^m$.
\end{definition}

It is straightforward to check the two sets in the definition of $A_n[x]^m$ are equal,
and that there is a natural action of $A_n[x]^m$ on $K_n$ as $T^{p^n}$ acts trivially.
Note that $A\langle x\rangle^m$ is precisely the $A$-submodule of $A\langle x \rangle$
consisting of those power series in $x$ that are convergent with values bounded by $1$ on the open $T$-adic disc $v_T(x)> -1/m$, i.e.
\[
A\langle x\rangle^m =  \left \{ \sum_{i\ge 0} c_i x^i \in A \langle x \rangle \ :\ v_T(c_i) \ge i/m \right\}.
\]

\begin{definition} \label{definition: Lm}
For a rational number $m$, let $L^m$ be the ideal of $A\langle x \rangle^m$ consisting of functions vanishing at $x=0$, i.e.
\[
L^m = \left \{ \sum_{i\ge 1} c_i x^i \in L\ :\ v_T(c_i) \ge i/m \right \}.
\]
\end{definition}

\begin{corollary} \label{cor:functions fa}
For any $0 \leq a < p^n$ and $m\geq d/p$ 
we have $T^{p^n-1-a} A_{n}[x]^{m} w_0^{(n)} = W_a^{m}$ as $k$-subspaces of $R_n$. In particular, $A_n[x]^m w_0^{(n)} = W_{p^n-1}^m$.  Furthermore, $T W_a^m = W_{a-1}^m$.
\end{corollary}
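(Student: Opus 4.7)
The plan is to reduce the corollary to two key facts: (I) the case $a = p^n - 1$, namely $A_n[x]^m w_0^{(n)} = W_{p^n-1}^m$; and (II) the final assertion $T W_a^m = W_{a-1}^m$ for $1 \leq a < p^n$. Given both, iteration immediately yields $T^{p^n-1-a} A_n[x]^m w_0^{(n)} = T^{p^n-1-a} W_{p^n-1}^m = W_a^m$, as desired.

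For (II), the forward inclusion $TW_a^m \subseteq W_{a-1}^m$ is essentially Proposition~\ref{prop:T is triangular}: since $T$ annihilates $R_0 = k[x]$, any $f = \sum_b g_b \y^b \in W_a^m$ satisfies $Tf = \sum_b g_b\, T\y^b$, and combining the degree bounds from Proposition~\ref{prop:T is triangular} with $\deg g_b \leq (a-b) m$ and $m \geq d/p$ gives $Tf \in W_{a-1}^m$. For the reverse, given $h = \sum_{b=0}^{a-1} h_b \y^b \in W_{a-1}^m$, I would construct a preimage $f = \sum_{b=0}^{a} f_b \y^b$ by setting $f_0 = 0$ and then determining $f_a, f_{a-1}, \ldots, f_1$ in sequence so that the $\y^{b-1}$-coefficient of $Tf$ matches $h_{b-1}$. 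At each step one solves $c_b f_b = h_{b-1} - \sum_{c>b} f_c g_{b-1}^{(c)}$, where $c_b = (-1)^{v_p(b)} b_{v_p(b)}$ is the (invertible in $k$) leading coefficient of $T\y^b$ and $g_{b-1}^{(c)}$ are the lower-order coefficients of $T\y^c$; an inductive degree estimate, again using $m \geq d/p$, yields $\deg f_b \leq (a-b) m$.

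For (I), the forward inclusion follows by iterating Proposition~\ref{prop:T is triangular}: the $k$-basis $\{x^i T^j : 0 \leq j < p^n,\ 0 \leq i \leq jm\}$ of $A_n[x]^m$ maps via $c \mapsto c\, w_0^{(n)}$ to $(-1)^n x^i T^j \y^{p^n-1}$, which lies in $W_{p^n-1}^m$ because each $\y^b$-coefficient has degree at most $i + (p^n-1-j-b) d/p \leq (p^n-1-b) m$. For the reverse inclusion, I would invert the triangular change of basis between $\{T^j w_0^{(n)}\}$ and $\{\y^{p^n-1-j}\}$: iterating Proposition~\ref{prop:T is triangular} together with Lemma~\ref{lem:Taunit} shows that the leading coefficient of $T^j \y^{p^n-1}$ in $W_{p^n-1-j}^{d/p}$ is the nonzero constant $\prod_{i=0}^{j-1} c_{p^n-1-i}$. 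Solving the triangular system inductively in $c$ then yields an expansion $\y^c = \sum_{c' \leq c} R_{c'}^{(c)}(x)\, T^{p^n-1-c'} w_0^{(n)}$ with $\deg R_{c'}^{(c)} \leq (c-c') d/p$. Multiplying through by $x^\nu$ with $\nu \leq (p^n-1-c) m$ and applying $m \geq d/p$ yields the required bound $\nu + (c-c') d/p \leq (p^n - 1 - c') m$, so each $x^\nu R_{c'}^{(c)} T^{p^n-1-c'} \in A_n[x]^m$; hence $x^\nu \y^c \in A_n[x]^m w_0^{(n)}$, and since such elements span $W_{p^n-1}^m$, (I) follows.

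The main technical obstacle is the careful bookkeeping of degree bounds through iterated applications of Proposition~\ref{prop:T is triangular} combined with multiplications by powers of $x$; the hypothesis $m \geq d/p$ is precisely what makes all the estimates close up.
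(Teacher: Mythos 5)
Your argument is correct, and it leans on the same key inputs as the paper (Proposition~\ref{prop:T is triangular}, Lemma~\ref{lem:Taunit}, and the hypothesis $m\ge d/p$ to close up every degree estimate), but the mechanics run in the opposite direction. The paper proves $TW_a^m=W_{a-1}^m$ by establishing the inclusion $TW_a^m\subseteq W_{a-1}^m$ and then comparing dimensions: the $T$-kernel of $W_a^m$ is $\{h\in k[x]:\deg h\le am\}$, of dimension $1+\lfloor am\rfloor$, which equals $\dim_k W_a^m-\dim_k W_{a-1}^m$. It then sets $U_a^m:=T^{p^n-1-a}A_n[x]^m w_0^{(n)}$ and inducts \emph{upward} from the base case $U_0^m=k=W_0^m$ (via $T^{p^n-1}\y^{p^n-1}\in k^\times$), using the two short exact sequences given by multiplication by $T$ and matching the $k[x]$-parts of the kernels. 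You instead establish the \emph{top} case $A_n[x]^m w_0^{(n)}=W_{p^n-1}^m$ directly by explicitly inverting the unipotent-triangular change of basis between $\{T^jw_0^{(n)}\}$ and $\{\y^c\}$, prove surjectivity of $T$ on the $W$'s by back-substitution rather than a dimension count, and then descend by applying $T^{p^n-1-a}$. Both routes are sound; the paper's is shorter because the dimension count and the exact-sequence formalism absorb the bookkeeping that you carry out by hand, while your version has the minor virtue of producing explicit preimages and an explicit inverse transition matrix with controlled degrees. One small caveat: your identity $c_b=(-1)^{v_p(b)}b_{v_p(b)}$ (the lowest nonzero $p$-adic digit of $b$, up to sign) is exactly what Proposition~\ref{prop:T is triangular} supplies, so invertibility of each $c_b$ is immediate and Lemma~\ref{lem:Taunit} is only needed, as in the paper, to identify the product of these leading coefficients.
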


\begin{proof}
We first claim that $TW_a^m \subseteq W_{a-1}^m$ as $k$-subspaces of $R_n$.  
To see this, it suffices to prove that $h T\y^b$ lies in $W_{a-1}^m$
for any $h\in k[x]$ with $\deg(h)\le (a-b)m$.  By Proposition \ref{prop:T is triangular},
we know that $T\y^b = \sum_{c<b} g_c \y^c$ with $\deg(g_c)\le (b-1-c)\frac{d}{p}$.
Since $m\ge d/p$ by hypothesis, we have 
$$\deg(hg_c)\le (a-b)m+(b-1-c)\frac{d}{p} \le (a-b)m+(b-1-c)m=(a-1-c)m$$
and it follows that $hT\y^b \in W_{a-1}^m$, as desired.  To see that the inclusion
$TW_a^m\subseteq W_{a-1}^m$ of $k$-subspaces of $R_n$ is an equality, we check that
they have the same dimension.  As the kernel of multiplication by $T$ on $R_n$
is exactly $k[x]$, the kernel of multiplication by $T$ on $W_a^m$ consists of
polynomials $h\in k[x]$ with $\deg(h)\le am$, which has dimension $1+\lfloor am\rfloor$.
Since 
$$\dim_k W_{a}^m=\sum_{0\le b\le a} (\lfloor (a-b)m\rfloor +1),$$
it follows that $\dim_kW_a^m - \dim_k W_{a-1}^m = 1+\lfloor am\rfloor$. By comparing dimensions we have $TW_a^m=W_{a-1}^m$.

For ease of notation, let us put $U_a^m\colonequals T^{p^n-1-a}A_n[x]^mw_0^{(n)}$.
Since $T^{p^n-1}\y^{p^n-1}\in k^{\times}$ by Lemma \ref{lem:Taunit},
we have $U_0^m=k=W_0^m$ inside $R_n$.
Inductively assuming that $U_{a-1}^m=W_{a-1}^m$,
we have a commutative diagram of exact sequences of $k$-subspaces of $R_n$
\begin{equation*}
        \xymatrix{
            0 \ar[r] & k[x] \cap U_a^m \ar[r] & U_a^m \ar[r]^-{\cdot T} & U_{a-1}^m \ar[r]\ar@{=}[d] & 0 \\
            0 \ar[r] & k[x] \cap W_a^m \ar[r] & W_{a}^m \ar[r]_-{\cdot T} & W_{a-1}^m \ar[r] & 0 \\
        }
\end{equation*}
so to prove that $U_a^m=W_a^m$ it is enough to prove that $k[x]\cap W_a^m=k[x]\cap U_a^m$,
which is clear from the definition of $W_a^{m}$ and the second description of $A_n[x]^m$.
\end{proof}

\begin{corollary} \label{cor:functionsqb}
For all $a$, the element $r_a \in A\langle x\rangle$ of Lemma \ref{lemdef:ra} lies in $A\langle x\rangle^{d/p}$.
\end{corollary}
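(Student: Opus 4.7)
The plan is to prove this at each finite level, using Corollary \ref{cor:functions fa} as the main input. Fix $a \ge 0$ and $n_0 \ge 0$; it suffices to show that the image $r_a^{(n_0)}$ of $r_a$ in $A_{n_0}[x]$ lies in $A_{n_0}[x]^{d/p}$, since by definition $A\langle x\rangle^{d/p} = \varprojlim_n A_n[x]^{d/p}$ sits inside $A\langle x\rangle = \varprojlim_n A_n[x]$ as the subset of those power series whose reductions at every level satisfy the valuation bound.

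First, for any $n$ with $p^n > a$, the element $w_a^{(n)} = (-1)^n \y^{p^n-1-a}$ lies in $W_{p^n-1-a}^{d/p}$ essentially trivially: in the presentation of Definition \ref{defn:wa}, take the single term with $b = p^n-1-a$ and $g_b = (-1)^n$, which has degree $0 \le (p^n-1-a - b) \cdot d/p = 0$. Now Corollary \ref{cor:functions fa} applied with $a$ there equal to $p^n-1-a$ and $m = d/p$ gives
\[
    W_{p^n-1-a}^{d/p} \;=\; T^{p^n - 1 - (p^n-1-a)} A_n[x]^{d/p} \, w_0^{(n)} \;=\; T^{a} A_n[x]^{d/p} \, w_0^{(n)},
\]
so there exists $h_n \in A_n[x]^{d/p}$ with $w_a^{(n)} = T^a h_n \, w_0^{(n)}$.

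On the other hand, by the definitions of $\rho_a$ and $r_a$ (Definition \ref{def:rhoa} and Lemma \ref{lemdef:ra}) we have $w_a = T^a r_a \cdot w_0$ in $R$, which upon projection to level $n$ yields $w_a^{(n)} = T^a r_a^{(n)} \, w_0^{(n)}$. Transporting both equalities through the $A_n[x]$-module isomorphism $A_n[x] \xrightarrow{\simeq} R_n$, $f \mapsto f \cdot w_0^{(n)}$ of Proposition \ref{prop: control type results}\ref{uisom}, we conclude $T^a r_a^{(n)} = T^a h_n$ in $A_n[x]$. Since the kernel of multiplication by $T^a$ on $A_n[x] = k[x][T]/(T^{p^n})$ is $T^{p^n-a} A_n[x]$, this yields the congruence
\[
    r_a^{(n)} \equiv h_n \pmod{T^{p^n-a} A_n[x]}.
\]

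To finish, choose $n \ge n_0$ large enough that $p^n - a \ge p^{n_0}$ (any $n$ with $p^n \ge p^{n_0} + a$ works). Reducing the above congruence modulo $T^{p^{n_0}}$ gives $r_a^{(n_0)} \equiv h_n \pmod{T^{p^{n_0}}}$ in $A_{n_0}[x]$. Now the reduction modulo $T^{p^{n_0}}$ of any element of $A_n[x]^{d/p}$ lies in $A_{n_0}[x]^{d/p}$: the valuation bound $v_T(c_i) \ge ip/d$ is preserved by truncation, and the coefficients with $ip/d \ge p^{n_0}$ (i.e.\ with $i \ge p^{n_0-1}d$, which is precisely the index truncation for $A_{n_0}[x]^{d/p}$) become zero. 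Hence $r_a^{(n_0)} \in A_{n_0}[x]^{d/p}$, as required. The only mild subtlety — and the step to be careful about — is the passage from the level-$n$ identity $T^a r_a^{(n)} = T^a h_n$ to control on $r_a$ itself: one cannot read off $r_a^{(n)}$ directly, but must go to levels $n \gg n_0$ where the ambiguity $T^{p^n-a} A_n[x]$ is killed modulo $T^{p^{n_0}}$.
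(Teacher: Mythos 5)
Your proposal is correct and follows essentially the same route as the paper's proof: both rest on the observation that $w_a^{(n)}=(-1)^n\y^{p^n-1-a}\in W_{p^n-1-a}^{d/p}$ together with the identification $W_{p^n-1-a}^{d/p}=T^aA_n[x]^{d/p}w_0^{(n)}$ from Corollary \ref{cor:functions fa}, and then deduce that $r_a$ is congruent to an element of $A_n[x]^{d/p}$ modulo $T^{p^n-a}$. Your extra care in passing from the ambiguity modulo $T^{p^n-a}$ at level $n$ to an exact statement at a fixed level $n_0$ by taking $n\gg n_0$ is just a slightly more explicit version of the paper's concluding limit argument.
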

\begin{proof}
    Fixing $a$, we have %
    $T^a r_a w_0=w_a$ using Definition \ref{def:rhoa} and Lemma \ref{lemdef:ra}. 
    Since $w_a\bmod T^{p^n}=w_a^{(n)}=(-1)^n \y^{p^n-1-a}$ when $p^n>a$ 
    (Definitions \ref{defn:omegab} and \ref{def:wa}), 
    we conclude that $T^a r_a w_0 \bmod T^{p^n} \in W_{p^n-1-a}^{d/p}=T^aA_n[x]^{d/p} w_0^{(n)}$
    thanks to Corollary \ref{cor:functions fa}, so that
    $r_a$ is congruent to an element of $A_n[x]^{d/p}$ modulo $T^{p^n-a}$.
    For varying $n$, these elements are compatible and uniquely determined modulo $T^{p^n-a}$,
    so yield a unique element of $A\langle x\rangle^{d/p}$ congruent to $r_a$ modulo every power of $T$.
\end{proof}

With Corollary \ref{cor:functionsqb} in hand, we now use the 
description of $M$ as a $A$-module provided by 
Proposition \ref{prop:omega as T module} to give strong $T$-adic estimates 
for $M$ when the tower has minimal break ratios.  To do so, it remains only to provide an
explicit description of the function 
$\mu_j$ of Definition \ref{defn:muj} under this hypothesis.  Our description uses
the lexicographic ordering on integer sequences: $(a_n)_{n\ge 0} \succ (b_n)_{n\ge 0}$
    if and only if there exists $m\ge 0$ with $a_i=b_i$ for $i<m$ and $a_i>b_i$.

\begin{proposition} \label{prop:muestimates}
    Assume that $\{X_n\}$ has minimal break ratios with ramification invariant $d$, and for $j\in \Z_{> 0}$ let 
    $\displaystyle \frac{p+1}{d} j=\sum_{n\in \Z} j_n p^{n}$ be the base-$p$ expansion of  $\frac{p+1}{d}j$. Then $\mu_0=1$ and for $j>0$
    \begin{equation*}
            \mu_j = \begin{cases}
                        \left\lfloor \frac{p+1}{d}j\right\rfloor & \text{if}\ (j_n)_{n\ge 0} \succ (j_{-1-n})_{n\ge 0} \\ \\
                        \left\lfloor \frac{p+1}{d}j\right\rfloor + 1 =\left\lceil \frac{p+1}{d}j\right\rceil  & \text{otherwise}
            \end{cases}
    \end{equation*}
\end{proposition}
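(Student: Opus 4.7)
The plan is to translate the definition $\mu_j = \min\{b \ge 1 : \xi_b > j\}$ into explicit base-$p$ combinatorics using Lemma~\ref{lem:xiprops} \ref{xifor}: under minimal break ratios, $\xi_b = \frac{d}{p+1}(b + \operatorname{rev}(b))$, so the defining condition is equivalent to $b + \operatorname{rev}(b) > N$, where $N \colonequals \frac{p+1}{d}j$. The claim $\mu_0 = 1$ is immediate since $\xi_b > 0$ for every $b \ge 1$. For $j > 0$, I note that $\operatorname{rev}(b) < 1$ for every finite $b \ge 1$ (the base-$p$ digits of any positive integer cannot all equal $p-1$), so no $b \le \lfloor N \rfloor - 1$ can satisfy the inequality. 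Moreover, $\gcd(d, p) = 1$ forces the denominator of $N$ to be coprime to $p$, so $N$ admits a unique base-$p$ expansion $\sum_{n \in \Z} j_n p^n$ that either terminates on the right (when $N \in \Z$) or never ends in all $(p-1)$s.

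The heart of the argument is to test $b = \lfloor N \rfloor$, whose base-$p$ digits are precisely $(j_n)_{n \ge 0}$. A direct computation gives
$$b + \operatorname{rev}(b) - N = \sum_{i \ge 0}(j_i - j_{-1-i}) p^{-i-1}.$$
If $(j_n)_{n\ge 0} \succ (j_{-1-n})_{n \ge 0}$, let $k$ be the first index where the two sequences differ; then the leading term $(j_k - j_{-1-k}) p^{-k-1}$ is at least $p^{-k-1}$, while the tail from $i > k$ is bounded by $p^{-k-1}$ in absolute value, with the extreme case tail $= -p^{-k-1}$ (which would require $j_{-1-i} = p-1$ for all $i > k$) excluded by the no-trailing-$(p-1)$s property of the expansion of $N$. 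Hence the sum is strictly positive and $\mu_j = \lfloor N \rfloor$. In the contrary case---either $(j_n)_{n\ge 0} \prec (j_{-1-n})_{n \ge 0}$ strictly, or the two sequences coincide---a symmetric analysis, now using that $(j_n)_{n \ge 0}$ has only finitely many nonzero terms (since $N$ is finite) to preclude the symmetric extreme case, shows the sum is $\le 0$, so $b = \lfloor N \rfloor$ fails. A short check rules out $N \in \Z$ in this contrary case (otherwise $(j_{-1-n})$ vanishes identically and the hypothesis forces $N = 0$), so $\lceil N \rceil = \lfloor N \rfloor + 1$ strictly exceeds $N$, meaning $b = \lceil N \rceil$ trivially succeeds and is the minimizer.

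The main obstacle is the strictness of these tail estimates: the naive geometric bound only gives $|\text{tail}| \le p^{-k-1}$, whereas strict inequality is essential to ensure that the leading term dominates. The critical input is the no-trailing-$(p-1)$s property of the canonical base-$p$ expansion of $N$, which traces directly back to $\gcd(d, p) = 1$---itself guaranteed by the minimal break ratio hypothesis $p \nmid d$.
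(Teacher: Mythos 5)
Your proof is correct and follows essentially the same route as the paper: both rewrite the defining inequality $\xi_b > j \ge \xi_{b-1}$ via $\xi_a = \frac{d}{p+1}(a + \rev(a))$, pin down $\mu_j \in \{\lfloor N\rfloor, \lceil N\rceil\}$ for $N=\frac{p+1}{d}j$ using $0 \le \rev(b) < 1$, and then decide between the two candidates by comparing $\rev(\lfloor N \rfloor)$ with the fractional part of $N$ digit by digit. Your writeup is somewhat more explicit at the last step---the paper's proof compresses it into the chain $\rev(c) > \varepsilon \iff c + \rev(c) > N \iff c = b$ and defers the digit analysis to a closing remark, whereas you spell out the series identity $b + \rev(b) - N = \sum_{i\ge 0}(j_i - j_{-1-i})p^{-i-1}$ and correctly isolate the no-trailing-$(p-1)$s condition as the input needed for the strict inequality.
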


\begin{proof}
        Set $b=\mu_j$, %
        so $b\ge 1$ and
        $\xi_b > j \ge \xi_{b-1}$.  By Lemma \ref{lem:xiprops} \ref{xifor}, this is equivalent to 
        \begin{equation}
            b + \rev(b) > \frac{p+1}{d}j \ge b-1 + \rev(b-1).\label{bjineq}
        \end{equation}
        Taking the floor of \eqref{bjineq} and remembering that 
        $0 < \rev(b)<1$ when $b>0$ gives 
        $$
            1+\left\lfloor \frac{p+1}{d}{j}\right\rfloor \ge b \ge \left\lfloor \frac{p+1}{d}{j}\right\rfloor.
        $$
        Let $c=\lfloor \frac{p+1}{d}{j}\rfloor$, so 
        $\frac{p+1}{d}j = c+\varepsilon$ with $0\le \varepsilon < 1$ and $c\le b$.
        Then
        $\rev(c) > \varepsilon$ if and only if $c+\rev(c) > \frac{p+1}{d}j$
        if and only if $c\ge b$ if and only if $c=b$.  The result follows
        upon expressing $\rev(c)$ and $\varepsilon$ in terms of the $p$-adic digits
        of $\frac{p+1}{d}{j}$.
\end{proof}

\begin{corollary} \label{cor:muestimate}
For $j>0$ we have $| \mu_j - \frac{p+1}{d} j | <  1$.  When $d | (p+1)$, $\mu_j = \frac{p+1}{d} j$.
\end{corollary}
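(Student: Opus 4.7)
The plan is to derive Corollary \ref{cor:muestimate} as an essentially immediate consequence of Proposition \ref{prop:muestimates}. That proposition expresses $\mu_j$ as either $\lfloor \frac{p+1}{d}j \rfloor$ or $\lfloor \frac{p+1}{d}j \rfloor + 1 = \lceil \frac{p+1}{d}j \rceil$, depending on a lexicographic comparison of the ``high'' and ``low'' base-$p$ digits of $\frac{p+1}{d}j$. In either case, $\mu_j$ is at distance strictly less than $1$ from $\frac{p+1}{d}j$ whenever $\frac{p+1}{d}j \notin \Z$, since the floor and ceiling of a non-integer real number differ from it by less than $1$.

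For the first assertion I would then handle the integer case separately. Suppose $\frac{p+1}{d}j = \sum_{n\in\Z} j_n p^n$ is a (positive) integer, i.e.~$j_n=0$ for all $n<0$. Then the sequence $(j_{-1-n})_{n\geq 0}$ is identically zero, while $(j_n)_{n\geq 0}$ has at least one positive entry because $j>0$ forces $\frac{p+1}{d}j>0$. By definition of the lexicographic order, the condition $(j_n)_{n\geq 0} \succ (j_{-1-n})_{n\geq 0}$ is automatic in this situation, so Proposition \ref{prop:muestimates} places us in the first case and yields $\mu_j = \lfloor \frac{p+1}{d}j\rfloor = \frac{p+1}{d}j$. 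Combined with the non-integer case above, this gives $|\mu_j - \frac{p+1}{d}j| < 1$ universally.

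The second assertion is then immediate: if $d \mid (p+1)$, then $\frac{p+1}{d}j \in \Z$ for every positive integer $j$, so the previous paragraph applies and gives $\mu_j = \frac{p+1}{d}j$ on the nose. There is no substantive obstacle here—the content of the corollary is entirely packaged in Proposition \ref{prop:muestimates}, and the only minor point requiring attention is the consistency check that, when $\frac{p+1}{d}j$ happens to be an integer, the proposition falls into its first (floor) case rather than its second, so that the displayed formula $\lfloor\cdot\rfloor + 1 = \lceil\cdot\rceil$ in the ``otherwise'' branch is never applied to an integer.
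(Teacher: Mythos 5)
Your proof is correct. The paper states the corollary without proof, presenting it as an immediate consequence of Proposition \ref{prop:muestimates}, and your argument is exactly the intended one. In particular, you correctly identify the one point that actually needs checking: when $\frac{p+1}{d}j\in\Z$, the tail sequence $(j_{-1-n})_{n\ge 0}$ is identically zero while $(j_n)_{n\ge 0}$ has a nonzero entry (since $\frac{p+1}{d}j>0$), so the lexicographic test is automatically satisfied and Proposition \ref{prop:muestimates} returns the floor, giving $\mu_j=\frac{p+1}{d}j$ exactly rather than $\frac{p+1}{d}j+1$. This is also consistent with the chain of equivalences at the end of the paper's proof of Proposition \ref{prop:muestimates}, where the integer case corresponds to $\varepsilon=0<\rev(c)$, landing in the $c=b$ (floor) branch. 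Nothing to add.
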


The following results are not used in the later sections of the article, but we believe they are of intrinsic interest. Indeed, %
Corollary \ref{cor:global pro-differential growth result} %
helped us to conceive of the strategy of this article.
\begin{lemma} \label{l: lower bound of mu_j}
    For all $j\geq 0$ we have $\mu_j > \frac{p}{d}j$.
\end{lemma}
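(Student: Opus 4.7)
The plan is to reduce the statement to the elementary estimate $\rev(a) \le a/p$ for every non-negative integer $a$, combined with the explicit formula for $\xi_a$ under minimal break ratios provided by Lemma \ref{lem:xiprops} \ref{xifor}.

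First I would dispose of the base case $j=0$, where Proposition \ref{prop:muestimates} gives $\mu_0 = 1 > 0$. For $j \ge 1$, the defining property $\xi_{\mu_j} > j$ combined with the formula $\xi_a = \frac{d}{p+1}(a + \rev(a))$ immediately yields the strict inequality
\begin{equation*}
    \mu_j + \rev(\mu_j) > \tfrac{p+1}{d}\, j.
\end{equation*}
Next, I would establish the elementary bound $\rev(a) \le a/p$ for every non-negative integer $a$: writing $a = \sum_{i \ge 0} a_i p^i$ in base $p$, one has $\rev(a) = \sum_{i \ge 0} a_i p^{-i-1}$ and $a/p = \sum_{i \ge 0} a_i p^{i-1}$, so that
\begin{equation*}
    \tfrac{a}{p} - \rev(a) = \sum_{i \ge 1} a_i \bigl( p^{i-1} - p^{-i-1} \bigr) \ge 0,
\end{equation*}
since every summand is non-negative.

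Substituting this bound with $a = \mu_j$ into the previous display yields
\begin{equation*}
    \mu_j \cdot \tfrac{p+1}{p} = \mu_j + \tfrac{\mu_j}{p} \;\ge\; \mu_j + \rev(\mu_j) \;>\; \tfrac{p+1}{d}\, j,
\end{equation*}
and dividing through by $(p+1)/p$ gives the desired inequality $\mu_j > \tfrac{p}{d}\, j$.

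There is no real obstacle: the argument is a two-line manipulation once one has the closed-form expression for $\xi_a$ under minimal break ratios and the pointwise bound $\rev(a) \le a/p$. The latter is an amusing companion to Corollary \ref{cor:muestimate}'s upper bound $\mu_j < \tfrac{p+1}{d}j + 1$, and together they pin $\mu_j$ into the narrow strip $\tfrac{p}{d}j < \mu_j < \tfrac{p+1}{d}j + 1$.
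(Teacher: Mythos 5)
Your proof is correct and follows essentially the same route as the paper: both start from $\mu_j+\rev(\mu_j)>\frac{p+1}{d}j$ via Lemma \ref{lem:xiprops} \ref{xifor} and then compare $\rev(\mu_j)$ with $\mu_j/p$. The only difference is that you prove the uniform bound $\rev(a)\le a/p$ once and for all, which neatly subsumes the paper's three-way case split ($j\ge d$; $j<d$ with $\mu_j\ge p$; $j<d$ with $\mu_j<p$, where $\rev(\mu_j)=\mu_j/p$ exactly) and is arguably the cleaner way to finish.
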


\begin{proof}
     Setting $b = \mu_j$, it follows from the definition of $\mu_j$ and Lemma \ref{lem:xiprops} \ref{xifor} that
\[b - \frac{pj}{d} > \frac{j}{d} - \rev(b).\]
If $j \geq d$, the right hand side is greater than zero, which gives the result. Assume $j < d$.  If $b\geq p$ there is nothing to show, so we may assume $b < p$.  
In that case, $\rev(b) = b/p$, and we have

\[b - \frac{pj}{d} > \frac{j}{d} - \frac{b}{p} = \frac{1}{p}\Big(\frac{pj}{d} - b\Big),\]
which implies  $b - \frac{pj}{d}$ is positive.
\end{proof}

\begin{corollary} \label{cor:global pro-differential growth result}
    We have $M \subset L^{d/p} w_0 \frac{dx}{x}$.
\end{corollary}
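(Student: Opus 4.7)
The plan is to combine Proposition \ref{prop:omega as T module}, which gives an explicit $A$-basis $\{e_i\}_{i\ge 1}$ of $M$, with the two overconvergence estimates that directly precede the Corollary: Corollary \ref{cor:functionsqb} (controlling the $x$-expansion of $r_{\mu_i}$) and Lemma \ref{l: lower bound of mu_j} (controlling $\mu_i$ from below).

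First, I would reduce to showing the single statement $e_i \in L^{d/p} \, w_0 \frac{dx}{x}$ for every $i \ge 1$. Indeed, by Proposition \ref{prop:omega as T module}, every element $\omega \in M$ has a (convergent in $Z$) expansion $\omega = \sum_{i\ge 1} a_i e_i$ with $a_i \in A$. Since $L^{d/p}$ is stable under multiplication by $A$ and is $T$-adically closed (the condition $v_T(c_i) \ge ip/d$ is preserved under $T$-adic limits of coefficients), and since the map $f \mapsto f \, w_0 \frac{dx}{x}$ from $L$ to $Z$ is continuous and $A$-linear, membership of $\omega$ in $L^{d/p} \, w_0 \frac{dx}{x}$ follows once we know each $e_i$ lies there.

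For the single-basis-element claim, recall from Definition \ref{defn:muj} that
\[
e_i = T^{\mu_i} r_{\mu_i} \, x^i w_0 \frac{dx}{x},
\]
so it suffices to show $T^{\mu_i} r_{\mu_i} x^i \in L^{d/p}$. Write $r_{\mu_i} = \sum_{j\ge 0} c_{i,j} x^j$. By Corollary \ref{cor:functionsqb} we have $r_{\mu_i} \in A\langle x\rangle^{d/p}$, so $v_T(c_{i,j}) \ge jp/d$. Hence
\[
T^{\mu_i} r_{\mu_i} x^i = \sum_{j\ge 0} T^{\mu_i} c_{i,j} \, x^{i+j},
\]
and the coefficient of $x^{i+j}$ has $T$-adic valuation at least $\mu_i + jp/d$. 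By Lemma \ref{l: lower bound of mu_j}, $\mu_i > ip/d$, so
\[
\mu_i + \tfrac{jp}{d} > \tfrac{(i+j)p}{d},
\]
which is precisely the condition for membership in $L^{d/p}$.

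Since all the hard technical work is already done in Corollary \ref{cor:functionsqb} and Lemma \ref{l: lower bound of mu_j}, there is no real obstacle; the only minor care required is verifying that $L^{d/p}$ is $T$-adically closed and $A$-stable so that the infinite sum $\sum a_i e_i$ defining an arbitrary element of $M$ lands in $L^{d/p} \, w_0 \frac{dx}{x}$, which is immediate from Definition \ref{definition: Lm}.
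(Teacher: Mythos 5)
Your proof is correct and takes essentially the same route as the paper's: reduce via Proposition \ref{prop:omega as T module} to showing $T^{\mu_i} r_{\mu_i} x^i$ is overconvergent, then combine Lemma \ref{l: lower bound of mu_j} (giving $\mu_i > ip/d$) with Corollary \ref{cor:functionsqb} (giving $r_{\mu_i}\in A\langle x\rangle^{d/p}$). The only cosmetic difference is that the paper invokes the fact that $A\langle x\rangle^{d/p}$ is a ring to close, while you unwind that into the explicit coefficient estimate $\mu_i + jp/d > (i+j)p/d$; you are also somewhat more careful than the paper in spelling out why $T$-adic closure of $L^{d/p}$ lets you pass from the basis elements $e_i$ to an arbitrary convergent sum.
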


\begin{proof}
    By Proposition \ref{prop:omega as T module}, it is enough to show that $T^{\mu_j}r_{\mu_j}x^j \in A\langle x\rangle^{d/p}$ for $j\geq 1$.
    As $\mu_j > \frac{pj}{d}$, we have $T^{\mu_j}x^j \in A\langle x\rangle^{d/p}$, so the result follows
    from Corollary \ref{cor:functionsqb} and the fact that $A\langle x\rangle^{d/p}$ is a ring.
\end{proof}

\subsection{\texorpdfstring{$T$}{T}-adic properties of Frobenius}
We conclude this section by analyzing the Frobenius operator on $W_a^m$ 
and the $T$-adic growth of the Frobenius element $\alpha$ defined in Definition \ref{alphadef}. 

\begin{proposition}\label{prop:Frobconv}
For any $a$, $F(W_a^{d/p}) \subset W_a^d$.  
\end{proposition}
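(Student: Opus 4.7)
I would prove this by induction on $a$, using the commutation $FT = TF$ on $R_n$ together with the identity $TW_a^m = W_{a-1}^m$ for $m \geq d/p$ from Corollary~\ref{cor:functions fa}. The base case $a = 0$ is immediate since $W_0^{d/p} = W_0^d = k$ and $F(k) = k$.

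For the inductive step, take $f \in W_a^{d/p}$. Corollary~\ref{cor:functions fa} gives $Tf \in W_{a-1}^{d/p}$, so by the inductive hypothesis $F(Tf) \in W_{a-1}^d$. Since Frobenius commutes with the ring automorphism $\gamma$ (as $F(\gamma r) = (\gamma r)^p = \gamma(r^p)$ in characteristic $p$), it commutes with $T = \gamma - 1$, yielding $T(Ff) = F(Tf) \in W_{a-1}^d$. The surjectivity $T\colon W_a^d \twoheadrightarrow W_{a-1}^d$ (again Corollary~\ref{cor:functions fa}) lets me choose $h \in W_a^d$ with $T(h) = T(Ff)$, and setting $g := Ff - h$ gives $Tg = 0$, so $g$ lies in the $\gamma$-fixed subring $R_n \cap k(x) = k[x]$, the last equality because $X_n \to \PP^1$ is unramified over $\A^1$.

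It remains to verify that $\deg g \leq ad$, which places $g \in W_a^d$ and completes the induction. For this I would use a valuation bound: for $f = \sum_b g_b\y^b \in W_a^{d/p}$, combining $\deg g_b \leq (a-b)d/p$ with $\xi_b \leq bd/p$ from Lemma~\ref{lem:xiprops}(iii) gives $\ord_n(f) \geq -p^{n-1}ad$ via Corollary~\ref{cor:valuationsdistinct}, whence $\ord_n(Ff) = p\ord_n(f) \geq -p^n ad$. Writing $Ff = \sum_c h'_c \y^c$ in standard form, Corollary~\ref{cor:valuationsdistinct} forces $\deg h'_c + \xi_c \leq ad$ for every $c$; specializing to $c=0$ (where $\xi_0 = 0$) yields $\deg h'_0 \leq ad$. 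Since the $\y^0$-coefficient of $Ff = h + g$ equals $h_0 + g$, where $h_0$ is the $\y^0$-coefficient of $h \in W_a^d$ (so $\deg h_0 \leq ad$), we get $g = h'_0 - h_0$ and hence $\deg g \leq ad$, as required.

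The main subtlety is organizing the argument inductively: once one notices $FT = TF$, control over the full Frobenius image of $W_a^{d/p}$ reduces to control over its single $\y^0$-component (the kernel direction of $T$), which is then cleanly handled by the elementary valuation estimate coming from the definition of $W_a^{d/p}$ and the minimal-break-ratio bound $\xi_b \le bd/p$.
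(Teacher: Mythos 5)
Your proof is correct and follows essentially the same route as the paper's: induction on $a$, using the commutation $FT = TF$ together with $TW_a^m = W_{a-1}^m$ from Corollary~\ref{cor:functions fa} to reduce $F(W_a^{d/p}) \subset W_a^d$ to controlling the $\y^0$-component (the kernel of $T$), which is then bounded by the valuation estimate $\ord_n(f) \geq -p^{n-1}ad$ combined with $\ord_n(Ff) = p\,\ord_n(f)$ and Corollary~\ref{cor:valuationsdistinct}. The only cosmetic difference is that you introduce the auxiliary element $h$ and compute $g = h_0' - h_0$, whereas the paper observes directly that $g_0$ is the $\y^0$-coefficient of $Fz$ and bounds it in one step.
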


\begin{proof}
    We will proceed by induction on $a$, with base case $a=0$ immediate.
    Assume the asserted containment holds with $a-1$ in place of $a$, 
    and let $z\in W_a^{d/p}$.  Choosing $n$ with $p^{n+1}>a$, since 
    $R_n$ is clearly $F$-stable, we have $F(z)\in R_n$.
     From  
Corollary \ref{cor:functions fa} we know $TW_a^{m} =W_{a-1}^{m}$ for $m\ge d/p$, so  
    $Tz\in W_{a-1}^{d/p}$, and the inductive hypothesis gives
    $T(Fz)=F(Tz) \in W_{a-1}^d$ as $T$ commutes with $F$.  Using again
    the fact that $TW_a^m=W_{a-1}^m$ for $m\ge d/p$, there exists $f\in W_a^d$
    with $F(z)-f \in R_n$ killed by $T$.  As the kernel of $T$ on $R_n$ is $k[x]$,
    we conclude that
    $$
        F(z) = \sum_{b=0}^a g_b \y^b \quad \text{with}\quad \deg(g_b) \le (a-b)d\quad \text{for}\ b>0.
    $$
    and it remains to check that $g_0 \in W_a^d$, i.e.~$\deg(g_0) \le ad$.
    As $z\in W_a^{d/p}$ by hypothesis, we have 
    $$\ord_{n+1}(z) \ge -p^{n+1}\left((a-b)\frac{d}{p}+\xi_b\right) \ge -adp^n$$
    by Corollary \ref{cor:valuationsdistinct} and Lemma \ref{lem:xiprops} \ref{xineq},
    and it follows that $\ord_{n+1}(Fz) \ge -adp^{n+1}$, which forces
    $\deg(g_0) \leq ad$ by Corollary~\ref{cor:valuationsdistinct}.  
\end{proof}

\begin{corollary} \label{cor: Frobenius element growth}
Let $\alpha$ be as in Definition \ref{alphadef}. Then both $\alpha$
and $\alpha^{-1}$ lie in $A\langle x\rangle^{d}$.
\end{corollary}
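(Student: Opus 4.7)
Proof plan: The isomorphism $A\langle x\rangle\xrightarrow{\sim}R$ of Proposition \ref{prop: control type results} \ref{uisom}, sending $f\mapsto f\cdot w_0$, descends at each finite level $n$ to an isomorphism $A_n[x]\xrightarrow{\sim}R_n$, $f\mapsto f\cdot w_0^{(n)}$. Writing $\overline{\alpha}_n\in A_n[x]$ for the reduction of $\alpha$ modulo $T^{p^n}$, the defining relation $F(w_0)=\alpha w_0$ descends to $F(w_0^{(n)})=\overline{\alpha}_n\cdot w_0^{(n)}$ in $R_n$. Since $A\langle x\rangle^d=\varprojlim_n A_n[x]^d$ by Definition \ref{d:Lm}, it is enough to prove $\overline{\alpha}_n\in A_n[x]^d$ for each $n$.

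The essential observation is that $w_0^{(n)}=(-1)^n\y^{p^n-1}$ lies in $W_{p^n-1}^{d/p}$: expanded in the basis $\{\y^b\}$, only the coefficient of $\y^{p^n-1}$ is nonzero, and this scalar $(-1)^n$ has degree $0=(p^n-1-(p^n-1))(d/p)$. Applying Proposition \ref{prop:Frobconv} with $a=p^n-1$ immediately yields $F(w_0^{(n)})\in W_{p^n-1}^d$. By Corollary \ref{cor:functions fa} (again with $a=p^n-1$), we have $W_{p^n-1}^d=A_n[x]^d\cdot w_0^{(n)}$, and since multiplication by $w_0^{(n)}$ is injective on $A_n[x]$, we conclude $\overline{\alpha}_n\in A_n[x]^d$. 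Taking the inverse limit in $n$ gives $\alpha\in A\langle x\rangle^d$.

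For $\alpha^{-1}$, Remark \ref{rem:alphaunit} furnishes $\alpha\in 1+TA\langle x\rangle$, so $\alpha^{-1}=\sum_{k\ge 0}(1-\alpha)^k$ converges $T$-adically in $A\langle x\rangle$. A direct check shows that $A\langle x\rangle^d$ is a subring of $A\langle x\rangle$ (if $v_T(c_i)\ge i/d$ and $v_T(c_j')\ge j/d$, then $v_T((fg)_k)\ge k/d$), and it is closed in the $T$-adic topology of $A\langle x\rangle$ because the coefficient-wise inequalities $v_T(c_i)\ge i/d$ are preserved under $T$-adic limits. Since both $1$ and $\alpha$ are in $A\langle x\rangle^d$, so is $1-\alpha$, whence each partial sum $\sum_{k=0}^N(1-\alpha)^k$ lies in $A\langle x\rangle^d$; passing to the limit yields $\alpha^{-1}\in A\langle x\rangle^d$.

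There is no substantive obstacle: the containment for $\alpha$ is a clean application of Proposition \ref{prop:Frobconv} once one notices that $w_0^{(n)}$ sits in the tightest filtration piece $W_{p^n-1}^{d/p}$ purely because it is a single basis monomial with constant coefficient, and the statement for the inverse is then a formal consequence of the $T$-adic completeness of $A\langle x\rangle^d$.
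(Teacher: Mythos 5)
Your proof is correct and follows essentially the same route as the paper: the containment $\alpha\in A\langle x\rangle^d$ is obtained by combining Proposition \ref{prop:Frobconv} with Corollary \ref{cor:functions fa} at each finite level and passing to the limit, exactly as the paper does (the paper applies $F$ to all of $A_n[x]^{d/p}w_0^{(n)}$ rather than just to $w_0^{(n)}$, an immaterial difference). For $\alpha^{-1}$ the paper runs an explicit induction on the coefficients of $1=\alpha\cdot\alpha^{-1}$, which rests on the same multiplicativity estimate $v_T(b_{i-j}c_j)\ge (i-j)/d+j/d=i/d$ that makes $A\langle x\rangle^{d}$ a $T$-adically closed subring, so your Neumann-series packaging is an equivalent formalization rather than a genuinely different argument.
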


\begin{proof}
    Combining Proposition \ref{prop:Frobconv} and Corollary \ref{cor:functions fa} gives 
    $F(A_n[x]^{d/p} w_0^{(n)}) \subset A_n[x]^d w_0^{(n)}$.  Passing to the limit
    yields $F(A\langle x\rangle^{d/p} w_0) \subset A\langle x\rangle^d w_0$, which suffices to give $\alpha \in \calA\langle x\rangle^d$.
    By Remark \ref{rem:alphaunit}, we know that $\alpha \in 1+TA\langle x\rangle$, so that $\alpha$ is a unit in $A\langle x\rangle$ \cite[\S 5.1.3 Proposition 1]{BGR}.
    Writing $\alpha^{-1}=\sum_{i\ge 0} c_i x^i$ with $c_i\in A$
    and $\alpha = \sum_{i\ge 0} b_i x^i$ with $v_T(b_i) \ge i/d$,
    we prove by induction that $v_T(c_i)\ge i/d$.  Since $\alpha \in 1 + TA\langle x \rangle$ we deduce $b_0\equiv 1 \mod T$. In particular, we know $b_0 \in A^\times$, and thus $c_0=b_0^{-1} \in A^\times$ which gives the base case.
    For $i > 0$,  comparing  coefficients of $x^i$ in $1=\alpha\cdot \alpha^{-1}$ gives
    \[
        c_i =  - \sum_{j<i} b_{i-j} c_j.
    \]
    Assuming that $v_T(c_j)\ge j/d$ for $j < i$, we have 
    $v_T(b_{i-j}c_j) \ge (i-j)/d + j/d = i/d$ for $j < i$,
    so every term in the above sum has valuation at least $i/d$,
    which completes the inductive step.
\end{proof}

\section{\texorpdfstring{$L$}{L}-functions and spectral properties of Cartier} \label{sec:lfunctions}

\renewcommand*{\b}{\beta}
\newcommand*{\m}{\mathfrak{m}}

We continue with the notation from Section \ref{s: tadic growht for minimally ramified towers}. In particular, the tower $X_\infty/X$ has minimal break ratios with invariant $d$. Recall from equation \eqref{eq:galoischar} that $\chi$ is a Galois character
$\chi:\Gal(L/K) \to A^\times$ that sends $a \in \Z_p \simeq \Gal(L/K)$ to $(1+T)^a$. The $L$-function associated to $\chi$ is defined by the following equivalent Euler products
\begin{align}
\begin{split}\label{eq: L-function definition}
    L(\chi,s) &= \prod_{\mathrm{places }~v}\left( 1 - \chi(Frob_v) s^{\deg(v)}\right) \\
    &= \prod_{\mathrm{places }~v}\left( 1 - \alpha(v) \dots \sigma^{d\fielddegree - 1}(\alpha)(v) s^{\deg(v)}\right),
\end{split}
\end{align}
where the product is over all places of $K$ away from $\infty$, and the equivalence of the two Euler products follows from Proposition \ref{prop: Frobenius }. 
In this section we use properties of $L(\chi,s)$, essentially proven in \cite{KostersZhu}  and refined in \cite{kramer-milleruptonII},  to study the Cartier operator on $Z$.

\subsection{Recollections on Banach spaces and Fredholm series} \label{ss:recollectionsbanach}
For clarity of exposition, in this section only we fix an arbitrary complete, valued field 
$(\calE, v:\calE^{\times} \to \R)$ with
valuation ring $\calA$ having maximal ideal $\m$ and residue field $k$.
 A {\em Banach space} over $\calE$ is an $\calE$-vector space  
 $\calV$ equipped with a valuation $\upnu$ that is compatible with the
 valuation on $\calE$ and with respect to which $\calV$
 is complete.
Given $\calV$, we write $\calD$ for the $\calA$-submodule of $\calV$ consisting of all elements
with nonnegative  valuation, 
and set $\overline{\calD}:=\calD/\m \calD$, which is naturally a $k$-vector space.

\begin{definition} \label{defn:schauderbasis}
    A {\em Schauder basis} of an $\calE$-Banach space $\calV$
    is a sequence $\calB=\{\b_1,\b_2,\ldots\}$ in $\calV$ with the property that,
    for every element $m \in \calV$, there exists a unique sequence $\{s_1,s_2,\ldots\}$
    in $\calE$ such that $\lim_{N\rightarrow \infty} \sum^N_{i=1} s_i \b_i$
    exists and equals $m$.  We will henceforth refer to $\calB$ simply as a {\em basis}
    of $\calV$.  If in addition we have $s_i\rightarrow 0$ as $i\rightarrow \infty$ and $\upnu(m)=\inf_i v(s_i)$, we say $\calB$ is an \emph{orthonormal basis}. 
\end{definition}

\begin{definition}\label{def: fildefgeneral}
Let $\calV$ be an $\calE$-Banach space with orthonormal basis $\calB=\{\b_i\}$.
For $t\in \Z_{\ge 0}$, define
		\begin{align*}
			\Fil{> \filindex}_{\calB} \calV	 \colonequals	\left\{	\sum_{i = 1}^\infty a_i \b_i \in \calV: a_i = 0\text{ for }i \leq \filindex	\right\}
   \quad\text{and}\quad
            \Fil{\leq \filindex}_{\calB} \calV	 \colonequals	\left\{	\sum_{i = 1}^\infty a_i \b_i \in \calV: a_i = 0\text{ for }i > \filindex	\right\}.
	\end{align*}
In other words, $\Fil{\le \filindex}_{\calB} \calV$ is the $\calE$-subspace of $\calV$ spanned by $\b_1,\ldots,\b_t$, and $\Fil{>\filindex}_{\calB} \calV$ is the closure of the span of $\{\b_i\}_{i>t}$ in $\calV$.  Note that $\Fil{\le \filindex}_{\calB} \calV$
(respectively $\Fil{>\filindex}_{\calB} \calV$)
is an increasing (respectively decreasing) filtration of $\calV$ by $\calE$-subspaces.
If $\calW$ is any subquotient of $\calV$, we will write $\Fil{\star}_\calB{}\calW$ for the induced filtrations on $\calW$
\cite[\href{https://stacks.math.columbia.edu/tag/0120}{Tag 0120}]{stacks-project}.
\end{definition}

For our applications, we will need to analyze {\em semilinear} maps of Banach spaces.  Rather than develop
the theory of such maps from scratch, we will allow ourselves sufficient flexibility in the usual theory of
linear maps as follows: let $k'\subseteq k$ be any subfield with $k/k'$ of finite degree $\nu$
and let $\calE'$ be the unique corresponding subfield of $\calE$ with $\calE/\calE'$ unramified.
If $\calV$ is any Banach space over $\calE$, we may consider it as a Banach space over $\calE'$,
and we will study maps of Banach spaces over $\calE$ that are $\calE'$-linear.  Fixing
a $k'$-basis $\varsigma_1,\ldots,\varsigma_{\nu}$ of $k$, each $\varsigma_j$
is a root of unity of order prime to $p$, so by Hensel's lemma 
uniquely lifts to a root of unity in $\calE$, which we again denote by $\varsigma_j$.
Note that if $\calB=\{\b_i\}$ is any basis of a Banach space $\calV$ over $\calE$,\
then $\calB_{k'}:=\{\varsigma_j \b_i\}$ is a basis of $\calV$ over $\calE'$, and is orthonormal
if and only if $\calB$ is.

    \begin{definition}
        Let $\calB=\{\b_i\}_{i\ge 1}$ be a basis of a Banach space $\calV$ over $\calE$ and $\Flin: \calV \to \calV$ an $\calE'$-linear map.
        Let $A=(a_{ij})$ be the matrix of $\Flin$ with respect to $\calB_{k'}$.
        For each $n$, consider
        \begin{equation}\label{eq: leibniz type formula}
				c_n	\colonequals 	(-1)^n \sum_{\substack{S\subseteq \calB_{k'} \\ |S| = n}} \sum_{\sigma \in \mathrm{Sym}(S)} \mathrm{sgn}(\sigma) \prod_{\b \in S}    a_{\sigma(\b),\b}
        \end{equation}
        in which the first (infinite) sum is over all subsets $S$ of $\calB_{k'}$ of cardinality $n$, and the second sum
        is over all permutations of $S$.  Provided the infinite sum in equation \eqref{eq: leibniz type formula} converges for all $n$, 
        we say that the {\em Fredholm determinant of $\Flin$ relative to $\calB_{k'}$} exists, and define it to be
        \begin{equation*}
            \Det_{\calE}(1 - s \Flin|\calB_{k'}) \colonequals  1 + c_1 s + c_2 s^2 + \cdots \in \calE\llbracket s \rrbracket.
        \end{equation*}
    \end{definition}
    In general, the existence and value of the Fredholm determinant depends both on $\calB_{k'}$ and $\Flin$. 
    Under suitable ``overconvergence'' hypotheses on $\Flin$ relative to $\calB_{k'}$, the Fredholm determinant of $\Flin$ exists, and is independent of
    $\calB_{k'}$ up to ``overconvergent'' change of basis, as the following
    Definitions and Lemmas make precise:

    \begin{definition}\label{def:overconvergent balls}
        Let $\calV$ be an $\calE$-Banach space with orthonormal basis $\calB=\{\b_i\}$.  For $r> 0$, define
        \[
            \calD^r_{\calB}:=\left\{ \sum_{i\ge 1} a_i \b_i \in \calD\ :\ rv(a_i) \ge i\right\}
            \quad\text{and}\quad
             \calD^{[r]}_{\calB}:=\left\{ \sum_{i\ge 1} a_i \b_i\in \calD^r_{\calB}\ :\ rv(a_i)-i\rightarrow \infty\right\}.
        \]
        We equip $\calD_{\calB}^{[r]}$ with the valuation $\upnu_{r}\left(\sum a_i\b_i\right):=\inf_i (rv(a_i)-i)$,
        which makes $\calV^{[r]}_{\calB}:=\calD_{\calB}^{[r]}\otimes_{\calA}\calE$ into a Banach space over $\calE$.
        If $r^{-1}=v(\delta)$ for some $\delta \in \calA$, we set $\calB^r:=\{ \delta^i \b_i\}$; it is an orthonormal basis of $\calV^{[r]}_{\calB}$.

    \end{definition}

    \begin{definition}
        Let $\calV$ and $\calB$ be as in Definition 
        \ref{def:overconvergent balls}.
        A sequence $B=\{b_i\}_{i > 0}$ in $\calV$
        is {\em $r$-overconvergent with respect to $\calB$} if, for all $i$,
        the $\calB$-expansion of $b_i = \sum_{j\ge 0} a_{ij} \beta_j$ has $rv(a_{ij}) \ge j-i$
        for all $j$, with equality when $j=i$. An $\calE'$-linear map $\Flin:\calV\rightarrow \calV$
        is {\em $r$-overconvergent with respect to $\calB$} if $\Flin(\calD^r_{\calB}) \subseteq \calD^{\rho}_{\calB}$
        for some $\rho < r$.
    \end{definition}

    \begin{remark}\label{rem:B resp B}
        Tautologically, $\calB$ is $r$-overconvergent with respect to $\calB$ for any $r>0$.
    \end{remark}

    \begin{lemma}
        \label{l: open discs with changing }
        Let $\calV$ be an $\calE$-Banach space with orthonormal basis $\calB=\{\beta_i\}$.
        If $B=\{b_i\}\subseteq \calD$ is $\rho$-overconvergent, then for any $r> \rho$ (respectively $r \ge \rho$), 
        we have $\calD^{[r]}_B=\calD^{[r]}_{\calB}$ (respectively $\calD^{r}_B=\calD^{r}_{\calB}$).
        If $r^{-1}=v(\delta)$ with $\delta\in \calA$, then $B^{r}$
        is an orthonormal basis of $\calV^{[r]}_{\calB}$.
    \end{lemma}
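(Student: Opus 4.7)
The plan is to study the $\calE$-linear operator $\Psi\colon \calV^{[r]}_\calB\to\calV^{[r]}_\calB$ determined on the orthonormal basis $\calB^r=\{\delta^i\beta_i\}$ by $\Psi(\delta^i\beta_i)\colonequals \delta^i b_i$, and to show that for $r>\rho$ it is a $\upnu_r$-isometric bijection. Granted this, $B^r=\Psi(\calB^r)$ is automatically an orthonormal basis of $\calV^{[r]}_\calB$, proving the last assertion; the equalities $\calD^{[r]}_B=\calD^{[r]}_\calB$ and $\calD^r_B=\calD^r_\calB$ then follow because both sides describe the same $c_0$-subspace (respectively unit ball) of $\calV^{[r]}_\calB$ expressed in the two orthonormal bases.

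First I would expand $b_i=\sum_j a_{ij}\beta_j$ and record the ingredients: $a_{ij}\in\calA$ (since $b_i\in\calD$), $v(a_{ii})=0$, and $\rho v(a_{ij})\ge j-i$ (from the $\rho$-overconvergence of $B$). The matrix of $\Psi$ in the basis $\calB^r$ then has entries $M_{ji}=a_{ij}\delta^{i-j}$, and a brief case analysis gives $v(M_{ii})=0$ along with the off-diagonal bound $v(M_{ji})\ge \epsilon\colonequals \min(1/\rho-1/r,\,1/r)$: when $j>i$ one combines $v(\delta^{i-j})=(i-j)/r$ with $\rho v(a_{ij})\ge j-i$, and when $j<i$ one uses $a_{ij}\in\calA$ together with $v(\delta^{i-j})=(i-j)/r>0$. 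The strict positivity of $\epsilon$ is precisely the upshot of $r>\rho$. Consequently each $\delta^i b_i$ has nonnegative $\upnu_r$-valuation with coefficients decaying as $j\to\infty$, so $\delta^i b_i\in\calD^{[r]}_\calB$, and $\Psi$ is bounded as an operator on both $\calD^r_\calB$ and $\calD^{[r]}_\calB$.

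Factoring $M=D(I+E)$ with $D=\mathrm{diag}(a_{ii})$ invertible and $E$ having $E_{ii}=0$, $v(E_{ji})\ge \epsilon$ off-diagonal, a direct coefficient estimate using $\upnu_r(\sum c_i\delta^i\beta_i)=r\inf_i v(c_i)$ yields $\upnu_r(Ex)\ge \upnu_r(x)+r\epsilon$; iterating gives $\upnu_r(E^n x)\to\infty$, so the Neumann series $\sum_{n\ge 0}(-E)^n$ converges in operator norm on both $\calD^r_\calB$ and $\calD^{[r]}_\calB$, producing bounded inverses to $I+E$ and hence to $\Psi$. Thus $\Psi$ is a bijective $\upnu_r$-isometry, completing all three assertions when $r>\rho$. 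For the remaining boundary case $\calD^\rho_B=\calD^\rho_\calB$, I would observe that $\rho$-overconvergence is automatically $r$-overconvergence for every $r>\rho$, and that $\calD^\rho_\calB=\bigcap_{r>\rho}\calD^r_\calB$ is immediate from the defining condition $\rho v(a_i)\ge i$ (with the analogous identity for $B$), so intersecting the equalities already established covers $r=\rho$. The main obstacle is convergence of the Neumann series in this infinite-dimensional setting: this hinges on the strictly positive uniform bound $\epsilon$ on valuations of off-diagonal entries, which collapses to zero at $r=\rho$—precisely why the stronger $\calD^{[\rho]}$-equality is not claimed and the boundary case must be reached through the intersection trick.
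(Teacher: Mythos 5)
Your argument is in substance the paper's: both rest on the estimate $r\,v(\delta^i a_{ij})-j \ge \max\{\,i-j,\ (r/\rho-1)(j-i)\,\}$ coming from $\rho$-overconvergence, both conclude that $B^r$ is an orthonormal basis of $\calV^{[r]}_{\calB}$, and both recover the closed-ball statement for $r\ge\rho$ by writing $\calD^{r}$ as an intersection over larger radii. The one real methodological difference is how orthonormality of $B^r$ is certified: the paper notes that $\delta^i a_{ii}^{-1}b_i$ and $\delta^i\beta_i$ have the same image in $\overline{\calD}{}^{[r]}_{\calB}$ and cites Serre's Lemme 1, whereas you invert the change-of-basis operator by hand via $M=D(I+E)$ and a Neumann series --- essentially a self-contained proof of the special case of Serre's lemma you need, at the cost of also having to check that $E$ preserves the decay condition defining $\calD^{[r]}_{\calB}$ (true, since each column of $E$ lies in $\calD^{[r]}_{\calB}$ and $E$ is bounded, but worth a line). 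Two points need repair. First, your entire setup presupposes $r^{-1}=v(\delta)$ for some $\delta\in\calA$, but the assertion $\calD^{[r]}_B=\calD^{[r]}_{\calB}$ is claimed for \emph{arbitrary} $r>\rho$; the paper handles this by base-changing to a complete valued extension $\calE''$ whose value group contains $1/r$ and observing $\calD^{[r]}_{\star}=(\calD''_{\star})^{[r]}\cap\calV$ --- you need this reduction or a $\delta$-free reformulation of your estimates. Second, $\calD^{r}_{\calB}$ is \emph{not} the unit ball of $\calV^{[r]}_{\calB}$ (that is $\calD^{[r]}_{\calB}$, which carries the extra condition $rv(a_i)-i\to\infty$), so the claim that both equalities ``follow because both sides describe the same unit ball'' is off as stated; however, your Neumann-series argument applied on $\calD^{r}_{\calB}$ itself, together with the intersection $\calD^{\rho}=\bigcap_{r>\rho}\calD^{r}$ you invoke for the boundary case, does yield the closed-ball equality, matching the paper's final step.
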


    \begin{proof}
        Let $\calE^{''}$ be an extension of $\calE$ that is complete with respect to a valuation $v:(\calE^{''})^\times \to \R$ extending that on $\calE$ and let $\calA^{''}$ be the valuation ring of $\calE^{''}$. 
        Then the completed tensor product $\calV^{''}:=\calE^{''} \widehat{\otimes}_{\calE} \calV$ is a Banach space over $\calE^{''}$
        and $\calD^{''}:=\calA^{''} \widehat{\otimes}_{\calA} \calD$ is the $\calA^{''}$-submodule of $\calV^{''}$ consisting of all elements with nonnegative valuation. Viewing $\calB$ and $B$ as orthonormal bases of $\calV^{''}$, it is immediate that $\calD_\star^{[r]}=(\calD^{''}_\star)^{[r]} \cap \calV$ where $\star$ is $\calB$ or $B$.
        Thus, to prove $\calD_B^{[r]}=\calD_{\calB}^{[r]}$, we are reduced to the case where $r^{-1}=v(\delta)$ for some $\delta \in \calA$. 
        As $\calB$ is an orthonormal basis and $B\subseteq \calD$, for each $i$ we may write $b_i=\sum_{j > 0} a_{ij} \beta_j$
        with $a_{ij}\in \calA$.  The hypothesis that $B$ is $\rho$-overconvergent gives $v(a_{ij}) \ge \max\{0,(j-i)/\rho \}$
        for all $j$, so 
        \begin{equation}
            r v(\delta^i a_{ij})-j \ge \max\left\{i-j, \left(\frac{r}{\rho}-1\right)(j-i)\right\},\label{eq:valcalc}
        \end{equation}
        which is nonnegative and tends to $\infty$ with $j$ when $r>\rho$.  Thus, $B^{r}\subseteq \calD^{[r]}_{\calB}$.
        By hypothesis, we have $a_{ii}\in \calA^{\times}$, so
        \[
            \upnu_{r}(\delta^i\beta_i - \delta^ia_{ii}^{-1} b_i) =  \inf_{j\neq i} (r v(\delta^i a_{ii}^{-1}a_{ij})-j))
        \]
        is {\em positive} thanks to \eqref{eq:valcalc}; we conclude that $\delta^i \beta_i$ and $\delta^i a_{ii}^{-1} b_i$
        have the same image in $\overline{\calD}_{\calB}^{[r]}$.
        Since $a_{ii}\in \calA^{\times}$ and $\calB^{r}$ is an orthonormal basis of $\calV^{[r]}_{\calB}$, so also is $B^{r}$ by \cite[Lemme 1]{SerreOperators}, and it follows that $\calD^{[r]}_{B}=\calD^{[r]}_{\calB}$.
        As $\calD^{r}_{\calB}$ is the intersection over all $r' > r$ of $\calD^{[r']}_{\calB}$,
        we deduce from this the equality $\calD^{r}_{\calB}=\calD^{r}_B$ for all $r\ge \rho$.
    \end{proof}

    A linear map of Banach spaces is {\em completely continuous}
    if it is a limit of maps of {\em finite} rank.  By \cite[Proposition 7]{SerreOperators},
    the Fredholm determinant of a completely continuous endomorphism exists relative to {\em any} orthonormal basis, is independent
    of the choice of basis, and is entire as a function of $s$ (i.e.~converges for all $s$). 
    The following result builds upon these key facts to establish the existence and independence of the Fredholm determinant
    in the context of overconvergence:

    \begin{lemma}\label{lem: technical}
        Let $\calV$ be a Banach space over $\calE$ with orthonormal basis 
        $\calB=\{\beta_i\}$, and $\Flin:\calV\rightarrow \calV$ an $\calE'$-linear map.
        Assume that $\Flin$ is $r$-overconvergent with respect to $\calB$ for some
        $r>0$.   Then $\Flin$ restricts to a completely continuous
        $\calE'$-linear endomorphism of $\calV^{[r]}_{\calB}$. %
        In particular, if $r^{-1}=v(\delta)$ and $B=\{b_i\}\subseteq \calD$ is $\rho$-overconvergent with respect to $\calB$ for some $\rho < r$,
        then $\Det_{\calE'}(1-s\Flin|B_{k'})$ and $\Det_{\calE'}(1-s\Flin|\calB^r_{k'})$
        exist and are equal.
    \end{lemma}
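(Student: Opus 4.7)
The plan is to first establish complete continuity of $\Flin$ on the Banach space $\calV^{[r]}_{\calB}$ by using the $r$-overconvergence hypothesis to control the matrix of $\Flin$ in the orthonormal basis $\calB^r$, and then to deduce the statement on Fredholm determinants from Serre's independence-of-basis result together with Lemma~\ref{l: open discs with changing }.

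For the complete continuity, as in the proof of Lemma~\ref{l: open discs with changing } I would first extend scalars so as to reduce to the case $r^{-1}=v(\delta)$ for some $\delta\in\calA$; then $\calB^r=\{\delta^i\beta_i\}_{i\ge 1}$ is an orthonormal basis of $\calV^{[r]}_{\calB}$. Writing $\Flin(\beta_i)=\sum_j c_{ji}\beta_j$ in the basis $\calB$, the matrix of $\Flin$ in the basis $\calB^r$ has $(j,i)$-entry $c^r_{ji}=\delta^{i-j}c_{ji}$. By $r$-overconvergence there exists $\rho<r$ with $\Flin(\calD^r_{\calB})\subseteq\calD^{\rho}_{\calB}$; applying this containment to $\delta^i\beta_i\in\calD^r_{\calB}$ and unwinding Definition~\ref{def:overconvergent balls} yields $v(c_{ji})\ge j/\rho-i/r$, hence $v(c^r_{ji})\ge j(1/\rho-1/r)$. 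Since $\rho<r$, this lower bound tends to $+\infty$ uniformly in $i$ as $j\to\infty$, which exhibits $\Flin|_{\calV^{[r]}_{\calB}}$ as the operator-norm limit of its finite-rank truncations onto $\Fil{\le N}_{\calB^r}\calV^{[r]}_{\calB}$; it is therefore completely continuous in the sense of \cite[Proposition 7]{SerreOperators}.

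For the Fredholm determinant statement, Serre's theorem (\emph{loc.~cit.}) guarantees that the Fredholm determinant of a completely continuous operator on an $\calE'$-Banach space exists and is independent of the choice of orthonormal basis. Applying Lemma~\ref{l: open discs with changing } to $B$ shows that $B^r\colonequals\{\delta^ib_i\}$ is an orthonormal basis of $\calV^{[r]}_{\calB}$, so $\Det_{\calE'}(1-s\Flin|B^r_{k'})$ and $\Det_{\calE'}(1-s\Flin|\calB^r_{k'})$ both exist and agree. To bridge from $B^r$ to $B$, I would observe that the rescaling $b_i\mapsto\delta^ib_i$ multiplies the $(j,i)$-entry of the matrix of $\Flin$ in basis $B$ by $\delta^{i-j}$; since $\sum_{i\in S}(i-\sigma(i))=0$ for any permutation $\sigma$ of any finite set $S$, the principal minor of the matrix of $\Flin$ on a finite index set $S$ is the same in bases $B$ and $B^r$. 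Comparing Leibniz expansions \eqref{eq: leibniz type formula} then shows that the coefficients of $\Det_{\calE'}(1-s\Flin|B_{k'})$ and $\Det_{\calE'}(1-s\Flin|B^r_{k'})$ agree term-by-term, which gives simultaneous convergence and equality. The main technical input is the decay estimate $v(c^r_{ji})\ge j(1/\rho-1/r)$ extracted from $r$-overconvergence; once it is in hand, Serre's framework and a direct determinantal calculation close the argument.
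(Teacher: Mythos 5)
Your proof is correct and follows essentially the same approach as the paper: a truncation-onto-$\Fil{\le N}$ argument for complete continuity (the paper runs this directly on the filtration without needing to extend scalars to arrange $r^{-1}=v(\delta)$, but both amount to the same decay estimate on the error), followed by Serre's basis-independence theorem, Lemma~\ref{l: open discs with changing } to identify $B^r$ and $\calB^r$ as orthonormal bases, and the observation that the $\delta^{i-j}$ rescaling telescopes in every permutation product of the Leibniz expansion.
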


    \begin{proof} 
            For $t>0$, consider the composite mapping
            \begin{equation*}
                    \xymatrix{
                        {\Flin_t: \calV} \ar[r]^-{\Flin} & \calV \ar@{->>}[r] &{\Fil{\le t}_{\calB} \calV} \ar@{^{(}->}[r] & {\calV}.
                        }
            \end{equation*}
            As $\calD^{[r]}_{\calB} \subseteq \calD^r_{\calB}$, 
            a straightforward computation gives
            \[
                (\Flin-\Flin_t)(\calD^{[r]}_{\calB}) \subseteq (\Flin-\Flin_t)(\calD^{r}_{\calB}) \subseteq \Fil{>t}_{\calB} \calD^{\rho}_{\calB} \subseteq 
                 \left\{m \in \calD^{[r]}_{\calB}\ :\ \upnu_r(m) \ge (t+1)\left(\frac{r}{\rho}-1\right)\right\}
            \]
            for some $\rho< r$, so that $\Flin = \lim_{t\rightarrow\infty} \Flin_t$.
            As each $\Flin_t$ is visibly of finite rank, it follows that $\Flin$
            is a completely continuous $\calE'$-linear endomorphism of $\calV^{[r]}_{\calB}$.

            When $r^{-1}=v(\delta)$ and $B=\{b_i\}$ is $\rho$-overconvergent for some $\rho < r$,
            then both $B^r$ and $\calB^r$ are orthonormal bases of $\calV^{[r]}_{\calB}$ by Lemma \ref{l: open discs with changing }
            (and Remark \ref{rem:B resp B}),
            whence $B^r_{k'}$ and $\calB^r_{k'}$ are orthonormal bases of $\calV^{[r]}_{\calB}$ over $\calE'$.
            By \cite[Proposition 7]{SerreOperators}, the Fredholm determinant of a completely continuous $\calE'$-linear map is 
            well-defined with respect to---and independent of the choice of---{\em any} orthonormal basis,
            so that $\Det_{\calE'}(1-s\Flin | \star)$ exists for $\star=\calB^r_{k'}$ and $\star=B^r_{k'}$,
            and both choices give the same power series.
            On the other hand, $B$ is certainly a basis of $\calV$, and if $(a_{ij})$
            is the matrix of $\Flin$ relative to $B^r$, then $(\delta^{(i-j)}a_{ij})$
            is its matrix relative to $B$.  It follows easily that, for the induced $\calE'$-bases
            $B^r_{k'}$ and $B_{k'}$, the products occurring in the definition \eqref{eq: leibniz type formula} of $c_n$ 
            are {\em the  same}, 
            which shows that $\Det_{\calE'}(1-s\Flin | B_{k'})$ exists and equals $\Det_{\calE'}(1-s\Flin | B^r_{k'})$.
    \end{proof}

    \begin{definition}
        Let $f(s) = 1 + a_1s + a_2s^2 + \dots$ be an element of $\calE\powerseries{s}$ that is entire in $s$. We define
        the {\em $v$-adic Newton polygon of $f(s)$}, denoted by $\NP(f)$,
        to be the lower convex hull of the points 
        \[(0,0), (1,v(a_1)), (2,v(a_2)), \dots .\]
        Note that the multisets of slopes of $\NP(f)$ 
        and valuations of the reciprocal roots of $f(s)$ coincide.
    \end{definition}
    \begin{definition}
        Let $\Flin:\calV \to \calV$ be an $\calE'$-linear map and $\calB$ a basis of $\calV$ over $\calE$.
        Assume that 
        the Fredholm determinant of $\Flin$ relative to $\calB_{k'}$ exists and is an entire function.
        Then we define 
        \begin{align*}
            \NP(\Flin|\calB_{k'}) \colonequals  \NP(\Det_{\calE'}(1-s\Flin|\calB_{k'})).
        \end{align*}
    \end{definition}

    	\begin{theorem}[Dwork's Linearization Trick]
     \label{t: Dwork's linearlization trick}
        Let $\calV$ be an $\calE$-Banach space with orthonormal basis $\calB$ and $\Flin: \calV \to \calV$ an 
        $\calE'$-linear map that is $r$-overconvergent with respect to $\calB$, for some $r>0$ with $r^{-1}=v(\delta)$. 
        If $\klin=\Flin^\fielddegree$ is $\calE$-linear %
        then
        \begin{align*}
            \frac{1}{\fielddegree} \NP(\klin|\calB)^{\times \fielddegree} &= \NP(\Flin|\calB_{k'}),
        \end{align*}
        where scaling $\NP(\klin|\calB)$ by $\frac{1}{\fielddegree}$ means each $y$-coordinate of the polygon is scaled by $\frac{1}{\fielddegree}$ and the $\fielddegree$ in the superscript means each slope is repeated $\fielddegree$ times. 
		\end{theorem}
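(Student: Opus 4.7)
The plan is to reduce Dwork's linearization identity to the equality of Fredholm determinants
\[
\Det_{\calE'}(1 - s\Flin \mid \calB_{k'}) \;=\; \Det_\calE(1 - s^\fielddegree \klin \mid \calB) \qquad \text{in } \calE'\llbracket s \rrbracket.
\]
Granting this, the Newton polygon formula will follow mechanically: if $\lambda_1 \leq \lambda_2 \leq \cdots$ are the valuations of the reciprocal roots of $\Det_\calE(1 - s\klin \mid \calB)$, then those of $\Det_\calE(1 - s^\fielddegree\klin \mid \calB)$ are their $\fielddegree$-th roots, each appearing with multiplicity $\fielddegree$ and valuation $\lambda_i/\fielddegree$, which is precisely the slope sequence of $\frac{1}{\fielddegree}\NP(\klin \mid \calB)^{\times\fielddegree}$.

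To establish the identity, I will first observe that $\klin = \Flin^\fielddegree$ inherits $r$-overconvergence from $\Flin$ (by iterating the inclusion $\Flin(\calD^r_\calB) \subseteq \calD^\rho_\calB \subseteq \calD^r_\calB$ with $\rho < r$), so Lemma~\ref{lem: technical} guarantees that both sides are well-defined entire in $s$ and that $\Flin$ and $\klin$ restrict to completely continuous operators on $\calV^{[r]}_\calB$. Taking $-\log$ of both sides and applying the standard trace-logarithmic-derivative formula for completely continuous operators \cite[Proposition 7]{SerreOperators} will reduce the identity to the trace formula
\[
\tr_{\calE'}(\Flin^m \mid \calV^{[r]}_\calB) \;=\; \begin{cases} \fielddegree \, \tr_\calE(\Flin^m \mid \calV^{[r]}_\calB), & \fielddegree \mid m \\ 0 & \text{otherwise} \end{cases}
\]
for all $m \geq 1$. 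Both traces are computed as sums of diagonal entries in any orthonormal basis, which localizes the computation to a sum of independent calculations on each factor $\calE \cdot \delta^i \b_i$ of $\calV^{[r]}_\calB$.

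To verify this trace formula, I will express the matrix of $\Flin^m$ relative to $\calB^r$ as an $\calE$-valued matrix $M_m$, noting that $\Flin^m$ is $\sigma^m$-semilinear where $\sigma$ generates the cyclic group $\Gal(\calE/\calE')$. The contribution of each diagonal entry $a = (M_m)_{ii}$ to $\tr_{\calE'}(\Flin^m)$ is the $\calE'$-trace of the endomorphism $\phi_a\colon\calE \to \calE$, $x \mapsto a\sigma^m(x)$. Extending scalars to an algebraic closure $\overline{\calE}$, the isomorphism $\calE \otimes_{\calE'}\overline{\calE} \cong \prod_{\tau \in \Gal(\calE/\calE')} \overline{\calE}$ simultaneously diagonalizes left multiplication on $\calE$ (with diagonal $(\tau(a))_\tau$) and realizes $\sigma^m$ as the permutation of factors $\tau \mapsto \tau\sigma^{-m}$. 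The matrix of $\phi_a$ over $\overline{\calE}$ is therefore a permutation matrix weighted by $(\tau(a))_\tau$, whose diagonal vanishes whenever $\sigma^m \neq \mathrm{id}$ and equals $(\tau(a))_\tau$ when $\sigma^m = \mathrm{id}$. Since trace is preserved under base change, this yields $\tr_{\calE'}(\phi_a \mid \calE) = \Tr_{\calE/\calE'}(a)$ when $\sigma^m = \mathrm{id}$ and zero otherwise. Summing over $i$ produces zero when $\fielddegree \nmid m$, and $\Tr_{\calE/\calE'}(\tr_\calE(\Flin^m))$ when $\fielddegree \mid m$; for such $m$, the cyclic property of trace combined with $\sigma^\fielddegree = \mathrm{id}$ forces $\tr_\calE(\Flin^m) \in \calE'$, so the norm collapses to multiplication by $\fielddegree$.

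The principal obstacle I anticipate is justifying that this sum-over-diagonal description correctly captures the Serre trace on $\calV^{[r]}_\calB$ in the semilinear setting. I plan to handle this by approximating $\Flin$ with the finite-rank operators $\Flin_t$ from the proof of Lemma~\ref{lem: technical}: for these the trace identity reduces to the finite-dimensional calculation sketched above, and passage to the limit $t \to \infty$ will be controlled by the operator-norm convergence $\Flin_t \to \Flin$ on $\calV^{[r]}_\calB$.
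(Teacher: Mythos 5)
Your reduction of the polygon equality to the exact Fredholm-series identity
\[
\Det_{\calE'}(1 - s\Flin \mid \calB_{k'}) = \Det_\calE(1 - s^\fielddegree \klin \mid \calB)
\]
is sound, and the identity itself is correct. The gap is in the route you take to it: passing to $-\log$ and comparing power-sum traces uses the expansion $-\log\det(1-sA) = \sum_{m\geq 1}\frac{s^m}{m}\tr(A^m)$, which requires inverting every positive integer $m$. In this paper's application $\calE = \Frac(k\llbracket T^{1/d}\rrbracket)$ has characteristic $p$, so this series is not defined, and the failure is not cosmetic: in characteristic $p$ the power sums do not determine the Fredholm series. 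Over $\mathbf{F}_p$, the $p\times p$ identity matrix and the $p\times p$ zero matrix satisfy $\tr(A^m)=0$ for every $m\ge 1$, yet their Fredholm series are $(1-s)^p$ and $1$. Thus even a fully justified version of your trace formula would not yield the determinant identity, and the polygon equality would remain unproved.

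The paper's argument never leaves the level of determinants: after reducing to the finite-rank case via Lemma~\ref{lem: technical}, it invokes the characteristic-free norm identity $\Det_{\calE'}(1-s\klin\mid\calB_{k'}) = \mathrm{Nm}_{\calE/\calE'}(\Det_\calE(1-s\klin\mid\calB))$ for the $\calE$-linear operator $\klin$, together with the operator factorization $1-(s\Flin)^\fielddegree = \prod_{i=1}^\fielddegree(1-s\zeta_i\Flin)$ over an extension containing the $\fielddegree$-th roots of unity, and then compares Newton polygons of the resulting products. Your proposal is salvageable in the same spirit: the extension-of-scalars computation you sketch for $\phi_a\colon x\mapsto a\sigma^m(x)$ as a weighted permutation matrix is exactly the right primitive, but it must be carried out for the characteristic polynomial (a weighted $\fielddegree$-cycle $P$ has $\det(1-sP)=1-s^\fielddegree\prod_\tau\tau(a)$ in every characteristic), not for the trace. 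Done at the level of determinants on the finite-rank approximants $\Flin_t$ and passed to the limit, that version would give a valid proof.
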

		\begin{proof}
            Thanks to Lemma \ref{lem: technical}, we may replace $\calV$ with $\calV^{[r]}_{\calB}$ and $\calB$
            with $\calB^r$ to assume that $\Flin$ is completely continuous,
            and thereby reduce to proving the claim under the assumption that $\calV$ is finite dimensional over $\calE$.
            In this situation, we have the well-known identity
			\begin{equation}\label{eq: Dwork trick 1}
				\Det_{\calE'}(1-s\klin|\calB_{k'})	=	\mathrm{Nm}_{\calE/\calE'} \left(\Det_{\calE}(1-s\klin|\calB)\right).
			\end{equation}
            Let $\calE_0$ be a finite extension of $\calE'$ containing all roots of $X^{\nu}-1$ and let $\calV_0=\calV\otimes _{\calE'}\calE_0$.
            Then $\calV_0$ is a Banach space over $\calE_0$ and $\calB_{k'}$ is
            an orthonormal basis of $\calV_0$ over $\calE_0$. 
            The $\calE'$-linear map $\Flin$ extends to an $\calE_0$-linear map $\Flin_0=\Flin \otimes 1$ and we have 
            \begin{equation} \label{eq: Dwork trick 2}
                \Det_{\calE'}(1-s\Flin^i|\calB_{k'})=\Det_{\calE_0}(1-s\Flin_0^i|\calB_{k'}),
            \end{equation}
            for any $i\geq 1$.
            Writing $\zeta_1,\dots,\zeta_\nu\in \calE_0$ for the roots of $X^\nu-1$ counted with multiplicity, 
            we have
            \begin{equation}\label{eq: Dwork trick 3}
                \Det_{\calE_0}(1-s^\nu \Flin_0^\nu |\calB_{k'}) = \prod_{i=1}^\nu \Det_{\calE_0}(1-s\zeta_i\Flin_0|\calB_{k'}). 
            \end{equation}
            Replacing $s$ by $s^{\nu}$ in \eqref{eq: Dwork trick 1} and the $i=\nu$ instance of \eqref{eq: Dwork trick 2},
            and combining these with \eqref{eq: Dwork trick 3} gives
            \begin{equation*}
                \mathrm{Nm}_{\calE/\calE'} \left(\Det_{\calE}(1-s^\nu \klin|\calB)\right) =\prod_{i=1}^\nu \Det_{\calE_0}(1-s\zeta_i\Flin_0|\calB_{k'}).
            \end{equation*}
            Each conjugate
            of $\Det_\calE(1 - s^\fielddegree \klin|\calB)$ under the natural action of $\mathrm{Aut}(\calE/\calE')\simeq \Gal(k/k')$
            has the same Newton polygon.   Since $v(\zeta_j)=0$ for all $j$, it follows from \eqref{eq: Dwork trick 2} with $i=1$
            that
            $\det_{\calE_0}(1-s\zeta_i\Flin_0|\calB_{k'})$ has the same Newton polygon as $\Det_{\calE'}(1-s  \Flin|\calB_{k'})$, 
            which completes the proof.
            \end{proof}

    \subsection{Applications to the Cartier operator} \label{ss:cartierestimates}
    We now apply the preceding considerations to study the Cartier operator on the $A$-module
    $Z$ of Definition \ref{definition: limiting objects}.  Recall that by Corollary \ref{cor:rankone}, the element $w_0$ of Definition \ref{def:wa}
    induces an identification of $A\langle x \rangle$-modules
    \begin{equation}
        Z = L \cdot w_0  \frac{dx}{x}\label{eq:Z eq L}.
    \end{equation}
    The Cartier operator $V$ on $Z$ is identified with the map sending $f\cdot w_0 \frac{dx}{x}$
    to $V(\alpha^{-1} f) \cdot w_0 \frac{dx}{x}$, where $V$ on $L=x A \langle x \rangle$ is given by equation \eqref{eq:VonL def} and $\alpha$ is
    the unique element (necessarily a unit) of $A\langle x\rangle $ determined by $Fw_0 = \alpha w_0$.
    The identification \eqref{eq:Z eq L} will be fundamental to our analysis below; however, in order
    to proceed, we must first extend scalars to obtain a $d$-th root of $T$.

\begin{notation} \label{notation: calligraphic}
We set $\calA \colonequals k\powerseries{T^{1/d}}$ and $\calA_\F\colonequals  \F\powerseries{T^{1/d}}$,
equipped with the $T$-adic valuation $v_T$, and write
$\calE\colonequals \Frac(\calA)$ and $\calE_\F\colonequals \Frac(\calA_\F)$ for their fraction fields.
(c.f. Notation~\ref{notation: iwasawa}). 

\end{notation}

We will frequently use the fact that, for an $A$-module $N$ of finite dimension over $k$, we have
\begin{align}\label{eq: dimension compare with d-th root of T}
    \dim_k(N \otimes_{A} \calA) &= d \cdot \dim_k(N).
\end{align}

    \begin{definition} 
    We define $\calZ \colonequals Z \otimes_{A} \calA$,
    and write $\calZ_{\calE}=\calZ\otimes_{\calA}\calE$ for the associated Banach space over $\calE$,
    equipped with the $T$-adic valuation for which $\calZ$ is the set of elements of nonnegative valuation.
    We write $\calA\langle x\rangle$ for the Tate algebra in $x$
    over $\calA$, equipped with its usual Gauss valuation induced by $v_T$, and denote by $\calL \colonequals  L\otimes_A \calA = x\calA\langle x \rangle$
    the ideal of functions vanishing at $x=0$.

    \end{definition}

    \begin{definition}\label{def:Bbasis}
        For each integer $i>0$ and rational $m>0$ with $1/m \in \frac{1}{d}\Z$, we define
        $$
        u_i	 \colonequals r_{\mu_i} x^i w_0 \frac{dx}{x},\quad\text{and}\quad
        u_i^m\colonequals T^{\frac{i}{m}} u_i,
        $$
        where $r_{\mu_i}$ is as in Lemma \ref{lemdef:ra}, and we set
        $\calB \colonequals \{u_i\}_{i >0}$ and $\calB^m \colonequals \{u_i^m\}_{i>0}$.
        We also define $B:=\{x^i w_0 \frac{dx}{x}\}_{i\ge 1}$
        and $B^m:=\{T^{\frac{i}{m}}x^i w_0 \frac{dx}{x}\}_{i\ge 1}$.
    \end{definition}

    In terms of the $\{e_i\}$ introduced in Definition~\ref{defn:muj}, note $T^{\mu_i} u_i = e_i$.

\begin{lemma}\label{l: B's are Schauder basis} %
The set $\calB$ is an orthonormal basis of $\calZ_{\calE}$ over $\calE$.
    \end{lemma}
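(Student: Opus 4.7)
The plan is to deduce orthonormality of $\calB$ from the orthonormality of the more obvious basis $B=\{b_i\}_{i\ge 1}$ with $b_i = x^i w_0 \frac{dx}{x}$, via the general criterion \cite[Lemme 1]{SerreOperators} that any sequence in the unit ball of an $\calE$-Banach space whose reductions modulo the maximal ideal form a $k$-basis of the residual vector space is automatically an orthonormal basis. This is the same tool invoked in the proof of Lemma \ref{l: open discs with changing }.

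First, I would verify that $B$ itself is an orthonormal basis of $\calZ_\calE$ over $\calE$. Indeed, Corollary \ref{cor:rankone} supplies an isomorphism of $A\langle x\rangle$-modules $\eta: L\xrightarrow{\sim} Z$ carrying $x^i$ to $b_i$; base-changing to $\calE$ yields an isometric isomorphism of $\calE$-Banach spaces $\calL_\calE \xrightarrow{\sim} \calZ_\calE$. Since $\calL = x\calA\langle x\rangle$ manifestly has the tautological orthonormal basis $\{x^i\}_{i\ge 1}$ over $\calE$, this transports to $B$ being an orthonormal basis of $\calZ_\calE$.

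Next, I would compare $\calB$ to $B$ explicitly. Writing $r_{\mu_i}(x) = \sum_{j\ge 0} a_{ij}x^j$ with $a_{ij}\in A$, Definition \ref{def:Bbasis} gives the expansion
\[
    u_i \;=\; r_{\mu_i}(x)\cdot x^i w_0 \frac{dx}{x} \;=\; \sum_{k\ge i} a_{i,k-i}\,b_k,
\]
which in particular shows $u_i\in \calZ$. Lemma \ref{lemdef:ra} asserts $r_{\mu_i}\bmod T \in k^\times$, which translates to $a_{i,0}\in A^\times$ and $a_{i,j}\in TA$ for every $j\ge 1$. Under the canonical identification $\calZ/T^{1/d}\calZ \simeq Z/TZ$ arising from $\calA/T^{1/d}\calA = A/TA = k$, the reduction $\bar u_i$ is therefore a nonzero $k^\times$-multiple of $\bar b_i$. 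Since $\{\bar b_i\}_{i\ge 1}$ is a $k$-basis of $Z/TZ$ by the previous paragraph, so is $\{\bar u_i\}_{i\ge 1}$.

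Invoking \cite[Lemme 1]{SerreOperators} then concludes that $\calB$ is an orthonormal basis of $\calZ_\calE$ over $\calE$. I anticipate no substantive obstacle here: the argument is essentially transport-of-structure, and the only mildly delicate point is the compatibility between reductions modulo $T^{1/d}$ on $\calZ$ and modulo $T$ on $Z$, which follows at once from the flatness of the scalar extension $A\hookrightarrow \calA$.
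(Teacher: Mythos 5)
Your proof is correct and follows essentially the same route as the paper: the paper's proof simply observes via Lemma \ref{lemdef:ra} that $u_i \equiv a_i x^i w_0\frac{dx}{x} \bmod T^{1/d}\calZ$ with $a_i\in k^\times$ and invokes \cite[Lemme 1]{SerreOperators}. Your additional explicit verification that $B$ is itself orthonormal (via Corollary \ref{cor:rankone}) is a reasonable elaboration of a step the paper leaves implicit, but it is not a different argument.
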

    \begin{proof}
        Thanks to Lemma \ref{lemdef:ra}, we have
        $u_i \equiv a_ix^i w_0\frac{dx}{x} \bmod T^{1/d} \calZ$
        with $a_i \in k^{\times}$, so $\calB$ is an orthonormal basis of $\calZ_{\calE}$
        by \cite[Lemme 1]{SerreOperators}. 
    \end{proof}

To ease the notational burden, we make the following abbreviations:
    
\begin{definition}\label{def: fildef}
For any subquotient $\calW$ of $\calZ_{\calE}$, we define  $\Fil{\star} \calW := \Fil{\star}_{\calB} \calW$
as in Definition \ref{def: fildefgeneral}.
For $m>0$ we abbreviate $\calZ^m:=\calZ^m_{\calB}$ 
and $\calL^m:=\calL^m_{\mathscr{B}}$ for $\mathscr{B}=\{x^i\}_{i\ge 1}$
as in Definition \ref{def:overconvergent balls}.

\end{definition}

    \begin{remark}
        Writing $\calA\langle x\rangle^m$ for the ring of rigid analytic functions over $\E$ that converge on the open disc $v_T(x)> -1/m$ and have values bounded by 1, we note that $\calL^m\subseteq \calA\langle x\rangle^m$ is the ideal of functions
        vanishing at $0$.
        While $\calA\langle x\rangle^m$  contains $A\langle x\rangle^m \otimes_{A}\calA$, they are not equal.
    \end{remark}

    \begin{lemma}
        \label{l: growth property of Z^m for normal basis}
        The basis $\calB$ is $\frac{d}{p}$-overconvergent with respect to $B$ and for $m\geq \frac{d}{p}$ with $1/m \in \frac{1}{d}\Z$ we have 
        \begin{align*}
            \calZ^m= \calZ^m_B = \calL^m \cdot w_0\frac{dx}{x}.
        \end{align*}
    \end{lemma}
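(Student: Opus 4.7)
My plan is to verify the $\frac{d}{p}$-overconvergence of $\calB$ relative to $B$ by a direct expansion, and then invoke Lemma~\ref{l: open discs with changing } together with a tautological description of $\calL^m$ to deduce the two displayed equalities.

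For the overconvergence, I will expand $u_i = r_{\mu_i}\,b_i$ (where $b_i = x^iw_0\frac{dx}{x}$) using the module action of $\calA\langle x \rangle$ on $\calZ$, under which $x^k\cdot b_i = b_{k+i}$. Writing $r_{\mu_i} = \sum_{k\ge 0} c_k^{(i)}\, x^k$ this yields $u_i = \sum_{j\ge i} c_{j-i}^{(i)}\, b_j$, and the required estimate $\frac{d}{p}v_T(c_{j-i}^{(i)}) \ge j-i$ is immediate from Corollary~\ref{cor:functionsqb} ($r_{\mu_i}\in A\langle x\rangle^{d/p}$) together with the description of $A\langle x\rangle^{d/p}$ in Definition~\ref{d:Lm}. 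Equality at $j=i$ is the content of Lemma~\ref{lemdef:ra}, which asserts that $c_0^{(i)} = r_{\mu_i}\bmod T$ is a unit in $\calA$.

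For the two equalities, base-changing the isomorphism of Corollary~\ref{cor:rankone} to $\calA$ identifies $\calZ$ with $\calL\cdot w_0\frac{dx}{x}$ and sends $b_i$ to $x^i$; since $\{x^i\}_{i\ge 1}$ is visibly an orthonormal basis of $\calL_{\calE}$ for the Gauss valuation, $B$ is itself an orthonormal basis of $\calZ_{\calE}$. Applying Lemma~\ref{l: open discs with changing } with the orthonormal basis taken to be $B$ and the overconvergent sequence taken to be $\calB$ then yields $\calZ^m = \calZ^m_B$ for every $m\ge d/p$ with $1/m\in \frac{1}{d}\Z$, while the final equality $\calZ^m_B = \calL^m\cdot w_0\frac{dx}{x}$ is a direct unwinding of Definition~\ref{def:overconvergent balls} together with the definition of $\calL^m$ from Definition~\ref{def: fildef}.

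The only point requiring care, and the main ``obstacle,'' is the role swap in the last invocation of Lemma~\ref{l: open discs with changing }: we verify overconvergence in one direction (the $B$-expansion of the $u_i$'s) but use the resulting equality of open discs in the reverse direction. This is legitimate because both $\calB$ and $B$ are orthonormal for the \emph{same} intrinsic valuation on $\calZ_{\calE}$ --- the change-of-basis matrix is upper triangular, unit-diagonal, and overconvergent with entries in $\calA$, so it preserves the unit ball. Beyond this bookkeeping observation, every step is a one-line consequence of results already established in Section~\ref{ss: functions and differentials as T modules}, and I anticipate no further technical difficulties.
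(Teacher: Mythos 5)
Your proof is correct and follows essentially the same route as the paper's, which simply cites Lemma~\ref{lemdef:ra} and Corollary~\ref{cor:functionsqb} for the $\frac{d}{p}$-overconvergence of $\calB$ with respect to $B$ and then invokes Lemma~\ref{l: open discs with changing }; you have merely written out the expansion making the first step explicit. Your concern about a ``role swap'' is unfounded: one applies Lemma~\ref{l: open discs with changing } with $B$ playing the role of the orthonormal basis and $\calB$ the overconvergent sequence, and its conclusion is already the symmetric equality $\calD^{r}_{B}=\calD^{r}_{\calB}$, so no additional change-of-basis argument is required.
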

    \begin{proof}
    By Lemma \ref{lemdef:ra} and Corollary \ref{cor:functionsqb}
    we see that $\calB$ is $\frac{d}{p}$-overconvergent with respect to $B$. The second part of the lemma follows from Lemma \ref{l: open discs with changing }.

    \end{proof}

    \begin{lemma}
        \label{l: action of of V on Z^m}
        We have $V(\calZ^d) \subset \calZ^{d/p}$, and $V$ is $d$-overconvergent with respect to $B$ and $\calB$.
    \end{lemma}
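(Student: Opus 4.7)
The plan is to transfer the question from $\calZ$ to $\calL$ via the $\calA\langle x\rangle$-module isomorphism $\eta:\calL \to \calZ$, $f \mapsto f \cdot w_0\frac{dx}{x}$, obtained by base change from Corollary \ref{cor:rankone}, using the intertwining relation $V \circ \eta = \eta \circ V_{\alpha^{-1}}$. Since Lemma \ref{l: growth property of Z^m for normal basis} identifies $\calZ^m$ with $\eta(\calL^m)$ for every $m \geq d/p$, it will suffice to establish $V_{\alpha^{-1}}(\calL^d) \subseteq \calL^{d/p}$, and then the overconvergence statements will be immediate consequences.

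I would then decompose $V_{\alpha^{-1}} = V \circ (\alpha^{-1} \cdot)$ and handle each factor separately. For multiplication by $\alpha^{-1}$: by Corollary \ref{cor: Frobenius element growth}, $\alpha^{-1}$ lies in $A\langle x\rangle^d \subseteq \calA\langle x\rangle^d$, and since $\calL^d$ is the ideal of $\calA\langle x\rangle^d$ consisting of functions vanishing at $x = 0$, a routine Cauchy-product estimate on coefficients shows $\alpha^{-1} \cdot \calL^d \subseteq \calL^d$. For the action of $V$ itself: writing $g = \sum_{i \geq 1} c_i x^i \in \calL^d$ with $v_T(c_i) \geq i/d$, applying $V$ term-by-term via its $\sigma^{-1}$-semilinearity and the defining formula $V(x^i) = x^{i/p}$ when $p \mid i$ (and $0$ otherwise) gives $V(g) = \sum_{j \geq 1} \sigma^{-1}(c_{pj}) x^j$. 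Since $\sigma^{-1}$ preserves $v_T$ and $v_T(c_{pj}) \geq pj/d = j/(d/p)$, we land in $\calL^{d/p}$, as desired. Composing, $V_{\alpha^{-1}}(\calL^d) \subseteq \calL^{d/p}$, so $V(\calZ^d) \subseteq \calZ^{d/p}$.

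For the overconvergence: Lemma \ref{l: growth property of Z^m for normal basis} yields $\calD^d_B = \calD^d_{\calB} = \calZ^d$ and similarly $\calD^{d/p}_B = \calD^{d/p}_{\calB} = \calZ^{d/p}$, so the inclusion $V(\calZ^d) \subseteq \calZ^{d/p}$ with $d/p < d$ is already the definition of $d$-overconvergence with respect to both $B$ and $\calB$. There is no real obstacle here: the genuine work has already been done in Corollaries \ref{cor:functionsqb} and \ref{cor: Frobenius element growth}, which control the change of basis from $B$ to $\calB$ and the $d$-overconvergence of $\alpha^{-1}$ respectively. Morally, the lemma asserts that the improvement by a factor of $p$ built into $V$ on monomials combines multiplicatively with the $d$-overconvergence of $\alpha^{-1}$ to yield exactly a factor-of-$p$ improvement in the $T$-adic radius.
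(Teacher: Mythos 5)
Your proof is correct and follows essentially the same route as the paper's: reduce to $\calL$ via the intertwining relation from Corollary \ref{cor:rankone}, use Corollary \ref{cor: Frobenius element growth} to show $\alpha^{-1}\calL^d \subseteq \calL^d$ (the paper phrases this as $\calL^d$ being an ideal of $\calA\langle x\rangle^d$, which is the same coefficient estimate you spell out), observe that the monomial action of $V$ improves the radius by a factor of $p$, and then invoke Lemma \ref{l: growth property of Z^m for normal basis} to identify $\calZ^m$ with $\calL^m \cdot w_0 \frac{dx}{x}$ for $m \geq d/p$ and to transfer overconvergence between $B$ and $\calB$.
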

    \begin{proof}
        Recall that $V$ is defined on $A\langle x\rangle$ by \eqref{eq:VonL def}, 
        and we extend $V$ to $\calA\langle x\rangle = A\langle x\rangle \otimes_A \calA$
        by letting $V$ act on $\calA$ through $\sigma^{-1}$ on $k$ and the identity map on $T^{1/d}$.
        It follows easily from this definition that $V(\calL^m) \subset \calL^{m/p}$ for all $m>0$ with $1/m\in \frac{1}{d}\Z$. 
        Corollary \ref{cor: Frobenius element growth} gives $\alpha^{-1}\in \calA\langle x\rangle^d $,
        so since 
        $\calL^d$ is an ideal of $\calA\langle x\rangle^d$,
        we conclude in particular that 
         $\alpha^{-1} \calL^d \subseteq \calL^d$. Then using Corollary \ref{cor:rankone} we have
        \[
            V(\calL^d w_0\frac{dx}{x}) = V(\alpha^{-1} \calL^d) w_0 \frac{dx}{x} \subseteq V(\calL^d)w_0\frac{dx}{x} \subseteq \calL^{d/p} w_0 \frac{dx}{x}. \]
        The lemma then follows at once from Lemma \ref{l: growth property of Z^m for normal basis}.
    \end{proof}

    \begin{corollary}\label{c: V-estimate for basis eliment}
        For $j >0$ we have $V(u_j^{d/p}) \in T^{\frac{(p-1)j}{d}}\calZ^{d/p}$.
    \end{corollary}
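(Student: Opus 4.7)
The plan is to unwind the action of $V$ on $u_j^{d/p}$ using the intertwining relation from Corollary \ref{cor:rankone} and then exploit the overconvergence properties of $\alpha^{-1}$ and $r_{\mu_j}$ established in Corollaries \ref{cor: Frobenius element growth} and \ref{cor:functionsqb}. At its heart, the argument is a careful $T$-adic valuation count made possible by the fact that the Cartier operator divides $x$-degrees by $p$.

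First I would observe that $V$ commutes with multiplication by $T^{1/d}$ (since by construction $V$ acts as the identity on $T^{1/d}\in \calA$), so
\begin{align*}
V(u_j^{d/p}) = V(T^{jp/d}\, r_{\mu_j} x^j w_0 \tfrac{dx}{x}) = T^{jp/d}\, V(\alpha^{-1} r_{\mu_j} x^j)\, w_0\tfrac{dx}{x},
\end{align*}
where in the second step I have used the intertwining $V(fw_0\tfrac{dx}{x}) = V(\alpha^{-1}f)w_0\tfrac{dx}{x}$ from Corollary \ref{cor:rankone}. This reduces the problem to a study of $V(\alpha^{-1} r_{\mu_j} x^j)\in \calL$.

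Next, since $\alpha^{-1}\in \calA\langle x\rangle^d$ by Corollary \ref{cor: Frobenius element growth} and $r_{\mu_j}\in \calA\langle x\rangle^{d/p}\subseteq \calA\langle x\rangle^d$ by Corollary \ref{cor:functionsqb}, their product lies in $\calA\langle x\rangle^d$. Thus we may write $\alpha^{-1}r_{\mu_j} = \sum_{k\ge 0} c_k x^k$ with $v_T(c_k)\ge k/d$. Shifting by $x^j$ and applying the Cartier operator, which annihilates $x^m$ unless $p\mid m$ and otherwise sends $x^m$ to $x^{m/p}$ with the coefficient twisted by $\sigma^{-1}$ (a valuation-preserving operation), I would obtain
\begin{align*}
V(\alpha^{-1} r_{\mu_j} x^j) = \sum_{\ell \ge \lceil j/p\rceil} \sigma^{-1}(c_{p\ell - j})\, x^\ell,
\end{align*}
with $v_T(\sigma^{-1}(c_{p\ell - j})) \ge (p\ell - j)/d$.

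Finally, multiplying by $T^{jp/d}$, the $x^\ell$-coefficient of $T^{jp/d} V(\alpha^{-1} r_{\mu_j} x^j)$ has $T$-adic valuation at least
\begin{align*}
\tfrac{jp}{d} + \tfrac{p\ell - j}{d} = \tfrac{(p-1)j}{d} + \tfrac{\ell p}{d},
\end{align*}
which is precisely the condition needed for this function to lie in $T^{(p-1)j/d}\calL^{d/p}$. Invoking Lemma \ref{l: growth property of Z^m for normal basis} to identify $\calZ^{d/p} = \calL^{d/p} w_0\tfrac{dx}{x}$, we conclude $V(u_j^{d/p})\in T^{(p-1)j/d}\calZ^{d/p}$ as required. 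There is no real obstacle here beyond careful bookkeeping; the Corollary is essentially an immediate consequence of the overconvergence of $\alpha^{-1}$ combined with the elementary fact that $V$ contracts $x$-degrees by a factor of $p$, which is the mechanism producing the gain $T^{(p-1)j/d}$.
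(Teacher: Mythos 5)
Your proof is correct, and it rests on exactly the same two mechanisms the paper uses, namely the overconvergence of $\alpha^{-1}$ (Corollary \ref{cor: Frobenius element growth}) and the fact that $V$ divides $x$-degrees by $p$. The only difference is organizational: the paper's own proof is a one-liner --- observe that $u_j^{d/p} = T^{(p-1)j/d} u_j^d$ and apply Lemma \ref{l: action of of V on Z^m}, which already packages $V(\calZ^d)\subseteq \calZ^{d/p}$ --- whereas you have inlined the proof of that lemma rather than citing it. Both routes are sound; the paper's version simply avoids redoing the coefficient-by-coefficient valuation count, which Lemma \ref{l: action of of V on Z^m} and Lemma \ref{l: growth property of Z^m for normal basis} already encode.
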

    \begin{proof}
        Note that $u_j^{d/p}= T^{\frac{(p-1)j}{d}}u_j^{d}$ and apply Lemma \ref{l: action of of V on Z^m}.
    \end{proof}

    \begin{remark}
        Corollary \ref{c: V-estimate for basis eliment} 
        gives bounds on the columns of the matrix of $V$ with respect to $\calB_{\F}^{d/p}$. This in turn gives a bound on the Hodge polygon of $V$ on $\calZ^{d/p}$ in the sense of \cite{Katz}.
    \end{remark}

    \begin{corollary}\label{c:V-estimate}
			For $t \ge 0$, we have
			\begin{equation*}
				\bfV(\Fil{> \filindex} \calZ^{d/p}) \subseteq T^{\frac{(p-1)(\filindex+1)}{d}} \calZ^{d/p}.
			\end{equation*}
	\end{corollary}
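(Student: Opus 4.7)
The plan is to reduce this directly to Corollary \ref{c: V-estimate for basis eliment} by carefully unpacking the induced filtration and invoking the $\sigma^{-1}$-semilinearity of $V$. Any $\omega \in \Fil{>t}\calZ^{d/p}$ has, by Definitions \ref{def: fildefgeneral}, \ref{def:overconvergent balls}, and \ref{def: fildef}, a unique $\calB$-expansion $\omega = \sum_{i>t} a_i u_i$ with $a_i \in \calA$ satisfying $v_T(a_i) \ge ip/d$. Rescaling by setting $b_i \colonequals T^{-ip/d} a_i \in \calA$ (which is integral by the valuation constraint), I rewrite
\[
\omega = \sum_{i>t} b_i\, u_i^{d/p}, \qquad v_T(b_i) \ge 0.
\]

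Next, since $V$ is $\sigma^{-1}$-semilinear over $\calA$ with $\sigma^{-1}$ acting as Frobenius-inverse on $k$ and trivially on $T^{1/d}$ (so in particular $v_T \circ \sigma^{-1} = v_T$), I get $V(\omega) = \sum_{i>t} \sigma^{-1}(b_i)\, V(u_i^{d/p})$, with the sum converging $T$-adically in $\calZ^{d/p}$. Applying Corollary \ref{c: V-estimate for basis eliment} termwise yields $V(u_i^{d/p}) \in T^{(p-1)i/d}\calZ^{d/p}$ for each $i > t$. Since $(p-1)/d > 0$ and $i \ge t+1$, each factor $T^{(p-1)i/d}$ is divisible by $T^{(p-1)(t+1)/d}$; combined with the integrality of $\sigma^{-1}(b_i)$ and the $\calA$-module structure on $\calZ^{d/p}$, every summand lies in $T^{(p-1)(t+1)/d}\calZ^{d/p}$, and hence so does the entire convergent sum.

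There is no genuine obstacle: the statement is essentially immediate from the ``columnwise'' bound of Corollary \ref{c: V-estimate for basis eliment} once one is careful with the interaction between the induced filtration $\Fil{>t}$ and the overconvergent submodule $\calZ^{d/p}$. The only step needing explicit verification is that semilinearity is compatible with the valuation estimates, which follows because $\sigma^{-1}$ preserves $v_T$.
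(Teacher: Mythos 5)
Your proof is correct and is precisely the argument the paper intends (the corollary is stated without proof as an immediate consequence of Corollary \ref{c: V-estimate for basis eliment}). Expanding $\omega\in\Fil{>t}\calZ^{d/p}$ in the basis $\calB^{d/p}$, invoking semilinearity (which preserves $v_T$ since $\sigma^{-1}$ fixes $T^{1/d}$), applying the termwise bound, and noting that $\calZ^{d/p}$ is closed in $\calZ$ so the convergent sum stays in $T^{(p-1)(t+1)/d}\calZ^{d/p}$, is exactly the intended reasoning.
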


        We now prove the following theorem:
        \begin{theorem}\label{t: usable trace formula}
            Let $\calB^{d/p}$ be the basis of $\calZ_{\calE}$ in Definition \ref{def:Bbasis} and $\calB^{d/p}_\F$ 
            as in Definition \ref{defn:schauderbasis}.  
            Then
            \begin{align*}
                \frac{1}{\fielddegree}\NP(L(\chi,s))^{\times \fielddegree} &= \NP(V|\calB_\F^{d/p}).
            \end{align*}
        \end{theorem}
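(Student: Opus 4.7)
The plan is to derive the theorem in two steps. First, establish the equicharacteristic Dwork trace formula
\[
    L(\chi, s) \;=\; \Det_\calE\!\left(1 - sV^\nu \mid \calB^{d/p}\right),
\]
where $V^\nu$ is viewed as an $\calE$-linear endomorphism of $\calZ_\calE$ (this is legitimate because $V$ is $\sigma^{-1}$-semilinear over $\calA$ and $\sigma$ has order $\nu$ on $k$). Second, apply Dwork's linearization trick (Theorem \ref{t: Dwork's linearlization trick}) to the pair $\Phi = V$, $\Psi = V^\nu$ to obtain
\[
    \tfrac{1}{\nu}\,\NP(V^\nu \mid \calB^{d/p})^{\times \nu} \;=\; \NP(V \mid \calB_\F^{d/p}).
\]
Since the first step gives $\NP(V^\nu\mid \calB^{d/p}) = \NP(L(\chi,s))$, combining these identities yields the theorem.

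To justify both sides of the trace formula, note that $L(\chi,s)$ is entire, and by Lemma \ref{l: action of of V on Z^m}, $V$ (hence $V^\nu$) is $d$-overconvergent with respect to $\calB$; Lemma \ref{lem: technical} then guarantees that $V^\nu$ restricts to a completely continuous $\calE$-linear endomorphism of $\calZ^{[d]}_\calB$ with entire Fredholm determinant computable via the basis $\calB^{d/p}$. To prove the equality, I would compare logarithmic derivatives term by term. Expanding the Euler product \eqref{eq: L-function definition} and invoking Proposition \ref{prop: Frobenius } gives that the $m$-th coefficient of $-s\tfrac{d}{ds}\log L(\chi,s)$ equals
\[
    \sum_{d_v \mid m} d_v \Bigl(\prod_{i=0}^{d_v\nu-1}\sigma^i(\alpha)(v)\Bigr)^{m/d_v},
\]
which reassembles as a sum over the fixed points of $F^{m\nu}$ on $\mathbf{A}^1_{\bar{k}}$ of $\prod_{i=0}^{m\nu-1}\sigma^i(\alpha)$ evaluated at each such point. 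On the operator side, the intertwining $V\circ \eta = \eta\circ V_{\alpha^{-1}}$ of Corollary \ref{cor:rankone} together with the identification $\calZ^{d/p} = \calL^{d/p}\cdot w_0\tfrac{dx}{x}$ from Lemma \ref{l: growth property of Z^m for normal basis} reduces $\Tr(V^{m\nu}\mid \calZ^{d/p})$ to $\Tr(V_{\alpha^{-1}}^{m\nu}\mid \calL^{d/p})$. A standard Dwork--Monsky telescoping, which repeatedly applies the semilinearity of $V$ and the identity $V(F(g)h) = gV(h)$, then unfolds $V_{\alpha^{-1}}^{m\nu}$ into $V^{m\nu}$ composed with multiplication by a product of $\sigma$-twists of $\alpha^{-1}$; the trace of this operator on $\calL^{d/p}$, identified with the residue-at-zero of its Laurent expansion, reproduces exactly the Frobenius sum above.

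The main obstacle is the rigorous execution of this trace computation. Because $V$ is only $\calA_\F$-linear, the iterated operator $V_{\alpha^{-1}}^{m\nu}$ must be unfolded coefficient by coefficient, and one must verify that the ``telescoped'' expression $V^{m\nu} \circ \bigl(\text{multiplication by }\prod_{i=0}^{m\nu-1}\sigma^i(\alpha^{-1})\bigr)$ acts on the Banach module $\calL^{d/p}$ in a way whose trace can be read off from an evaluation at fixed points of $F^{m\nu}$. Tracking signs, the $\alpha^{-1}$ versus $\alpha$ products, and the precise Frobenius identification of Proposition \ref{prop: Frobenius } is delicate, but the structure follows a well-established template in equicharacteristic Dwork theory; once done, the application of Theorem \ref{t: Dwork's linearlization trick} is purely formal.
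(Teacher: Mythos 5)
Your proposal follows the paper's proof in essentially the same way: the paper likewise deduces the theorem by combining the equicharacteristic Dwork trace formula $L(\chi,s)=\Det_{\calE}(1-sV^{\nu}\mid B)$ (Theorem \ref{t: trace formula}) with Dwork's linearization trick (Theorem \ref{t: Dwork's linearlization trick}) and the overconvergence and change-of-basis lemmas (Lemmas \ref{l: action of of V on Z^m}, \ref{l: growth property of Z^m for normal basis}, and \ref{l: open discs with changing }) to pass to the basis $\calB^{d/p}_{\F}$. The only difference is that the trace formula itself, which you propose to re-derive via the logarithmic-derivative and telescoping computation, is simply quoted in the paper from \cite[\S 7]{Wan-meromorphic_continuation} (together with Remark \ref{remark: matrix of V in terms of coefficients of alpha} identifying the relevant matrix), so that step is a citation rather than a computation.
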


        The main input is the Dwork trace formula applied to $\chi$.

        \begin{remark}
            \label{remark: matrix of V in terms of coefficients of alpha} Write $\displaystyle\prod_{i=0}^{\nu-1} \sigma^{i}(\alpha^{-1})=\displaystyle\sum_{i=0}^\infty b_i x^i$, where $b_i \in A$, and set $b_i=0$ for $i< 0$. Then 
            by the intertwining relation in Corollary \ref{cor:rankone}
            we see that the matrix of $V^\nu$ in terms of the basis 
            $B$ is the matrix $(b_{p^\fielddegree i-j})_{i,j \geq 1}$.
        \end{remark}
        \begin{theorem}[Dwork trace formula]
            \label{t: trace formula}
            We have
            \begin{align*}
                L(\chi,s) &= \Det_{\calE}(1-sV^\nu|B).
            \end{align*}
        \end{theorem}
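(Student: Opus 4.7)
The plan is to reduce the identity, via logarithms, to the sequence of trace identities
\[
\mathrm{Tr}\big(V^{n\nu} \mid B\big) = S_n \quad\text{for all}\ n \ge 1,
\]
where $S_n := \sum_{v : \deg v \mid n} \deg(v)\cdot \chi(\Frob_v)^{n/\deg v}$ is the $n$-th coefficient in the Euler product expansion $-\log L(\chi, s) = \sum_{n \ge 1}\tfrac{s^n}{n} S_n$ coming from \eqref{eq: L-function definition}. Because $V$ is $d$-overconvergent with respect to $B$ (Lemma \ref{l: action of of V on Z^m}), the $\calE$-linear operator $V^\nu$ restricts, by Lemma \ref{lem: technical}, to a completely continuous endomorphism of the overconvergent subspace of $\calZ_\calE$ spanned by $B^{d/p}$. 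Its Fredholm determinant therefore exists, and by Serre's theory satisfies $-\log \Det_\calE(1 - sV^\nu \mid B) = \sum_{n \ge 1}\tfrac{s^n}{n}\mathrm{Tr}(V^{n\nu})$; matching $s^n$-coefficients yields the reduction.

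To compute the left-hand side, I would extend the matrix description of Remark \ref{remark: matrix of V in terms of coefficients of alpha} to $V^{n\nu}$: the same argument shows that in the basis $B$, the matrix of $V^{n\nu}$ has $(i,j)$-entry equal to the $(p^{n\nu}i - j)$-th coefficient of the $\sigma$-twisted product $\prod_{j=0}^{n\nu-1}\sigma^j(\alpha^{-1}) \in A\langle x\rangle$. Summing the diagonal expresses $\mathrm{Tr}(V^{n\nu})$ explicitly as a sum of certain $(p^{n\nu}-1)$-periodic coefficients of this product. For the right-hand side, Proposition \ref{prop: Frobenius} writes $\chi(\Frob_v)$ as a product of $\sigma$-conjugates of $\alpha$ evaluated at $v$; combining this with the standard identification of closed points of $\mathbf{A}^1_k$ of degree dividing $n$ with Galois orbits of $\F_{p^{n\nu}}$-valued points transforms $S_n$ into a sum of $\prod_{j=0}^{n\nu-1}\sigma^j(\alpha)$ evaluated over all of $\mathbf{A}^1(\F_{p^{n\nu}})$.

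These two descriptions are then matched via the elementary characteristic-$p$ character-sum identity: for $g \in \calA\langle x \rangle$ converging on the closed unit disk, the sum $\sum_{x \in \F_q} g(x)$ with $q = p^{n\nu}$ picks out only the $(q-1)$-periodic coefficients of $g$, since $\sum_{x \in \F_q}x^j = -1$ if $j > 0$ and $(q-1) \mid j$, and is zero otherwise. Applying this identity to the relevant $\sigma$-twisted power series converts the Frobenius-orbit sum defining $S_n$ into a sum of periodic coefficients, identifying it with the diagonal trace computed above. The overconvergence of $\alpha$ and $\alpha^{-1}$ provided by Corollary \ref{cor: Frobenius element growth} guarantees both the convergence of all character sums and the legitimacy of identifying the Fredholm trace with the naive diagonal sum.

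The main obstacle is the careful tracking of conventions regarding $\sigma$-semilinearity and of the interchange between $\alpha$ and $\alpha^{-1}$ on the two sides of the trace formula. This bookkeeping, which lies at the heart of every Dwork-style trace formula and is delicate in the equicharacteristic setting, is what converts the elementary character-sum identity into the precise equality of $L$-function and Fredholm determinant asserted in the theorem.
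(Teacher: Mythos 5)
The paper does not prove this theorem from scratch: it invokes Dwork's trace formula as adapted to positive characteristic by Wan, and essentially all of the difficulty lives in that adaptation. Your proposal runs the classical characteristic-zero argument, and its very first step --- ``reduce the identity, via logarithms, to the sequence of trace identities'' --- is not available here. The coefficient field $\calE=\Frac(k\powerseries{T^{1/d}})$ has characteristic $p$, so the series $\sum_{n\ge 1}\frac{s^n}{n}\Tr(V^{n\nu})$ is not even defined (one divides by $n$ with $p\mid n$), and the underlying principle fails: in characteristic $p$ the traces of all powers of an operator do not determine its Fredholm determinant. For instance, the identity and the zero operator on $\calE^p$ have identical traces of all powers, yet their determinants are $1-s^p$ and $1$. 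The most one can extract from trace identities is the logarithmic-derivative statement, which determines the ratio $L(\chi,s)/\Det_{\calE}(1-sV^\nu\mid B)$ only up to a factor in $1+s^p\calE\powerseries{s^p}$. Removing that ambiguity is precisely the content of the positive-characteristic discussion in Wan's paper (and of the Taguchi--Wan and Anderson trace formulas), and it requires a genuinely different argument: a direct identification of the coefficients of the Fredholm determinant, not merely of the traces.

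Second, the trace identities you propose to prove do not appear to be correct as stated. By Remark \ref{remark: matrix of V in terms of coefficients of alpha}, the diagonal of the matrix of $V^{n\nu}$ relative to $B$ consists of the coefficients $c_{(p^{n\nu}-1)i}$, $i\ge 1$, of $H_n:=\prod_{j=0}^{n\nu-1}\sigma^j(\alpha^{-1})$, so your character-sum identity gives $\Tr(V^{n\nu})=-\sum_{x\in\A^1(\F_{p^{n\nu}})}H_n(x)$. Since evaluation is a ring homomorphism, $H_n(x)$ is the \emph{inverse} of $\prod_j\sigma^j(\alpha)(x)$, and Proposition \ref{prop: Frobenius } then yields $\Tr(V^{n\nu})=-\sum_v\deg(v)\,\chi(\Frob_v)^{-n/\deg(v)}$, whereas the quantity you need is $S_n=+\sum_v\deg(v)\,\chi(\Frob_v)^{+n/\deg(v)}$. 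These differ both by a sign and by $\chi\leftrightarrow\chi^{-1}$, and the two character sums are genuinely different elements of $A_{\F}$ in general; the interchange of $\alpha$ and $\alpha^{-1}$ is therefore not convention-tracking but a substantive obstruction. The true equicharacteristic statement, with the Euler product taken with exponent $+1$ and the operator built from $\alpha^{-1}$, is exactly Wan's equation (7.5), and it is not reached by matching power-sum traces; it rests on a direct determinant-level argument (dual to the Frobenius structure $f\mapsto\alpha\sigma(f)$ on functions) that your sketch does not supply.
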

        \begin{proof}
            This is essentially due to Dwork and can be found in \cite{Dwork-rationality}. For our specific situation, see \cite[\S 4]{Wan-meromorphic_continuation} for a proof and
            \cite[\S 7]{Wan-meromorphic_continuation} for a discussion about adapting the formula to positive
            characteristic. The relevant equation is 7.5 in \cite{Wan-meromorphic_continuation}. 
            In particular, the $L$-function that Wan studies is exactly
            given by the second line of equation \eqref{eq: L-function definition} 
            and the matrix in the Fredholm determinant is the matrix of
            $V^\nu$ by Remark \ref{remark: matrix of V in terms of coefficients of alpha}.
        \end{proof}

        \begin{proof}[Proof of Theorem $\ref{t: usable trace formula}$]
        From Theorem \ref{t: trace formula}, Lemma \ref{l: action of of V on Z^m}, and Theorem \ref{t: Dwork's linearlization trick} we have
        \begin{align*}
            \frac{1}{\fielddegree}\NP(L(\chi,s))^{\times \fielddegree}&=\NP(V|B_{\F}).
        \end{align*}
        By Lemma \ref{l: action of of V on Z^m} and Lemma \ref{l: growth property of Z^m for normal basis}, we may use Lemma \ref{l: open discs with changing } to obtain:
        \begin{align*}
            \frac{1}{\fielddegree}\NP(L(\chi,s))^{\times \fielddegree}&=\NP(V|\calB_{\F}^{d/p}).\qedhere
        \end{align*}

        \end{proof}

\subsection{Newton-Hodge Interaction}
\label{ss: Newton Hodge interaction}

    \begin{definition}\label{def:HPD}
			We define $\HP(d)$ to be the polygon in $\R^2$ connecting the vertices
			\begin{equation*}
				\{(	m, \eta_m	)	: m \geq 0 \} \quad\text{where}\quad \eta_m\colonequals \frac{(p-1)m(m+1)}{2d}.
			\end{equation*}
            That is, $\HP(d)$ is the polygon consisting of segments of slopes $\frac{p-1}{d},2\frac{p-1}{d}, \dots$, each occurring with multiplicity one.
            We let $\HP(d)^{\times \fielddegree}$ denote the polygon where each slope in $\HP(d)$ is repeated $\fielddegree$ times.
		\end{definition}
\begin{theorem}
    \label{theorem: kosters-zhu plus epsilon} 
    For each $\filindex \equiv 0$ or $-1 \pmod{d}$, 
    the $T$-adic Newton polygon of $L(\chi,s)$ has the point $(\filindex,\fielddegree \eta_t)$ as a vertex. Furthermore, if $d|(p-1)$ then there is an equality of polygons
    \begin{equation*}
        \NP (L(\chi,s)) = \fielddegree\HP(d).
    \end{equation*}
\end{theorem}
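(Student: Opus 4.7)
The plan is to combine a \emph{lower} bound on $\NP(L(\chi,s))$ derived from the Hodge polygon of $V$ on $\calZ^{d/p}$ with matching \emph{upper} bounds on $\NP(L(\chi,s))$ essentially proven in \cite{KostersZhu} and refined in \cite{kramer-milleruptonII}.

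For the lower bound, Corollary \ref{c: V-estimate for basis eliment} asserts $V(u_j^{d/p}) \in T^{(p-1)j/d}\calZ^{d/p}$ for each $j \geq 1$; passing to the orthonormal $\calE_{\F}$-basis $\calB^{d/p}_{\F}=\{\varsigma_i u_j^{d/p}\}$, this forces every column of the matrix of $V$ indexed by $(j,i)$ to have entries of valuation at least $(p-1)j/d$. Consequently the Hodge polygon of $V$ with respect to $\calB^{d/p}_{\F}$ has vertices $(m\fielddegree, \fielddegree\eta_m)$, with slope $\tfrac{(p-1)m}{d}$ appearing with multiplicity $\fielddegree$. Since $V$ is completely continuous on $\calZ^{d/p}$ by Lemmas \ref{l: action of of V on Z^m} and \ref{lem: technical}, the standard ``Newton above Hodge'' inequality applies to $\NP(V|\calB^{d/p}_{\F})$. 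A short computation shows that after the transformations of Theorem \ref{t: usable trace formula}, this Hodge bound translates exactly to $\NP(L(\chi,s)) \geq \fielddegree\HP(d)$.

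For the matching upper bounds I would invoke \cite{kramer-milleruptonII}, which adapts and extends the Dwork-theoretic techniques of \cite{KostersZhu}: that work establishes (i) that $\NP(L(\chi,s))$ meets $\fielddegree\HP(d)$ at every ``periodic'' point $(t,\fielddegree\eta_t)$ with $t \equiv 0$ or $-1 \pmod d$, and (ii) that $\NP(L(\chi,s)) = \fielddegree\HP(d)$ outright whenever $d \mid (p-1)$. Combined with the lower bound above, (i) forces $(t,\fielddegree\eta_t)$ to be an \emph{honest} vertex of $\NP(L(\chi,s))$: since $\fielddegree\HP(d)$ has a strict vertex at each such point and two convex polygons that touch while one lies above the other must have left derivative of the upper $\le$ left derivative of the lower and right derivative of the upper $\ge$ right derivative of the lower, the upper polygon $\NP(L(\chi,s))$ inherits the strict vertex. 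Assertion (ii) yields the global equality in the $d \mid (p-1)$ case directly.

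The main obstacle is the bookkeeping needed to translate between the normalization conventions of \cite{kramer-milleruptonII}, which treats a broader class of Hasse--Weil $L$-functions over more general ordinary base curves with several possible ramification points, and our Iwasawa-theoretic character $L$-function $L(\chi,s)$. Once this translation is in place, the theorem follows from the Hodge lower bound above combined with the cited external upper bounds, with no further substantive analytic input.
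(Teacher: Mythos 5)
Your proposal is correct, but it takes a more elaborate route than the paper. The paper's own proof is essentially a direct citation: \cite[Theorem~1.20]{kramer-milleruptonII} already establishes that the $T^\nu$-adic Newton polygon of $L(\chi,s)$ has the point $(t,\eta_t)$ as an honest \emph{vertex} for all $t\equiv 0,-1\pmod d$ in the minimal-break-ratio case, and further that $\NP(L(\chi,s))=\HP(d)$ in $T^\nu$-adic normalization when $d\mid(p-1)$. The only work the paper does is the one-line rescaling $v_T(T^\nu)=\nu$ to pass from the $T^\nu$-adic to the $T$-adic Newton polygon (which also handles the ``normalization bookkeeping'' you flag at the end). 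Your argument instead reads the external reference as supplying only that $\NP(L)$ \emph{meets} $\nu\HP(d)$ at those periodic points (a weaker assertion), and then recovers the vertex property by combining this with a Newton-over-Hodge lower bound. The lower bound you derive from Corollary~\ref{c: V-estimate for basis eliment} and Theorem~\ref{t: usable trace formula} is sound: the column estimate $V(u_j^{d/p})\in T^{(p-1)j/d}\calZ^{d/p}$ gives a Hodge polygon lying on $\HP(d)^{\times\nu}$, complete continuity (Lemmas~\ref{l: action of of V on Z^m} and~\ref{lem: technical}) gives the Newton-above-Hodge inequality, and the translation through Theorem~\ref{t: usable trace formula} does yield $\NP(L(\chi,s))\ge\nu\HP(d)$ since both polygons are piecewise linear with breakpoints at integers. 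Your convexity argument---left slope of the upper polygon $\le$ left slope of the lower, right slope of the upper $\ge$ right slope of the lower, so touching at a strict vertex of the lower forces a strict vertex of the upper---is also correct. So the approach is logically complete and has the modest advantage of not depending on the exact formulation of the cited theorem, but it is strictly more work than what is needed here, since the reference already delivers the vertex statement directly. (Both arguments rely equally on the external result for the $d\mid(p-1)$ global equality.)
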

\begin{proof}
    This is essentially proven in \cite{KostersZhu}, but requires a couple of small observations found in \cite{kramer-milleruptonII}. 
    In \cite[Theorem 1.20]{kramer-milleruptonII} it is shown that for towers with minimal break ratios (there called \emph{strictly stable monodromy}), the $T^\nu$-adic Newton polygon of $L(\chi,s)$ has the point $(\filindex,\eta_t)$ as a vertex for all $t\equiv 0$ or $-1 \bmod d$. Since $v_T(T^\fielddegree) = \fielddegree$ we see that $\NP L(\chi,s)$ has the point $(t, \fielddegree \eta_t)$ as a vertex. 
    In the case where $d|(p-1)$, the third part of \cite[Theorem 1.20]{kramer-milleruptonII} implies
    the equality of polygons.
\end{proof}

\begin{corollary} \label{corollary: touching polygons}
    For each $\filindex \equiv 0$ or $-1 \pmod{d}$, the polygons
    \begin{equation*}
        \NP(\bfV|\calB_\F^{d/p})~\text{and }\HP(d)^{\times \fielddegree}
    \end{equation*}
    coincide at the vertex $(\fielddegree\filindex ,\fielddegree \eta_t)$. Furthermore, if $d|(p-1)$
    the two polygons are the same.
\end{corollary}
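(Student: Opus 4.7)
The plan is to combine Corollary \ref{c: V-estimate for basis eliment} with the standard ``Newton-above-Hodge'' inequality, and then exploit the description of $\NP(L(\chi,s))$ provided by Theorem \ref{theorem: kosters-zhu plus epsilon} together with the Dwork-style identification of Theorem \ref{t: usable trace formula}.

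First, I would describe the Hodge polygon of $V$ on $\calZ_{\calE}^{[d/p]}$ relative to the $\calE_\F$-basis $\calB_\F^{d/p}$. By Corollary \ref{c: V-estimate for basis eliment}, for each $j \ge 1$ the $j$-th column of the matrix of $V$ in the basis $\calB^{d/p}$ has all entries in $T^{(p-1)j/d}\calA$. Passing to $\calB_\F^{d/p}$ replaces each basis element $u_j^{d/p}$ by its $\nu$ multiples $\varsigma_\ell u_j^{d/p}$ (with $\varsigma_1,\dots,\varsigma_\nu$ a lift to $\calE$ of an $\F$-basis of $k$), and since each $\varsigma_\ell \in \calA^{\times}$ has $T$-adic valuation zero, the resulting $\calE_\F$-matrix of $V$ has the same column valuations, each repeated $\nu$ times. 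The Hodge polygon of this matrix is therefore exactly $\HP(d)^{\times \nu}$.

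Next, since $V$ is $d$-overconvergent with respect to $\calB$ by Lemma \ref{l: action of of V on Z^m}, Lemma \ref{lem: technical} implies that $V$ restricts to a completely continuous $\calE_\F$-linear endomorphism of $\calZ_{\calE}^{[d/p]}$, and so the Newton-over-Hodge inequality for completely continuous operators on Banach spaces (in the style of \cite[\S 1.4]{Katz}) applies, giving
\begin{equation*}
\NP(V | \calB_\F^{d/p}) \ge \HP(d)^{\times \nu}.
\end{equation*}

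Next, I would use Theorem \ref{t: usable trace formula} together with Theorem \ref{theorem: kosters-zhu plus epsilon} to show the two polygons touch at the prescribed vertices. Theorem \ref{theorem: kosters-zhu plus epsilon} tells us that $(t, \nu \eta_t)$ is a vertex of $\NP(L(\chi,s))$ for each $t \equiv 0$ or $-1 \pmod d$. Tracking the operation $P \mapsto \tfrac{1}{\nu}P^{\times \nu}$ (each slope repeated $\nu$ times, then all $y$-coordinates divided by $\nu$), the vertex $(t,\nu\eta_t)$ of $\NP(L(\chi,s))$ is sent to the point $(\nu t,\nu \eta_t)$, so by Theorem \ref{t: usable trace formula} the polygon $\NP(V | \calB_\F^{d/p})$ passes through $(\nu t, \nu \eta_t)$. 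Since $\HP(d)^{\times \nu}$ has a vertex at $(\nu m, \nu \eta_m)$ for \emph{every} integer $m \ge 0$, and since $\NP(V|\calB_\F^{d/p})$ lies on or above $\HP(d)^{\times \nu}$, the two polygons must coincide at each of these common points (and $\NP(V|\calB_\F^{d/p})$ must have a vertex there by convexity).

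Finally, when $d \mid (p-1)$, the stronger conclusion of Theorem \ref{theorem: kosters-zhu plus epsilon} says that $\NP(L(\chi,s)) = \nu\HP(d)$ as polygons; a direct calculation with the scaling $\tfrac{1}{\nu}(\cdot)^{\times \nu}$ shows $\tfrac{1}{\nu}(\nu\HP(d))^{\times \nu} = \HP(d)^{\times \nu}$, so Theorem \ref{t: usable trace formula} gives the equality $\NP(V|\calB_\F^{d/p}) = \HP(d)^{\times \nu}$ on the nose. The step most demanding of care is bookkeeping the two scaling operations involved in Theorem \ref{t: usable trace formula} so that the vertex of $\NP(L(\chi,s))$ at $(t,\nu\eta_t)$ corresponds correctly to the vertex of $\NP(V|\calB_\F^{d/p})$ at $(\nu t,\nu\eta_t)$ that lies on $\HP(d)^{\times \nu}$.
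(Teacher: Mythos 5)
Your proof is correct and reaches the same conclusion, but takes a somewhat longer route than the paper. The paper's argument is a direct two-line computation: Theorem \ref{t: usable trace formula} gives $\NP(V|\calB_\F^{d/p}) = \frac{1}{\nu}\NP(L(\chi,s))^{\times\nu}$, Theorem \ref{theorem: kosters-zhu plus epsilon} places a vertex of $\NP(L(\chi,s))$ at $(t,\nu\eta_t)$, and the scaling operations carry this to $(\nu t,\nu\eta_t)$, which is also a vertex of $\HP(d)^{\times\nu}$ by Definition \ref{def:HPD}. That is all that is needed. Your interpolation of the Newton-over-Hodge inequality via Corollary \ref{c: V-estimate for basis eliment} and Lemma \ref{lem: technical} is a valid and natural alternative framing (and is consistent with the expository discussion of the strategy in Section \ref{s: outline of the article and sketch of the proof}), but it is not required to establish the statement: once both polygons are known to pass through $(\nu t, \nu\eta_t)$, nothing more is needed.

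One small but worth-noting imprecision: you assert that the Hodge polygon of $V$ relative to $\calB_\F^{d/p}$ is \emph{exactly} $\HP(d)^{\times\nu}$. Corollary \ref{c: V-estimate for basis eliment} only yields the containment $V(u_j^{d/p}) \in T^{(p-1)j/d}\calZ^{d/p}$, i.e.~a \emph{lower} bound $v_T(a_{ij}) \ge (p-1)j/d$ on the column entries; this shows the Hodge polygon lies on or above $\HP(d)^{\times\nu}$, but does not by itself rule out that the inequality is strict in some column. Fortunately, for your argument only the inequality $\NP(V|\calB_\F^{d/p}) \ge \HP(d)^{\times\nu}$ is used, so the over-claim is harmless. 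Your bookkeeping of the $\frac{1}{\nu}(\cdot)^{\times\nu}$ operation on vertices is carried out correctly, as is the $d\mid(p-1)$ case.
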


\begin{proof}
        From Theorem \ref{t: usable trace formula} we know $\NP(V|\calB_\F^{d/p}) = \frac{1}{\fielddegree} \NP(L(\chi,s))^{\times \fielddegree}$. Thus, from Theorem \ref{theorem: kosters-zhu plus epsilon}
        we conclude $\NP(V|\calB_\F^{d/p})$ contains the vertex $(\fielddegree\filindex ,\fielddegree \eta_t)$. By the definition of $\HP(d)$, we see that
        $\HP(d)^{\times \fielddegree}$ also contains this vertex. When $d|(p-1)$ we know from Theorem \ref{theorem: kosters-zhu plus epsilon} that $\NP(L(\chi,s)) = \fielddegree\HP(d)$. As we know
        $\NP(V|\calB_\F^{d/p}) = \frac{1}{\fielddegree} \NP(L(\chi,s))^{\times \fielddegree}$ this shows
        $\NP(\bfV|\calB_\F^{d/p})=\HP(d)^{\times \fielddegree}$.
\end{proof}

\subsection{Consequences of the coincidence of NP and \texorpdfstring{$\HP(d)$}{HP(d)}} \label{ss: consequences of NP and HP coincidence}
In this subsection we make use of the coincidences between the Newton polygon and $\HP(d)$ to control the behavior of the iterates $V^r$ on $Z$. Fix $\filindex \geq 0$. We have a decomposition of $Z$ as a direct sum of $A$-modules
\begin{equation*}
    Z = \Fil{\leq \filindex} Z \oplus \Fil{> \filindex} Z.
\end{equation*}
and, for each $r\ge 0$, a corresponding representation of $V^r$ as a $2\times 2$ matrix
of $\sigma^{-1}$-semilinear $A$-module homomorphisms
    \[V^r = \begin{bmatrix}
		 \bfA_{r,\filindex} & \bfB_{r,\filindex} \\
		 \bfC_{r,\filindex} & \bfD_{r,\filindex}
		\end{bmatrix}= \begin{bmatrix}
		 \bfA_{r} & \bfB_{r} \\
		 \bfC_{r} & \bfD_{r}
		\end{bmatrix}.
  \]
So, for example, $\bfA_{r,t}$ is an endomorphism of $\Fil{\le \filindex}Z$,  $\bfB_{r,t}$
is a homomorphism from $\Fil{>\filindex}Z$ to $\Fil{\le\filindex}Z$, etc.
By convention, we will omit the $t$ in the subscript except
    when there is no possibility for ambiguity, and we will
    again write $\bfA_r$, $\bfB_r$ etc.~for the induced maps 
    on $\Fil{\bullet} \calZ$ and $\Fil{\bullet} \calZ_{\calE}$.

\begin{remark}
        Note that $A_{r,t}$ is a {\em linear} endomorphism of the finite, free $\calA_{\F}$-module
        $\Fil{\le t}\calZ $; as such, its determinant $\Det_{\calA_{\F}}(A_{r,t})$ is a well-defined element 
        of $\calA_{\F}$.
\end{remark}

    \begin{convention}\label{convention:vertex}
        In the remainder of \S\ref{ss: consequences of NP and HP coincidence} we assume that $(\nu t, \nu \eta_t)$
        is a vertex of $\NP(\bfV|\calB_\F^{d/p})$.
    \end{convention}

    \begin{proposition}\label{prop: columns and determinant of Ak}
We have $v_T(\Det_{\calE_{\F}}(A_{1, t})) = \fielddegree \eta_t$.
    \end{proposition}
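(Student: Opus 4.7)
The plan is to expand the coefficient $c_{\nu t}$ of $s^{\nu t}$ in the Fredholm determinant $\Det_{\calE_\F}(1-sV|\calB_\F^{d/p})$ via its Leibniz-type formula, and to isolate $\Det_{\calE_\F}(A_{1,t})$ as the \emph{unique} principal minor of minimum $T$-adic valuation among all $\nu t \times \nu t$ principal minors; the vertex hypothesis then matches the two valuations.

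First I would promote the column estimate of Corollary~\ref{c: V-estimate for basis eliment} to the $\calE_\F$-linear matrix $M$ of $V$ in $\calB_\F^{d/p}$. Since $\sigma$ restricts to the identity on $\calA_\F$ and $\calA=\bigoplus_{\ell} \varsigma_\ell\calA_\F$ as a $\calA_\F$-module, each of the $\nu$ columns of $M$ indexed by $\{\varsigma_j u_i^{d/p}\}_{j=1}^\nu$ has entries in $T^{(p-1)i/d}\calA_\F$. Applied directly to the upper-left $\nu t\times \nu t$ block of $M$, this yields the easy inequality
\begin{equation*}
v_T(\Det_{\calE_\F}(A_{1,t}))\geq \sum_{i=1}^t \nu\cdot \tfrac{(p-1)i}{d} = \nu\eta_t.
\end{equation*}

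For the reverse inequality, I would order $\calB_\F^{d/p}$ so that its first $\nu t$ elements are precisely $\{\varsigma_j u_i^{d/p} : i\leq t,\ j\leq \nu\}$, which form a basis of $\Fil{\leq t}\calZ_{\calE}$ over $\calE_\F$; with this ordering $\Det_{\calE_\F}(A_{1,t})$ is the top-left $\nu t\times \nu t$ principal minor of $M$. By \eqref{eq: leibniz type formula}, $c_{\nu t}$ is (up to sign) the sum over all $\nu t$-element subsets $S\subseteq \calB_\F^{d/p}$ of the principal minor $\det M|_S$, and the column bound gives $v_T(\det M|_S)\geq \tfrac{p-1}{d}\sum_i i\cdot n_i(S)$, where $n_i(S)\in\{0,\ldots,\nu\}$ denotes the multiplicity of index $i$ in $S$. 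Subject to $\sum n_i=\nu t$, the lower bound $\sum_i i\cdot n_i$ is minimized \emph{uniquely} by $n_i=\nu$ for $i\leq t$ and $n_i=0$ otherwise, yielding exactly $\nu\eta_t$; every other configuration of $S$ gives a strictly larger lower bound.

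Finally, Theorem~\ref{t: usable trace formula} together with Convention~\ref{convention:vertex} gives $v_T(c_{\nu t})=\nu\eta_t$. Because every principal minor other than the top-left block contributes strictly larger valuation, $c_{\nu t}$ differs from $\pm\Det_{\calE_\F}(A_{1,t})$ by an element of valuation $>\nu\eta_t$, which forces $v_T(\Det_{\calE_\F}(A_{1,t}))=\nu\eta_t$. The one subtle point is the combinatorial uniqueness of the minimizing configuration in the presence of the $\nu$-fold multiplicity coming from $k/\F$: this is resolved by the observation that all $\nu$ columns sharing the same ``$i$'' have identical column valuation $(p-1)i/d$, so the minimum is attained only when exactly $\nu$ copies are used for each $i\le t$ and none for $i>t$.
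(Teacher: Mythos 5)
Your proposal is correct and follows essentially the same route as the paper: both arguments use the column estimate from Corollary~\ref{c: V-estimate for basis eliment} to bound each term of the Leibniz expansion of $c_{\nu t}$ below by $\nu\eta_t$, observe that this bound is attained only by the subset $S=\Fil{\le t}\calB_\F^{d/p}$ (your multiplicity argument with $n_i(S)$ is the same observation the paper phrases as $i_k\ge\lceil k/\nu\rceil$ with equality exactly for that subset), and then invoke the vertex hypothesis $v_T(c_{\nu t})=\nu\eta_t$ to conclude via the ultrametric inequality.
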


    \begin{proof}
        Let $(a_{b,b'})$ be the matrix of $V$ relative to the basis $\calB_{\F}^{d/p}$
        of $\calZ_{\calE}$, considered as a Banach space over $\calE_{\F}$, so
        for each $b=\zeta_ju_i^{d/p}$ in $\calB_{\F}^{d/p}$ we have
        \begin{align*}
            V(b) &= \sum_{b' \in \calB_{\F}^{d/p}} a_{b',b} b'\quad\text{for unique}\ a_{b',b}\in \calA_{\F}.
        \end{align*}
Corollary~\ref{c: V-estimate for basis eliment} shows $v_T(a_{b',b}) \geq \frac{(p-1)i}{d}$. 
        Thus, if $S=\{\zeta_{j_1}u_{i_1}^{d/p}, \dots, \zeta_{j_n}u_{i_n}^{d/p}\}$ is any subset of $\calB_\F^{d/p}$ with cardinality $n=\nu t$ and $\sigma$ any permutation of $S$, we have
        \begin{align}\label{eq: determinant of Ak equation}
            v_T\Big( \prod_{b \in S} a_{\sigma(b),b} \Big ) &\geq \sum_{k=1}^{n} \frac{(p-1)}{d}i_k \ge \sum_{k=1}^n \frac{(p-1)}{d}\left\lceil \frac{k}{\nu} \right\rceil  = \nu \eta_t.
        \end{align}
        Moreover, the second inequality in \eqref{eq: determinant of Ak equation} is {\em strict} when $S\neq \Fil{\leq t} \calB_\F^{d/p}$, since $i_k\ge \lceil k/\nu \rceil$ with equality for all $k\le n$ if and only if $S=\Fil{\le t} \calB^{d/p}_{\F}$.    
        Considering the sum $c_{n}$ in the Leibniz-type formula \eqref{eq: leibniz type formula},
        our assumption on $\NP(\bfV|\calB_\F^{d/p})$ is that $v_T(c_n)=\nu\eta_t$.
        Therefore we see that
        \begin{align*}
             \nu \eta_t =v_T(c_n) &=  v_T\left( (-1)^n \sum_{|S| = n} \sum_\sigma \mathrm{sgn}(\sigma) \prod_{m \in S} a_{\sigma(m),m} \right) \\
            &= v_T\left( (-1)^n \sum_\sigma \mathrm{sgn}(\sigma) \prod_{m \in \Fil{\le n}\calB_{\F}^{d/p}} a_{\sigma(m),m} \right) \\
            &= v_T(\Det_{\calE_{\F}}(A_{1,t})). \qedhere
        \end{align*}
    \end{proof}
    \begin{corollary}\label{cor:invertible}
        The operator $A_1$ is invertible on $\Fil{\leq \filindex}\calZ_{\calE}$.
    \end{corollary}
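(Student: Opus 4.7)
The plan is to deduce invertibility of $A_1$ directly from the nonvanishing of its determinant, which is provided by Proposition \ref{prop: columns and determinant of Ak}.

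First, I would note that $\Fil{\le t}\calZ_{\calE}$ is a finite-dimensional $\calE$-vector space, of dimension $t$ over $\calE$ and hence of dimension $\nu t$ over the subfield $\calE_{\F}\subseteq \calE$. The Cartier operator $V$ is $\sigma^{-1}$-semilinear over $\calA$ where $\sigma$ acts as the $p$-power Frobenius on $k$ and fixes $T^{1/d}$; consequently, $A_1$ is a $\sigma^{-1}$-semilinear endomorphism of $\Fil{\le t}\calZ_{\calE}$. Since $\sigma$ fixes $\calE_{\F}$ pointwise, $A_1$ is genuinely $\calE_{\F}$-linear, and its determinant $\Det_{\calE_{\F}}(A_{1,t})\in \calE_{\F}$ is a well-defined element of the base field.

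Now I would invoke Proposition \ref{prop: columns and determinant of Ak}, which asserts
\[
    v_T\bigl(\Det_{\calE_{\F}}(A_{1,t})\bigr) = \nu\eta_t.
\]
As $\nu\eta_t$ is a finite rational number, the determinant is nonzero in $\calE_{\F}$. By the standard characterization of invertibility of linear endomorphisms of finite-dimensional vector spaces, $A_1$ is an $\calE_{\F}$-linear automorphism of $\Fil{\le t}\calZ_{\calE}$, hence in particular is a bijection. Because $\sigma$ is itself a bijection of $\calE$, any set-theoretic inverse of the semilinear $A_1$ is automatically $\sigma$-semilinear over $\calA$; thus $A_1$ is invertible in the category of semilinear endomorphisms, completing the proof.

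There is no real obstacle here---the content is entirely in Proposition \ref{prop: columns and determinant of Ak}, and the present corollary is the immediate translation from ``determinant has finite valuation'' to ``operator is bijective.'' The only point that requires a word of care is the semilinearity of $A_1$, which is harmless because $\sigma$ fixes the subfield $\calE_{\F}$ over which the determinant is computed.
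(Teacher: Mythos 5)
Your argument is correct and is essentially the same as the paper's: Proposition~\ref{prop: columns and determinant of Ak} gives $v_T(\Det_{\calE_\F}(A_{1,t}))=\nu\eta_t<\infty$, so the determinant is a nonzero (hence invertible) element of the field $\calE_\F$, and $A_1$ is therefore an $\calE_\F$-linear automorphism of the finite-dimensional space $\Fil{\leq\filindex}\calZ_\calE$. Your extra care about $\sigma$-semilinearity versus $\calE_\F$-linearity is accurate and harmless; the paper simply leaves it implicit.
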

    \begin{proof}
        This is clear, as $\det_{\calE_\F}(A_1)$ is a unit of $\calE_{\F}$.
    \end{proof}

    \begin{lemma}\label{l:fundamental fact SNF}
        Let $\calV$ be finite dimensional $\calE_\F$-vector space, and $u:\calV\rightarrow \calV$
        an injective (equivalently invertible) $\calE_\F$-linear map.  If $\calD\subseteq \calV$
        is any $\calA_\F$-lattice that is preserved by $u$, then
        $$
            d\cdot v_T(\Det_{\calE_\F}(u)) = \dim_{\F}( \calD / u\calD) = \dim_{\F}(u^{-1}\calD / \calD).
        $$
    \end{lemma}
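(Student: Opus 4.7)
The plan is to reduce the statement to the standard structure theory over a discrete valuation ring. Observe that $\calA_\F = \F\powerseries{T^{1/d}}$ is a DVR with uniformizer $\pi \colonequals T^{1/d}$ and residue field $\F$, and its $\pi$-adic valuation satisfies $v_\pi = d \cdot v_T$ on $\calE_\F^\times$.

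Since $\calD$ is a $\calA_\F$-lattice in $\calV$, it is free of finite rank $n = \dim_{\calE_\F}\calV$ over $\calA_\F$. Applying the elementary divisor theorem to the inclusion $u\calD \subseteq \calD$ of two lattices in $\calV$, I would choose bases $\{e_1,\ldots,e_n\}$ of $\calD$ and $\{\pi^{a_1}e_1,\ldots,\pi^{a_n}e_n\}$ of $u\calD$ for uniquely determined integers $a_1,\ldots,a_n \ge 0$. Then in the basis $\{e_i\}$ of $\calV$, the matrix of $u$ has the form $P \cdot \mathrm{diag}(\pi^{a_1},\ldots,\pi^{a_n})$ for some $P\in \mathrm{GL}_n(\calA_\F)$, so $v_\pi(\Det_{\calE_\F}(u)) = \sum_i a_i$. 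On the other hand, the same bases give an $\calA_\F$-module isomorphism
\begin{equation*}
        \calD/u\calD \;\simeq\; \bigoplus_{i=1}^n \calA_\F/(\pi^{a_i}),
\end{equation*}
and since the residue field is $\F$, taking $\F$-dimensions yields $\dim_\F(\calD/u\calD) = \sum_i a_i$. Combining these two equalities and using $v_\pi = d \cdot v_T$ gives the first asserted equality:
\begin{equation*}
        \dim_\F(\calD/u\calD) \;=\; v_\pi(\Det_{\calE_\F}(u)) \;=\; d \cdot v_T(\Det_{\calE_\F}(u)).
\end{equation*}

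For the second equality, I would simply observe that since $u$ is an $\calE_\F$-linear automorphism of $\calV$, it induces an $\calA_\F$-linear isomorphism $u:u^{-1}\calD \xrightarrow{\sim} \calD$ carrying $\calD \subseteq u^{-1}\calD$ onto $u\calD \subseteq \calD$; this yields an isomorphism of finite $\F$-vector spaces $u^{-1}\calD / \calD \simeq \calD / u\calD$. There is no real obstacle here: the entire argument is a direct application of the Smith normal form over a DVR combined with the scaling $v_\pi = d\cdot v_T$, and the only thing to be careful about is tracking the factor of $d$ coming from the difference between the $T$-adic and $\pi$-adic valuations.
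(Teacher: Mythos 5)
Your argument is correct and is essentially the paper's own proof: both invoke Smith Normal Form (elementary divisors) over the DVR $\calA_\F$ with uniformizer $T^{1/d}$ and identify $u^{-1}\calD/\calD \simeq \calD/u\calD$ via $u$. You simply unpack the elementary-divisor computation of $\dim_\F(\calD/u\calD) = \sum a_i = v_\pi(\det u) = d\cdot v_T(\det u)$ in more detail than the paper's terse proof.
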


    \begin{proof}
        Since $u: \calD\rightarrow \calD$ is an isomorphism after inverting $T$, 
        its cokernel is a finitely generated, torsion $\calA_\F$-module, so finite-dimensional
        as an $\F$-vector space.  Noting that $u$ induces an isomorphism of $\calA_\F$-modules
        $u: u^{-1}\calD / \calD \simeq \calD / u\calD$, the result follows from
        the existence of Smith Normal Form for $u$ over the discrete valuation ring $\calA_{\F}$,
        whose maximal ideal is  $T^{1/d}\calA_\F$.
    \end{proof}
    
 \begin{lemma}\label{lemma: inverse of Cartier part}
    As $\calA$-submodules of $\calZ_{\calE}$,
		we have $\bfA_{1,t}^{-1} (\Fil{\leq \filindex}\calZ^{d/p}) = \Fil{\leq \filindex}\calZ^d$.
	\end{lemma}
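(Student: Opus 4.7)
The plan is to prove the equivalent equality $\bfA_{1,t}(\Fil{\leq t}\calZ^d) = \Fil{\leq t}\calZ^{d/p}$, from which the lemma follows by applying the inverse $\bfA_{1,t}^{-1}$, which exists by Corollary~\ref{cor:invertible}. The approach has two stages: first establish the containment $\bfA_{1,t}(\Fil{\leq t}\calZ^d) \subseteq \Fil{\leq t}\calZ^{d/p}$, and then upgrade it to an equality via a Smith-normal-form style index comparison.

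For the containment, any $v \in \Fil{\leq t}\calZ^d$ can be written as $v = \sum_{i=1}^t b_i u_i^d$ with $b_i \in \calA$. Since $V$ is $\sigma^{-1}$-semilinear and acts as the identity on $T^{1/d}$, we have $V(v) = \sum_i b_i^{\sigma^{-1}} V(u_i^d)$; Lemma~\ref{l: action of of V on Z^m} gives $V(u_i^d) \in \calZ^{d/p}$, so $V(v) \in \calZ^{d/p}$. Because $\calZ^{d/p}$ is defined coefficient-wise in the basis $\calB$, projection onto $\Fil{\leq t}\calZ$ preserves membership, and therefore $\bfA_{1,t}(v) \in \Fil{\leq t}\calZ^{d/p}$.

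For the index calculation, the inclusion $\Fil{\leq t}\calZ^{d/p} \subseteq \Fil{\leq t}\calZ^d$ combined with the containment above shows $\bfA_{1,t}$ preserves the $\calA_\F$-lattice $\Fil{\leq t}\calZ^d$. Applying Lemma~\ref{l:fundamental fact SNF} with $u = \bfA_{1,t}$ and $\calD = \Fil{\leq t}\calZ^d$, and then Proposition~\ref{prop: columns and determinant of Ak}, gives
\begin{equation*}
    \dim_\F\!\bigl(\Fil{\leq t}\calZ^d / \bfA_{1,t}(\Fil{\leq t}\calZ^d)\bigr) \;=\; d\cdot v_T(\Det_{\calE_\F}(\bfA_{1,t})) \;=\; d\fielddegree\,\eta_t.
\end{equation*}
On the other hand, $\Fil{\leq t}\calZ^d/\Fil{\leq t}\calZ^{d/p} \cong \bigoplus_{i=1}^t \calA/T^{i(p-1)/d}\calA$, whose $\F$-dimension is $\sum_{i=1}^t i(p-1)\fielddegree = d\fielddegree\,\eta_t$ as well. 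Matching these two indices forces $\bfA_{1,t}(\Fil{\leq t}\calZ^d) = \Fil{\leq t}\calZ^{d/p}$.

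The main subtlety I anticipate is the transfer of the overconvergence statement $V(\calZ^d) \subseteq \calZ^{d/p}$ from Lemma~\ref{l: action of of V on Z^m}, which is stated with respect to the basis $B$, into coefficient-wise bounds in the basis $\calB$. This is supplied by Lemma~\ref{l: growth property of Z^m for normal basis}, which identifies $\calZ^m$ with $\calZ^m_B$ for $m \geq d/p$; once this identification is in hand, the remainder of the proof reduces to linear algebra over the discrete valuation ring $\calA_\F$.
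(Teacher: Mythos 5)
Your proposal is correct and follows essentially the same route as the paper: the containment $\bfA_{1,t}(\Fil{\leq t}\calZ^d) \subseteq \Fil{\leq t}\calZ^{d/p}$ via Lemma~\ref{l: action of of V on Z^m}, then an index count using Proposition~\ref{prop: columns and determinant of Ak} and Lemma~\ref{l:fundamental fact SNF} matched against the explicit computation $\Fil{\leq t}\calZ^d/\Fil{\leq t}\calZ^{d/p} \simeq \bigoplus_{i\le t}\calA/T^{i(p-1)/d}\calA$ of $\F$-dimension $d\fielddegree\eta_t$. The paper phrases the final step as a surjection of finite quotients being forced into an isomorphism rather than as an equality of indices, but this is the same argument.
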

    \begin{proof}
        By Lemma \ref{l: action of of V on Z^m} we know that $\bfA_{1}(\Fil{\leq \filindex}\calZ^d) \subseteq \Fil{\leq \filindex}\calZ^{d/p}$, which induces a surjection
        of $\calA$-modules
           \begin{equation}\label{eq:Amodsurj} 
           \frac{\Fil{\leq \filindex}\calZ^d}{\bfA_{1}(\Fil{\leq \filindex}\calZ^d)} \twoheadrightarrow \frac{\Fil{\leq \filindex}\calZ^d}{\Fil{\leq \filindex}\calZ^{d/p}}
            \end{equation}
        with source (and hence also target) that is finite dimensional over 
        $\F$ by Convention \ref{convention:vertex} and Corollary \ref{cor:invertible}.
        From Proposition \ref{prop: columns and determinant of Ak} and Lemma \ref{l:fundamental fact SNF},
        we see that the $\F$-dimension of the source is $d\nu\eta_t$. By comparing $u_i^d$ and $u_{i}^{d/p}$ we see that the target is isomorphic as an $\calA$-module to $\oplus_{i\le t} \calA/T^{i(p-1)/d}\calA$
        so also has $\F$-dimension $d\nu\eta_t$, so \eqref{eq:Amodsurj} is an isomorphism, and the containment an equality.
    \end{proof}

    We now turn our attention to the operators $\bfA_{r}$. For $r\ge 1$, we have the following relations:
    \begin{align*}
        \bfA_{r} &= \bfA_{1}\bfA_{r-1} + \bfB_{1} \bfC_{r-1} \\
        \bfC_{r} &= \bfC_{1}\bfA_{r-1} + \bfD_{1} \bfC_{r-1}.
    \end{align*}
     Our goal is to study the inverse images of $\Fil{\leq t}\calZ^{d/p}$ under the $\bfA_{r}$. The next lemma does so inductively by approximating $\bfA_{r}^{-1}$ with the composite $\bfA_{r-1}^{-1}\bfA_{1}^{-1}$.

     \begin{lemma}\label{lemma: inverse of iterated Cartier part}
         For all $r \geq 0$, $\bfA_{r}$ induces an invertible operator on $ \Fil{\leq \filindex}\calZ_{\calE}$ and   
         \begin{subequations}
         \begin{equation}\label{eq:ArCra}
             \bfA_{r}^{-1} (\Fil{\leq \filindex}\calZ^{d/p}) = \bfA_{r-1}^{-1} \bfA_{1}^{-1} (\Fil{\leq \filindex}\calZ^{d/p}) \subseteq T^{- \filindex\frac{(r-1)(p-1)}{d}}\Fil{\leq \filindex}\calZ^d
         \end{equation}
         as submodules of $\calZ_{\calE}$.
         Moreover, inside $\calZ_{\calE}$, we have
         \begin{equation}\label{eq:ArCrb}
             \bfC_{r}\bfA_{r}^{-1}(\Fil{\leq \filindex}\calZ^{d/p}) = \bfC_{r}\bfA_{r-1}^{-1} \bfA_{1}^{-1} (\Fil{\leq \filindex}\calZ^{d/p}) \subseteq \Fil{> \filindex}\calZ^{d/p}.
         \end{equation}
         \end{subequations}
     \end{lemma}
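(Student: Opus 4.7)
The plan is to establish all three assertions---invertibility of $\bfA_r$, the chain of equalities and the inclusion in \eqref{eq:ArCra}, and the estimate \eqref{eq:ArCrb}---simultaneously by induction on $r$, with base case $r=1$. For $r=1$, invertibility is Corollary~\ref{cor:invertible}, the identity $\bfA_1^{-1}(\Fil{\leq t}\calZ^{d/p}) = \Fil{\leq t}\calZ^d$ is Lemma~\ref{lemma: inverse of Cartier part} (so the inclusion in \eqref{eq:ArCra} is trivial with zero loss), and \eqref{eq:ArCrb} reduces to $\bfC_1(\Fil{\leq t}\calZ^d)\subseteq \Fil{>t}\calZ^{d/p}$, which follows from $V(\calZ^d)\subseteq \calZ^{d/p}$ (Lemma~\ref{l: action of of V on Z^m}) by projecting to the $\Fil{>t}$ component.

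For the inductive step, assuming (a), \eqref{eq:ArCra}, \eqref{eq:ArCrb} for $r-1$, the central idea is to use the block recursion $\bfA_r = \bfA_1\bfA_{r-1} + \bfB_1\bfC_{r-1}$ to write
\[
\Phi \;\colonequals\; \bfA_r\,\bfA_{r-1}^{-1}\,\bfA_1^{-1} \;=\; \mathrm{id} \;+\; \bfB_1\,\bfC_{r-1}\,\bfA_{r-1}^{-1}\,\bfA_1^{-1}
\]
and show that $\Phi$ is a small perturbation of the identity on $\Fil{\leq t}\calZ^{d/p}$. Three estimates combine: Lemma~\ref{lemma: inverse of Cartier part} sends $\Fil{\leq t}\calZ^{d/p}$ into $\Fil{\leq t}\calZ^d$, which is contained in $T^{-t(p-1)/d}\Fil{\leq t}\calZ^{d/p}$; the inductive estimate \eqref{eq:ArCrb} and the fact that $V$ (hence $\bfA_{r-1}^{-1}$ and $\bfC_{r-1}$) commutes with $T$ then place $\bfC_{r-1}\bfA_{r-1}^{-1}\bfA_1^{-1}(\Fil{\leq t}\calZ^{d/p})$ inside $T^{-t(p-1)/d}\Fil{>t}\calZ^{d/p}$; finally Corollary~\ref{c:V-estimate} tells us $\bfB_1$ improves $T$-valuation by $(p-1)(t+1)/d$. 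The net effect is that $(\Phi-\mathrm{id})(\Fil{\leq t}\calZ^{d/p}) \subseteq T^{(p-1)/d}\Fil{\leq t}\calZ^{d/p}$, so $\Phi$ is an automorphism of the $T$-adically complete, finitely generated $\calA$-module $\Fil{\leq t}\calZ^{d/p}$ via the geometric series $\Phi^{-1} = \sum_{n\geq 0}(\mathrm{id}-\Phi)^n$. Inverting $T$, $\Phi$ is an automorphism of $\Fil{\leq t}\calZ_{\calE}$; since $\bfA_1$ and $\bfA_{r-1}$ are already invertible there, so is $\bfA_r$, with $\bfA_r^{-1} = \bfA_{r-1}^{-1}\bfA_1^{-1}\Phi^{-1}$. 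Applied to $\Fil{\leq t}\calZ^{d/p}$, this gives the first equality of \eqref{eq:ArCra} since $\Phi^{-1}$ preserves $\Fil{\leq t}\calZ^{d/p}$.

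To upgrade this equality to the stated inclusion, note
\[
\bfA_{r-1}^{-1}\bfA_1^{-1}(\Fil{\leq t}\calZ^{d/p}) = \bfA_{r-1}^{-1}(\Fil{\leq t}\calZ^d) \subseteq T^{-t(p-1)/d}\,\bfA_{r-1}^{-1}(\Fil{\leq t}\calZ^{d/p}),
\]
and the inductive inclusion in \eqref{eq:ArCra} bounds the latter by $T^{-t(r-1)(p-1)/d}\Fil{\leq t}\calZ^d$. For the estimate \eqref{eq:ArCrb}, apply the analogous block recursion $\bfC_r = \bfC_1\bfA_{r-1} + \bfD_1\bfC_{r-1}$ to the equivalent expression $\bfC_r\bfA_{r-1}^{-1}\bfA_1^{-1}(\Fil{\leq t}\calZ^{d/p}) = \bfC_r\bfA_{r-1}^{-1}(\Fil{\leq t}\calZ^d)$: the first summand $\bfC_1(\Fil{\leq t}\calZ^d)$ lies in $\Fil{>t}\calZ^{d/p}$ by Lemma~\ref{l: action of of V on Z^m}, while the second summand equals $\bfD_1\bigl(\bfC_{r-1}\bfA_{r-1}^{-1}(\Fil{\leq t}\calZ^d)\bigr) \subseteq \bfD_1(T^{-t(p-1)/d}\Fil{>t}\calZ^{d/p})$ by induction, and Corollary~\ref{c:V-estimate} places this inside $T^{(p-1)/d}\Fil{>t}\calZ^{d/p} \subseteq \Fil{>t}\calZ^{d/p}$.

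The main obstacle is the perturbation argument for $\Phi$: identifying precisely why the three $T$-adic contributions---a loss of $t(p-1)/d$ from $\bfA_1^{-1}$, a preservation from $\bfC_{r-1}\bfA_{r-1}^{-1}$ (thanks to the inductive \eqref{eq:ArCrb}), and a gain of $(t+1)(p-1)/d$ from $\bfB_1$---combine to yield a net improvement of $(p-1)/d$, which is exactly the margin needed to invert $\Phi$ by geometric series. This margin is positive independent of $r$ and $t$, which is what makes the entire induction go through without any restriction on $t$.
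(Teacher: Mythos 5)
Your proof is correct and takes essentially the same route as the paper's: the same induction on $r$, the same block recursions for $\bfA_r$ and $\bfC_r$, the same three $T$-adic estimates (loss of $t(p-1)/d$ from $\bfA_1^{-1}$, preservation via the inductive \eqref{eq:ArCrb}, gain of $(t+1)(p-1)/d$ from $\bfB_1$ or $\bfD_1$), and the same key observation that $\bfA_r\bfA_{r-1}^{-1}\bfA_1^{-1}$ is congruent to the identity modulo $T^{(p-1)/d}$ on $\Fil{\leq \filindex}\calZ^{d/p}$ and hence invertible. The only difference is presentational: you package the perturbation as an explicit operator $\Phi$ inverted by a geometric series, whereas the paper carries out the same bookkeeping element-wise.
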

     \begin{proof}
         We proceed by induction on $r$. The base case $r=1$ of \eqref{eq:ArCra} 
         follows from Lemma \ref{lemma: inverse of Cartier part}, whence \eqref{eq:ArCrb} for $r=1$
         is a consequence of Lemma \ref{l: action of of V on Z^m}. 
         Assume inductively that \eqref{eq:ArCra} and \eqref{eq:ArCrb}
         hold with $r-1$ in place of $r$. Fix $x \in \Fil{\leq \filindex}\calZ^{d/p}$, and set
         $z \colonequals \bfA_{1}^{-1}(x)$ and $y  \colonequals \bfA_{r-1}^{-1}(z).$
         By the base case of \eqref{eq:ArCra}, $z$ is contained in $\Fil{\leq \filindex}\calZ^d \subseteq T^{-\filindex \frac{p-1}{d}}\Fil{\leq t}\calZ^{d/p}$.   The induction hypothesis \eqref{eq:ArCra} for $r-1$ then tells us that 
        \begin{equation*}
           y \in T^s \Fil{\leq \filindex}\calZ^d\quad\text{for}\quad s \colonequals -\filindex\frac{p-1}{d} - \filindex\frac{(r-2)(p-1)}{d} = - \filindex\frac{(r-1)(p-1)}{d},
        \end{equation*}
        which yields the asserted containment in \eqref{eq:ArCra}.
        Now since $z\in \Fil{\le t} \calZ^d$, Lemma \ref{l: action of of V on Z^m} gives
        \begin{equation}\label{eq:induction containments 1}
            \bfC_{1}(z) \in \Fil{> \filindex}\calZ^{d/p}.
        \end{equation}
        On the other hand, we have
        \begin{equation}\label{eq:induction containments 2}
            \bfC_{r-1}(y) = \bfC_{r-1}\bfA_{r-1}^{-1}(z) \in  \bfC_{r-1}\bfA_{r-1}^{-1} (T^{-\filindex \frac{p-1}{d}}\Fil{\leq t}\calZ^{d/p}) \subseteq  T^{-\filindex\frac{p-1}{d}} \Fil{> \filindex}\calZ^{d/p}.
        \end{equation}
        wherein the final containment follows from the inductive hypothesis. %
        By Corollary \ref{c:V-estimate} we also have 
        \begin{equation}\label{eq:induction containments 3}
            \bfD_{1} \Fil{>\filindex}\calZ^{d/p} \subseteq T^{(\filindex+1)\frac{p-1}{d}}\Fil{>\filindex}\calZ^{d/p}.
        \end{equation}
        Combining \eqref{eq:induction containments 1}--\eqref{eq:induction containments 3}, we 
        see that 
        $$ \bfC_{r}(y)   =   \bfC_{1}\bfA_{r-1}(y) + \bfD_{1} \bfC_{r-1}(y) = \bfC_{1}(z) + \bfD_{1} \bfC_{r-1}(y)$$
        lies in
        \begin{align*}
                \Fil{> \filindex} \calZ^{d/p} + \bfD_{1} \left( T^{-\filindex\frac{p-1}{d}} \Fil{> \filindex} \calZ^{d/p} \right)
                & \subseteq  \Fil{> \filindex} \calZ^{d/p} + T^{(\filindex+1)\frac{p-1}{d}} \cdot T^{-\filindex\frac{p-1}{d}} \Fil{> \filindex} \calZ^{d/p} \\
                &=  \Fil{> \filindex}\calZ^{d/p},
        \end{align*}
        which is the second containment claimed in \eqref{eq:ArCrb}.

        It remains to prove the first equality in \eqref{eq:ArCra},
        which also gives the first equality in  \eqref{eq:ArCrb}.
        By Corollary \ref{c:V-estimate} we have 
        \begin{equation}\label{eq:Bcontainment}
            \bfB_{1} \Fil{>\filindex}\calZ^{d/p} \subseteq T^{(\filindex+1)\frac{p-1}{d}}\Fil{\leq\filindex}\calZ^{d/p}.
        \end{equation}
        On the other hand, the very definitions of $z= \bfA_{1}^{-1}(x)$ and 
        $y=\bfA_{r-1}^{-1}(z)$ give
        \begin{align*}
                x - \bfA_{r}\bfA_{r-1}^{-1}\bfA_{1}^{-1}(x) &= x - \bfA_r(y)  
                 = x - \bfA_{1}\bfA_{r-1}(y) - \bfB_{1} \bfC_{r-1}(y)  
                =  -\bfB_{1} \bfC_{r-1}(y),
        \end{align*}
        so by  \eqref{eq:induction containments 2} and \eqref{eq:Bcontainment} we have
        \begin{equation*}
                x - \bfA_{r}\bfA_{r-1}^{-1}\bfA_{1}^{-1}(x) \in  T^{\frac{p-1}{d}} \Fil{\leq \filindex}\calZ^{d/p}
        \end{equation*}
        for all $x\in \Fil{\leq \filindex}Z^{d/p}$.  Thus, the operator $\bfA_{r}\bfA_{r-1}^{-1}\bfA_{1}^{-1}$
        induces an endomorphism of $\Fil{\leq \filindex}\calZ^{d/p}$ that is congruent to the identity modulo $T^{\frac{p-1}{d}}$;
        it is therefore invertible on $\Fil{\leq \filindex}\calZ^{d/p}$, so certainly invertible on $\Fil{\leq \filindex}\calZ_{\calE}$.
        Moreover, as $\bfA_{r}\bfA_{r-1}^{-1}\bfA_{1}^{-1}$ carries $\Fil{\leq \filindex}\calZ^{d/p}$ isomorphically onto itself, 
        we have an equality of submodules of $\Fil{\leq \filindex}\calZ_{\calE}$
        \begin{equation*}
            \bfA_{r-1}^{-1} \bfA_{1}^{-1} (\Fil{\leq \filindex}\calZ^{d/p}) = \bfA_{r}^{-1} (\Fil{\leq \filindex}\calZ^{d/p}),
        \end{equation*}
        as claimed.
     \end{proof}

    \begin{corollary}\label{cor: inverse of iterated Cartier over OmegaRinfty}
       For all $r \geq 0$ we have
       \begin{align*}
           \bfA_{r}^{-1} (\Fil{\leq t}\calZ) \subseteq T^{-\filindex \frac{r(p-1)+1}{d}} \Fil{\leq t}\calZ^{d} \subseteq T^{-\beta(\filindex)} \Fil{\leq t}\calZ^{\frac{d}{p+1}} \quad\text{where}\quad
           \beta(\filindex) = \filindex\frac{r(p-1) + p+1}{d}.
       \end{align*}
       Moreover, for any $s\in \frac{1}{d}\Z$,
       \begin{equation*}
           \bfC_{r}\bfA_{r}^{-1} (T^s\Fil{\leq t}\calZ) \subseteq T^{s+\frac{p}{d}}\Fil{> \filindex}\calZ.
       \end{equation*}
    \end{corollary}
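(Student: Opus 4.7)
The plan is to deduce this corollary formally from Lemma \ref{lemma: inverse of iterated Cartier part} by comparing the overconvergent filtrations $\Fil{\le t}\calZ^{d/p}$ and $\Fil{\le t}\calZ^{d}$ with the integral filtration $\Fil{\le t}\calZ$, paying the price of controlled powers of $T$.

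First I will record three elementary containments between the various truncated filtrations. Since $\calZ^m$ is defined on the basis $\calB$ by the coefficientwise condition $v_T(a_i) \ge i/m$, unpacking definitions and checking one basis element at a time yields
\begin{equation*}
    \Fil{\le t}\calZ \subseteq T^{-tp/d}\Fil{\le t}\calZ^{d/p}, \qquad \Fil{\le t}\calZ^{d} \subseteq T^{-tp/d}\Fil{\le t}\calZ^{d/(p+1)}, \qquad \Fil{>t}\calZ^{d/p} \subseteq T^{(t+1)p/d}\Fil{>t}\calZ.
\end{equation*}
Each of these is a routine check: for $i \le t$ one has $tp/d \ge ip/d$ and $tp/d + i/d \ge (p+1)i/d$, while for $i > t$ one has $ip/d \ge (t+1)p/d$.

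Next I will use that, by the construction in the proof of Lemma \ref{l: action of of V on Z^m}, the operator $V$ acts trivially on $T^{1/d}$; consequently $V^r$ commutes with multiplication by any power of $T^{1/d}$, and so does each matrix entry $\bfA_r, \bfB_r, \bfC_r, \bfD_r$ of its decomposition. Applying $\bfA_r^{-1}$ to the first elementary containment and invoking \eqref{eq:ArCra} from Lemma \ref{lemma: inverse of iterated Cartier part} then yields
\begin{equation*}
    \bfA_r^{-1}(\Fil{\le t}\calZ) \subseteq T^{-tp/d}\bfA_r^{-1}(\Fil{\le t}\calZ^{d/p}) \subseteq T^{-t(p + (r-1)(p-1))/d}\Fil{\le t}\calZ^{d}.
\end{equation*}
The arithmetic identity $p + (r-1)(p-1) = r(p-1)+1$ recasts the exponent in the asserted form $-t(r(p-1)+1)/d$, and combining with the second elementary containment produces the $T^{-\beta(t)}\Fil{\le t}\calZ^{d/(p+1)}$ inclusion since $t(r(p-1)+1)/d + tp/d = \beta(t)$.

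The final assertion is handled the same way: tensoring the first elementary containment with $T^s$ and applying \eqref{eq:ArCrb} from Lemma \ref{lemma: inverse of iterated Cartier part} gives $\bfC_r\bfA_r^{-1}(T^s\Fil{\le t}\calZ) \subseteq T^{s - tp/d}\Fil{>t}\calZ^{d/p}$, after which the third elementary containment telescopes the exponent to $s + p/d$. I do not anticipate any genuine obstacle here; the corollary is essentially a bookkeeping exercise that translates the refined information of Lemma \ref{lemma: inverse of iterated Cartier part} about the overconvergent filtrations into a statement about the integral filtration $\Fil{\le t}\calZ$.
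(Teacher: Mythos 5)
Your proof is correct and follows essentially the same route as the paper: the same three "evident" containments between $\Fil{\le t}\calZ$, $\Fil{\le t}\calZ^{d/p}$, $\Fil{\le t}\calZ^{d}$, $\Fil{\le t}\calZ^{d/(p+1)}$ and $\Fil{>t}\calZ^{d/p}$, combined with \eqref{eq:ArCra} and \eqref{eq:ArCrb} of Lemma \ref{lemma: inverse of iterated Cartier part} and the fact that $\bfA_r^{-1}$ and $\bfC_r$ commute with powers of $T^{1/d}$. The only difference is that you spell out the coefficientwise verifications and the exponent arithmetic that the paper leaves implicit.
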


    \begin{proof}
       The first statement follows from Lemma \ref{lemma: inverse of iterated Cartier part} and the evident containments
    \begin{equation}\label{eq: Zcontainments}
    \Fil{\leq \filindex}\calZ \subseteq T^{-\frac{\filindex p}{d}}\Fil{\le\filindex}\calZ^{d/p}
    \quad\text{and}\quad
    \Fil{\leq \filindex}\calZ^d \subseteq T^{-\frac{\filindex p}{d}}\Fil{\le\filindex}\calZ^{\frac{d}{p+1}}.
    \end{equation}
    Similarly, since $\bfC_{r}$, $\bfA_{r}^{-1}$ are $\calA$-semilinear, the
    second assertion is a consequence of the first containment in 
    \eqref{eq: Zcontainments},
    Lemma \ref{lemma: inverse of iterated Cartier part}, and the evident
    containment
    \begin{equation*}
        T^{-\frac{(t+1)p}{d}} \Fil{>\filindex} \calZ^{d/p} \subseteq \Fil{>\filindex} \calZ. \qedhere
    \end{equation*}   
    \end{proof}

    \begin{corollary}
        \label{c: filtered part for alteration}
        Let $x \in \calZ_{\calE}$. There exists a unique element $y_r \in \Fil{\leq \filindex}\calZ_{\calE}$ such that $V^r(x+y_r) \in \Fil{>\filindex} \calZ_{\calE}$. Furthermore, if $x \in Z_E$, then $y_r \in \Fil{\leq \filindex}Z_E$.
    \end{corollary}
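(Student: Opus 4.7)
The plan is to solve for $y_r$ explicitly using the block-matrix representation of $V^r$ relative to the decomposition $\calZ_{\calE} = \Fil{\le t}\calZ_{\calE} \oplus \Fil{> t}\calZ_{\calE}$. Writing $x = x_1 + x_2$ in this decomposition and taking $y \in \Fil{\le t}\calZ_{\calE}$, we have
\begin{equation*}
    V^r(x+y) = \bigl(\bfA_r(x_1+y) + \bfB_r(x_2)\bigr) + \bigl(\bfC_r(x_1+y) + \bfD_r(x_2)\bigr),
\end{equation*}
so the requirement $V^r(x+y) \in \Fil{>t}\calZ_{\calE}$ reduces to the single equation $\bfA_r(x_1+y) + \bfB_r(x_2) = 0$. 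Since Lemma \ref{lemma: inverse of iterated Cartier part} guarantees that $\bfA_r$ is invertible on $\Fil{\le t}\calZ_{\calE}$, this uniquely forces
\begin{equation*}
    y_r = -x_1 - \bfA_r^{-1}\bfB_r(x_2),
\end{equation*}
giving existence and uniqueness simultaneously.

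For the second assertion, I would start from the observation that because $V$ preserves $Z$ and each basis element $u_i$ of $\calB$ lies in $Z$, both projections onto $\Fil{\le t}$ and $\Fil{>t}$ preserve $Z_E$. In particular, when $x \in Z_E$ we have $x_1 \in \Fil{\le t} Z_E$ and $\bfB_r(x_2) \in \Fil{\le t} Z_E$. The only remaining point is to show that $\bfA_r^{-1}$ preserves $\Fil{\le t}Z_E$, which is equivalent to showing that the restriction of $\bfA_r$ to the free $E$-module $\Fil{\le t}Z_E$ (of rank $t$, with basis $u_1,\dots,u_t$) is already an isomorphism over $E$.

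To prove this, I would view $\bfA_r\big|_{\Fil{\le t}Z_E}$ as an $E_\F$-linear endomorphism, which is legitimate since $V^r$ is $\sigma^{-r}$-semilinear over $A$ and $\sigma$ fixes $\F$. The extensions $E/E_\F$ and $\calE_\F/E_\F$ are linearly disjoint with compositum $\calE$---one extends the constants from $\F$ to $k$ while the other adjoins $T^{1/d}$---so the canonical $\calE_\F$-linear map
\begin{equation*}
    \Fil{\le t}Z_E \otimes_{E_\F} \calE_\F \longrightarrow \Fil{\le t}\calZ_{\calE}, \qquad u_i \otimes 1 \mapsto u_i,
\end{equation*}
is an isomorphism, under which $\bfA_r \otimes 1$ corresponds to the $\calE_\F$-linear endomorphism $\bfA_r$ of $\Fil{\le t}\calZ_{\calE}$. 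The latter is invertible by Lemma \ref{lemma: inverse of iterated Cartier part}, so by faithful flatness of $\calE_\F$ over $E_\F$ the restriction $\bfA_r\big|_{\Fil{\le t}Z_E}$ is invertible as well, and the formula for $y_r$ then lies in $\Fil{\le t}Z_E$. The main subtlety---rather than a genuine obstacle---is simply to keep the semilinearity conventions straight when comparing $\bfA_r$ as a $\sigma^{-r}$-semilinear map over $E$ with its incarnations as $E_\F$- and $\calE_\F$-linear maps; once these are pinned down, the descent is routine.
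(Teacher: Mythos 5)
Your proposal is correct and takes essentially the same approach as the paper: decompose via the block form of $V^r$, use the invertibility of $\bfA_r$ on $\Fil{\le t}\calZ_{\calE}$ from Lemma \ref{lemma: inverse of iterated Cartier part} to solve uniquely for $y_r$ (your formula $y_r = -x_1 - \bfA_r^{-1}\bfB_r(x_2)$ is the same as the paper's $y_r = \bfA_r^{-1}(-z)$ with $z$ the $\Fil{\le t}$-component of $V^r(x)$), and appeal to $E_\F$-linearity for the rationality claim. The only difference is that you spell out the descent from $\calE_\F$ to $E_\F$ via linear disjointness and faithful flatness, where the paper is terser (it suffices, e.g., to observe that an injective $E_\F$-linear endomorphism of the finite-dimensional $E_\F$-space $\Fil{\le t}Z_E$ is automatically bijective); both arguments are correct.
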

    \begin{proof}
        Write $V^r(x) = z + z'$, where $z \in \Fil{\leq \filindex}\calZ_{\calE}$
        and $z' \in \Fil{>\filindex} \calZ_{\calE}$. We know from Lemma 
        \ref{lemma: inverse of iterated Cartier part} that $A_r$ is invertible on 
        $\Fil{\leq \filindex}\calZ_{\calE}$. Let $y_r = A_r^{-1}(-z)$. Then,
        $V^r(x + y_r) = z' + C_r(y_r) \in \Fil{>\filindex} \calZ_{\calE}$.
        If $x \in Z_E$, using the fact that $V$ (respectively $A_r$) is an $E_{\F}$-linear map on $Z_E$ (respectively $\Fil{\leq \filindex} Z_E$),
        we see that $z$ and $y_r$ are contained in $\Fil{\leq \filindex} Z_E$.
    \end{proof}
    \begin{lemma}
        \label{lemma: filtered part for alterations lies in L^{d/p+1}}
        Let $s \in \frac{1}{d}\Z$
        and let $x \in T^{s + \frac{\filindex p}{d}}\calZ^d$. Let $y_r$ be the unique element of $\Fil{\leq \filindex}\calZ_\calE$  such that 
        $V^r(x+y_r) \in \Fil{> \filindex}\calZ_\calE$ from Corollary \ref{c: filtered part for alteration}. Then
        \begin{align*}
            y_r &\in T^s\Fil{\leq \filindex}\calZ^{\frac{d}{p+1}}\\
            V^r(x+y_r) &\in T^{s + t\frac{p}{d} + \frac{(t+1)(r-1)(p-1)}{d}} \Fil{> \filindex}\calZ^{d/p}.
        \end{align*}
    \end{lemma}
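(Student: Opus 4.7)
The plan is induction on $r \geq 1$, with the auxiliary parameter $\gamma_r := s + t\frac{p}{d} + (t+1)(r-1)\frac{p-1}{d}$, so the assertion becomes $y_r \in T^s\Fil{\leq t}\calZ^{d/(p+1)}$ and $V^r(x + y_r) \in T^{\gamma_r}\Fil{> t}\calZ^{d/p}$. The three key ingredients are Corollary \ref{c:V-estimate} (which yields a gain of $T^{(p-1)(t+1)/d}$ per application of $V$ on $\Fil{> t}\calZ^{d/p}$), Lemma \ref{lemma: inverse of iterated Cartier part} (controlling $A_{r+1}^{-1}$ and $C_{r+1}A_{r+1}^{-1}$), and the containment $\Fil{\leq t}\calZ^d \subseteq T^{-tp/d}\Fil{\leq t}\calZ^{d/(p+1)}$ from \eqref{eq: Zcontainments}.

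For the base case $r = 1$, Lemma \ref{l: action of of V on Z^m} gives $V(x) \in T^{s+tp/d}\calZ^{d/p}$. Decomposing $V(x)$ into its $\Fil{\leq t}$ part $z$ and $\Fil{> t}$ part $z'$, both lie in $T^{s+tp/d}\Fil{\star}\calZ^{d/p}$. From the construction in Corollary \ref{c: filtered part for alteration}, $y_1 = -A_1^{-1}(z)$; applying Lemma \ref{lemma: inverse of Cartier part} followed by \eqref{eq: Zcontainments} gives $y_1 \in T^{s+tp/d}\Fil{\leq t}\calZ^d \subseteq T^s\Fil{\leq t}\calZ^{d/(p+1)}$. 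Then $V(x + y_1) = z' - C_1A_1^{-1}(z)$, and the $r = 1$ case of Lemma \ref{lemma: inverse of iterated Cartier part} places both terms in $T^{s+tp/d}\Fil{> t}\calZ^{d/p} = T^{\gamma_1}\Fil{> t}\calZ^{d/p}$.

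For the inductive step, suppose the claim holds for $r$ and set $w_r := V^r(x + y_r) \in T^{\gamma_r}\Fil{> t}\calZ^{d/p}$. Corollary \ref{c:V-estimate} then yields $V(w_r) \in T^{\gamma_{r+1}}\calZ^{d/p}$, since $\gamma_r + (p-1)(t+1)/d = \gamma_{r+1}$. Decomposing $V(w_r) = (Vw_r)_{\leq t} + (Vw_r)_{> t}$, define $\delta := -A_{r+1}^{-1}((Vw_r)_{\leq t})$ and take $y_{r+1} := y_r + \delta$. Lemma \ref{lemma: inverse of iterated Cartier part} yields $\delta \in T^{\gamma_{r+1} - tr(p-1)/d}\Fil{\leq t}\calZ^d = T^{s + tp/d + r(p-1)/d}\Fil{\leq t}\calZ^d$, so \eqref{eq: Zcontainments} puts $\delta$ (and hence $y_{r+1}$) in $T^s\Fil{\leq t}\calZ^{d/(p+1)}$. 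A direct expansion gives
\[
V^{r+1}(x + y_{r+1}) = V(w_r) + V^{r+1}(\delta) = (Vw_r)_{> t} - C_{r+1}A_{r+1}^{-1}((Vw_r)_{\leq t}),
\]
and the $C_{r+1}A_{r+1}^{-1}$ clause of Lemma \ref{lemma: inverse of iterated Cartier part} lands the second term in $T^{\gamma_{r+1}}\Fil{> t}\calZ^{d/p}$. Uniqueness in Corollary \ref{c: filtered part for alteration} then identifies this $y_{r+1}$ with the one in the statement.

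The main obstacle is arithmetic bookkeeping rather than any structural difficulty: the gain of $(p-1)(t+1)/d$ per iteration from Corollary \ref{c:V-estimate} must balance against the loss of $t(r-1)(p-1)/d$ from $A_r^{-1}$ and the further loss of $tp/d$ from the containment $\Fil{\leq t}\calZ^d \subseteq T^{-tp/d}\Fil{\leq t}\calZ^{d/(p+1)}$. The crucial cancellation $(t+1)r(p-1)/d - tr(p-1)/d = r(p-1)/d \geq 0$ is exactly what preserves the estimate $y_r \in T^s\Fil{\leq t}\calZ^{d/(p+1)}$ through the induction, and is ultimately why the auxiliary module $\Fil{\leq t}\calZ^{d/(p+1)}$ (rather than $\Fil{\leq t}\calZ^{d/p}$) appears in the statement.
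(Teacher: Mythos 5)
Your proof is correct and takes essentially the same route as the paper's: induction on $r$, using Lemma \ref{l: action of of V on Z^m} (or Corollary \ref{c:V-estimate}) to gain $T$-adic valuation per application of $V$, decomposing into $\Fil{\leq t}$ and $\Fil{>t}$ parts, and correcting by $-A_\star^{-1}$ applied to the $\Fil{\leq t}$ part with the bounds from Lemma \ref{lemma: inverse of iterated Cartier part} and the containments \eqref{eq: Zcontainments}. The only cosmetic difference is that you run the induction from $r$ to $r+1$ while the paper goes from $r-1$ to $r$; the exponent bookkeeping and the use of uniqueness from Corollary \ref{c: filtered part for alteration} to identify $y_{r+1}$ match the paper's argument.
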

    
    \begin{proof}
        We proceed by induction on $r$. By Lemma \ref{l: action of of V on Z^m},
        we may write
        \begin{equation*}
            V(x) = z + z'\quad\text{where}\quad z \in T^{s+\frac{\filindex p}{d}}\Fil{\leq \filindex} \calZ^{d/p}\ \text{and}\  z' \in T^{s+\frac{\filindex p}{d}}\Fil{> \filindex} \calZ^{d/p}.
        \end{equation*}
        Set $y = \bfA_{1}^{-1}(-z)$, and note that 
        $$y \in T^{s + \frac{\filindex p}{d}}\Fil{\leq \filindex}\calZ^d \subseteq T^s\Fil{\leq \filindex}\calZ^{\frac{d}{p+1}}$$
        thanks to %
        Lemma \ref{lemma: inverse of Cartier part}.
        By construction, we have
        $$
        V(x + y) =V(x) + V(y) = z+z' + \bfA_1(y)+\bfC_1(y) = z'+\bfC_1 \bfA_1^{-1}(-z)
        $$
        which lies in $T^{s + \frac{\filindex p}{d}}\Fil{> \filindex}\calZ^{d/p}$
        thanks to \eqref{eq:ArCrb} with $r=1$. Thus, $y_1=y$ and the base case holds.

        Now assume the result holds for 
        $r-1$, i.e.~that there exists $y \in T^s \Fil{\le t} \calZ^{\frac{d}{p+1}}$ such that
        \begin{equation*}
            V^{r-1}(x + y) \in T^{s+t\frac{p}{d} + \frac{(t+1)(r-2)(p-1)}{d}}\Fil{> \filindex}\calZ^{d/p}.
        \end{equation*}
        Applying $V$ and appealing to Corollary \ref{c:V-estimate}, we have
        $\bfV^r(x+y) \in T^{s+t\frac{p}{d} + \frac{(t+1)(r-1)(p-1)}{d}} \calZ^{d/p}$.
        As before, we write 
        \[ V^r(x+y) = z + z' \quad \text{where} \quad 
        z\in T^{s+t\frac{p}{d} + \frac{(t+1)(r-1)(p-1)}{d}} \Fil{\le t}\calZ^{d/p} \ \text{and} \ z' \in T^{s+t\frac{p}{d} + \frac{(t+1)(r-1)(p-1)}{d}} \Fil{> t}\calZ^{d/p}.
        \]
        Setting $\tilde{y}\colonequals A_{r}^{-1}(-z)$, thanks to \eqref{eq:ArCra}, we have
        \[
            \tilde{y} \in T^{s+ t\frac{p}{d} + \frac{(r-1)(p-1)}{d}}\Fil{\leq \filindex}\calZ^d\subseteq T^s\Fil{\leq\filindex}\calZ^{\frac{d}{p+1}} 
        \] 
        and, by construction,
        \[
            V^r(x+y+ \tilde{y}) = V^r(x+y)+V^r(\tilde{y})= z+z'+A_r(\tilde{y}) + C_r(\tilde{y}) = z'+C_rA_r^{-1}(-z),
        \]
        which is contained in
        \[
             T^{s+t\frac{p}{d} + \frac{(t+1)(r-1)(p-1)}{d}}\Fil{> \filindex}\calZ^{d/p}
        \]
        due to \eqref{eq:ArCrb} and the definition of $z'$. By the uniqueness of $y_r$ we have $y_r=y+\tilde{y}$ and the result holds. \qedhere

    \end{proof}

 \begin{lemma}
     \label{lemma: endpoint of polygon for iterates}
     For $r\geq 1$ we have $v_T(\Det_{\calE_\F}(\bfA_{r})) = rv_T(\Det_{\calE_\F}(\bfA_{1}))$.
 \end{lemma}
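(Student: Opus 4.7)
The plan is to leverage the key identity extracted from the proof of Lemma \ref{lemma: inverse of iterated Cartier part}, which asserts that the composite $\bfA_r \bfA_{r-1}^{-1}\bfA_1^{-1}$ is an endomorphism of the $\calA$-module $\Fil{\leq t}\calZ^{d/p}$ that is congruent to the identity modulo $T^{(p-1)/d}$. Taking determinants in the factorization $\bfA_r = (\bfA_r \bfA_{r-1}^{-1}\bfA_1^{-1})\cdot \bfA_1 \cdot \bfA_{r-1}$ and inducting on $r$ will then immediately yield the desired equality.

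First I would make precise the $\calE_\F$-linear setup. Because the Cartier operator $V$ is $\sigma^{-1}$-semilinear and $\sigma$ fixes $\F=\F_p$ and (by convention) the chosen uniformizer $T^{1/d}$ of $\calA$, the operator $V^r$ restricts to an $\calA_\F$-linear endomorphism of $\calZ$ for every $r\ge 1$. Consequently, $\bfA_r$ is an $\calE_\F$-linear endomorphism of the finite-dimensional $\calE_\F$-vector space $\Fil{\leq t}\calZ_\calE$, and preserves the $\calA_\F$-lattice $\Fil{\leq t}\calZ^{d/p}=\bigoplus_{i=1}^t \calA\cdot u_i^{d/p}$ by Lemma \ref{l: action of of V on Z^m} and Lemma \ref{lemma: inverse of iterated Cartier part}. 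Fixing a lift $\varsigma_1,\ldots,\varsigma_\nu$ of an $\F$-basis of $k$ provides the orthonormal $\calA_\F$-basis $\{\varsigma_j u_i^{d/p}\}_{1\le i\le t,\,1\le j\le \nu}$ of $\Fil{\leq t}\calZ^{d/p}$ in which determinants will be computed.

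Next I would extract the crucial observation from the argument proving Lemma \ref{lemma: inverse of iterated Cartier part}: there it is shown that for every $x\in \Fil{\leq t}\calZ^{d/p}$,
\begin{equation*}
x - \bfA_r\bfA_{r-1}^{-1}\bfA_1^{-1}(x)\in T^{(p-1)/d}\Fil{\leq t}\calZ^{d/p}.
\end{equation*}
Since $\bfA_r\bfA_{r-1}^{-1}\bfA_1^{-1}$ is $\calA_\F$-linear, its matrix relative to the basis above lies in $I + T^{(p-1)/d} M_{t\nu}(\calA_\F)$. Hence
\begin{equation*}
\Det_{\calE_\F}\!\left(\bfA_r\bfA_{r-1}^{-1}\bfA_1^{-1}\right)\in 1+T^{(p-1)/d}\calA_\F \subseteq \calA_\F^\times,
\end{equation*}
so this determinant has $T$-adic valuation zero.

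Finally, multiplicativity of $\Det_{\calE_\F}$ applied to $\bfA_r=(\bfA_r\bfA_{r-1}^{-1}\bfA_1^{-1})\cdot \bfA_1\cdot \bfA_{r-1}$ gives
\begin{equation*}
v_T(\Det_{\calE_\F}\bfA_r) = v_T(\Det_{\calE_\F}\bfA_1) + v_T(\Det_{\calE_\F}\bfA_{r-1}),
\end{equation*}
and an induction on $r$ (with trivial base case $r=1$) completes the proof. The only delicate point, and the place where one must be careful, is the bookkeeping that $\bfA_r\bfA_{r-1}^{-1}\bfA_1^{-1}$ really does preserve the $\calA_\F$-lattice $\Fil{\leq t}\calZ^{d/p}$ (rather than merely $\Fil{\leq t}\calZ_\calE$), so that its determinant over $\calE_\F$ is genuinely a unit of $\calA_\F$; this is precisely what the congruence modulo $T^{(p-1)/d}$ from the proof of Lemma \ref{lemma: inverse of iterated Cartier part} delivers.
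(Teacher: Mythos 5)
Your proof is correct, but it takes a genuinely different route from the paper's. You factor $\bfA_r=(\bfA_r\bfA_{r-1}^{-1}\bfA_1^{-1})\cdot\bfA_1\cdot\bfA_{r-1}$ and observe that the first factor---being congruent to the identity modulo $T^{(p-1)/d}$ on the lattice $\textup{Fil}^{\leq t}\calZ^{d/p}$, a fact established \emph{inside} the proof of Lemma \ref{lemma: inverse of iterated Cartier part}---has determinant in $1+T^{(p-1)/d}\calA_\F$, hence of valuation zero; multiplicativity of $\Det_{\calE_\F}$ then yields the recursion $v_T(\Det_{\calE_\F}\bfA_r)=v_T(\Det_{\calE_\F}\bfA_1)+v_T(\Det_{\calE_\F}\bfA_{r-1})$ and induction finishes. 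The paper instead computes lattice indices: it applies Lemma \ref{l:fundamental fact SNF} to the chain $W\subseteq\bfA_1^{-1}(W)\subseteq\bfA_{r-1}^{-1}\bfA_1^{-1}(W)=\bfA_r^{-1}(W)$ with $W=\textup{Fil}^{\leq t}\calZ^{d/p}$, using only the \emph{statement} of Lemma \ref{lemma: inverse of iterated Cartier part} (the equality of preimages) together with the stability of $\bfA_1^{-1}(W)=\textup{Fil}^{\leq t}\calZ^{d}$ under $\bfA_{r-1}$, and adds the resulting $\F$-dimensions. The two arguments are two faces of the same coin, since $d\cdot v_T(\Det_{\calE_\F}(u))$ equals the index of $u\Lambda$ in $\Lambda$, and both ultimately rest on Lemma \ref{lemma: inverse of iterated Cartier part}; yours has the mild stylistic cost of invoking an intermediate congruence from that lemma's proof rather than its stated conclusion, but it buys a clean, purely determinant-theoretic bookkeeping that bypasses the Smith Normal Form lemma entirely. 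Your preliminary remarks (the $\calA_\F$-linearity of $V^r$, the preservation of the lattice $\textup{Fil}^{\leq t}\calZ^{d/p}$ by $\bfA_r$, and the choice of orthonormal $\calA_\F$-basis $\{\varsigma_j u_i^{d/p}\}$) are all accurate, and you correctly identify the one delicate point: that the defect operator preserves the integral lattice, so its determinant is genuinely a unit of $\calA_\F$ rather than merely a nonzero element of $\calE_\F$.
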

 \begin{proof} 
    We proceed by induction on $r$, with base case $r=1$ tautological. Assume that $r>1$ and the result holds for $\bfA_{r-1}$. Set $W = \Fil{\leq \filindex}\calZ^{d/p}$. By Lemma \ref{lemma: inverse of Cartier part} we have $\bfA_{1}^{-1}(W) = \Fil{\leq \filindex}\calZ^d$. Since this is stable under $\bfA_{r-1}$ (Lemma \ref{l: action of of V on Z^m}), we see that
    \begin{equation*}
        \bfA_{1}^{-1}(W) \subseteq \bfA_{r-1}^{-1}(\bfA_{1}^{-1}(W)).
    \end{equation*}
    From Lemma \ref{lemma: inverse of iterated Cartier part} we also know that $\bfA_{r-1}^{-1}(\bfA_{1}^{-1}(W)) = \bfA_{r}^{-1}(W)$. By our inductive hypothesis and Lemma \ref{l:fundamental fact SNF}, we compute
    \begin{align*}
        d\cdot v_T(\Det_{\calE_\F}(\bfA_{r})) &= \dim_\F(\bfA_{r}^{-1}(W) / W) \\
        &= \dim_\F(\bfA_{r-1}^{-1}(\bfA_{1}^{-1}(W))/\bfA_{1}^{-1}(W) ) + \dim_\F(\bfA_{1}^{-1}(W)/W)\\
        &=  d\cdot v_T(\Det_{\calE_\F}(\bfA_{r-1})) + 
        d\cdot v_T(\Det_{\calE_\F}(\bfA_{1}))   \\
        &= r d\cdot v_T(\Det_{\calE_\F}(\bfA_{1})).\qedhere
    \end{align*}  
 \end{proof}

\section{Calculation of higher \texorpdfstring{$a$}{a}-numbers} 
 \label{s:calculation of higher a numbers}
As before let $\{X_n\}$ be a $\Z_p$-tower over $\PP^1$ with minimal break ratios and ramification invariant $d$. 
 The goal of this section is to give an explicit formula relating higher $a$-numbers of $X_n$ and certain lattice point counts.  
 Throughout, we fix both $d$ and $p$.
 
 \begin{definition} \label{defn:t and point count}
 Let $n,t$ be nonnegative integers.  We define
 \begin{itemize}
    \item $\dnom:=(r+1)p-(r-1) = (p-1)r+(p+1).$
     \item $\Delta_n(t) \colonequals \Delta_n \cap \left \{(i,j) \in \Z_{\geq 1}^2: i > t \right \}$, where $\Delta_n$ is as in Definition~\ref{defn:muj}.
 
     \item $t(n) \colonequals d \left \lfloor (p^n-1)/\dnom \right \rfloor$. 

        \item $s(n)\colonequals p^n-1 \bmod \delta = p^n - 1 - t(n)\delta/d$.

     \item $\lambda \colonequals \displaystyle \max_{i \geq 1} \left |\mu_i - i \frac{p+1}{d} \right|$, where $\mu_i$ is defined as in Definition~\ref{defn:muj}.

    \item $\varepsilon(n)\colonequals s(n)+1+\lambda-\delta/d = p^n - (t(n)+1)\delta/d+\lambda$.

     \item $t'(n) \colonequals  \left \lfloor d p^n/\dnom \right \rfloor$.

 \end{itemize}
Note that $\lambda\in \frac{1}{d}\Z$ and that $\lambda<1$ by Corollary~\ref{cor:muestimate}.  
 \end{definition}

 Our main result is the following. 
 
\begin{theorem} \label{theorem:higheranumber}
 There exists an explicit constant $C(p,d,r)$ depending only on $p,d$, and $r$ such that 
\begin{equation}
0 \leq  \left( \frac{r(p-1) t(n)( t(n)+1)}{2d} + \# \Delta_n(t(n)) \right )  -a^{(r)}_n \leq C(p,d,r)
 \end{equation}
 for all $n\ge 1$. 
 When $d|(p-1)$ $($respectively $d\le p+1$$)$ we have
 \begin{equation}\label{eq: MT second case}
 a^{(r)}_n = \frac{r(p-1) t'(n)(t'(n)+1)}{2d} + \# \Delta_n(t'(n)) .
 \end{equation}
 for all $n$ $($respectively all $n$ satisfying $s(n) < \frac{\delta}{d}-1$$)$.
\end{theorem}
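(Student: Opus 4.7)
Following the heuristic of Section~\ref{ss:toy}, I would view $M_n$ via Proposition~\ref{prop:omega as T module}, whose $k$-basis $\{\overline{T^J u_i}\}_{(i,J)\in\Delta_n}$ identifies kernel counts with lattice-point counts. Setting $t\colonequals t(n)$, the $A$-module decomposition $M=M_{\le t}\oplus M_{>t}$ with $M_{\le t}\colonequals\bigoplus_{i=1}^t A e_i$ and $M_{>t}\colonequals\prod_{i>t}A e_i$ descends to $M_n=M_{\le t,n}\oplus M_{>t,n}$ and writes $V^r$ as a block matrix $\bigl(\begin{smallmatrix}A_r&B_r\\ C_r&D_r\end{smallmatrix}\bigr)$, splitting the theorem into matching the two summands $r\eta_t=\tfrac{r(p-1)t(t+1)}{2d}$ and $\#\Delta_n(t)$. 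Whenever $k$-dimensions must be converted to $\F$-dimensions over $\calA$, I use \eqref{eq: dimension compare with d-th root of T}.

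For the upper bound $a_n^{(r)}\le r\eta_t+\#\Delta_n(t)$: Corollary~\ref{c: filtered part for alteration} shows each kernel class in $M_n$ has a unique lift $x+y\in M+\Fil{\le t}Z_E$ with $V^r(x+y)\in\Fil{>t}Z_E$. Combining this with the invertibility of $A_r$ on $\Fil{\le t}\calZ_\calE$ (Lemma~\ref{lemma: inverse of iterated Cartier part}) and the evaluation $v_T(\Det_{\calE_\F}A_r)=r\nu\eta_t$ (Proposition~\ref{prop: columns and determinant of Ak} and Lemma~\ref{lemma: endpoint of polygon for iterates}), Lemma~\ref{l:fundamental fact SNF} caps the $\Fil{\le t}$-contribution to $\ker V^r$ at $r\eta_t$, while the $\Fil{>t}$-contribution is tautologically bounded by $\dim_k M_{>t,n}=\#\Delta_n(t)$.

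For the lower bound $a_n^{(r)}\ge r\eta_t+\#\Delta_n(t)-C(p,d,r)$: I would exhibit kernel classes explicitly. Given $(i,J)\in\Delta_n(t)$ with $J$ above an explicit threshold, Lemma~\ref{lemma: filtered part for alterations lies in L^{d/p+1}} applied to $x=T^J u_i$ produces $y_r\in\Fil{\le t}\calZ_\calE$ with $V^r(x+y_r)\in T^{p^n}\calZ$, which projects to a nonzero kernel class in $M_{>t,n}$. The lattice points in $\Delta_n(t)$ for which this construction fails lie in a bounded ``uncertainty band'' near $i=t$, contributing the constant $C(p,d,r)$. For the $r\eta_t$ summand, the divisibility $d\mid t(n)$ together with Corollary~\ref{corollary: touching polygons} forces the vertex $(\nu t,\nu\eta_t)$ of the Newton polygon to coincide with that of $\HP(d)^{\times\nu}$, so the determinantal count from the previous paragraph is realized by an equal number of explicit kernel classes from $M_{\le t,n}$ via the same inductive scheme.

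For the sharp cases: when $d\mid(p-1)$, Corollary~\ref{corollary: touching polygons} gives the full equality $\NP(V|\calB_\F^{d/p})=\HP(d)^{\times\nu}$, eliminating all slope fluctuation; the cutoff relaxes from $t(n)$ to the optimal $t'(n)$ and both inequalities collapse to give \eqref{eq: MT second case}. When $d\le p+1$ and $s(n)<\delta/d-1$, the inequality $\mu_{t'(n)+1}>p^n-\tfrac{r(p-1)(t'(n)+1)}{d}$ (from Proposition~\ref{prop:muestimates}) closes the uncertainty band at the boundary. The main obstacle throughout will be controlling the off-diagonal blocks $B_r$ and $C_r$ in the lower bound: only the average slopes of $V$ on the $u_i$ are pinned down by the periodic polygon matching, so turning averaged determinantal data into \emph{pointwise} kernel counts requires carefully combining Corollary~\ref{corollary: touching polygons} with the overconvergence estimates of Lemma~\ref{l: action of of V on Z^m}, Corollary~\ref{c:V-estimate}, and Corollary~\ref{cor: inverse of iterated Cartier over OmegaRinfty} to isolate the $C(p,d,r)$ uncertainty.
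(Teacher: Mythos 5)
Your proposal takes essentially the same route as the paper: it uses the filtration decomposition of Lemma~\ref{lemma:breaking up the kernel}, computes the $\oFil{t}$-contribution via Proposition~\ref{prop: columns and determinant of Ak}, Lemma~\ref{lemma: endpoint of polygon for iterates}, and the Smith-normal-form argument of Lemma~\ref{l:fundamental fact SNF}, and bounds the $\Fil{t}$-contribution below by exhibiting kernel classes via Lemma~\ref{lemma: filtered part for alterations lies in L^{d/p+1}}, with Corollary~\ref{corollary: touching polygons} and the choice of $t(n)$ (respectively $t'(n)$ when $d\mid(p-1)$ or $s(n)<\delta/d-1$) supplying the needed vertices of the Newton polygon. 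The only cosmetic difference is that you phrase the split in block-matrix language rather than the paper's explicit statement of Propositions~\ref{prop:piece1} and \ref{prop:piece2}, but the argument structure, intermediate lemmas, and the source of the error term $C(p,d,r)$ all match.
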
 

\begin{remark}
The constant $C(p,d,r)$ is made explicit in Proposition~\ref{prop:piece2}.
By definition, $s(n)$ is the remainder upon division of $p^n-1$ by $\delta$,
so lies in the interval $[0, \delta-1]$.
When $d\le p+1$, so $\delta/d - 1 > 0$ for all $r$, the
condition $s(n) < \frac{\delta}{d}-1$ says that 
this remainder actually lies in the smaller interval $[0,\delta/d-1)$.
If $r\not\equiv 1\bmod p$, we have $(\delta,p)=1$  
and $s(n)=0$ for all $n$ with $n\equiv 0 \bmod m$, where $m$ is the order
of $p$ modulo $\delta$. It follows that when $d\le p+1$, for each $r$
there is a modulus $m$ so that the exact formula \eqref{eq: MT second case} holds
for all $n\equiv 0\bmod m$; in particular, this formula holds for a positive proportion
of positive integers $n$.
One can derive analogous statements when $r\equiv 1\bmod p$ as well:
for example, when $r=p+1$ and $d< p+1$ (so $d\le p-1$), \eqref{eq: MT second case}
holds for all {\em odd} $n$.
\end{remark}

\subsection{Cutoff parameters}

 \renewcommand{\Fil}[1]{\textup{Fil}^{ >  #1 }}
\renewcommand{\oFil}[1]{\textup{Fil}^{ \leq  #1 }}
\newcommand{\dFil}[2]{\textup{Fil}^{ ( #1,  #2]}}

For  $n,t \in \Z_{\geq 1}$,  we use the filtrations
$\Fil{t}(M)$ (respectively $\oFil{t}(M)$) on $M$ and $\Fil{t}(M_n)$ (respectively $\oFil{t}(M_n)$) on the sub-quotient $M_n$ introduced in Section~\ref{ss:cartierestimates}.

\begin{lemma} \label{lemma:breaking up the kernel}
For any $n,t \in \Z_{\geq 0}$, the projection map $\prt{t} : Z \to \Fil{t}(Z)$ induces an exact sequence
        \begin{equation*} 
	\begin{tikzcd}
		0 \arrow[r] & \oFil{t}(M_n) \arrow[r] & M_n \arrow[r,"\prt{t}"]& \Fil{t}(M_n) \arrow[r] & 0.
	\end{tikzcd}
	\end{equation*}
Furthermore, 
\begin{equation*}
\label{cor: breaking up the kernel eq1} 
           a^{(r)}_n = \dim_k(\ker(\bfV^r|_{M_n})) = \dim_k(\ker(\bfV^r|_{\oFil{t}(M_n)})) + \dim_k(\prt{t}(\ker(\bfV^r|_{M_n}))). 
\end{equation*}
\end{lemma}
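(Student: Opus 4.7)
The plan is to exhibit the short exact sequence as arising from a direct-sum decomposition of $Z$ compatible with a Schauder basis, and then derive the dimension formula by intersecting that sequence with $\ker V^r$.

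For the short exact sequence, observe that $\{u_i\}_{i\ge 1}$ is an orthonormal basis of $\calZ_\calE$ over $\calE$ (Lemma~\ref{l: B's are Schauder basis}), and each $u_i = r_{\mu_i} x^i w_0 \frac{dx}{x}$ lies in $Z$.  Since $r_{\mu_i} \in A\langle x\rangle^\times$ has constant coefficient (in $x$) that is a unit in $A$ modulo $T$ (Lemma~\ref{lemdef:ra}), the change of basis from the evident $A$-Schauder basis $\{x^i w_0 \frac{dx}{x}\}_{i\ge 1}$ of $Z$ to $\{u_i\}$ is $A$-upper-triangular with unit diagonal; consequently $\{u_i\}$ is itself an $A$-Schauder basis of $Z$, yielding a direct-sum decomposition
\[
    Z = \oFil{t}Z \oplus \Fil{t}Z
\]
of $A$-modules under which $\prt{t}$ is the projection to the second summand.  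Proposition~\ref{prop:omega as T module} describes $M$ as the $A$-submodule $\widehat{\bigoplus}_{i\ge 1} T^{\mu_i} A \cdot u_i$ of $Z$, which is manifestly compatible with the decomposition just described; reducing modulo $T^{p^n}Z \cap M$ then produces a direct-sum decomposition $M_n = \oFil{t}M_n \oplus \Fil{t}M_n$ matching the induced subquotient filtrations of Definition~\ref{def: fildefgeneral}.  The asserted sequence is the (split) exact sequence for this decomposition.

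For the dimension formula, the first equality follows from the definition of $a_n^{(r)}$ in \eqref{anumdef} together with the identification $M_n = H^0(X_n,\Omega^1_{X_n/k})$.  Set $K := \ker(V^r|_{M_n})$ and intersect $K$ with the short exact sequence of the previous paragraph to obtain a short exact sequence of $k$-vector spaces
\[
    0 \to K \cap \oFil{t}M_n \to K \xrightarrow{\prt{t}} \prt{t}(K) \to 0,
\]
whose left term coincides with $\ker(V^r|_{\oFil{t}M_n})$ by definition of the latter notation.  Additivity of $k$-dimension in a short exact sequence then yields the claimed identity.

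The only point requiring real care is the identification of $M \subseteq Z$ with its coordinate decomposition in the Schauder basis $\{u_i\}$, which is exactly the content of Proposition~\ref{prop:omega as T module}; everything else is essentially formal once this structural input is in place.
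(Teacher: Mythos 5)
Your proof is correct and follows essentially the same route as the paper: the first claim is exactly the content of Proposition~\ref{prop:omega as T module} (which identifies $M$ and $M_n$ as direct products/sums in the Schauder basis $\{u_i\}$, so the filtration splits) combined with Proposition~\ref{prop: control type results} and the definition of the induced filtration, while the dimension formula is the rank--nullity argument you give. The paper simply cites these inputs without writing out the details.
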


\begin{proof}
The first claim follows from Proposition~\ref{prop:omega as T module}, Proposition~\ref{prop: control type results}, and the definition of $\Fil{\star}$.  The second
claim follows from linear algebra.
\end{proof}

We will analyze the contributions from $\ker(\bfV^r|_{\oFil{t}(M_n)}))$ and $\prt{t}(\ker(\bfV^r|_{M_n}))$ separately.  The parameter $t$ will ultimately be chosen to be $t(n)$ or $t'(n)$: in the picture of Figure~\ref{fig:picture}, it controls the location of the vertical line dividing the two cases of the argument.  As we will now explain, the parameter $t'(n)$ would yield the strongest result, but is not always a valid choice, while $t(n)$ is always valid.

\begin{definition}\label{def: cuttoff}
    Let $\eta_m=(p-1)m(m+1)/2d$ be as in Definition \ref{def:HPD}.
    A {\em cutoff parameter} is any $t\in \Z_{\ge 0}$ with $\left(\fielddegree t,\fielddegree\eta_t\right)$ a vertex of the Newton polygon $\NP(\bfV|\calB_\F^{d/p})$.  Given $n$, we consider the following conditions on a cutoff parameter $t$:
    \begin{enumerate}[label*={(\Roman*)}]
        \item\label{condition I} 
            $\displaystyle p^n - tr\frac{p-1}{d}  \geq t\frac{p+1}{d} +\lambda.$
        \item\label{condition II} 
            $\displaystyle p^n - tr\frac{p-1}{d}  \geq \mu_t.$
    \end{enumerate}
\end{definition}

\begin{remark}\label{rem:cutoffsets}
    Since $t\frac{p+1}{d}+\lambda \ge \mu_t$ by Definition \ref{defn:t and point count}, 
    any cutoff parameter satisfying \ref{condition I} also satisfies \ref{condition II}.
    By Corollary \ref{corollary: touching polygons}, when $d|(p-1)$,
    the set of cutoff parameters is the set
    of all nonnegative integers. For general $d$, the set of cutoff points will depend on the individual tower. However, by Corollary \ref{corollary: touching polygons} we know the set of cutoff points contains all nonnegative integers congruent to $0$ or $-1$ modulo $d$.
\end{remark}

\begin{definition} \label{defn:parameter D}
For a cutoff parameter $t$ satisfying \ref{condition I}, we define %
\[
D_t := \max\{1,p^n - t\frac{ \dnom}{d} - \frac{r(p-1)+1}{d} + 1-\lambda\} 
\]
\end{definition}
Note that $D_t$ a priori depends on $p,d,r$ {\em and} $n$;
for the choice $t=t(n)$, we show that $D_t$ is bounded independently of $n$.

\begin{lemma} \label{lem:parameters}
For fixed $n$:
\begin{enumerate}[(i)]
\item \label{parameters:general}  $t=t(n)$ is a cutoff parameter satisfying \ref{condition I} and 
\[
        D_t  = \max\{1,s(n) - \frac{r(p-1)+1}{d}+2-\lambda\} \le \left(r(p-1)+1\right)\left(1-\frac{1}{d}\right)+2\left\lfloor\frac{p}{2}\right\rfloor+1-\lambda
\]

\item \label{parameters:d|p-1} Let $t = t'(n)$. Then $(t+1) \dnom \geq d p^n$.  If $d|(p-1)$, then $t$ is
a cutoff parameter satisfying \ref{condition II}.

\item \label{parameters:r=1} %
If $r=1$ and $p>2$, then $t = d (p^{n-1}-1)/2$ is a cutoff parameter satisfying \ref{condition I} and 
$D_t = p\left(1-\frac{1}{d}\right)+1-\lambda$.  %

\end{enumerate}
\end{lemma}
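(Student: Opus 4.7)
The plan is to verify all three parts by direct computation from the definitions, using Remark~\ref{rem:cutoffsets} (and Corollary~\ref{corollary: touching polygons} when $d \mid (p-1)$) to check the cutoff-parameter condition. No substantial obstacle arises; the only subtle step is a parity observation in part~\ref{parameters:general} used to sharpen the bound on $D_{t(n)}$ from $p + 1$ down to $2\lfloor p/2\rfloor + 1$ when $p$ is odd.

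For part~\ref{parameters:general}, I would start from the identity
\[
t(n) \cdot \tfrac{\dnom}{d} = \dnom \left\lfloor \tfrac{p^n - 1}{\dnom}\right\rfloor = p^n - 1 - s(n),
\]
which gives $p^n - t(n)\dnom/d = 1 + s(n) \geq 1 > \lambda$, verifying \ref{condition I} in the rearranged form $p^n \geq t\dnom/d + \lambda$. Since $t(n)$ is a multiple of $d$ by construction, Remark~\ref{rem:cutoffsets} ensures that $t(n)$ is a cutoff parameter. Substituting into Definition~\ref{defn:parameter D} yields
\[
D_{t(n)} = \max\bigl\{1,\; s(n) - \tfrac{r(p-1)+1}{d} + 2 - \lambda\bigr\}.
\]
For the upper bound, I would estimate $s(n) \leq \dnom - 1$ trivially when $p = 2$, and then observe that when $p$ is odd both $\dnom = (r+1)p - (r-1)$ and $p^n - 1$ are even, forcing $s(n)$ to be even and hence $s(n) \leq \dnom - 2$. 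These two cases combine to give precisely the claimed bound, since $2\lfloor p/2\rfloor + 1$ equals $p$ when $p$ is odd and $p+1$ when $p = 2$.

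For part~\ref{parameters:d|p-1}, the definition $t'(n) = \lfloor dp^n/\dnom\rfloor$ immediately gives $(t'(n)+1)\dnom > dp^n$, so $(t'(n)+1)\dnom \geq dp^n$. When $d \mid (p-1)$, Corollary~\ref{cor:muestimate} implies $\mu_i = (p+1)i/d$ exactly and hence $\lambda = 0$; moreover, Corollary~\ref{corollary: touching polygons} gives $\NP(V|\calB_{\F}^{d/p}) = \HP(d)^{\times\nu}$, so every nonnegative integer is a cutoff parameter. Condition~\ref{condition II} then reduces to $p^n \geq t'(n)\dnom/d$, which is immediate from the definition of $t'(n)$.

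For part~\ref{parameters:r=1}, with $r = 1$ and $p > 2$ we have $\dnom = 2p$. Since $p$ is odd, $p^{n-1} - 1$ is even, so $t = d(p^{n-1}-1)/2$ is a nonnegative integer multiple of $d$, hence a cutoff parameter by Remark~\ref{rem:cutoffsets}. A short computation gives
\[
p^n - t\cdot\tfrac{\dnom}{d} = p^n - p(p^{n-1} - 1) = p > \lambda,
\]
verifying \ref{condition I}. Substituting into Definition~\ref{defn:parameter D} then produces $D_t = \max\{1,\; p(1-1/d) + 1 - \lambda\} = p(1-1/d) + 1 - \lambda$, where the last equality holds because $p(1-1/d) \geq p/2 > 1 > \lambda$ when $d \geq 2$, while for $d = 1$ one has $\lambda = 0$ (Proposition~\ref{prop:muestimates} makes $\mu_j = j(p+1)$ integral) so the bracketed expression equals $1$.
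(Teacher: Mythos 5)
Parts \ref{parameters:general} and \ref{parameters:r=1} of your proposal are correct and follow essentially the same route as the paper: in \ref{parameters:general} you use the identity $p^n - t(n)\dnom/d = s(n)+1 > \lambda$ to verify \ref{condition I}, then bound $s(n)$ by $\dnom - 2$ in the odd-$p$ case via a parity observation (the paper phrases this as $\dnom \nmid p^n$ because $\dnom$ is even and $p^n$ odd, which is the same fact), and in \ref{parameters:r=1} you carry out the direct calculation; the verification that the maximum defining $D_t$ is attained by the second argument is sound.

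Part \ref{parameters:d|p-1} contains a genuine error. You write that Corollary~\ref{cor:muestimate} implies ``$\mu_i = (p+1)i/d$ exactly and hence $\lambda = 0$'' when $d \mid (p-1)$. That is a misreading of the corollary: it asserts the exact formula only when $d \mid (p+1)$, not $d \mid (p-1)$. When $d \mid (p-1)$ but $d \nmid (p+1)$ (for instance $p=7$, $d=3$), the quantity $(p+1)i/d$ is generically non-integral, so $\mu_i$ is certainly not equal to it, and $\lambda$ need not vanish. Your claimed reduction of \ref{condition II} to $p^n \geq t'(n)\dnom/d$ rests entirely on replacing $\mu_{t'(n)}$ by the exact value $\frac{p+1}{d}t'(n)$, so the argument collapses. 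The actual proof in the paper is considerably more delicate: it sets $\psi := \mu_t - \lfloor t\frac{p+1}{d}\rfloor \in \{0,1\}$ (Proposition~\ref{prop:muestimates}), uses $d \mid (p-1)$ to deduce $r\frac{p-1}{d}t + \mu_t = \lfloor t\dnom/d\rfloor + \psi$, then splits on whether $\{dp^n/\dnom\}$ is zero or not, with the zero case requiring a denominator argument to force $\psi = 0$. You would need to reconstruct this case analysis (or find an alternative that genuinely handles the error term in $\mu_{t'(n)}$) to close the gap.
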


\begin{proof}
By definition \ref{defn:t and point count}, $t(n)$ is a nonnegative integral multiple of $d$, so is a cutoff parameter
by Remark \ref{rem:cutoffsets}.  Likewise, $t'(n)$ is a nonnegative integer, 
so is also a cutoff parameter when $d|(p-1)$. 
To complete the proof of \ref{parameters:general}, write $p^n-1= q\dnom+s(n)$ with $q,s\in \Z$ and $0\le s(n) \le \dnom-1$.
We first claim that $s(n) \le \dnom -2$ if $p$ is odd.  For this, 
it suffices to prove that $\dnom$ does not divide $p^n$, 
which is clear as $\dnom = (p-1)r + (p+1)$ is always even when $p$ is odd. Thus, we have an upper bound on $s(n)+1$ that can be expressed concisely as:
\begin{equation}\label{eq: bound on s(n)}
    s(n) +1\le  \delta + (2 \left\lfloor \frac{p}{2} \right \rfloor -p).
\end{equation}
By definition, $t=t(n)=dq$ so that $p^n -t\dnom/d = p^n-q\dnom = s(n)+1$, which combined with \eqref{eq: bound on s(n)} gives 
$$D_t = \max\{1,s(n)+1 - \frac{r(p-1)+1}{d}+1-\lambda \}\le  \dnom  +(2\left\lfloor\frac{p}{2}\right\rfloor-p) -\frac{r(p-1)+1}{d}+1-\lambda.$$
Recalling the definition $\dnom=(p-1)r+(p+1)$ and simplifying completes the proof.

To prove \ref{parameters:d|p-1}, let $t=t'(n)$.
The first claim that $(t+1)\dnom \geq d p^n$ is elementary.
For the second, assume $d|(p-1)$ 
and define $\psi := \mu_{t} - \left \lfloor t \frac{p+1}{d}  \right \rfloor$,
so that $\psi\in \{0,1\}$ by Proposition \ref{prop:muestimates}. Setting
$x := dp^n / \dnom$, we have
$\{x\} := x - \lfloor x \rfloor = x - t$.
Using Proposition~\ref{prop:muestimates} and the assumption $d|(p-1)$, we see that
\[
r \frac{p-1}{d}t + \mu_t = \left \lfloor t \frac{r(p-1) + (p+1)}{d}  \right \rfloor + \psi = \left \lfloor t\dnom/d  \right \rfloor + \psi.
\]
Now note that
\[
\left \lfloor  t \dnom/d \right \rfloor = \left \lfloor  (x - \{x\}) \dnom/d \right \rfloor = \left \lfloor p^n - \dnom \{x\}/d \right \rfloor = p^n + \left \lfloor- \dnom \{x\}/d \right \rfloor.
\] 
If $\{x\} \neq 0$, then $\lfloor -\{x\}\dnom/d \rfloor <0$ and since $\psi \leq 1$ we conclude that $r \frac{p-1}{d} t + \mu_t \leq p^n$,
which shows that $t$ satisfies \ref{condition II} in this case. 
Alternatively, if $\{x\} =0$, then $x =t\in \Z$. Writing $y$ for the denominator of $\dnom/d$ when written in lowest terms,
it follows from the definition of $x$ that we must have $y | x$.  As $d |(p-1)$ and $\dnom/d = (p-1)r/d+(p+1)/d$,
we see that $y$ is also the denominator of $(p+1)/d$ when written in lowest terms.  It follows that 
$\frac{p+1}{d} x = \frac{p+1}{d} t$ is an integer, so $\psi=0$ by the definition 
    of $\mu_t$ (Definition \ref{defn:muj}). Thus, we again have $r \frac{p-1}{d} t + \mu_t = p^n$, 
    and $t$ satisfies \ref{condition II}.

For \ref{parameters:r=1}, note that $\delta=2p$ when $r=1$, so if $p$ is
odd we have $t=t(n) = d(p^{n-1}-1)/2$. Thus, by \ref{parameters:general} we see $t=d(p^{n-1}-1)/2$ is a cutoff parameter satisfying \ref{condition I}. We have
$t \cdot (\delta/d) = p^n-p.$
Thus, $D_t=\max\{1, p-p/d + 1-\lambda\}$, and since $1-\lambda \ge 1/d$, this maximum is $p(1-1/d)+1-\lambda$, as claimed.
\end{proof}

\begin{remark}
If $p>2$, the proof of Lemma \ref{lem:parameters} \ref{parameters:general}
shows that $t(n)=d\lfloor p^n/\dnom \rfloor$. In particular, for $p$ odd,
$t(n)$ (respectively $t'(n)$) is the largest element of $d\Z$ (resp.~$\Z$)
 less than or equal to $d p^n/\dnom$.
\end{remark}

\subsection{Breaking Up the Computation of \texorpdfstring{$a^{(r)}_n$}{arn}} 
The proof of Theorem \ref{theorem:higheranumber} will depend on two propositions: Proposition \ref{prop:piece1} corresponds to the region
left of the vertical line in Figure \ref{fig:picture} while Proposition 
\ref{prop:piece2} corresponds to the region to the right of the vertical line.
The location of the vertical line is controlled by the cutoff parameter $t$,
which will ultimately be selected to optimize our calculations

We will prove the following two propositions in Sections~\ref{ss:proof1}-\ref{ss:proof2b}.

\begin{proposition} \label{prop:piece1}
Let $n$ be a positive integer and $t$ a cutoff parameter.  Assume either that $t$ satisfies \ref{condition I}, or that $d|(p-1)$ and $t$ satisfies \ref{condition II}. 
Then
        \begin{equation}\label{eq:piece1}
            \dim_k\left (\ker \left(\bfV^r|_{\oFil{t}(M_n)}\right)\right) = r\eta_t=  \frac{r(p-1) t( t+1)}{2d} .
        \end{equation}      
\end{proposition}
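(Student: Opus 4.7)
My strategy is to decompose $\bfV^r|_{\oFil{t}(M_n)}$ into its $\oFil{t}$-valued and $\Fil{t}$-valued components, show that the first component alone determines the kernel under the hypotheses, and then compute its $k$-dimension via Lemma~\ref{l:fundamental fact SNF} combined with the determinant estimate of Lemma~\ref{lemma: endpoint of polygon for iterates}.

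By Proposition~\ref{prop:omega as T module} and Definition~\ref{defn:muj}, I identify $\oFil{t}(M)=\bigoplus_{i=1}^t T^{\mu_i}A\cdot u_i$ and $\oFil{t}(M_n)=\bigoplus_{i=1}^t (T^{\mu_i}A/T^{p^n}A)\cdot u_i$ inside $N\colonequals\bigoplus_{i=1}^t A\cdot u_i\subseteq Z$. Decomposing $\bfV^r|_N=\bfA_r+\bfC_r$ as in Section~\ref{ss: consequences of NP and HP coincidence}, a lift $\tilde x\in\oFil{t}(M)$ of $x\in\oFil{t}(M_n)$ lies in $\ker(\bfV^r|_{M_n})$ exactly when $\bfA_r(\tilde x)\in T^{p^n}N$ and $\bfC_r(\tilde x)\in T^{p^n}\Fil{t}(Z)$. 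The first step is to show that the former condition forces the latter. Indeed, writing $\bfA_r(\tilde x)=T^{p^n}z$ with $z\in N$, one has $\tilde x=T^{p^n}\bfA_r^{-1}(z)$ by Corollary~\ref{cor:invertible}, so $\bfC_r(\tilde x)=T^{p^n}\bfC_r\bfA_r^{-1}(z)\in T^{p^n+p/d}\Fil{t}\calZ$ by the second half of Corollary~\ref{cor: inverse of iterated Cartier over OmegaRinfty}; intersecting with $Z$ and observing that elements of $Z$ have integer $T$-valuation on each $u_i$-coefficient gives $\bfC_r(\tilde x)\in T^{p^n}\Fil{t}(Z)$ as required.

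The principal step (and main obstacle) is establishing the inclusion $T^{p^n}\bfA_r^{-1}(N)\subseteq\oFil{t}(M)$, which then identifies the kernel of $\bfA_r|_{\oFil{t}(M_n)}$ with $T^{p^n}\bfA_r^{-1}(N)/T^{p^n}N$. Expanding $y=\sum a_i u_i\in\bfA_r^{-1}(N)$ with $a_i\in E$ (since $\bfA_r$ is defined over $A$), Corollary~\ref{cor: inverse of iterated Cartier over OmegaRinfty} gives $v_T(a_i)\geq(i-t(r(p-1)+1))/d$. The desired inclusion reduces to $p^n\geq (\mu_i-i/d)+t(r(p-1)+1)/d$ for every $1\leq i\leq t$. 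Rewriting $\mu_i-i/d=(\mu_i-i(p+1)/d)+ip/d\leq\lambda+ip/d$, condition~\ref{condition I} provides exactly the bound $p^n\geq\lambda+tp/d+t(r(p-1)+1)/d=\lambda+t\delta/d$. Under the alternative hypotheses ($d\mid(p-1)$ with condition~\ref{condition II}), we have $d\leq p-1<p$, so the $\xi$-steps $dp^{-m-1}$ of Lemma~\ref{lem:xiprops} are strictly less than one; hence $\mu_i>\mu_{i-1}$ always holds, so $\mu_i-i/d$ is non-decreasing in $i$ and its maximum over $1\leq i\leq t$ is $\mu_t-t/d$. The required inequality then becomes $p^n\geq\mu_t+tr(p-1)/d$, which is precisely condition~\ref{condition II}.

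Finally, multiplication by $T^{p^n}$ is injective on $Z$, so $\dim_k(T^{p^n}\bfA_r^{-1}(N)/T^{p^n}N)=\dim_k(\bfA_r^{-1}(N)/N)$. Tensoring with $\calA$ multiplies $k$-dimensions by $d$ (equation~\eqref{eq: dimension compare with d-th root of T}), and then Lemma~\ref{l:fundamental fact SNF} together with Proposition~\ref{prop: columns and determinant of Ak} and Lemma~\ref{lemma: endpoint of polygon for iterates} yield
\begin{equation*}
\nu\cdot d\cdot\dim_k(\bfA_r^{-1}(N)/N)=\dim_\F(\bfA_r^{-1}(N\otimes_A\calA)/(N\otimes_A\calA))=d\cdot v_T(\Det_{\calE_\F}(\bfA_r))=d\nu r\eta_t,
\end{equation*}
so $\dim_k(\bfA_r^{-1}(N)/N)=r\eta_t=r(p-1)t(t+1)/(2d)$, completing the proof.
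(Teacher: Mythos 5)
Your proof is correct and follows the same basic skeleton as the paper's: decompose $V^r = \bfA_r + \bfC_r$ relative to $\oFil{t}\oplus\Fil{t}$, reduce the kernel computation to the $\bfA_r$-block via the key inclusion $T^{p^n}\bfA_r^{-1}(\oFil{t}Z)\subseteq\oFil{t}M$ (the paper's Lemmas~\ref{lemma:ar^{-1} inclusion} and~\ref{lemma:alternatekernel}), and then compute the resulting elementary-divisor count via Lemma~\ref{l:fundamental fact SNF}, Proposition~\ref{prop: columns and determinant of Ak}, and Lemma~\ref{lemma: endpoint of polygon for iterates}. Your bookkeeping works over $A$ and tensors with $\calA$ only at the final dimension count, whereas the paper extends scalars to $\calA$ up front; this is cosmetic. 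One small citation slip: the invertibility of $\bfA_r$ (rather than $\bfA_1$) on $\oFil{t}Z_E$ comes from Lemma~\ref{lemma: inverse of iterated Cartier part}, not Corollary~\ref{cor:invertible}.

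The one place where you take a genuinely different route is the case $d\mid(p-1)$ with condition~\ref{condition II}. The paper there uses that $t-1$ is also a cutoff parameter (valid because $d\mid(p-1)$ makes every nonnegative integer a cutoff parameter), applies Corollary~\ref{cor: inverse of iterated Cartier over OmegaRinfty} with $t-1$ to control the $u_i$-coefficients for $i<t$, and handles $i=t$ separately. You instead observe that when $d<p$ the $\xi$-steps $dp^{-m-1}$ of Lemma~\ref{lem:xiprops}\ref{xidifmbr} are all strictly less than $1$, forcing $\mu_i>\mu_{i-1}$ for every $i\ge 1$, hence $\mu_i-i/d$ is non-decreasing; the required valuation bound therefore only needs to be checked at $i=t$, where it is exactly condition~\ref{condition II}. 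This is a cleaner and more uniform argument that avoids the auxiliary cutoff parameter $t-1$. It is worth recording this monotonicity observation explicitly, since it isolates precisely why the weaker condition~\ref{condition II} suffices when $d\mid(p-1)$, and it works whenever $d<p$ (i.e.\ $d\le p-1$, which in fact always holds when $d\mid(p-1)$).
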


 \begin{proposition} \label{prop:piece2} 
 Let $n$ be a positive integer and $t$ a cutoff parameter.
 \begin{enumerate}
 \item \label{prop:piece2 part1} Assume $t$ satisfies \ref{condition I} %
and let $D:=D_t$ be as in Definition 
\ref{defn:parameter D}.  Define
\[
    \varepsilon:=D-1+2\lambda-\frac{p}{d}\quad\text{and}\quad
    C(p,d,r) \colonequals \begin{cases}
        \left(1+\left\lfloor\frac{d\varepsilon}{p}\right\rfloor\right)\left(\varepsilon - \frac{p}{2d}\left\lfloor\frac{d\varepsilon}{p}\right\rfloor\right) & \text{if}\ \varepsilon > 0 \\
        \phantom{\left(1+\left\lfloor\frac{d\varepsilon}{p}\right\rfloor\right)} 0 & \text{if}\ \varepsilon \le 0
    \end{cases}
\]
Then
  \begin{equation*}
  0 \leq \# \Delta_n(t) - \dim_k \prt{t}(\ker(\bfV^r|_{M_n})) \leq 
    C(p,d,r). %
  \end{equation*}
If moreover $t=t(n)$, then $\varepsilon=\max\{2\lambda-\frac{p}{d},\varepsilon(n)\}$ 
with $\varepsilon(n)$ as in Definition \ref{defn:t and point count}.

 \item\label{prop:second half stronger} Set $t=t'(n)$ and assume that either $d|(p-1)$,
 or that $d\le p+1$ and $t(n)=t'(n) = t$. Then
    $$\# \Delta_n(t) = \dim_k \prt{t}(\ker(\bfV^r|_{M_n})).$$ 
 \end{enumerate}
 \end{proposition}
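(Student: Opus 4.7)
The upper bound $\dim_k \prt{t}(\ker(\bfV^r|_{M_n})) \leq \#\Delta_n(t)$ in both parts is immediate from Lemma \ref{lemma:breaking up the kernel} and Proposition \ref{prop:omega as T module}: the map $\prt{t}$ factors through $\Fil{t}(M_n)$, which has $k$-dimension $\#\Delta_n(t)$. The strategy for the lower bound (and for the exact equality in Part 2) is to construct, for each lattice point $(i,j) \in \Delta_n(t)$, a distinguished lift of the basis class $\overline{T^j u_i} \in \Fil{t}(M_n)$ to an element of $\ker(\bfV^r|_{M_n})$, and then bound the set of exceptional $(i,j)$ for which this construction fails.

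Given $(i,j) \in \Delta_n(t)$, apply Corollary \ref{c: filtered part for alteration} to $x = T^j u_i \in \Fil{t}(M) \subseteq Z_E$ to obtain the unique $y \in \oFil{t}(Z_E)$ such that $\bfV^r(T^j u_i + y) \in \Fil{t}(Z_E)$. Then $\overline{T^j u_i + y} \in M_n$ lies in $\ker(\bfV^r|_{M_n})$ and projects under $\prt{t}$ onto $\overline{T^j u_i}$ provided two conditions hold: (a) $y \in \oFil{t}(M) = \bigoplus_{k \leq t} A \cdot T^{\mu_k} u_k$; and (b) $\bfV^r(T^j u_i + y) \in T^{p^n} Z$. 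I will translate these into explicit linear inequalities in $j$.

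Since $j \geq \mu_i > pi/d$ by Lemma \ref{l: lower bound of mu_j} and $T^{i/d} u_i \in \calZ^d$, we have $T^j u_i \in T^{j - i/d} \calZ^d$. Applying Lemma \ref{lemma: filtered part for alterations lies in L^{d/p+1}} with $s = j - i/d - tp/d$, the $u_k$-coefficient of $y$ in the orthonormal basis $\calB$ has $T$-valuation at least $s + k(p+1)/d$, and $\bfV^r(T^j u_i + y) \in T^{j - i/d + (t+1)(r-1)(p-1)/d} \Fil{t}\calZ^{d/p}$. Combining with $\mu_k \leq k(p+1)/d + \lambda$ from Corollary \ref{cor:muestimate}, condition (a) reduces to $s \geq \lambda$, i.e.\ $j \geq J_a(i) := i/d + tp/d + \lambda$. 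For (b), the stringent case $k = t+1$ (and the identity $(r-1)(p-1) + p = r(p-1) + 1$) gives $j \geq J_b(i) := p^n + i/d - (t+1)[r(p-1)+1]/d$.

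A lattice point $(i,j) \in \Delta_n(t)$ is \emph{bad} precisely when $\mu_i \leq j < \max(J_a(i), J_b(i))$. Condition \ref{condition I}, namely $p^n \geq t\dnom/d + \lambda$ with $\dnom = r(p-1) + (p+1)$, gives the crucial comparison $J_a(i) - J_b(i) \leq (r(p-1)+1)/d$; moreover both $J_a$ and $J_b$ have slope $1/d$ in $i$ while $\mu_i$ has slope $(p+1)/d$, so bad points are confined to a bounded region. A direct combinatorial count (using the explicit bound on $D_{t(n)}$ from Lemma \ref{lem:parameters}\ref{parameters:general}) shows that the number of bad points is at most $C(p,d,r)$ of Definition \ref{defn:parameter D}, yielding Part 1. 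For Part 2 with $d \mid (p-1)$, Proposition \ref{prop:muestimates} gives $\mu_k = k(p+1)/d$ exactly (so $\lambda = 0$), and the identity $(t+1)\dnom \geq dp^n$ from Lemma \ref{lem:parameters}\ref{parameters:d|p-1} forces $\mu_i \geq J_b(i)$ for every $i > t$, while $J_a(i) \leq \mu_i$ is automatic for $i > t$ when $\lambda = 0$; hence no bad points exist. The analogous case $d \leq p+1$ with $t(n) = t'(n)$ is handled similarly, using the hypothesis $s(n) < \dnom/d - 1$ to eliminate bad points. The main obstacle will be the fine combinatorial optimization in Part 1, matching the cardinality of the bad region to $C(p,d,r)$ exactly; a subsidiary technicality is verifying that the unique $y$ from Corollary \ref{c: filtered part for alteration}, a priori only in $\oFil{t}(Z_E)$, actually lies in $\oFil{t}(M)$ under condition (a), which is where the factor $\lambda$ enters.
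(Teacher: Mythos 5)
Your approach is essentially the paper's, recast point‑by‑point: your ``good lattice points'' are precisely those $(i,j)$ with $T^j u_i \in T^{\lambda+D-1+tp/d}\calZ^d$, so the span of the good basis classes coincides with the image of the space $W(n,t)$ that the paper introduces in \S\ref{ss:proof2}, and the key technical input (Lemma \ref{lemma: filtered part for alterations lies in L^{d/p+1}}, via Corollary \ref{c: filtered part for alteration}) is the same. For Part 1 the architecture is sound, though the ``direct combinatorial count'' asserting that the bad points number at most $C(p,d,r)$ is left entirely unargued; this is the substance of the paper's introduction of the auxiliary quotients $X'(n)$ and $X(n)$ and the evaluation of $\dim_k X(n)$ via the explicit functions $c_i$ in \eqref{eq: upper bound sum}--\eqref{eq: dimXRHS}, and it is not a triviality, in particular because the passage from $\Fil{>t}(\calM)$ to $T^{-\lambda}\Fil{>t}(\calZ^{d/(p+1)})$ must be controlled.

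There is, however, a genuine error in Part 2. You write that when $d\mid(p-1)$, Proposition \ref{prop:muestimates} gives $\mu_k = k\frac{p+1}{d}$ exactly, hence $\lambda=0$. That is only true when $d\mid(p+1)$ (Corollary \ref{cor:muestimate}); since $\gcd(p-1,p+1)\mid 2$, the two conditions are simultaneous only for $d\in\{1,2\}$. For a typical $d\mid(p-1)$ one has $\lambda>0$, and then your real‑valued sufficient condition ``$s\ge\lambda$'' need not hold at $j=\mu_i$: you would need $(i-t)p/d\ge 2\lambda$ for all $i>t$, which already fails at $i=t+1$ when $\lambda$ is close to $1$. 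The paper circumvents this by exploiting integrality: since $T^{\mu_i}u_i\in Z$, Corollary \ref{c: filtered part for alteration} forces $y\in Z_E$, so the $u_j$‑coefficients of $y$ have integer $T$‑valuation and one may take ceilings; the chain
\[
\left\lceil \mu_i - \tfrac{p+1}{d}i + (i-t)\tfrac{p}{d} + \tfrac{p+1}{d}j \right\rceil \ge \left\lceil -1 + \tfrac{1}{d} + \tfrac{p}{d} + \tfrac{p+1}{d}j \right\rceil \ge \left\lceil \tfrac{p+1}{d}j \right\rceil \ge \mu_j
\]
in Lemma \ref{lemma:projectionlemma} is what makes the argument go through for every $d\le p+1$. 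Your ``condition (a)'' is a strictly stronger (and, at the boundary, false) sufficient condition; you correctly flag at the end that the membership $y\in\oFil{t}(M)$ is the delicate point, but the derivation ``(a) reduces to $s\ge\lambda$'' discards exactly the integrality that rescues the $d\mid(p-1)$, $\lambda>0$ case.
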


\begin{proof}[Proof of Theorem~\ref{theorem:higheranumber}]
For the general case, take $t = t(n)$,
which is a cutoff parameter satisfying 
\ref{condition I} thanks to Lemma \ref{lem:parameters} \ref{parameters:general}. As such, we may apply 
Proposition \ref{prop:piece1} and Proposition \ref{prop:piece2} \ref{prop:piece2 part1}, which combine with 
Lemma~\ref{lemma:breaking up the kernel} to give
\[
0 \leq \frac{r(p-1) t(n)(t(n)+1)}{2d} + \# \Delta_n(t(n)) -  a^{(r)}_n  \leq C(p,d,r)
\]
with $C(p,d,r)$ depending only on $p,d$, and $\varepsilon=\max\{2\lambda-\frac{p}{d},\varepsilon(n)\}$ as in Proposition  \ref{prop:piece2} \eqref{prop:piece2 part1}. It follows from Definition
\ref{defn:t and point count} 
and \eqref{eq: bound on s(n)}
that 
\[
    \varepsilon(n) \le \delta\left(1-\frac{1}{d}\right)+\lambda+2\left\lfloor \frac{p}{2}\right\rfloor-p,
\]
which is visibly independent of $n$, so that $C(p,d,r)$
is likewise independent of $n$.  Explicitly, 
\[
        C(p,d,r) \le \frac{d}{2p}\left(\varepsilon(n) + \frac{1}{d}\left\lfloor\frac{p}{2}\right\rfloor\right)\left(\varepsilon(n) + \frac{1}{d}\left\lceil\frac{p}{2}\right\rceil\right).
\]

To prove \eqref{eq: MT second case}, we first observe that
the hypothesis $s(n) < \delta/d -1$ is readily seen to be equivalent to $t(n)=t'(n)$.
Thus, the hypotheses under which 
\eqref{eq: MT second case} is claimed are equivalent to
the hypotheses of Proposition \ref{prop:piece2} \eqref{prop:second half stronger}, which together with Lemma \ref{lem:parameters} and Proposition \ref{prop:piece1} yield, upon invoking
Lemma~\ref{lemma:breaking up the kernel}, 
the equality \eqref{eq: MT second case}.
\end{proof}  

\begin{example}
When $d|(p+1)$, we have $\lambda=0$ by Corollary \ref{cor:muestimate}, so taking $t=t(n)$
in Proposition \ref{prop:piece2} \eqref{prop:piece2 part1} gives $\varepsilon=\max\{-\frac{p}{d}, s(n)+1-\delta/d\}$,
which is non-positive when $s(n)\le \delta/d - 1$, whence $C(p,d,r)=0$ in such cases.  
This recovers (and slightly improves) the result of Proposition \ref{prop:piece2} \eqref{prop:second half stronger}
in such cases.  Again assuming $d|(p+1)$, when $p$ is odd and $r=1$ we have $t(n)=d(p^{n-1}-1)/2$ and $\varepsilon=\varepsilon(n)=p(1-2/d)$
by Lemma~\ref{lem:parameters} \ref{parameters:r=1}, and it follows that $C(p,d,r)=\frac{p}{2d}(d-1)(d-2)$. 
\end{example}
  
\subsection{Proof of Proposition~\ref{prop:piece1}} \label{ss:proof1}

Fix a cutoff parameter $t$, and assume either that $t$ satisfies \ref{condition I} or 
that $d|(p-1)$ and $t$ satisfies \ref{condition II}.
We continue the notation $A_{r,t} = A_r, B_r, \ldots$ (with the fixed $t$ understood) from Section~\ref{ss: consequences of NP and HP coincidence}.

\begin{notation}
Let $\calM_n \colonequals M_n \otimes_{A} \calA$ and $\calM \colonequals M \otimes_{A} \calA$. Note that $\Fil{t}(\calM_\star) = \Fil{t}(M_\star) \otimes_{A} \calA$ and $\oFil{t}(\calM_\star)= \oFil{t}(M_\star) \otimes_{A} \calA$, where $\star$ is $n$ or nothing.
\end{notation}

\begin{lemma} \label{lemma:ar^{-1} inclusion}
With the standing hypothesis, 
\begin{equation} \label{eq:Ainverse}
\bfA_{r}^{-1}(T^{p^n}\oFil{t}(\calZ)) \subset \oFil{t}(\calM).
\end{equation}
\end{lemma}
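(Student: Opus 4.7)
The plan is to use Corollary~\ref{cor: inverse of iterated Cartier over OmegaRinfty} to control $\bfA_r^{-1}(T^{p^n}\oFil{t}(\calZ))$, and then to verify that the resulting expansion in the orthonormal basis $\calB = \{u_i\}$ of Lemma~\ref{l: B's are Schauder basis} lies in the direct-sum decomposition $\oFil{t}(\calM) = \bigoplus_{i=1}^{t}\calA\cdot T^{\mu_i}u_i$ provided by Proposition~\ref{prop:omega as T module}.  That corollary yields the chain
\begin{equation*}
    \bfA_r^{-1}\bigl(T^{p^n}\oFil{t}(\calZ)\bigr)\ \subseteq\ T^{p^n - t(r(p-1)+1)/d}\,\oFil{t}(\calZ^d)\ \subseteq\ T^{p^n - \beta(t)}\,\oFil{t}\bigl(\calZ^{d/(p+1)}\bigr),
\end{equation*}
with $\beta(t) = t\delta/d$, so any $\omega$ in the left-hand side admits a unique expansion $\omega = \sum_{i=1}^{t} a_i u_i$ with $a_i\in\calA$, and the containment reduces to the pointwise estimate $v_T(a_i) \geq \mu_i$ for $1 \leq i \leq t$.

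Under condition~\ref{condition I}, I would use the weaker $\calZ^{d/(p+1)}$-bound, which gives $v_T(a_i) \geq p^n - \beta(t) + i(p+1)/d$.  Since $\lambda = \max_j|\mu_j - j(p+1)/d|$, we have $\mu_i \leq i(p+1)/d + \lambda$ for every $i$, so the required inequality $v_T(a_i) \geq \mu_i$ follows uniformly in $i$ from $p^n - \beta(t) \geq \lambda$, which (after rearranging $\beta(t) = tr(p-1)/d + t(p+1)/d$) is exactly~\ref{condition I}.

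Under condition~\ref{condition II} with $d\mid(p-1)$, condition~\ref{condition I} may fail, so instead I would use the sharper $\calZ^d$-bound $v_T(a_i) \geq p^n - t(r(p-1)+1)/d + i/d$.  At the endpoint $i = t$ this gives $v_T(a_t) \geq p^n - tr(p-1)/d \geq \mu_t$ by~\ref{condition II}, and for $i < t$ the bound decreases from the endpoint value by exactly $(t-i)/d$, so the inequality $v_T(a_i) \geq \mu_i$ reduces to the slope estimate $\mu_t - \mu_i \geq (t-i)/d$.  The hard part will be this slope estimate, and it is the only place the hypothesis $d\mid(p-1)$ is used: that hypothesis forces $d \leq p-1 < p$, so by Lemma~\ref{lem:xiprops}\ref{xidifmbr} every jump $\xi_b - \xi_{b-1} = d/p^{v_p(b)+1}$ is strictly less than $1$; by the definition $\mu_i = \min\{b\geq 1 : \xi_b > i\}$, this forces $\mu_\bullet$ to be strictly increasing as an $\Z_{\geq 1}$-valued sequence, from which $\mu_t - \mu_i \geq t - i \geq (t-i)/d$ is immediate.
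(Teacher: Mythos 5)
Your proof is correct. The first case (condition \ref{condition I}) is identical to the paper's argument: pass through $T^{p^n-\beta(t)}\oFil{t}(\calZ^{d/(p+1)})$ and use $\mu_i \le \frac{p+1}{d}i+\lambda$. The second case, however, takes a genuinely different route. The paper handles $d\mid(p-1)$ with condition \ref{condition II} by observing that $t-1$ is then \emph{also} a cutoff parameter (via Corollary \ref{corollary: touching polygons}), applying Corollary \ref{cor: inverse of iterated Cartier over OmegaRinfty} at level $t-1$ to control the coefficients of $u_i$ for $i<t$, and treating the coefficient of $u_t$ separately. You instead stay at level $t$ and use the \emph{sharper} containment $\bfA_r^{-1}(\oFil{t}\calZ)\subseteq T^{-t(r(p-1)+1)/d}\oFil{t}(\calZ^{d})$ from the same corollary, which pins the coefficient of $u_i$ to valuation at least $\bigl(p^n-tr\frac{p-1}{d}\bigr)-\frac{t-i}{d}$; condition \ref{condition II} handles the endpoint $i=t$, and the deficit $\frac{t-i}{d}$ for $i<t$ is absorbed by the slope estimate $\mu_t-\mu_i\ge t-i$, which you correctly derive from Lemma \ref{lem:xiprops} \ref{xidifmbr}: when $d<p$ every jump of $\xi_\bullet$ is less than $1$, so $\mu_\bullet$ is strictly increasing. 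Your version has two modest advantages: it avoids the slightly delicate step of relating coefficients of an element of $\bfA_{r,t}^{-1}(\cdots)$ to the operator $\bfA_{r,t-1}$ at the smaller cutoff, and it uses $d\mid(p-1)$ only through the weaker consequence $d\le p-1$, so it would apply to any cutoff parameter satisfying \ref{condition II} whenever $d<p$. What it gives up is the paper's structural point that divisibility makes \emph{every} nonnegative integer a cutoff parameter, which is reused elsewhere.
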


\begin{proof}
Let $\beta(t)=t \dnom/d$ as in Corollary~\ref{cor: inverse of iterated Cartier over OmegaRinfty}.  If $t$ satisfies \ref{condition I}, then $p^n - \beta(t) \geq \lambda$. Using Corollary~\ref{cor: inverse of iterated Cartier over OmegaRinfty} and the definition of $\lambda$, we see that
\begin{equation*} 
\bfA_{r}^{-1}(T^{p^n}\oFil{t}(\calZ)) \subset T^{p^n -  \beta(t)}\oFil{t}(\calZ^{d/(p+1)}) \subset \oFil{t}(\calM).
\end{equation*}

When $d|(p-1)$ and $t$ satisfies \ref{condition II}, the argument is very similar except we consider the coefficients of $u_{t}$, and the coefficients of $u_i$ for $i<t$, occurring in an element of $\bfA_{r}^{-1}(T^{p^n}\oFil{t}(\calZ))$ separately.  
The assumption that $d|(p-1)$ implies that $t-1$ is {\em also} a cutoff parameter when $t\ge 1$ (see Remark \ref{rem:cutoffsets}),
so we may apply Corollary~\ref{cor: inverse of iterated Cartier over OmegaRinfty} with $t-1$;
when $t=0$, the conclusions of this Corollary are vacuously true for $t-1=-1$ as well. 
In any case, applying \ref{condition II}, the fact that $\mu_t$ is within $1$ of $\frac{p+1}{d} t$, and the assumption $d|(p-1)$ yields
\[
p^n - \beta(t-1) = p^n - t  \frac{r(p-1) + p+1}{d} + \frac{r(p-1) + p+1}{d} > \mu_t - t \frac{p+1}{d} +  2 >1.
\]
Then the previous strategy shows that
\[
A_{r,t-1}^{-1}(T^{p^n}\oFil{t-1}(\calZ)) \subset T^{p^n -  \beta(t-1)}\oFil{t-1}(\calZ^{d/(p+1)}) \subset \oFil{t-1}(\calM).
\]
For any element of $\bfA_{r}^{-1}(T^{p^n}\oFil{t}(\calZ))$, Corollary~\ref{cor: inverse of iterated Cartier over OmegaRinfty} (applied with $t$) shows the coefficient of $u_t$ is a multiple of $T^{p^n-\beta(t) + t\frac{p+1}{d}}$.  The exponent is greater than $\mu_t$ by our assumption
that $t$ satisfies \ref{condition II}, which completes the proof.  
\end{proof}

\begin{lemma} \label{lemma:alternatekernel}
With the standing hypotheses,
 \[
            \ker \left (\bfV^r|_{\oFil{t}(\calM_n)} \right)  = \frac{\bfA_{r}^{-1}(T^{p^n}\oFil{t}(\calZ))}{T^{p^n}\oFil{t}(\calZ)}.
\]
\end{lemma}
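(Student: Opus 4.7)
The plan is to reduce the kernel of $\bfV^r$ on $\oFil{t}(\calM_n)$ to a statement about $\bfA_r$ acting on the free $\calA$-module $\oFil{t}(\calZ)$, exploiting the direct sum decomposition $\calZ = \oFil{t}(\calZ) \oplus \Fil{t}(\calZ)$ and the results of Section~\ref{ss: consequences of NP and HP coincidence}.

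First I would establish the identification
$$\oFil{t}(\calM_n) = \oFil{t}(\calM)\big/T^{p^n}\oFil{t}(\calZ).$$
Using Proposition~\ref{prop:omega as T module} after extending scalars, one has a direct sum decomposition $\calM = \oFil{t}(\calM) \oplus \Fil{t}(\calM)$ and the explicit description $\oFil{t}(\calM) = \bigoplus_{i=1}^{t} T^{\mu_i}\calA u_i$. Both hypotheses (\ref{condition I}) and (\ref{condition II} with $d\mid(p-1)$) force $\mu_t \leq p^n$, so $t \leq i(n)$, and hence $\oFil{t}(\calM) \cap T^{p^n}\calZ = \bigoplus_{i=1}^{t} T^{\max(\mu_i,p^n)}\calA u_i = T^{p^n}\oFil{t}(\calZ)$. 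Moreover, any lift of an element of $\oFil{t}(\calM_n)$ in $\calM$ can be corrected by its $\Fil{t}$-component (which lies in $\Fil{t}(\calM) \cap T^{p^n}\calZ$) to produce a lift in $\oFil{t}(\calM)$; this verifies surjectivity of $\oFil{t}(\calM) \to \oFil{t}(\calM_n)$.

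Next I would translate the kernel condition. Since $V$ and $T$ commute (as $\sigma(T)=T$), the operator $\bfA_r$ sends $T^{p^n}\oFil{t}(\calZ)$ into itself, so the quotient on the right of the claim is well defined. For $x \in \oFil{t}(\calM) \subseteq \oFil{t}(\calZ)$, the block-matrix form of $\bfV^r$ gives $\bfV^r(x) = \bfA_r(x) + \bfC_r(x)$ with $\bfA_r(x) \in \oFil{t}(\calZ)$ and $\bfC_r(x) \in \Fil{t}(\calZ)$. As these summands are disjoint, $\bar{x}$ lies in $\ker\bigl(\bfV^r|_{\oFil{t}(\calM_n)}\bigr)$ if and only if both $\bfA_r(x) \in T^{p^n}\oFil{t}(\calZ)$ and $\bfC_r(x) \in T^{p^n}\Fil{t}(\calZ)$. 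The crucial point is that the second condition is \emph{automatic}: using invertibility of $\bfA_r$ on $\oFil{t}(\calZ_{\calE})$ (Lemma~\ref{lemma: inverse of iterated Cartier part}) we may write $\bfC_r(x) = \bfC_r \bfA_r^{-1}(\bfA_r(x))$, and by Corollary~\ref{cor: inverse of iterated Cartier over OmegaRinfty} (with $s=p^n$) this lies in $T^{p^n + p/d}\Fil{t}(\calZ) \subseteq T^{p^n}\Fil{t}(\calZ)$.

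Thus the kernel is represented by $\{x \in \oFil{t}(\calM) : \bfA_r(x) \in T^{p^n}\oFil{t}(\calZ)\}/T^{p^n}\oFil{t}(\calZ)$. Applying Lemma~\ref{lemma:ar^{-1} inclusion}, the preimage $\bfA_r^{-1}(T^{p^n}\oFil{t}(\calZ))$ is already contained in $\oFil{t}(\calM)$, so the numerator equals $\bfA_r^{-1}(T^{p^n}\oFil{t}(\calZ))$ on the nose, yielding the claim. The delicate point will be the first step: verifying that $T^{p^n}\oFil{t}(\calZ)$ is actually contained in $\oFil{t}(\calM)$ (not just in $\calZ$), which is where the hypothesis that $t$ is a cutoff parameter satisfying (\ref{condition I}) or (\ref{condition II}) enters in an essential way via the inequality $\mu_t \leq p^n$.
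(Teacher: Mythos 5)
Your proposal is correct and follows essentially the same route as the paper's proof: both rest on the identification $\oFil{t}(\calM_n)=\oFil{t}(\calM)/T^{p^n}\oFil{t}(\calZ)$ (via $\mu_i\le\mu_t\le p^n$ for $i\le t$), on Lemma~\ref{lemma:ar^{-1} inclusion} to place $\bfA_r^{-1}(T^{p^n}\oFil{t}(\calZ))$ inside $\oFil{t}(\calM)$, and on Corollary~\ref{cor: inverse of iterated Cartier over OmegaRinfty} to show the $\bfC_r$-component lands in $T^{p^n}\Fil{t}(\calZ)$ automatically. The only difference is organizational: the paper exhibits an injection and then checks surjectivity, while you characterize the kernel directly by the two block conditions and observe the second is implied by the first.
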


\begin{proof}
Using Proposition~\ref{prop: control type results} and the definition of the filtrations, extending scalars gives that 
\[
\oFil{t}(\calM_n) = \oFil{t}(\calM) / ( T^{p^n}\oFil{t}(\calZ) \cap \calM).
\]
 We claim that $T^{p^n}\oFil{t}(\calZ)$ is contained in $\oFil{t}(\calM)$.  For $1\leq i \leq t$ we know that
  \[
            \mu_i  \leq \mu_{t} \leq    p^n - tr \frac{p-1}{d} \le p^n . 
\]
  Since $\{T^{\mu_i}u_i\}_{1\leq i \leq t}$ is a basis of $\oFil{t}(\calM)$ over $\calA$ (Proposition~\ref{prop:omega as T module}), the claim follows.  
Thus,
\begin{equation} \label{eq:alternate filtration}
\oFil{t}(\calM_n)= \frac{\oFil{t}(\calM) }{ T^{p^n}\oFil{t}(\calZ)} .
\end{equation}

Note that any $x \in \oFil{t}(\calM)$ such that $\bfV^r(x) \in T^{p^n}\calZ$ satisfies $\bfA_{r}(x) \in T^{p^n}\oFil{t}(\calZ)$.  Hence there is an injection
 \begin{equation} \label{eq:isomorphism}
            \ker(\bfV^r|_{\oFil{t}(\calM_n) })  \hookrightarrow \frac{\bfA_{r}^{-1}(T^{p^n}\oFil{t}(\calZ))}{T^{p^n}\oFil{t}(\calZ)}.
  \end{equation}
  Conversely, given $x\in T^{p^n} \oFil{t}(\calZ)$, Lemma~\ref{lemma:ar^{-1} inclusion} gives $y = \bfA_{r}^{-1}(x) \in \oFil{t}(\calM)$.  Then 
        \[
            V^r(y) = \bfA_{r}(y) + \bfC_{r}(y) = x + \bfC_{r}(y).
        \]
        The second summand lies in $T^{p^n} \Fil{t}(\calZ)$ using  Corollary~\ref{cor: inverse of iterated Cartier over OmegaRinfty}.  Using \eqref{eq:alternate filtration} it follows that $V^r(y)$ is $0$ in $\calM_n$.  This shows that the morphism in \eqref{eq:isomorphism} is an isomorphism as desired.  
\end{proof}

\begin{proof}[Proof of Proposition~\ref{prop:piece1}]
Using the fact that $A\to \calA$ is flat, it follows from \eqref{eq: dimension compare with d-th root of T}
that one has
$$d \cdot \dim_k\left (\ker \left(\bfV^r|_{\oFil{t}(M_n)}\right)\right) = \dim_k \left (\ker \left(\bfV^r|_{\oFil{t}(\calM_n)}\right)\right).$$ 
On the other hand, Lemma \ref{l:fundamental fact SNF} gives
\[
d\cdot v_{T}(\Det_{\calE_\F}(A_r)) = \dim_\F \frac{\bfA_{r}^{-1}(T^{p^n}\oFil{t}(\calZ))}{T^{p^n}\oFil{t}(\calZ)}
=\nu \dim_k \frac{\bfA_{r}^{-1}(T^{p^n}\oFil{t}(\calZ))}{T^{p^n}\oFil{t}(\calZ)}.
\]
Combining
Lemma~\ref{lemma:alternatekernel} with Proposition \ref{prop: columns and determinant of Ak} and
Lemma \ref{lemma: endpoint of polygon for iterates} then completes the proof. 
\end{proof}

\subsection{Proof of Proposition~\ref{prop:piece2} \eqref{prop:piece2 part1}} \label{ss:proof2}
Fix a cutoff parameter $t$ satisfying \ref{condition I}, and let $D=D_t$
be as in Definition \ref{defn:parameter D}. 
Define
\begin{equation}
W(n,t) \colonequals \text{Im} \Big(T^{\lambda+D-1+\frac{tp}{d}} \calZ^d \cap \Fil{t}(\calM)\to \Fil{t}(\calM_n) \Big) , 
\end{equation}
where the map is induced by the canonical map $\calM \to \calM_n$. The space
$W(n,t)$ plays the role of the top right region in Figure \ref{fig:picture 2}, i.e., the region above the line $y=13$ and to the right of the line $x=6$. In Figure \ref{fig:picture 2} we saw that \emph{every} lattice point in the top right region is in the kernel of $V^3$. Accordingly, we will show that $W(n,t)$ is a subspace of $\prt{t}(\ker(\bfV^r|_{\calM_n}))$ and is also almost all of $\Fil{t}(\calM_n)$.

By the definition of $\lambda$ we know
        \begin{equation} \label{lemma: error for the T_k part eq1}
            T^{\lambda}\Fil{t}(\calZ^{\frac{d}{p+1}}) \subset \Fil{t}(\calM) \subset T^{-\lambda}\Fil{t}(\calZ^{\frac{d}{p+1}}).
        \end{equation}

    \begin{lemma}\label{lemma: estimating the T_k part}
With the standing hypotheses,  
        \begin{align*}
            W(n,t) \subset \prt{t}(\ker(\bfV^r|_{\calM_n})).
        \end{align*}
    \end{lemma}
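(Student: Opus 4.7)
Given $w \in W(n,t)$, the strategy is to lift $w$ to some $x \in T^{\lambda + D - 1 + tp/d}\calZ^d \cap \Fil{t}(\calM)$ (which exists by the very definition of $W(n,t)$), then add a correction $y \in \oFil{t}(\calM)$ chosen so that $\bfV^r(x+y) \in T^{p^n}\calZ$. This produces a class $\overline{x+y} \in \ker(\bfV^r|_{\calM_n})$, and since $\prt{t}$ kills elements of $\oFil{t}(\calM)$, we have $\prt{t}(\overline{x+y}) = \prt{t}(\overline{x}) = w$.

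The construction of $y$ is essentially automatic. Corollary~\ref{c: filtered part for alteration} furnishes a unique $y \in \oFil{t}(\calZ_\calE)$ with $\bfV^r(x+y) \in \Fil{t}(\calZ_\calE)$, and Lemma~\ref{lemma: filtered part for alterations lies in L^{d/p+1}} applied with $s = \lambda + D - 1$ upgrades this to the quantitative statement
\[
y \in T^{\lambda+D-1}\oFil{t}(\calZ^{d/(p+1)}) \quad \text{and} \quad \bfV^r(x+y) \in T^{\lambda + D - 1 + tp/d + (t+1)(r-1)(p-1)/d}\Fil{t}(\calZ^{d/p}).
\]
The bound on $y$, combined with \eqref{lemma: error for the T_k part eq1} and the fact that $D \ge 1$, forces $y \in T^{D-1}\oFil{t}(\calM) \subseteq \oFil{t}(\calM)$, as required, so $x+y$ lies in $\calM$.

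The main obstacle is checking that the exponent in the bound on $\bfV^r(x+y)$ is large enough to place it in $T^{p^n}\calZ$ --- this is where the precise choice of $D_t$ matters. A short algebraic manipulation using $\dnom = (p-1)r + (p+1)$ together with the formula $D = p^n - t\dnom/d - (r(p-1)+1)/d + 1 - \lambda$ reduces the exponent in the bound above to $p^n - (t+1)p/d$, so that the coefficient of $u_i$ in the expansion of $\bfV^r(x+y)$ has $T$-valuation at least $p^n + (i-t-1)p/d$ for each $i > t$, which exceeds $p^n$. When the maximum defining $D$ takes the value $1$ instead, the same conclusion follows directly from the defining inequality $t\dnom \ge dp^n - r(p-1) - 1 - d\lambda$ for that case. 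With this verified, $\overline{x+y} \in \ker(\bfV^r|_{\calM_n})$ projects under $\prt{t}$ to $w$, which concludes the argument.
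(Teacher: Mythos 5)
Your proof is correct and follows essentially the same route as the paper: lift $w$ to $x \in T^{\lambda+D-1+tp/d}\calZ^d \cap \Fil{t}(\calM)$, invoke Lemma~\ref{lemma: filtered part for alterations lies in L^{d/p+1}} with $s=\lambda+D-1$ to produce the correction $y\in\oFil{t}(\calM)$, and then verify via the definition of $D_t$ that $\bfV^r(x+y)\in T^{p^n}\calZ$. The only difference is that you spell out the algebra showing the resulting exponent equals $p^n-(t+1)p/d$ (and separately handle the $D=1$ branch of the maximum), where the paper simply says ``it again follows from the definition of $D=D_t$''; one small slip is that for $i=t+1$ the coefficient valuation $p^n+(i-t-1)p/d$ equals $p^n$ rather than exceeding it, but this is immaterial since $\ge p^n$ is all that is needed.
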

    
    \begin{proof}
        Let $x \in T^{\lambda+D-1+t \frac{p}{d}}\calZ^d \cap \Fil{t}(\calM)$. By Lemma \ref{lemma: filtered part for alterations lies in L^{d/p+1}} there exists $y \in T^{\lambda+D-1}\oFil{t}\calZ^{\frac{d}{p+1}}$ with
        \[\bfV^r(x+y) \in T^{\lambda +D-1+t \frac{p}{d} + \frac{(t+1)(r-1)(p-1)}{d}} \Fil{t}(\calZ^{d/p}) =  T^{\lambda +D-1+ t\frac{r (p-1)}{d} + \frac{t}{d} +\frac{(r-1)(p-1)}{d}} \Fil{t}(\calZ^{d/p}). \]
        By \eqref{lemma: error for the T_k part eq1} and the fact that $D\ge 1$ by Definition \ref{defn:parameter D},
        we have $T^{\lambda+D-1}\oFil{t}\calZ^{\frac{d}{p+1}} \subset \oFil{t}(\calM)$,
        so $y \in \calM$ and therefore $x+y\in \calM$. 
        As $\Fil{t}(\calZ^{d/p}) \subset T^{(t+1) p/d}\Fil{t}(\calZ)$, 
        it again follows from the definition of $D=D_t$ that
 \[
 \bfV^r(x+y) \in T^{\lambda+D-1 + t\frac{r(p-1)}{d} + \frac{t}{d} + (t+1)\frac{p}{d} + \frac{(r-1)(p-1)}{d} }\Fil{t}(\calZ) \subset T^{p^n} \Fil{t}(\calZ) .
 \]
In particular, the image of $x+y$ in $\calM_n$ is contained in 
$\ker(\bfV^r|_{\calM_n})$. As $\prt{t}(x+y)=\prt{t}(x)=x$, the result follows. 
    \end{proof}

    From Lemma \ref{lemma: estimating the T_k part} we deduce
    \begin{equation}\label{eq: bound the cokernel of V on right side}
        \dim_k \left( \frac{\oFil{t}(\calM_n)}{\prt{t}(\ker(\bfV^r|_{\calM_n}))} \right) \le \dim_k \left( \frac{\oFil{t}(\calM_n)}{W(n,t)} \right).
    \end{equation}
    We would like to explicitly bound the right side of this inequality. This is difficult to do directly, as the $\mu_i$'s behave erratically (as evidenced in Proposition \ref{prop:muestimates}). Our strategy is to map $\frac{\oFil{t}(\calM_n)}{W(n,t)}$ injectively into a larger space whose dimension we can easily calculate. This motivates the following definition.

\begin{definition}
Define $k$-vector spaces (depending on the parameters $\lambda, t, D$) by
\[
        X'(n) \colonequals \frac{\Fil{t}(\calM)}{T^{t\frac{p}{d} + \lambda+D-1}\calZ^d \cap \Fil{t}(\calM)} \quad \text{and} \quad 
            X(n) \colonequals \frac{T^{-\lambda}\Fil{t}(\calZ^{\frac{d}{p+1}})}{T^{t \frac{p}{d} + \lambda+D-1}\calZ^d \cap T^{-\lambda}\Fil{t}(\calZ^{\frac{d}{p+1}})}.
\]
\end{definition}

\begin{lemma} \label{lemma:x'x}
With the standing hypotheses, 
\[  \dim_k \left( \frac{\oFil{t}(\calM_n)}{W(n,t)} \right)\leq \dim_k X'(n) \leq \dim_k X(n).
\]
\end{lemma}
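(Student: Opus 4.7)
Proof plan. Both inequalities will be obtained by purely formal manipulations of quotients; the essential content is already packaged in the containment \eqref{lemma: error for the T_k part eq1} and in the definition of $W(n,t)$.

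For the first inequality, we observe that from the definition of the Iwasawa module and the filtrations, the canonical projection $\Fil{t}(\calM) \twoheadrightarrow \Fil{t}(\calM_n)$ is surjective with kernel $T^{p^n}\Fil{t}(\calZ) \cap \Fil{t}(\calM)$. Writing
\[
A \colonequals T^{tp/d + \lambda + D - 1}\calZ^d \cap \Fil{t}(\calM) \quad\text{and}\quad B\colonequals T^{p^n}\Fil{t}(\calZ) \cap \Fil{t}(\calM),
\]
the quotient $\Fil{t}(\calM_n)/W(n,t)$ is naturally identified with $\Fil{t}(\calM)/(A+B)$, while $X'(n) = \Fil{t}(\calM)/A$. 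Since $A \subseteq A+B$, the quotient $\Fil{t}(\calM_n)/W(n,t)$ is itself a quotient of $X'(n)$, giving the first inequality.

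For the second inequality, the inclusion $\Fil{t}(\calM) \subseteq T^{-\lambda}\Fil{t}(\calZ^{d/(p+1)})$ from \eqref{lemma: error for the T_k part eq1} induces a map of numerators which descends to a map $X'(n) \to X(n)$. This map is well-defined because the denominator of $X(n)$ intersected with $\Fil{t}(\calM)$ equals $A$, the denominator of $X'(n)$. The same observation shows that the map is injective: any class in $X'(n)$ represented by $x \in \Fil{t}(\calM)$ that vanishes in $X(n)$ must satisfy $x \in T^{tp/d + \lambda + D - 1}\calZ^d$, hence $x \in A$, so it already vanishes in $X'(n)$.

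There is no real obstacle here; the lemma is essentially a bookkeeping step that replaces the intractable quotient $\Fil{t}(\calM_n)/W(n,t)$ (which involves the erratic $\mu_i$ through $\Fil{t}(\calM)$) with the more tractable $X(n)$, whose numerator $T^{-\lambda}\Fil{t}(\calZ^{d/(p+1)})$ has an explicit $\calA$-basis coming from the $u_i$. The actual work — computing $\dim_k X(n)$ in terms of the lattice point counts and concluding the bound $C(p,d,r)$ of Proposition~\ref{prop:piece2} — is left to subsequent lemmas.
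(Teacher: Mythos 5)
Your proof is correct and follows essentially the same route as the paper: the second inequality comes from the injection $X'(n)\hookrightarrow X(n)$ induced by the containment \eqref{lemma: error for the T_k part eq1} (injective because the denominator of $X(n)$ meets $\Fil{t}(\calM)$ in exactly the denominator of $X'(n)$), and the first from the surjection $X'(n)\twoheadrightarrow \Fil{t}(\calM_n)/W(n,t)$ induced by $\Fil{t}(\calM)\to\Fil{t}(\calM_n)$. Your explicit identification of $\Fil{t}(\calM_n)/W(n,t)$ with $\Fil{t}(\calM)/(A+B)$ is just a slightly more detailed rendering of the same argument.
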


\begin{proof}
The inclusion $\Fil{t}(\calM) \subset T^{-\lambda}\Fil{t}(\calZ^{\frac{d}{p+1}})$ induces a map $\imath: X'(n) \to X(n)$.  It is injective as
\[
T^{t \frac{p}{d} + \lambda + D-1}\calZ^d \cap \Fil{t}(\calM) = \left(T^{t\frac{p}{d} + \lambda + D-1 }\calZ^d \cap T^{-\lambda}\Fil{t}(\calZ^{\frac{d}{p+1}})\right) \cap \Fil{t}(\calM).\]
This gives the second inequality. For the first inequality, note that there is a surjective map 
$$ X'(n) \twoheadrightarrow \frac{\oFil{t}(\calM_n)}{W(n,t)}, $$
which is induced by the map $\oFil{t}(\calM) \to \oFil{t}(\calM_n)$. 
\end{proof}

\begin{lemma} \label{lemma:rightlattice}
We have that $\dim_k \Fil{t}(M_n) = \# \Delta_n(t)$ and hence $\dim_k \Fil{t}(\calM_n) = d  \# \Delta_n(t)$.
\end{lemma}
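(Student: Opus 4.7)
The plan is to combine the structural decomposition of $M_n$ from Proposition \ref{prop:omega as T module} with the observation that the generators $\bar{e}_i = T^{\mu_i}\bar{u}_i$ of $M_n$ are aligned with the filtration $\Fil{\bullet}$, which (by the convention of Section \ref{s:calculation of higher a numbers}) is induced from the orthonormal basis $\calB = \{u_i\}$ of $\calZ_\calE$.

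I will first invoke Proposition \ref{prop:omega as T module} to write $M_n = \bigoplus_{i=1}^{i(n)} A_n \cdot \bar{e}_i$ as $A_n$-modules, where the $i$-th summand has $k$-dimension $p^n - \mu_i$ (equivalently, $\bar{e}_i$ has $A_n$-annihilator $(T^{p^n - \mu_i})$). Next, since $\{\bar{u}_i\}_{i \ge 1}$ is an $A_n$-basis of $Z_n$ by Proposition \ref{prop: control type results}, the projection $\prt{t}: Z_n \to \Fil{t}(Z_n)$ of Lemma \ref{lemma:breaking up the kernel} annihilates $\bar{u}_i$ for $i \le t$ and restricts to the identity on $\bar{u}_i$ for $i > t$. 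In particular, $\prt{t}$ kills each generator $\bar{e}_i$ with $i \le t$ and is injective on the cyclic summand $A_n \bar{e}_i$ for $i > t$. Combined with the identification $\Fil{t}(M_n) = \prt{t}(M_n)$ from Lemma \ref{lemma:breaking up the kernel}, this gives
\begin{equation*}
\Fil{t}(M_n) = \bigoplus_{i = t+1}^{i(n)} A_n \bar{e}_i.
\end{equation*}

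Counting $k$-dimensions then yields
\begin{equation*}
\dim_k \Fil{t}(M_n) = \sum_{i = t+1}^{i(n)} (p^n - \mu_i) = \#\{(i,j) \in \Z^2 : i > t,\ \mu_i \le j < p^n\} = \# \Delta_n(t),
\end{equation*}
where the middle equality uses that $\mu_i \ge p^n$ whenever $i > i(n)$, so the cutoff $i \le i(n)$ is automatic once we impose $\mu_i \le j < p^n$. The statement for $\calM_n$ follows from \eqref{eq: dimension compare with d-th root of T} and the compatibility of $\Fil{\bullet}$ with the flat scalar extension $A \to \calA$.

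I expect no real obstacle: the content of the lemma is already packaged into the alignment between the orthonormal basis $\calB = \{u_i\}$ and the generators $e_i = T^{\mu_i} u_i$ of $M$ established in Proposition \ref{prop:omega as T module}.
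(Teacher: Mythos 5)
Your proposal is correct and follows essentially the same route as the paper: the paper's proof simply exhibits the $k$-basis $\{T^j \overline{u}_i : i > t,\ \mu_i \le j \le p^n-1\}$ of $\Fil{t}(M_n)$ and notes the bijection with $\Delta_n(t)$, which is exactly the basis your decomposition $\Fil{t}(M_n) = \bigoplus_{i>t} A_n\overline{e}_i$ produces. Your writeup just makes explicit (via Proposition \ref{prop:omega as T module} and the behavior of $\prt{t}$ on the generators $\overline{e}_i = T^{\mu_i}\overline{u}_i$) the justification the paper leaves implicit.
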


\begin{proof}
A $k$-basis for $\Fil{t}(M_n)$ is given by
\[
\{ T^{j} u_i : i > t, \mu_i \leq j \leq p^n-1\}.
\] 
Remembering Definition~\ref{defn:muj}, we observe a natural bijection between this basis and $\Delta_n(t)$.
\end{proof}

\begin{proof}[Proof of Proposition~\ref{prop:piece2} \eqref{prop:piece2 part1}]

Let $t$ be a cutoff parameter satisfying \ref{condition I}, set $D=D_t$ and 
for $i> t $ define 
\[
    c_i \colonequals \max \left\{ \frac{tp}{d} + \lambda +D-1 + \frac{i}{d}, \frac{i(p+1)}{d} - \lambda \right\}.
  \]
We know that $\{T^{\frac{i(p+1)}{d} - \lambda} u_i \}_{i > t}$
        is an $\calA$-basis of $T^{-\lambda}\Fil{t}(\calZ^{\frac{d}{p+1}})$ and it is easy to see that $\{ T^{c_i} u_i\}_{i >t}$ is an $\calA$-basis of $T^{t\frac{p}{d} + \lambda+D-1}\calZ^d \cap T^{-\lambda}\Fil{t}(\calZ^{\frac{d}{p+1}})$.
It follows that
        \begin{equation}\label{eq: upper bound sum}
            \dim_k(X(n)) = d \sum_{i>t} \left( c_i -\frac{i(p+1)}{d}+ \lambda\right). 
        \end{equation}
Setting $\varepsilon= D-1+2\lambda - p/d$ as in the statement of Proposition \ref{prop:piece2} \eqref{prop:piece2 part1},
and writing $i = t + 1+ j$ with $j\ge 0$, a quick argument shows
        \begin{align*}
            c_i &= \begin{cases}
                \frac{t p}{d} + \lambda + D-1 + \frac{i}{d}, & j \leq \left\lfloor \frac{d\varepsilon}{p}\right\rfloor \\[10pt]
                \frac{i(p+1)}{d} - \lambda, & j > \left\lfloor \frac{d\varepsilon}{p}\right\rfloor
            \end{cases}.
        \end{align*}
The right side of \eqref{eq: upper bound sum} is therefore $0$ when $\varepsilon < 0$, and when $\varepsilon \ge 0$ is equal to
        \begin{align}\label{eq: dimXRHS}
      d\sum_{j=0}^{\left\lfloor \frac{d\varepsilon}{p}\right\rfloor} \left(\frac{tp}{d} + \lambda + D-1 + \frac{t + 1+j}{d} -\frac{(t +1+ j)(p+1)}{d} + \lambda \right) & = d C(p,d,r),
        \end{align}
        which is readily seen to be zero when $\varepsilon=0$ as well.
Combining \eqref{eq: bound the cokernel of V on right side} with Lemmas \ref{lemma:x'x} and \ref{lemma:rightlattice} gives the inequalities
\begin{equation}\label{eq: semifinal bound}
    0 \le d\Delta_n(t) - \dim_k \prt{t}(\ker(\bfV^r|_{\calM_n})) \le \dim_k X'(n) \le \dim_k X(n) 
\end{equation}
As in the proofs of Lemma \ref{lemma:breaking up the kernel} and Proposition \ref{prop:piece1},
we have 
\begin{equation}
    d \dim_k \prt{t}(\ker(\bfV^r|_{M_n}))=\dim_k \prt{t}(\ker(\bfV^r|_{\calM_n})).
    \label{eq:projection divide by d}
\end{equation}
Combining this with \eqref{eq: semifinal bound}, \eqref{eq: upper bound sum}--\eqref{eq: dimXRHS} 
and dividing through by $d$ yields the claimed inequalities in Proposition \ref{prop:piece2} \eqref{prop:piece2 part1}.
For the final statement of \eqref{prop:piece2 part1}, note that when $t=t(n)$,
we have
\[  
    \varepsilon  = D_{t}-1+2\lambda-p/d =\max\{2\lambda - p/d, \varepsilon(n)\}  
    \le \delta\left(1-\frac{1}{d}\right)+\lambda - \left(p-2\left\lfloor \frac{p}{2}\right\rfloor\right)
\]
by Definition \ref{defn:t and point count} and Lemma \ref{lem:parameters} \ref{parameters:general},
and the right hand side depends only on $p,d$, and $r$.
It follows from the definition of $C(p,d,r)$ that this constant is likewise 
bounded independently of $n$.
\end{proof}

\subsection{Proof of Proposition~\ref{prop:piece2} \eqref{prop:second half stronger}} \label{ss:proof2b}

We now prove the stronger result
of Proposition \ref{prop:piece2} (\ref{prop:second half stronger}).

\begin{lemma} \label{lemma:projectionlemma}
Let $t=t'(n)$ and
assume either that that $d|(p-1)$, or that $d\le p+1$ and $t(n)=t$.  Then 
\[\Fil{t}(\calM_n)=\prt{t}(\ker(V^r|_{\calM_n})).\]
\end{lemma}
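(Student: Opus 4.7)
The inclusion $\prt{t}(\ker V^r|_{\calM_n}) \subseteq \Fil{t}(\calM_n)$ is tautological; my plan is to prove the reverse by a dimension-counting argument. Since $V^r$ is $\sigma^{-r}$-semilinear and $\calA$ is a free $A$-module of rank $d$, faithfully flat base change commutes with $\sigma^{-r}$-linear kernels and with the $A$-linear projection $\prt{t}$, giving $\dim_k \prt{t}(\ker V^r|_{\calM_n}) = d \cdot \dim_k \prt{t}(\ker V^r|_{M_n})$. Combined with $\dim_k \Fil{t}(\calM_n) = d \cdot \#\Delta_n(t)$ from Lemma~\ref{lemma:rightlattice}, this shows the deficit between the two dimensions is a nonnegative multiple of $d$, so it suffices to prove it is strictly less than $d$.

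To bound the deficit, I appeal to the estimate underlying the proof of Proposition~\ref{prop:piece2}\eqref{prop:piece2 part1}. In case (b), $t=t(n)=t'(n)$ satisfies \ref{condition I} by Lemma~\ref{lem:parameters}(ii); in case (a), Corollary~\ref{corollary: touching polygons} makes every nonnegative integer a cutoff parameter, so $t=t'(n)$ qualifies. Inspecting the proof of Lemma~\ref{lemma: estimating the T_k part}, one sees it depends only on $D_t \ge 1$ (automatic from the max-with-$1$ in Definition~\ref{defn:parameter D}) and $t$ being a cutoff parameter, not on \ref{condition I} per se; hence the bound
\[
\dim_k \Fil{t}(\calM_n) - \dim_k \prt{t}(\ker V^r|_{\calM_n}) \le \dim_k X(n) = \sum_{j \ge 0} \max\{d\varepsilon - jp,\ 0\}
\]
remains valid, with $\varepsilon = D_t - 1 + 2\lambda - p/d = \max\{2\lambda - p/d,\ p^n - (t+1)\delta/d + \lambda\}$ by the calculation in the proof of Proposition~\ref{prop:piece2}\eqref{prop:piece2 part1}.

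It then remains to show $\varepsilon < 1$ in each case. For the second argument of the max, $(t'(n)+1)\delta > dp^n$ by the definition of $t'(n)$, so $p^n - (t+1)\delta/d + \lambda < \lambda \le 1 - 1/d < 1$ (using Proposition~\ref{prop:muestimates}). For the first argument, $\lambda \le 1 - 1/d$ gives $2\lambda - p/d \le 2 - (p+2)/d$, and $d \le p+1$ (automatic in case (a), since $d \mid (p-1)$ forces $d \le p-1$) ensures $(p+2)/d > 1$, whence $2\lambda - p/d < 1$. So $\varepsilon < 1$, and since $\varepsilon \in (1/d)\Z$ we have $d\varepsilon \le d-1$. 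For $j \ge 1$, $d\varepsilon - jp \le (d-1) - p \le 0$ as $d \le p+1$, so only $j=0$ contributes, yielding $\dim_k X(n) \le d\varepsilon < d$. The integrality constraint from the first paragraph then forces the deficit to vanish.

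The main technical hurdle is the extension of Proposition~\ref{prop:piece2}\eqref{prop:piece2 part1} to cutoff parameters not satisfying \ref{condition I}, needed in case (a) when $dp^n \bmod \delta$ is small (particularly $\delta \mid dp^n$, making \ref{condition I} fail catastrophically). Resolving this requires tracking through the proof of Lemma~\ref{lemma: estimating the T_k part} to confirm that the critical estimate $V^r(x+y) \in T^{p^n}\Fil{t}(\calZ)$ continues to hold with $D_t = 1$; this reduces to the inequality $\lambda \ge p^n - t\delta/d - (r(p-1)+1)/d$, an immediate consequence of the failure of \ref{condition I} together with positivity of $(r(p-1)+1)/d$.
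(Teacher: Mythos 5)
Your proof is correct, and it takes a genuinely different route from the paper's argument, which is worth comparing.

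The paper proves the reverse inclusion constructively: for each generator $T^{\mu_i}u_i$ of $\Fil{t}(\calM_n)$ with $i>t$, it uses Lemma~\ref{lemma: filtered part for alterations lies in L^{d/p+1}} and Corollary~\ref{c: filtered part for alteration} to produce an explicit $y\in \oFil{t}(\calM)$ with $V^r(T^{\mu_i}u_i + y) \in T^{p^n}\calZ$, verifying membership of $y$ in $\oFil{t}(M)$ by a direct estimate involving $\mu_i - \frac{p+1}{d}i \ge -1 + 1/d$ and the inequality $(t'(n)+1)\delta \ge dp^n$. Your argument instead reuses the machinery of \S\ref{ss:proof2}: combining \eqref{eq: bound the cokernel of V on right side}, Lemmas~\ref{lemma:x'x} and \ref{lemma:rightlattice}, and \eqref{eq:projection divide by d}, the gap between $\dim_k\Fil{t}(\calM_n)$ and $\dim_k\prt{t}(\ker V^r|_{\calM_n})$ is a nonnegative multiple of $d$ bounded by $\dim_k X(n)$; you then show $\dim_k X(n) = \sum_{j\ge 0}\max\{d\varepsilon - jp, 0\} \le d\varepsilon \le d-1 < d$ using $\varepsilon \in \frac{1}{d}\Z$ and $\varepsilon < 1$ (the last drawn from $\lambda \le 1 - 1/d$, $(t'(n)+1)\delta > dp^n$, and $d\le p+1$). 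The observation that this deficit is $d$-integral and $<d$, hence zero, is a clean pigeonhole step not used in the paper. Your handling of the ``technical hurdle''—that Lemma~\ref{lemma: estimating the T_k part} does not actually require \ref{condition I}, only $D_t\ge 1$ (guaranteed by the $\max$ in Definition~\ref{defn:parameter D}) and $t$ being a cutoff parameter—is correct: the exponent bound in that proof is exactly $\lambda+D-1+t\delta/d + (r(p-1)+1)/d \ge p^n$, which holds for any $D$ satisfying the $\max$ formula, whether the second argument is $\ge 1$ or not. The trade-off is that the paper's constructive proof is shorter and self-contained given \S\ref{ss: consequences of NP and HP coincidence}, whereas your dimension count leans on more of the \S\ref{ss:proof2} apparatus but makes visible the structural reason the inequality of Proposition~\ref{prop:piece2}\eqref{prop:piece2 part1} degenerates to an equality precisely when $\varepsilon<1$.
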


\begin{proof}
First note that the hypotheses guarantee that $t$ is a cutoff parameter,
thanks to Lemma \ref{lem:parameters}.
It is immediate that $\prt{t}(\ker(V^r|_{\calM_n})) \subset \Fil{t}(\calM_n)$.  
For $i > t$, we will show that there exists 
    $y \in \oFil{t}(\calM)$ such that $V^r(T^{\mu_i}u_i + y) \in T^{p^n} \calZ$. As the $T^{\mu_i}u_i$ with $i>t $ and $\mu_i < p^n$ generate $\Fil{t}(\calM_n)$ as an $\calA$-module, this will prove the lemma. 
    
    We have
    \[T^{\mu_i}u_i=T^{(\mu_i - \frac{p+1}{d}i) + \frac{p+1}{d}i}u_i \in T^{(\mu_i - \frac{p+1}{d}i) + \frac{p}{d}i}\calZ^d=T^{(\mu_i - \frac{p+1}{d}i + (i-t)\frac{p}{d}) + \frac{p}{d}t}\calZ^d. \]
    Then by Lemma \ref{lemma: filtered part for alterations lies in L^{d/p+1}} there exists $y \in T^{\mu_i - \frac{p+1}{d}i + (i-t)\frac{p}{d}}\oFil{ t} \calZ^{\frac{d}{p+1}}$ such that
    \begin{align}
    \begin{split}\label{eq: right side d|p-1}
        V^r(T^{\mu_i}u_i+y) &\in T^{\alpha_i}\Fil{t}\calZ^{d/p}, \text{ where}\\
        \alpha_i &= \left (\mu_i - \frac{p+1}{d}i + (i-t)\frac{p}{d}\right) + t\frac{p}{d} + \frac{(t+1)(r-1)(p-1)}{d}.
        \end{split}
    \end{align}
    Moreover, since $T^{\mu_i}u_i \in Z$ we also know from Corollary \ref{c: filtered part for alteration}
    that $y \in Z$.  
     By Corollary \ref{cor:muestimate}, we have $\mu_i-\frac{p+1}{d}i \ge -1+1/d$. Since $i>t$, for all $j\ge 0$ we have
     \[
            \left\lceil \mu_i - \frac{p+1}{d}i+(i-t)\frac{p}{d} +\frac{p+1}{d}j \right\rceil \ge \left\lceil -1+\frac{1}{d}+\frac{p}{d}+\frac{p+1}{d}j\right\rceil
            \ge \left\lceil\frac{p+1}{d}j \right\rceil\ge \mu_j
     \]
     with the penultimate inequality resulting from the fact that $d\le p+1$ in any case,
     and the final inequality following from Proposition \ref{prop:muestimates}.
     We conclude that $y \in  \oFil{t}M$.

    We will now prove $V^r(T^{\mu_i}u_i + y)$ is contained in $T^{p^n} Z \subset T^{p^n}\calZ$. 
    From \eqref{eq: right side d|p-1} we obtain
    \begin{align*}
        V^r(T^{\mu_i}u_i+y) &\in T^{\alpha_i + \frac{p}{d}(t+1)}\calZ \cap Z = T^{\left\lceil \alpha_i + \frac{p}{d}(t+1)\right \rceil }Z.
    \end{align*}
    A straightforward computation then gives
    \begin{align*}
        \alpha_i + \frac{p}{d}(t+1) &=\mu_i - \frac{p+1}{d}i + (i-t-1)\frac{p}{d} + (t + 1) \frac{\dnom}{d} \\
        &> (t + 1) \frac{\dnom}{d} -1\geq p^n-1
    \end{align*}
    wherein the final inequality is Lemma~\ref{lem:parameters} \ref{parameters:d|p-1}.
    This yields $\left\lceil \alpha_i + \frac{p}{d}(t+1)\right \rceil \geq p^n$, giving the desired inclusion.
\end{proof}

  \begin{proof}[Proof of Proposition~\ref{prop:piece2} \eqref{prop:second half stronger}] 
  Combine \eqref{eq:projection divide by d} and Lemma~\ref{lemma:rightlattice} with Lemma \ref{lemma:projectionlemma}.
  \end{proof}
  
\section{Formulae and estimates of higher \texorpdfstring{$a$}{a}-numbers}
\label{s: formulae and estimates}

As usual, 
 let $\{X_n\}$ be a \fullystable{} $\Z_p$-tower totally ramified over $\infty \in \PP^1$ and unramified elsewhere, with $s_n=d p^{n-1}$ the $n$th break in the upper numbering for the ramification filtration above infinity. 

\subsection{Estimating Lattice Point Counts}
In this subsection, we prove Corollary~\ref{cor:asymptoticmain}: in the over-simplified picture of Figure~\ref{fig:picture}, the idea is to approximate the number of lattice points above the diagonal line as the area of the triangle.  Crude estimates suffice for the asymptotic statement.

We proved in Theorem~\ref{theorem:higheranumber} that
\begin{equation} \label{eq:boundeddifference}
 \frac{r(p-1) t(n)( t(n)+1)}{2d} + \# \Delta_n(t(n))  -a_n^{(r)} 
\end{equation}
is non-negative and bounded above independently of $n$, where $t(n) \colonequals d \left \lfloor \frac{p^n-1}{(r+1)p - (r-1)} \right \rfloor$.  
Elementary estimates show that
\begin{equation} \label{eq:leftestimate}
 \frac{r(p-1) t(n)( t(n)+1)}{2d}  = \frac{r(p-1) d}{2(r (p-1) + (p+1))^2 } p^{2n} + O(p^n)
\end{equation}
where the implied constant depends only on $p$, $d$, and $r$. 

\begin{lemma} \label{lem:rightestimate}
We have $\displaystyle \# \Delta_n(t(n)) = \left( \frac{r (p-1)}{ r(p-1) + (p+1)} \right)^2 \frac{d p^{2n}}{2(p+1)} + O(p^n).$
\end{lemma}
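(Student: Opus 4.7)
The plan is to reduce the lattice-point count to a sum of the form $\sum (p^n - \mu_i)$, apply the near-linearity of $\mu_i$ from Corollary~\ref{cor:muestimate}, and carry out an elementary Riemann-sum-style estimate.

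First I would rewrite
\[
    \#\Delta_n(t(n)) = \sum_{\substack{i>t(n)\\ \mu_i<p^n}} (p^n-\mu_i),
\]
using the first description of $\Delta_n$ in Definition~\ref{defn:muj} and summing the number of admissible $j$ for each fixed $i$. By Corollary~\ref{cor:muestimate} one has $|\mu_i-(p+1)i/d|<1$, so the largest $i$ for which $\mu_i<p^n$ is $I(n)=\lfloor dp^n/(p+1)\rfloor + O(1)$, with implicit constant depending only on $p,d$. Moreover, substituting $\mu_i=(p+1)i/d+O(1)$ into each summand introduces an error of $O(1)$ per term, and therefore a cumulative error of $O(p^n)$ across the $O(p^n)$ terms.

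Next I would execute the arithmetic. With $\alpha\colonequals d/(p+1)$ and $\beta\colonequals d/\delta = d/(r(p-1)+(p+1))$, so that $I(n)=\alpha p^n+O(1)$ and $t(n)=\beta p^n+O(1)$, the sum telescopes to
\[
    \#\Delta_n(t(n)) = (I(n)-t(n))p^n - \tfrac{p+1}{2d}\bigl(I(n)^2-t(n)^2\bigr) + O(p^n).
\]
Expanding and using $\tfrac{p+1}{2d}\alpha=\tfrac12$ gives
\[
    \#\Delta_n(t(n)) = (\alpha-\beta)\Bigl(1-\tfrac{p+1}{2d}(\alpha+\beta)\Bigr)p^{2n} + O(p^n)
    = (\alpha-\beta)\cdot\tfrac{r(p-1)}{2\delta}\cdot p^{2n}+O(p^n),
\]
and a final simplification using $\alpha-\beta = dr(p-1)/((p+1)\delta)$ yields exactly the claimed coefficient $\bigl(r(p-1)/\delta\bigr)^2\cdot d/(2(p+1))$.

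There is no genuine obstacle here: the only subtlety is bookkeeping of the $O(1)$ errors from Corollary~\ref{cor:muestimate} and from the floor in the definition of $t(n)$, both of which contribute $O(p^n)$ after summation, and the final algebraic identity, which is a straightforward manipulation.
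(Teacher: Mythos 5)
Your proof is correct and takes essentially the same approach as the paper: both reduce to the lattice-point count $\sum_{i>t(n),\mu_i<p^n}(p^n-\mu_i)$, substitute $\mu_i = \frac{p+1}{d}i + O(1)$ from Corollary~\ref{cor:muestimate}, and estimate; the paper phrases this as "area of the triangle bounded by $y=p^n$, $x=t(n)$, $y=\frac{p+1}{d}x$ with error bounded by the perimeter," whereas you carry out the equivalent Riemann-sum computation explicitly. The algebraic simplifications agree, so the proposal is sound.
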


\begin{proof}
Using Proposition~\ref{prop:muestimates} and Definition~\ref{defn:muj}, we see that the area of the triangle bounded by the lines $y = p^n$, $x=t(n)$, and $y = \frac{p+1}{d} x $ is an approximation to $\# \Delta(t(n))$, with error (depending on the perimeter) which is $O(p^n)$.  Hence $\# \Delta(t(n))$ is approximately
\[
\frac{1}{2} \left( p^n - \frac{(p+1) p^n}{(r+1)p - (r-1)} \right) \left( \frac{p^n d}{p+1} - \frac{d p^n}{(r+1)p - (r-1)} \right) = \left( \frac{r (p-1)}{ r(p-1) + (p+1)} \right)^2 \frac{d p^{2n}}{2(p+1)}.
\]
with error $O(p^n)$.  
\end{proof}

\begin{proof}[Proof of Corollary~\ref{cor:asymptoticmain}]
Combine Lemma~\ref{lem:rightestimate} with \eqref{eq:leftestimate} and \eqref{eq:boundeddifference}.
\end{proof}

\subsection{A formula for the \texorpdfstring{$a$}{a}-number}  We now work out an explicit formula for the $a$-number when $d | p-1$, establishing Corollary~\ref{MC2:formula}.  The argument, while a bit involved, uses only elementary techniques so we leave many of the details to the reader.  The basic idea is to count the number of elements in $\Delta(t'(n))$ with a given second coordinate, and then sum.  In terms of the picture of Figure~\ref{fig:picture}, this corresponds to counting the number of elements in each row that are to the right of the vertical line.  Recall that $\xi_\star$ %
is defined in Definition~\ref{not:basep}.

\begin{definition}
For a positive integer $n$, define $\displaystyle f_n(x) \colonequals \sum_{b=0}^{p^n-1} \lfloor x + \xi_b \rfloor$.
\end{definition}

We will establish an explicit formula for $f_n(x)$ by induction on $n$.

\begin{lemma} \label{lemma:f1(x)}
We have $\displaystyle f_1(x) = \frac{(p-1)(d-1)}{2} + \lfloor px \rfloor$.
\end{lemma}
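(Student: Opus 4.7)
The plan is to unwind the definition of $\xi_b$ for $b$ in the range $0 \leq b < p$, then evaluate the resulting sum using a standard floor-function identity. Since $b < p$, the only nonzero $p$-adic digit of $b$ is $b_0 = b$, so by Definition~\ref{not:basep} we get $\xi_b = p^{-1} b d_1$. Under our running hypothesis of minimal break ratios with invariant $d$, Proposition~\ref{prop:genusbreaks} gives $d_1 = d$, so $\xi_b = bd/p$. This reduces the claim to proving
\[
\sum_{b=0}^{p-1} \left\lfloor x + \frac{bd}{p} \right\rfloor = \frac{(p-1)(d-1)}{2} + \lfloor px \rfloor.
\]

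To handle this sum, I would write $bd = p\lfloor bd/p \rfloor + (bd \bmod p)$, pull the integer part $\lfloor bd/p \rfloor$ out of the floor, and split the sum into two pieces: an $x$-independent arithmetic piece and a piece of Hermite type. The arithmetic piece is
\[
\sum_{b=0}^{p-1} \left\lfloor \frac{bd}{p} \right\rfloor = \frac{1}{p}\sum_{b=0}^{p-1}\bigl(bd - (bd \bmod p)\bigr) = \frac{d(p-1)p/2 - (p-1)p/2}{p} = \frac{(p-1)(d-1)}{2},
\]
where in the second equality I use that $p \nmid d$ (which holds in any tower with minimal break ratios, since $d = s_1$ and $p \nmid s_1$) to conclude that multiplication by $d$ is a bijection on $\Z/p\Z$, so $\{bd \bmod p : 0 \leq b < p\} = \{0,1,\ldots,p-1\}$.

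Using the same bijectivity, the remaining sum telescopes into the classical Hermite identity
\[
\sum_{b=0}^{p-1} \left\lfloor x + \frac{bd \bmod p}{p} \right\rfloor = \sum_{c=0}^{p-1}\left\lfloor x + \frac{c}{p}\right\rfloor = \lfloor px \rfloor,
\]
and adding the two pieces yields the claim. There is no real obstacle here; the only subtlety worth flagging is the appeal to $\gcd(d,p)=1$, which is automatic from minimal break ratios and is what makes multiplication by $d$ permute the residues mod $p$.
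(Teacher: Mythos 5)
Your proof is correct, and it takes a genuinely different route from the paper's. The paper writes $e = (p-1)/d$ (which is an integer under its standing hypothesis $d \mid (p-1)$) and groups the indices $b = 1,\dots,p-1$ into $d$ blocks of length $e$, using the exact identity $\xi_b = bd/p = i(p-1)/p + jd/p$ for $b = ie+j$ to peel off the integer part $i$ from each floor; the residual sum is then shown to reindex exactly onto $\{0,\dots,p-1\}/p$, after which Hermite's identity finishes. You instead decompose each $bd$ via Euclidean division by $p$, so the integer part you peel off is $\lfloor bd/p\rfloor$ and the reindexing is the bijection $b \mapsto bd \bmod p$. The payoff is that your argument needs only $\gcd(d,p)=1$ (automatic from minimal break ratios) rather than $d \mid (p-1)$, so it proves the lemma as stated in greater generality than the paper's proof does. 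This is a real improvement at the level of the lemma, although it doesn't change the scope of Corollary~\ref{MC2:formula}, where $d \mid (p-1)$ is used elsewhere; in particular Lemma~\ref{lemma:fn(x)} and Proposition~\ref{prop:fn(x)explicit} already hold without that divisibility, so your version of the base case does propagate.
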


\begin{proof}
Let $e = (p-1)/d$ and group the sum into segments of length $e$ to see that
\begin{align*}
f_1(x) &= \lfloor x \rfloor + \sum_{i=0}^{d-1} \sum_{j=1}^e \left \lfloor x + \frac{i(p-1)}{p} + \frac{j d}{p} \right \rfloor = \lfloor x \rfloor + \sum_{i=0}^{d-1} \sum_{j=1}^e \left ( i + \left \lfloor x + \frac{jd-i}{p} \right \rfloor \right )\\
& = e \frac{d(d-1)}{2} + \sum_{j=0}^{p-1} \left \lfloor x + \frac{j}{p} \right \rfloor = \frac{(p-1)(d-1)}{2} + \lfloor px \rfloor
\end{align*}
where the last step uses Hermite's identity.
\end{proof}

\begin{lemma} \label{lemma:fn(x)}
For $n>1$, we have
\[
f_n(x) = \sum_{i=0}^{p-1} f_{n-1}\left( x + i d \frac{p^{n-1} + p^{-n}}{p+1} \right).
\]
\end{lemma}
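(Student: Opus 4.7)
The plan is to split the summation defining $f_n(x)$ according to the top $p$-adic digit of the summation variable $b$, and then use the explicit formula for $\xi_b$ under the minimal break ratio hypothesis to separate the contribution of that digit.

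First, I would write each $b$ with $0 \le b < p^n$ uniquely as $b = i p^{n-1} + b'$ with $0 \le i \le p-1$ and $0 \le b' < p^{n-1}$. By Definition~\ref{not:basep}, the $p$-adic digits of $b$ are exactly the digits $b'_j$ of $b'$ for $j < n-1$, together with $b_{n-1} = i$. Since $\xi_b = \sum_{j\ge 0} p^{-j-1} b_j d_{j+1}$ and $b'_j = 0$ for $j \ge n-1$, this gives the key additive decomposition
\[
\xi_b = \xi_{b'} + p^{-n} \, i \, d_n.
\]

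Next, I would evaluate $p^{-n} d_n$ using Proposition~\ref{prop:genusbreaks}, which under minimal break ratios (with ramification invariant $d$) yields $d_n = d \cdot \frac{p^{2n-1}+1}{p+1}$, so
\[
p^{-n} d_n = d \cdot \frac{p^{n-1} + p^{-n}}{p+1}.
\]

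Finally, substituting into the definition of $f_n(x)$ and swapping the order of summation gives
\[
f_n(x) = \sum_{i=0}^{p-1} \sum_{b'=0}^{p^{n-1}-1} \left\lfloor x + \xi_{b'} + i d \frac{p^{n-1} + p^{-n}}{p+1} \right\rfloor = \sum_{i=0}^{p-1} f_{n-1}\!\left(x + i d \frac{p^{n-1} + p^{-n}}{p+1}\right),
\]
which is the claimed identity. There is no real obstacle here: the argument is a direct unwinding of definitions together with the explicit formula for $d_n$ from Proposition~\ref{prop:genusbreaks}; the only point requiring care is verifying that the $p$-adic digit decomposition of $b$ really does decouple as stated, which is immediate from $b' < p^{n-1}$.
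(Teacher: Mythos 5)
Your proof is correct and follows essentially the same route as the paper: group the sum over $b$ by its top $p$-adic digit $b_{n-1}=i$, use the digit-wise additivity of $\xi$ to write $\xi_b=\xi_{b'}+i\,d_n p^{-n}$, and then substitute $d_n = d\frac{p^{2n-1}+1}{p+1}$ from Proposition~\ref{prop:genusbreaks}. No issues.
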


\begin{proof}
Grouping the sum based on the first base-$p$ digit of $b$, we compute
\[
f_n(x) = \sum_{b=0}^{p^n-1} \lfloor x + \xi_b \rfloor = \sum_{i=0}^{p-1} \sum_{j=0}^{p^{n-1}-1} \left \lfloor x + i d_n p^{-n}+ \xi_j \right \rfloor = \sum_{i=0}^{p-1} f_{n-1}( x + i d_n p^{-n}).
\]
Then use the formula for $d_n$ in Proposition~\ref{prop:genusbreaks}.
\end{proof}

\begin{proposition} \label{prop:fn(x)explicit}
We have $\displaystyle f_n(x) = \frac{d p (p^n-1)(p^{n-1}-1)}{2 (p+1)} + \frac{(d-1)(p^n-1)}{2} + \lfloor p^n x \rfloor$.
\end{proposition}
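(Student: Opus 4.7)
The plan is to proceed by induction on $n$, with base case $n=1$ furnished directly by Lemma~\ref{lemma:f1(x)}: indeed, setting $n=1$ in the claimed formula gives $\tfrac{dp(p-1)(1-1)}{2(p+1)} + \tfrac{(d-1)(p-1)}{2} + \lfloor px\rfloor$, which agrees with Lemma~\ref{lemma:f1(x)}.

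For the inductive step, assuming the formula for $f_{n-1}$, Lemma~\ref{lemma:fn(x)} gives
\begin{equation*}
f_n(x) = \sum_{i=0}^{p-1} f_{n-1}\!\left(x + \tfrac{id(p^{n-1}+p^{-n})}{p+1}\right)
        = p\cdot C_{n-1} + \sum_{i=0}^{p-1}\left\lfloor p^{n-1}x + \tfrac{id(p^{2n-2}+p^{-1})}{p+1}\right\rfloor,
\end{equation*}
where $C_{n-1}\colonequals \tfrac{dp(p^{n-1}-1)(p^{n-2}-1)}{2(p+1)} + \tfrac{(d-1)(p^{n-1}-1)}{2}$. The key observation is that $\tfrac{d(p^{2n-2}+p^{-1})}{p+1} = \tfrac{d(p^{2n-1}+1)}{p(p+1)} = \tfrac{d_n}{p}$ by Proposition~\ref{prop:genusbreaks}, so the floor sum collapses to $\sum_{i=0}^{p-1}\lfloor p^{n-1}x + id_n/p\rfloor$.

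Since $d_n \equiv d_1\pmod p$ and $p\nmid d_1$ by the standard properties of the ramification breaks, $d_n$ is a unit modulo $p$. Writing $id_n = pq_i + r_i$ with $0\leq r_i < p$, the map $i\mapsto r_i$ is a permutation of $\{0,\ldots,p-1\}$, and summing $id_n = pq_i + r_i$ over $i$ yields $\sum_i q_i = \tfrac{(p-1)(d_n-1)}{2}$. Hermite's identity $\sum_{k=0}^{p-1}\lfloor y + k/p\rfloor = \lfloor py\rfloor$ applied to the $r_i$ then gives
\begin{equation*}
\sum_{i=0}^{p-1}\left\lfloor p^{n-1}x + \tfrac{id_n}{p}\right\rfloor = \tfrac{(p-1)(d_n-1)}{2} + \lfloor p^n x\rfloor.
\end{equation*}

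It remains to verify that $p\cdot C_{n-1} + \tfrac{(p-1)(d_n-1)}{2}$ equals $\tfrac{dp(p^n-1)(p^{n-1}-1)}{2(p+1)} + \tfrac{(d-1)(p^n-1)}{2}$, which is a straightforward algebraic identity: factoring out $\tfrac{p-1}{2}$ from the desired difference and using $d-d_n = \tfrac{-dp(p^{2n-2}-1)}{p+1}$ together with the factorization $p(p^{n-2}-1) - (p^n-1) = -(p-1)(p^{n-1}+1)$ matches both sides. The only potential obstacle is bookkeeping in this final algebraic verification, but no genuinely hard step appears; the substantive content is the reduction to Hermite's identity via the coprimality of $d_n$ and $p$.
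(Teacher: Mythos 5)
Your proof is correct and follows essentially the same route as the paper: induction on $n$ via Lemma~\ref{lemma:fn(x)}, identification of the shift $d\frac{p^{2n-2}+p^{-1}}{p+1}$ with $d_n/p$ (equivalently, splitting off the integer $d\frac{p^{2n-2}-1}{p+1}$), and an application of Hermite's identity. The only cosmetic difference is that you evaluate $\sum_i\lfloor y+id_n/p\rfloor$ directly via the permutation $i\mapsto id_n\bmod p$, whereas the paper recognizes the residual sum as $f_1(p^nx)$ and reuses Lemma~\ref{lemma:f1(x)}; both closing computations check out.
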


\begin{proof}
We prove this by induction, with the base case $n=1$ being Lemma~\ref{lemma:f1(x)}.  For the inductive step, suppose the result holds true for $n$.  Using Lemma~\ref{lemma:fn(x)} and the inductive hypothesis gives that
\begin{align*}
f_{n+1}(x) &= \sum_{i=0}^{p-1} f_n\left(x + i d \frac{p^{n}+p^{-n-1}}{p+1}\right) \\
&= p \frac{dp(p^n-1)(p^{n-1}-1)}{2 (p+1)} + p \frac{(d-1)(p^n-1)}{2} + \sum_{i=0}^{p-1} \left \lfloor p^n x + i d \frac{p^{2n}+p^{-1}}{p+1} \right \rfloor.
\end{align*}
Now notice that $\displaystyle\frac{p^{2n}+p^{-1}}{p+1} = \frac{p^{2n+1} -p +p +1}{p(p+1)} =  p^{-1} + \frac{p^{2n} -1}{p+1}$.  Thus
\[
 \sum_{i=0}^{p-1} \left \lfloor p^n x + i d \frac{p^{2n}+p^{-1}}{p+1} \right \rfloor = \frac{(p-1)p d}{2} \frac{p^{2n}-1}{p+1} + \sum_{i=0}^{p-1} \left \lfloor p^n x + \frac{id}{p} \right \rfloor.
\]
Recognizing the last sum as $f_1(p^nx)$ and using Lemma~\ref{lemma:f1(x)}, we conclude 
\[
f_{n+1}(x) = \frac{d p (p^{n+1}-1)(p^{n}-1)}{2 (p+1)} + \frac{(d-1)(p^{n+1}-1)}{2}+ \lfloor p^{n+1} x \rfloor. \qedhere
\]
\end{proof}

\begin{proof}[Proof of Corollary~\ref{MC2:formula}]
Recall that $p$ is odd and $d|p-1$.  Theorem~\ref{theorem:higheranumber} gives that
\[
a(X_n) =  \frac{r(p-1) t'(n)(t'(n)+1)}{2d} + \# \Delta_n(t'(n)).
\]
When $r=1$, we see that $t'(n) = d p^{n-1}/2$ when $d$ is even and $t'(n) = (d p^{n-1}-1)/2$ when $d$ is odd.  For conciseness, we will only treat the case that $d$ is even: the other case is similar.

Note that $\xi_{(p+1)/2 \cdot p^{n-1}} = d (p^{n-1} + p^{-n})/2$, so $\xi_{a} \geq d p^{n-1}/2 +1$ only when $a > (p+1)p^{n-1} /2$ since $\xi_a$ is an increasing function of $a$.  Thus recalling Definition~\ref{defn:muj}, when $b \geq (p+1) p^{n-1}/2$ the number of points in $\Delta(t'(n))$ whose second coordinate is $b$ is given by $\lfloor \xi_b - d p^{n-1}/2\rfloor$.  It follows from Lemma~\ref{lem:xiprops}\ref{xifor} that
 if $b + ip^{n-1} < p^n$ then 
\[
\xi_{b + i p^{n-1}} = \xi_b +  \frac{d i }{p+1} (p^{n-1} + p^{-n}).
\]
Thus we see that for $i <p$
\[
\sum_{b= i p^{n-1}}^{(i+1)p^{n-1} -1} \lfloor \xi_{b} - d p^{n-1}/2 \rfloor = \sum_{j=0}^{p^{n-1}-1} \left \lfloor \xi_{j}  + \frac{di}{p+1} (p^{n-1} + p^{-n})  - \frac{dp^{n-1}}{2} \right \rfloor = f_{n-1}\left(\frac{di}{p+1} (p^{n-1} + p^{-n}) - \frac{dp^{n-1}}{2} \right).
\]
We conclude that 
\begin{equation}
\# \Delta(t'(n)) = \sum_{i=(p+1)/2}^{p-1} f_{n-1}\left(\frac{di}{p+1} (p^{n-1} + p^{-n}) - \frac{dp^{n-1}}{2} \right).
\end{equation}
But this can be evaluated explicitly using Proposition~\ref{prop:fn(x)explicit}: a lengthy but elementary computation to simplify gives the result when $d$ is even.  The odd case is similar.
\end{proof}

\begin{example}
When $p=5$ and $d=4$, Corollary~\ref{MC2:formula} says that the $a$-number of the $n$th curve in a break-minimal $\Z_p$-tower is $\frac{2}{3} (5^{2n-1} + 1)$ which for $n=1,2,3,4$ equals $4, 84, 2084, 52084$ and matches the computations of \cite[Example 4.8]{boohercais21}.
\end{example}

\appendix

\section{Index of notation}

\subsection{Basic Notation}
\begin{itemize}[label={}]
    \item $\F\colonequals\FF$ the prime field of characteristic $p$
    \item $k$ a finite extension of $\F$
    \item $\nu\colonequals[k:\F]$ the degree of $k/\F$
    \item $K \colonequals k(x)$ the rational function field of $\PP_k^1$
    \item $A\colonequals k\llbracket T \rrbracket$ and $A_\F \colonequals \F\llbracket T \rrbracket$
    \item $E \colonequals \Frac(A)$ and $E_\F \colonequals \Frac(A_F)$
    \item $\mathcal{A}\colonequals k\llbracket T^{1/d} \rrbracket$ and $\mathcal{A}_\F \colonequals \F\llbracket T^{1/d} \rrbracket$ (except in Section \ref{ss:recollectionsbanach} where $\calA$ is an arbitrary complete valuation ring).
    \item $\mathcal{E} \colonequals \Frac(\mathcal{A})$ and $\mathcal{E}_\F \colonequals \Frac(\mathcal{E}_F)$
\end{itemize}
As general practice, modules over $A$ and $A_\F$ will be denoted using Roman letters, while modules over $\mathcal{A}$ and $\mathcal{A}_\F$ will be denoted using calligraphic letters.

\subsection{Notation Pertaining to \texorpdfstring{$\Z_p$}{Zp}-Towers}
\begin{itemize}[label={}]
    \item $L/K$ a geometric $\Z_p$-extension totally ramified at $S = \{\infty\}$ and unramified elsewhere.
    \item $\gamma$ a topological generator of $\Gal(L/K)$ providing an isomorphism $\Gal(L/K) \cong \Z_p$.
    \item $\chi:\Gal(L/K) \to A_\F^\times$ the character corresponding to the choice of $\gamma$, see \eqref{eq:galoischar}.
    \item $d$ the ramification invariant of a $\Z_p$-tower with minimal break ratios, see Definition \ref{d:minimal-break-ratios}.
\end{itemize}
A subscript of $n$ will often denote objects pertaining to the $n$th level in the $\Z_p$-tower $L/K$:
\begin{itemize}[label={}]
    \item $K_n$ the unique sub-extension of $L/K$ of degree $p^n$ over $K$.
    \item $X_n$ the smooth, projective, geometrically connected curve over $k$ with function field $K_n$.
    \item $\pi$ the covering $\pi:X_{n+1} \to X_n$ corresponding to $K_{n+1}/K_n$. Note we have suppressed the dependence on $n$.
    \item $\pi_*$ the trace map $\pi_*\calO_{X_{n+1}} \to \calO_{X_n}$ induced by $\pi$, see \S \ref{ss:iwasawa-modules}.
    \item $P_n$ the unique point of $X_n$ lying above $\infty \in \PP_k^1$
    \item $d_n$ the unique break in the ramification filtration of the cover $X_n \to X_{n-1}$ at $P_n$
    \item $s_n$ the $n$th break in the upper-numbering ramification filtration of $L/K$ over $\infty$.
    \item $A_n \colonequals A/(T^{p^n})$
    \item $R_n \colonequals \calO_{X_n}(X_n - P_n)$ the affine coordinate ring of $X_n - P_n$
    \item $M_n \colonequals H^0(X_n,\Omega_{X_n/k}^1)$ the space of regular differentials on $X_n$
    \item $\mathcal{M}_n \colonequals M_n \otimes_A \mathcal{A}$
    \item $Z_n \colonequals H^0(X_n-P_n,\Omega_{X_n/k}^1)$ the space of differentials on $X_n$ regular away from $P_n$
    \item $\mathcal{Z}_n \colonequals Z_n \otimes_A \mathcal{A}$
\end{itemize}
For objects at the ``infinite level'' we use the same letters without the subscript:
\begin{itemize}[label={}]
    \item $A\langle x \rangle \colonequals \varprojlim_n A_n[x]$ the Tate algebra
    \item $R \colonequals \varprojlim_n R_n$
    \item $M \colonequals \varprojlim_n M_n$
    \item $\mathcal{M} \colonequals M \otimes_A \mathcal{A}$
    \item $Z \colonequals \varprojlim_n Z_n$
    \item $\mathcal{Z} \colonequals Z \otimes_A \mathcal{A}$
    \item $\calZ_{\calE}\colonequals \calZ \otimes_{\calA} \calE$
    \item $L \colonequals x A\langle x \rangle$ the ideal of functions vanishing at $x = 0$.
    \item $\mathcal{L} \colonequals x \mathcal{A}\langle x \rangle$
\end{itemize}

\subsection{Growth conditions} We use a superscript $m$ to denote functions and differentials ``convergent with values bounded by $1$ on an open disk of radius $-1/m$:''
\begin{itemize}[label={}]
    \item $\displaystyle A\langle x\rangle^m \colonequals  \left\{ \sum_{i\ge 0} c_i x^i \in A\langle x\rangle \ :\ v_T(c_i) \ge \frac{i}{m}\right\}$
    \item $\displaystyle L^m \colonequals  \left\{ \sum_{i\ge 1} c_i x^i \in L\ :\ v_T(c_i) \ge \frac{i}{m}\right\}$
    \item $\displaystyle \mathcal{L}^m \colonequals  \left\{ \sum_{i\ge 1} c_i x^i \in \mathcal{L}\ :\ v_T(c_i) \ge \frac{i}{m}\right\}$
\end{itemize}
In Definition \ref{def:Bbasis} we define elements $u_1,u_2,\dots \in Z$ that form an orthonormal basis of $\calZ_{\calE}$.
\begin{itemize}[label={}]
    \item $\displaystyle Z^m \colonequals  \left\{ \sum_{i\ge 1} c_i u_i \in Z\ :\ v_T(c_i) \ge \frac{i}{m}\right\}$
    \item $\displaystyle \mathcal{Z}^m \colonequals   \left\{ \sum_{i\ge 1} c_i u_i \in \mathcal{Z}\ :\ v_T(c_i) \ge \frac{i}{m}\right\}$
\end{itemize}

\bibliographystyle{amsalpha}
\bibliography{trace}

\end{document}